\newcommand{\N}{\mathbb{N}}
\newcommand{\R}{\mathbb{R}}
\newcommand{\dHaus}{\, \mathrm{d} \mathcal{H}^{n-1} \,}
\newcommand{\dd}{\, \mathrm{d} \,}
\newcommand{\dz}{\, \mathrm{dz}\,}
\newcommand{\dx}{\, \mathrm{dx}\,}
\newcommand{\ds}{\, \mathrm{ds}\,}
\newcommand{\dt}{\, \mathrm{dt}\,}
\newcommand{\dzeta}{\, \mathrm{d} \zeta\,}
\newcommand{\pd}{\partial}
\newcommand{\abs}[1]{\left| #1 \right|}
\newcommand{\norm}[1]{\| #1 \|}
\newcommand{\inner}[2]{\langle #1 , #2 \rangle}
\newcommand{\eps}{\varepsilon}
\newcommand{\surf}{\nabla_{\Gamma}}
\newcommand{\surfz}{\nabla_{\Gamma^{\eps z}}}
\newcommand{\surfzloc}{\tilde{\nabla}_{\Gamma^{\eps z}}}
\newcommand{\surfloc}{\tilde{\nabla}_{\Gamma}}
\newcommand{\dist}{\text{dist}}
\newcommand{\esssup}{\mathop{\mathrm{ess\,sup}}}
\newcommand{\sech}{\mathrm{sech}}
\newcommand{\tr}[1]{\text{tr} \left ( #1 \right )}
\newtheorem{thm}{Theorem}[section]
\newtheorem{cor}[thm]{Corollary}
\newtheorem{remark}{Remark}[section]
\newtheorem{defn}{Definition}[section]
\newtheorem{assump}{Assumption}[section]
\numberwithin{equation}{section}
\title{Analysis of the diffuse domain approach for a bulk-surface coupled PDE system \thanks{}} 
\author{Helmut Abels \thanks{Fakult\"at f\"ur Mathematik, Universit\"at Regensburg, 93040 Regensburg, Germany (\email{Helmut.Abels@mathematik.uni-regensburg.de})} \and Kei Fong Lam \thanks{Fakult\"at f\"ur Mathematik, Universit\"at Regensburg, 93040 Regensburg, Germany (\email{Kei-Fong.Lam@mathematik.uni-regensburg.de}) This author has been supported by the Engineering and Physical Sciences Research Council (EPSRC) grant EP/H023364/1.} \and Bj\"{o}rn Stinner \thanks{Mathematics Institute, Zeeman Building, University of Warwick, Coventry, CV4 7AL, United Kingdom (\email{Bjorn.Stinner@warwick.ac.uk})}}
\begin{document}
\maketitle
\slugger{mms}{xxxx}{xx}{x}{x--x}

\begin{abstract}
We analyse a diffuse interface type approximation, known as the diffuse domain approach, of a linear coupled bulk-surface elliptic partial differential system.  The well-posedness of the diffuse domain approximation is shown using weighted Sobolev spaces and we prove that the solution to the diffuse domain approximation converges weakly to the solution of the coupled bulk-surface elliptic system as the approximation parameter tends to zero.  Moreover, we can show strong convergence for the bulk quantity, while for the surface quantity, we can show norm convergence and strong convergence in a weighted Sobolev space.  Our analysis also covers a second order surface elliptic partial differential equation and a bulk elliptic partial differential equation with Dirichlet, Neumann and Robin boundary condition. 
\end{abstract}

\begin{keywords}
diffuse domain method, weighted Sobolev spaces, well-posedness, diffuse interface approximation, asymptotic analysis, bulk-surface elliptic equations.
\end{keywords}

\begin{AMS}
35J25, 35J50, 35J70, 46E35, 41A30, 41A60
\end{AMS}

\pagestyle{myheadings}
\thispagestyle{plain}
\markboth{H.ABELS, K.F.LAM, B.STINNER}{ANALYSIS OF THE DIFFUSE DOMAIN APPROACH}

\section{Introduction}
The diffuse domain approach \cite{article:RatzVoigt06, article:LiLowengrubRatzVoigt09} is a method originating from the phase field methodology which approximates partial differential equations posed on domains with arbitrary geometries.  The method embeds the original domain $\Omega^{*} \subset \R^{n}$ with complicated geometries into a larger domain $\Omega$ with a simpler geometry.  Drawing on aspects of the phase field methodology, the diffuse domain method replaces the boundary $\Gamma$ of $\Omega^{*}$ with an interfacial layer of thickness $0 < \eps \ll 1$, denoted by $\Gamma_{\eps}$.  The original PDEs posed on $\Omega^{*}$ will have to be extended to $\Omega$ and any surface quantities or boundary terms on $\Gamma$ have to be extended to fields defined on $\Gamma_{\eps}$.  The resulting PDE system, which we denote as the diffuse domain approximation, is defined on $\Omega$ and will have the same order as the original system defined in $\Omega^{*}$, but with additional terms that approximate the original boundary conditions on $\Gamma$.

Our model problem will be the following elliptic coupled bulk-surface system:
\begin{equation*}
(\textrm{CSI}) \quad \begin{alignedat}{2}
-\nabla \cdot (\mathcal{A} \nabla u) + a u &= f && \text{ in } \Omega^{*}, \\
-\surf \cdot (\mathcal{B} \surf v) + bv + \mathcal{A} \nabla u \cdot \nu &= \beta g && \text{ on } \Gamma, \\
\mathcal{A} \nabla u \cdot \nu &= K(v - u) && \text{ on } \Gamma.
\end{alignedat}
\end{equation*}

Here, $\surf  v$ and $\surf \cdot \bm{v}$ denote the surface gradient of $v$ and the surface divergence of $\bm{v}$ on $\Gamma$, respectively.   For a precise definition, we refer the reader to \cite[Section 2]{article:DziukElliott13} or Section \ref{sec:ScaledNbd} below.  Meanwhile, $K,\beta \geq 0$ are non-negative constants, and $\mathcal{A} = (a_{ij})_{1 \leq i,j \leq n}$, and $\mathcal{B} = (b_{ij})_{1 \leq i,j\leq n}$ denote the matrices with function coefficients $a_{ij}$, and $b_{ij}$, respectively.  The precise assumptions on the data and the domain will be given in Section \ref{sec:dataassump}.

We now embed $\Omega^{*} \cup \Gamma$ into a larger domain $\Omega \subset\R^{n}$.  The location of the original boundary $\Gamma$ is encoded in an order parameter $\varphi$ as its zero-level set.  A typical choice for $\varphi$ is a function of the signed distance function of $\Gamma$, $d : \Omega \to \R$, which is defined as
\begin{align}\label{defn:signeddist}
d(x) = \begin{cases}
-\inf_{z \in \Gamma} \abs{x-z} & \text{ for } x \in \Omega^{*}, \\
0 & \text{ for } x \in \Gamma, \\
\inf_{z \in \Gamma} \abs{x-z} & \text{ for } x \in \Omega \setminus \overline{\Omega^{*}}.
\end{cases}
\end{align}

Let $\chi_{\Omega^{*}}$ denote the characteristic function of $\Omega^{*}$ and $\delta_{\Gamma} := \mathcal{H}^{n-1} \lefthalfcup \Gamma$ denote the Hausdorff measure restricted to $\Gamma$ (see Theorem \ref{thm:Alt} for a definition in the sense of distributions given by the Dirac measures of $\Omega^{*}$ and $\Gamma$).  An equivalent distribution form for (CSI) is (see Appendix, Theorem \ref{thm:Alt})
\begin{equation*}
\begin{alignedat}{2}
-\nabla \cdot (\chi_{\Omega^{*}} \mathcal{A} \nabla u) + \chi_{\Omega^{*}} a u &= \chi_{\Omega^{*}}f + \delta_{\Gamma} K(v-u) && \text{ in } \mathcal{D}'(\Omega), \\
-\nabla \cdot (\delta_{\Gamma} \mathcal{B} \nabla v) + \delta_{\Gamma} bv &= \delta_{\Gamma} \beta g - \delta_{\Gamma} K(v-u) && \text{ in } \mathcal{D}'(\Omega).
\end{alignedat}
\end{equation*}

The diffuse domain approximation of (CSI) is derived by approximating $\chi_{\Omega^{*}}$ and $\delta_{\Gamma}$ with more regular functions $\xi_{\eps}(\varphi)$, $\delta_{\eps}(\varphi)$, indexed by $\eps$, which is related to the thickness of the interfacial layer $\Gamma_{\eps}$.  In order words, the diffuse domain approximation of (CSI) is
\begin{equation*}
(\mathrm{CDD}) \quad \begin{alignedat}{2}
 - \nabla \cdot (\xi_{\eps} \mathcal{A}^{E} \nabla u^{\eps}) + \xi_{\eps} a^{E} u^{\eps} & = \xi_{\eps} f^{E}  + \delta_{\eps} K (v^{\eps} - u^{\eps}) && \text{ in } \Omega, \\
 - \nabla \cdot (\delta_{\eps} \mathcal{B}^{E} \nabla v^{\eps}) + \delta_{\eps} b^{E} v^{\eps} &=  \delta_{\eps} \beta g^{E} - \delta_{\eps} K (v^{\eps} - u^{\eps}) && \text{ in } \Omega,
\end{alignedat}
\end{equation*}
where $\mathcal{A}^{E}, \mathcal{B}^{E}, a^{E}, b^{E}, f^{E}$ and $g^{E}$ denote suitable extensions of $\mathcal{A}, \mathcal{B}, a, b, f$ and $g$ to the larger domain $\Omega$.  The precise assumptions on the extensions will be outlined in Section \ref{sec:dataextension}.

Formally, $\xi_{\eps}(\varphi) \to \chi_{\Omega^{*}}$, $\delta_{\eps}(\varphi) \to \delta_{\Gamma}$ as $\eps \to 0$, and so in the limit of vanishing interfacial thickness, we recover the distributional form for (CSI).

For here onwards, we refer to the original problem posed on $\Omega^{*}$ as the SI (sharp interface) problem, and the corresponding diffuse domain approximation posed on $\Omega$ will be denoted as the DD (diffuse domain) problem.

We remark that replacing the original boundary $\Gamma$ with an interfacial layer $\Gamma_{\eps}$ transforms the diffuse domain approximation into a two-scale problem.  This is similar to phase field approximations of free boundary problems.  The idea of adopting the phase field methodology to approximate partial differential equations has been applied to study diffusion inside a stationary cell \cite{article:KockelkorenLevineRappel03}, Turning patterns on a membrane \cite{article:LevineRappel05}, wave propagation in the heart \cite{article:BuzzardFoxSisoNadal08, article:FentonCherryKarmaRappel05}, two-phase flow \cite{article:AlandLowengrubVoigt10} and soluble surfactants \cite{article:GarckeLamStinner14, article:TeigenSongLowengrubVoigt11}.  In \cite{article:BuenoOrovioPerezGarca06, article:BuenoOrovioPerezGarceFenton06}, a diffuse domain type method, denoted as the smoothed boundary method, has been applied to solve partial differential equations on irregular domains with homogeneous Neumann boundary conditions using spectral methods.  A generalised formulation of the smoothed boundary method can be found in \cite{article:YuChenThornton12}.

The phase field methodology provides us with two candidates for $\varphi(x)$.  The first is based on the smooth double-well potential $\psi_{DW}(\varphi) = \frac{1}{4} (1-\varphi^{2})^{2}$ and leads to
\begin{align*}
 \varphi_{DW}(x) := \tanh \left ( \frac{d(x)}{\sqrt{2}\eps} \right ) \text{ for } x \in \Omega.
\end{align*}
The other is based on the double-obstacle potential \cite{incoll:BloweyElliott93,article:ChenElliott94}
\begin{align*}
 \psi_{DO}(\varphi) = \frac{1}{2}(1-\varphi^{2}) + I_{[-1,1]}(\varphi), \quad I_{[-1,1]}(\varphi) = \begin{cases}
                                                                                          0, & \text{ if } \varphi \in [-1,1], \\
+\infty, & \text{ otherwise},
                                                                                         \end{cases}
\end{align*}
and leads to
\begin{align*}
 \varphi_{DO}(x) := \begin{cases}
                     +1, & \text{ if } d(x) > \eps \frac{\pi}{2}, \\
\sin(d(x)/\eps), & \text{ if } \abs{d(x)} \leq \eps \frac{\pi}{2}, \\
-1, & \text{ if } d(x) < -\eps \frac{\pi}{2}.
                    \end{cases}
\end{align*}
Note that in the case of the double-obstacle potential, the interfacial layer $\Gamma_{\eps}$ has finite thickness of $\eps \pi$.  We remark that for general phase field models $\varphi_{DW}$ or $\varphi_{DO}$ is the leading order approximation for the order parameter $\varphi$.  In our setting, the location of the boundary $\Gamma$ is known and hence we can use $\varphi_{DW}$ or $\varphi_{DO}$ as the order parameter.  A common regularisation of $\chi_{\Omega^{*}}$ based on the smooth double-well potential used in  \cite{article:LiLowengrubRatzVoigt09, article:TeigenLiLowengrubWangVoigt09, article:TeigenSongLowengrubVoigt11} is
\begin{align*}
 \xi_{\eps}^{(1)}(x) = \frac{1}{2} (1 - \varphi_{DW}(x) ) = \frac{1}{2} \left (1 - \tanh \left ( \frac{d(x)}{\sqrt{2}\eps} \right ) \right ).
\end{align*}
While an alternative based on the double-obstacle potential is 
\begin{align*}
 \xi_{\eps}^{(2)}(x) = \frac{1}{2} (1 - \varphi_{DO}(x) ).
\end{align*}
There are many regularisations of $\delta_{\Gamma}$ available from the literature \cite{article:TeigenSongLowengrubVoigt11, article:ElliottStinnerStylesWelford11, article:RatzVoigt06, article:LeeJunseok12}.  One well known approximation of $\delta_{\Gamma}$ is a multiple of the Ginzburg--Landau energy density (see \cite{article:ModicaMortola77})
\begin{align*}
\frac{\eps}{2} \abs{\nabla \varphi}^{2} + \frac{1}{\eps} \psi(\varphi).
\end{align*}
From the above discussions regarding the double-well and the double-obstacle potentials, we have two candidates for the regularisation to $\delta_{\Gamma}$:
\begin{align*}
 \delta_{\eps}^{(1)}(x) = \frac{3}{2\sqrt{2}\eps}\sech^{4} \left (\frac{d(x)}{\sqrt{2}\eps} \right ), \quad \delta_{\eps}^{(2)}(x) = \frac{2}{\pi \eps}\cos^{2} \left ( \frac{d(x)}{\eps} \right )  \chi_{\{ x \in \Omega \, : \, \abs{d(x)} \leq \eps \frac{\pi}{2}\}}.
\end{align*}

The convergence analysis of the diffuse domain approach (with the smooth double-well potential), in the limit $\eps \to 0$, has only been done in the context of recovering the original equations via formally matched asymptotics (see \cite{article:LiLowengrubRatzVoigt09, article:TeigenLiLowengrubWangVoigt09,article:TeigenSongLowengrubVoigt11,preprint:LervayLowengrub14}).  A first analytical treatment of convergence in one dimension and on a half-plane in two dimension can be found in \cite{article:FranzGartnerRoosVoigt12}, where the error between the solution to a second order system and the diffuse domain approximation in the $L^{\infty}$ norm is of order $\mathcal{O}(\eps^{1-\mu})$, where $\mu > 0$ arbitrarily small.  

In \cite{article:ElliottStinner09}, a diffuse domain type approximation for an advection-diffusion equation posed on evolving surfaces is considered.  Motivated by modelling and numerical simulations, the diffuse domain approximation utilises a double-obstacle type regularisation.  Note that the regularisations from the double-obstacle potential are degenerate in certain parts of the larger domain $\Omega$ (in particular they are zero outside $\Gamma_{\eps}$ for the set-up in \cite{article:ElliottStinner09}).  Consequently the corresponding diffuse domain approximation becomes a degenerate equation and weighted Sobolev spaces are employed.  The chief results in \cite{article:ElliottStinner09} are the well-posedness of the diffuse domain approximation and weak convergence to the solution of the original system.

For a bulk second order elliptic boundary value problem, the convergence analysis has been studied in \cite{preprint:BurgerElvetunSchlottbom14}.  There, a double-obstacle type diffuse domain approximation to a Robin boundary value problem is studied, with the specific choice $\delta_{\eps} = \abs{\nabla \xi_{\eps}}$.  The authors are able to deduce trace theorems, embedding theorems and Poincar\'{e} inequalities for Sobolev spaces weighted with $\xi_{\eps}$.  Under suitable assumptions on the decay of $\xi_{\eps}$ near $\pd \Gamma_{\eps}$, the authors showed that there exists a $p > 2$ such that the error between the solution to the diffuse domain approximation and the solution to the Robin problem is of order $\mathcal{O}(\eps^{\frac{1}{2} - \frac{1}{p}})$ in a weighted Sobolev norm.  Similar results for the Neumann and Dirichlet boundary conditions are also given.

In this work, we show that weak solutions to (CDD) converge to 
the unique weak solution to (CSI) under appropriate assumptions.  Thanks to the property of the problem, we are also able to show strong convergence in $H^{1}(\Omega^{*})$ for the bulk quantity, while for the surface quantity we have norm convergence and strong convergence in a weighted Sobolev space.  The techniques we develop here can also be applied to a second order surface linear elliptic equation and a second order linear elliptic equation with Robin, Neumann and Dirichlet boundary conditions.

The structure of this article is as follows:  In Section \ref{sec:AssumpMain} we introduce the assumptions and the main results.  In Section \ref{sec:Technical} we prove several technical results that will simplify the proof of the main results, which are contained in Section \ref{sec:Mainproof}.  In Section \ref{sec:Discussion}, we compare our results with the results in \cite{article:ElliottStinner09, preprint:BurgerElvetunSchlottbom14} and discuss some insights and issues we encountered in the analysis.  In the appendix we collect a few results regarding the distributional formulations and properties of the trace operator.

\section{General assumptions and main results}\label{sec:AssumpMain}
We observe that setting $\mathcal{A} = \bm{0}^{n \times n}$, $a = 0$, $f = 0$, $K = 0$, $\beta = 1$ in (CSI) leads to an elliptic equation on $\Gamma$, which we denote as (SSI):
\begin{align*}
 \text{(SSI)} \quad - \surf \cdot (\mathcal{B} \surf v) + bv = g \text{ on } \Gamma.
\end{align*}
Meanwhile, setting $\mathcal{B} = \bm{0}^{n \times n}$, $b(x) \equiv \beta$ and formally sending $K \to \infty$ in (CSI) leads to the Robin boundary condition (RSI):
\begin{equation*}
 \text{(RSI}) \quad 
\begin{alignedat}{2}
 - \nabla \cdot (\mathcal{A} \nabla u) + au &= f && \text{ in } \Omega, \\
\mathcal{A} \nabla u \cdot \nu + \beta u &= \beta g && \text{ on } \Gamma.
\end{alignedat}
\end{equation*}
Neglecting the second equation in (CSI), setting $v  = g$ and formally sending $K \to \infty$ gives the Dirichlet boundary condition (DSI):
\begin{equation*}
\text{(DSI}) \quad \begin{alignedat}{2}
 - \nabla \cdot (\mathcal{A} \nabla u) + au &= f && \text{ in } \Omega, \\
u &= g && \text{ on } \Gamma,
\end{alignedat}
\end{equation*}
while the Neumann boundary condition (NSI) can be obtained by setting $\mathcal{B} = \bm{0}^{n \times n}$, $b = 0$, $\beta = 1$ and neglecting the last boundary condition in (CSI):
\begin{equation*}
\text{(NSI}) \quad  \begin{alignedat}{2}
 - \nabla \cdot (\mathcal{A} \nabla u) + au &= f && \text{ in } \Omega, \\
\mathcal{A} \nabla u \cdot \nu  &= g && \text{ on } \Gamma.
\end{alignedat}
\end{equation*}
In order to cover these derived sharp interface problems we make the following assumptions.

\subsection{Assumptions on the data}\label{sec:dataassump}
\
\begin{assump}[Assumptions on domain]\label{assump:Gamma}
We assume that $\Omega^{*}$ is an open bounded domain in $\R^{n}$ with compact $C^{2}$ boundary $\Gamma$ and outward unit normal $\nu$.  Let $\Omega$ be an open bounded domain in $\R^{n}$ with Lipschitz boundary $\pd \Omega$ such that $\overline{\Omega^{*}} \subset \Omega$ and $\Gamma \cap \pd \Omega = \emptyset$.  In addition, for $(\mathrm{CSI})$ and $(\mathrm{SSI})$, we assume that $\Gamma$ is of class $C^{3}$.\end{assump}

\begin{assump}[Assumptions on the data]\label{assump:General}
We assume that for $1 \leq i, j \leq n$,
\begin{align*}
a_{ij}, a \in L^{\infty}(\Omega^{*}),  \quad f \in L^{2}(\Omega^{*}), \quad b_{ij}, b \in L^{\infty}(\Gamma),  \quad g \in L^{2}(\Gamma),
\end{align*}
and there exist positive constants $\theta_{0}, \theta_{1}, \theta_{2}, \theta_{3}$ such that
\begin{align*}
 (\mathcal{A}(x) \zeta_{1}) \cdot \zeta_{1} \geq \theta_{0} \abs{\zeta_{1}}^{2}, \quad (\mathcal{B}(p) \zeta_{2}) \cdot \zeta_{2} \geq \theta_{1} \abs{\zeta_{2}}^{2}, \quad a(x) \geq \theta_{2}, \quad b(p) \geq \theta_{3},
\end{align*}
for all $x\in \Omega^{*}$, $p \in \Gamma$, $\zeta_{1} \in \R^{n}$ and $\zeta_{2} \in T_{p} \Gamma \subset \R^{n}$.
\end{assump}

\begin{assump}[Specific assumption for (DSI)]\label{assump:DSI}
For $(\mathrm{DSI})$, we assume
\begin{align*}
g \in H^{\frac{1}{2}}(\Gamma).
\end{align*}
\end{assump}

Here, for a bounded Lipschitz domain $D \subset \R^{n}$, the fractional-order Sobolev spaces $H^{\frac{1}{2}}(\pd D)$ is defined as
\begin{align*}
H^{\frac{1}{2}}(\pd D) := \left \{ u  \in L^{2}(\pd D) : \int_{\pd D} \int_{\pd D} \frac{\abs{u(x) - u(y)}^{2}}{\abs{x-y}^{n}} \dHaus(x) \dHaus(y) < \infty \right \},
\end{align*}
equipped with the inner product
\begin{align*}
\inner{u}{v}_{H^{\frac{1}{2}}(\pd D)} & := \int_{\pd D} uv \dHaus \\
& + \int_{\pd D} \int_{\pd D} \frac{(u(x)-u(y))(v(x) - v(y))}{\abs{x-y}^{n}} \dHaus(x) \dHaus(y),
\end{align*}
where $\dHaus$ denotes the $n-1$ dimensional Hausdroff measure.

\begin{assump}[Specific assumption for (NSI)]\label{assump:NSI}
For $(\mathrm{NSI})$, if $g \neq 0$ then we assume that
\begin{align*}
g \in H^{\frac{1}{2}}(\Gamma), \text{ and } a_{ij} \in C^{0,1}(\overline{\Omega^{*}}) \text{ for } 1 \leq i, j \leq n.
\end{align*}
\end{assump}

\subsection{Results on the sharp interface (SI) problems}
Let $\gamma_{0} : W^{1,1}(\Omega^{*}) \to L^{1}(\Gamma)$ denote the trace operator.  If $\Gamma$ is Lipschitz and $g \in H^{\frac{1}{2}}(\Gamma)$, then by Theorem \ref{thm:trace}, there exists $\tilde{g} \in H^{1}(\Omega^{*})$ such that $\gamma_{0}(\tilde{g}) = g$, and (DSI) is equivalent to the following homogeneous problem (DSIH):
\begin{equation*}
(\mathrm{DSIH}) \quad \begin{alignedat}{2}
-\nabla \cdot (\mathcal{A} \nabla w) + aw & = f + \nabla \cdot (\mathcal{A} \nabla \tilde{g}) - a\tilde{g} && \text{ in } \Omega^{*}, \\
w &= 0 && \text{ on } \Gamma.
\end{alignedat}
\end{equation*}
If $w$ is a weak solution to (DSIH), then $u := w + \tilde{g}$ is a weak solution to (DSI).

We can also consider (DSIH) as the limiting problem $\beta \to \infty$ of the following homogeneous Robin problem (RSIH):
\begin{equation*}
(\mathrm{RSIH}) \quad \begin{alignedat}{2}
-\nabla \cdot (\mathcal{A} \nabla w) + aw &= f + \nabla \cdot (\mathcal{A} \nabla \tilde{g}) - a\tilde{g} && \text{ in } \Omega^{*}, \\
\mathcal{A} \nabla w \cdot \nu + \beta w &= 0 && \text{ on } \Gamma.
\end{alignedat}
\end{equation*}

In fact, we have (see also \cite{article:MarusicPaloka99}):
\begin{lemma}\label{lem:DirichletfromRobin}
Let $\Omega^{*}$ be an open bounded domain with $C^{1}$ boundary $\Gamma$.  Let the data satisfy Assumption \ref{assump:General} with $\tilde{g} \in H^{1}(\Omega^{*})$.  For each $\beta > 0$, $(\mathrm{RSIH})$ is well-posed with unique weak solution $w^{\beta} \in H^{1}(\Omega^{*})$, and $(\mathrm{DSIH})$ is well-posed with unique weak solution $w_{D} \in H^{1}_{0}(\Omega^{*})$ (see Theorem \ref{thm:wellposedSI} below for the well-posedness).  Then, as $\beta \to \infty$, $w^{\beta}$ converges weakly to $w_{D}$ in $H^{1}(\Omega^{*})$.
\end{lemma}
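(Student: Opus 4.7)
The plan is a standard penalty argument, in which the Robin condition with large parameter $\beta$ forces the homogeneous Dirichlet condition in the limit. I would start from the weak formulation of $(\mathrm{RSIH})$: find $w^\beta\in H^1(\Omega^*)$ such that
\begin{equation*}
\int_{\Omega^*}\mathcal{A}\nabla w^\beta\cdot\nabla\phi+aw^\beta\phi\dx+\beta\int_\Gamma w^\beta\phi\dHaus = L(\phi)
\end{equation*}
for all $\phi\in H^1(\Omega^*)$, where $L(\phi)=\int_{\Omega^*}\bigl((f-a\tilde{g})\phi-\mathcal{A}\nabla\tilde{g}\cdot\nabla\phi\bigr)\dx$ (plus any boundary contribution arising from integrating the source $\nabla\cdot(\mathcal{A}\nabla\tilde{g})$ by parts). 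The point is that $L$ is a bounded linear functional on $H^1(\Omega^*)$ whose norm is independent of $\beta$.

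The first step is to obtain uniform bounds by choosing $\phi=w^\beta$. Using the ellipticity of $\mathcal{A}$, the lower bound $a\geq\theta_2>0$ and non-negativity of the penalty contribution, together with Young's inequality to absorb $\|w^\beta\|_{H^1(\Omega^*)}$ on the left, one obtains
\begin{equation*}
\|w^\beta\|_{H^1(\Omega^*)}^2+\beta\|\gamma_0(w^\beta)\|_{L^2(\Gamma)}^2\leq C
\end{equation*}
with $C$ independent of $\beta$. In particular $\|\gamma_0(w^\beta)\|_{L^2(\Gamma)}\leq C\beta^{-1/2}\to 0$ as $\beta\to\infty$.

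Next, for any sequence $\beta_k\to\infty$ extract a subsequence (not relabelled) with $w^{\beta_k}\rightharpoonup w^*$ in $H^1(\Omega^*)$. Continuity of $\gamma_0:H^1(\Omega^*)\to L^2(\Gamma)$ gives weak convergence of the traces, and combined with the strong convergence $\gamma_0(w^{\beta_k})\to 0$ in $L^2(\Gamma)$ just derived this forces $\gamma_0(w^*)=0$, so $w^*\in H^1_0(\Omega^*)$. To identify $w^*$, I test the weak form of $(\mathrm{RSIH})$ against $\phi\in H^1_0(\Omega^*)$; the penalty term $\beta\int_\Gamma w^{\beta_k}\phi\dHaus$ vanishes identically, and passing to the weak limit in the remaining linear expressions yields
\begin{equation*}
\int_{\Omega^*}\mathcal{A}\nabla w^*\cdot\nabla\phi+aw^*\phi\dx=L(\phi)\quad\forall\,\phi\in H^1_0(\Omega^*),
\end{equation*}
which is exactly the weak formulation of $(\mathrm{DSIH})$. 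Uniqueness of the weak solution of $(\mathrm{DSIH})$ gives $w^*=w_D$, and since the limit is independent of the extracted subsequence, the full family $(w^\beta)_{\beta>0}$ converges weakly to $w_D$ in $H^1(\Omega^*)$.

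The only subtle point I anticipate is making $L$ consistent when $\tilde{g}$ has merely $H^1$ regularity and $\nabla\cdot(\mathcal{A}\nabla\tilde{g})$ is only distributional; however, since any such ambiguous boundary contribution ultimately gets tested only against $\phi\in H^1_0(\Omega^*)$ in the limit step, the argument is insensitive to this choice and the penalty mechanism does the rest.
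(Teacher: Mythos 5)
Your proof is correct and follows essentially the same approach as the paper: uniform a priori bounds with the $\beta$-weighted trace term, extraction of a weakly convergent subsequence, identification of the limit as an element of $H^1_0(\Omega^*)$, testing the Robin weak form against $H^1_0$ functions so the penalty drops out, and invoking uniqueness of the Dirichlet problem to upgrade to convergence of the full family. The one small difference is that you deduce $\gamma_0(w^*)=0$ from weak-to-weak continuity of the trace together with the strong decay $\|\gamma_0(w^\beta)\|_{L^2(\Gamma)}=O(\beta^{-1/2})$, whereas the paper instead invokes compactness of the trace operator (Theorem \ref{thm:compacttrace}) to get strong convergence of traces and then passes to norms; both arguments are valid, and yours is marginally more elementary since it avoids the compact embedding.
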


The proof of Lemma \ref{lem:DirichletfromRobin} will be outlined in Section \ref{sec:proof:DirichletfromRobin}.

Similarly, by Assumption \ref{assump:NSI} and Theorem \ref{thm:conormal}, there exists $h \in H^{2}(\Omega^{*})$ such that $\mathcal{A} \nabla h \cdot \nu = g$ in the sense of traces.  Then, (NSI) is equivalent to the following homogeneous problem (NSIH):
\begin{equation*}
(\mathrm{NSIH}) \quad \begin{alignedat}{2}
-\nabla \cdot (\mathcal{A} \nabla w) + aw &= f + \nabla \cdot (\mathcal{A} \nabla h) - a h && \text{ in } \Omega^{*}, \\
\mathcal{A} \nabla w \cdot \nu &= 0 && \text{ on } \Gamma.
\end{alignedat}
\end{equation*}
If $w$ is a weak solution to (NSIH), then $u := w + h$ is a weak solution to (NSI).  We point out that, if $h \neq 0$, then the coefficient $\mathcal{A}$ in $(\mathrm{NSIH})$ is as assumed in Assumption \ref{assump:NSI}, i.e., $a_{ij} \in C^{0,1}(\overline{\Omega^{*}})$ for $1 \leq i, j \leq n$.

We now state the well-posedness results for the SI problems.  For convenience, let us define the following bilinear forms $a_{B} : H^{1}(\Omega^{*}) \times H^{1}(\Omega^{*}) \to \R$, $a_{S} : H^{1}(\Gamma) \times H^{1}(\Gamma) \to \R$, $l_{B} : L^{2}(\Omega^{*}) \times L^{2}(\Omega^{*}) \to \R$, and $l_{S} : L^{2}(\Gamma) \times L^{2}(\Gamma) \to \R$:
\begin{alignat}{4}
 \label{bulkbilinear} a_{B}(\varphi, \psi) & := \int_{\Omega^{*}} \mathcal{A} \nabla \varphi \cdot \nabla \psi + a \varphi \psi \dx, && \quad l_{B}(\varphi, \psi ) && := \int_{\Omega^{*}} \varphi \psi \dx,  \\
\label{surfacebilinear} a_{S}(\varphi, \psi) &:= \int_{\Gamma} \mathcal{B} \surf \varphi \cdot \surf \psi + b \varphi \psi \dHaus, && \quad l_{S}(\varphi, \psi ) &&:= \int_{\Gamma} \varphi \psi \dHaus.
\end{alignat}

\begin{thm}[Well-posedness for the SI problems]\label{thm:wellposedSI}
Let Assumptions \ref{assump:Gamma}, \ref{assump:General}, \ref{assump:DSI} and \ref{assump:NSI} be satisfied.  Then there exist unique weak solutions
\begin{alignat*}{3}
(u,v) \in H^{1}(\Omega^{*}) \times H^{1}(\Gamma) & \text{ for } \mathrm{(CSI)}, &&\quad v_{S} \in H^{1}(\Gamma) && \text{ for } \mathrm{(SSI)},\\
u_{R}  \in H^{1}(\Omega^{*}) & \text{ for } \mathrm{(RSI)}, && \quad w_{R} \in H^{1}(\Omega^{*}) && \text{ for } \mathrm{(RSIH}), \\
u_{N} \in H^{1}(\Omega^{*}) & \text{ for } \mathrm{(NSI)}, && \quad w_{N} \in H^{1}(\Omega^{*}) && \text{ for } \mathrm{(NSIH)}, \\
& && \quad w_{D} \in H^{1}_{0}(\Omega^{*}) && \text{ for } \mathrm{(DSIH)},
\end{alignat*}
such that for all $\varphi \in H^{1}(\Omega^{*})$, $\varphi_{0} \in H^{1}_{0}(\Omega^{*})$, $\psi \in H^{1}(\Gamma)$,
\begin{align*}
 a_{B}(u,\varphi) + a_{S}(v, \psi) + K l_{S}(v - \gamma_{0}(u),\psi - \gamma_{0}(\varphi)) & = l_{B}(f, \varphi) + \beta l_{S}(g, \psi), \\
a_{S}(v_{S}, \psi) & = l_{S}(g, \psi), \\
a_{B}(u_{R}, \varphi) + \beta l_{S}(\gamma_{0}(u_{R}), \gamma_{0}(\varphi)) & = l_{B}(f, \varphi) + \beta l_{S}(g, \gamma_{0}(\varphi)), \\
a_{B}(w_{R}, \varphi) + \beta l_{S}(\gamma_{0}(w_{R}), \gamma_{0}(\varphi)) & = l_{B}(f, \varphi) - a_{B}(\tilde{g}, \varphi), \\
a_{B}(u_{N}, \varphi) &= l_{B}(f, \varphi) + l_{S}(g, \gamma_{0}(\varphi)), \\
a_{B}(w_{N}, \varphi) &= l_{B}(f + \nabla \cdot (\mathcal{A} \nabla h) - a h, \varphi), \\
a_{B}(w_{D}, \varphi_{0}) & = l_{B}(f, \varphi_{0}) - a_{B}(\tilde{g}, \varphi_{0}).
\end{align*}
Moreover, there exist constants $C$, independent of $(u, v)$, $v_{S}$, $u_{R}$, $w_{R}$, $w_{D}$, $u_{N}$, $w_{N}$, such that
\begin{align*}
\norm{u}_{H^{1}(\Omega^{*})}^{2} + \norm{v}_{H^{1}(\Gamma)}^{2} & \leq C (\norm{f}_{L^{2}(\Omega^{*})}^{2} + \norm{g}_{L^{2}(\Gamma)}^{2}), \\
\norm{v_{S}}_{H^{1}(\Gamma)}^{2} & \leq C \norm{g}_{L^{2}(\Gamma)}^{2}, \\
\norm{u_{R}}_{H^{1}(\Omega^{*})}^{2} + \beta \norm{\gamma_{0}(u_{R})}_{L^{2}(\Gamma)}^{2} & \leq C( \norm{f}_{L^{2}(\Omega^{*})}^{2} + \norm{g}_{L^{2}(\Gamma)}^{2}), \\
\norm{w_{R}}_{H^{1}(\Omega^{*})}^{2} + \beta \norm{\gamma_{0}(w_{R})}_{L^{2}(\Gamma)}^{2} & \leq C(\norm{f}_{L^{2}(\Omega^{*})}^{2} + \norm{\tilde{g}}_{H^{1}(\Omega^{*})}^{2}), \\
\norm{u_{N}}_{H^{1}(\Omega^{*})}^{2} & \leq C(\norm{f}_{L^{2}(\Omega^{*})}^{2} + \norm{g}_{L^{2}(\Gamma)}^{2}), \\
\norm{w_{N}}_{H^{1}(\Omega^{*})}^{2} & \leq C(\norm{f}_{L^{2}(\Omega^{*})}^{2} + \norm{h}_{H^{2}(\Omega^{*})}^{2}), \\
\norm{w_{D}}_{H^{1}(\Omega^{*})}^{2} & \leq C( \norm{f}_{L^{2}(\Omega^{*})}^{2} + \norm{\tilde{g}}_{H^{1}(\Omega^{*})}^{2}).
\end{align*}

\end{thm}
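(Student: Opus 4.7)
The plan is to apply the Lax--Milgram lemma to each of the seven weak formulations on its natural Hilbert space, and then to read off the energy estimates by testing the variational identity against the solution itself. Continuity of every bilinear form in the statement is routine; the real task is to verify coercivity in each case, and uniqueness is then an automatic byproduct of Lax--Milgram.

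For the six scalar problems (SSI), (RSI), (RSIH), (NSI), (NSIH), (DSIH) the relevant bilinear form is one of $a_{S}$, $a_{B}$, or $a_{B}$ supplemented by a non-negative boundary term of Robin type, posed on $H^{1}(\Gamma)$, $H^{1}(\Omega^{*})$, or $H^{1}_{0}(\Omega^{*})$ accordingly. Continuity of each form uses only the $L^{\infty}$-bounds on the coefficients together with Cauchy--Schwarz and the boundedness of the trace $\gamma_{0} : H^{1}(\Omega^{*}) \to L^{2}(\Gamma)$, which is available because $\Gamma$ is $C^{2}$ by Assumption \ref{assump:Gamma}. Coercivity comes from the uniform ellipticity of $\mathcal{A}$ and $\mathcal{B}$ combined with the strict positivity $a \geq \theta_{2} > 0$, $b \geq \theta_{3} > 0$, yielding
\[
a_{B}(\varphi,\varphi) \geq \min(\theta_{0}, \theta_{2}) \, \norm{\varphi}_{H^{1}(\Omega^{*})}^{2}, \qquad a_{S}(\psi,\psi) \geq \min(\theta_{1}, \theta_{3}) \, \norm{\psi}_{H^{1}(\Gamma)}^{2}.
\]
Here the tangential ellipticity of $\mathcal{B}$ (the assumption being for $\zeta_{2} \in T_{p}\Gamma$) is exactly what is needed, since $\surf \psi$ is a tangent vector at every point of $\Gamma$. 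Adding the non-negative boundary term $\beta l_{S}(\gamma_{0}(\varphi), \gamma_{0}(\varphi))$ in the Robin cases only improves coercivity and, after testing with the solution, also produces the $\beta$-weighted boundary term in the stability estimate. For (DSIH) no Poincar\'e inequality is needed, since the $a\varphi$ term already supplies $H^{1}$-coercivity on $H^{1}_{0}(\Omega^{*})$. Continuity of the right hand sides then follows from Cauchy--Schwarz, the trace theorem, and, in the case of (NSIH), the fact that $\nabla \cdot (\mathcal{A} \nabla h) - ah \in L^{2}(\Omega^{*})$ once the $a_{ij}$ are Lipschitz as in Assumption \ref{assump:NSI} and $h \in H^{2}(\Omega^{*})$.

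For the coupled problem (CSI) I would work on the product space $H^{1}(\Omega^{*}) \times H^{1}(\Gamma)$ with the bilinear form
\[
B((u,v),(\varphi,\psi)) := a_{B}(u,\varphi) + a_{S}(v,\psi) + K \, l_{S}(v - \gamma_{0}(u), \psi - \gamma_{0}(\varphi)).
\]
The coupling term is symmetric and, when tested against $(\varphi,\psi) = (u,v)$, contributes the non-negative quantity $K \, \norm{v - \gamma_{0}(u)}_{L^{2}(\Gamma)}^{2}$, so coercivity of $B$ reduces to the two scalar coercivity bounds just established. Continuity is again routine from Cauchy--Schwarz on $\Gamma$ together with the trace theorem. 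Lax--Milgram applied to $B$ with right hand side $(\varphi,\psi) \mapsto l_{B}(f,\varphi) + \beta l_{S}(g,\psi)$ delivers a unique $(u,v)$. In every case the a priori estimate is obtained by inserting the solution as test function, using the coercivity lower bound on the left, and absorbing the $f$-, $g$-, $\tilde{g}$-, or $h$-contributions via Cauchy--Schwarz and Young's inequality on the right, with constants depending only on $\theta_{0}, \theta_{1}, \theta_{2}, \theta_{3}$ and the operator norm of $\gamma_{0}$.

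The only real subtlety, and the one place where the regularity assumptions on the lifted data are actually used, is the reduction of the inhomogeneous problems (DSI) and (NSI) to their homogeneous counterparts (DSIH) and (NSIH). For (DSI), Assumption \ref{assump:DSI} together with Theorem \ref{thm:trace} produces $\tilde{g} \in H^{1}(\Omega^{*})$ with $\gamma_{0}(\tilde{g}) = g$, whence the reduced right hand side is the bounded linear functional $\varphi \mapsto l_{B}(f,\varphi) - a_{B}(\tilde{g}, \varphi)$ on $H^{1}_{0}(\Omega^{*})$, with no additional regularity required of $\mathcal{A}$. For (NSI), Theorem \ref{thm:conormal} provides $h \in H^{2}(\Omega^{*})$ with $\mathcal{A} \nabla h \cdot \nu = g$ in the trace sense, and the Lipschitz regularity of the $a_{ij}$ imposed by Assumption \ref{assump:NSI} is used precisely to guarantee that $\nabla \cdot (\mathcal{A} \nabla h) \in L^{2}(\Omega^{*})$, so that the reduced source term still fits the Lax--Milgram framework. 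Once these liftings are in place, (DSIH) and (NSIH) fall within the scalar analysis of the second paragraph, completing the proof.
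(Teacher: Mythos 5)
Your proposal is correct and takes essentially the same route as the paper: the paper spells out the Lax--Milgram argument only for the coupled system (CSI) on the product space $H^{1}(\Omega^{*}) \times H^{1}(\Gamma)$ in Section 4.2, establishing coercivity by combining the scalar ellipticity bounds with the non-negativity of the $K$-coupling term, and then states that the remaining scalar problems are handled in the same way, which is exactly what you do.
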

We note that the constant $C$ for $w_{N}$ depends on $\norm{\mathcal{A}}_{C^{0,1}(\overline{\Omega^{*}})}$, while the constant for $w_{R}$ is independent of $\beta$ (see Section \ref{sec:proof:DirichletfromRobin}).

We will outline the proof for (CSI) in Section \ref{sec:proof:CSI}.  The well-posedness for the other SI problems follow similarly from the application of the Lax--Milgram theorem, and hence we will omit the details.

\subsection{Derivation of the diffuse domain approximations}
Motivated by Theorem \ref{thm:Alt}, replacing $\chi_{\Omega^{*}}$ and $\delta_{\Gamma}$ by $\xi_{\eps}$ and $\delta_{\eps}$, we obtain the corresponding diffuse domain approximations for (SSI), (RSI), (RSIH), (NSI) and (NSIH), which we denote by (SDD), (RDD), (RDDH), (NDD) and (NDDH):
\begin{alignat*}{3}
(\mathrm{SDD}) &  - \nabla \cdot (\delta_{\eps} \mathcal{B}^{E} \nabla v_{S}^{\eps}) + \delta_{\eps} b^{E} v_{S}^{\eps} = \delta_{\eps} g^{E} && \text{ in } \Omega, \\
(\mathrm{RDD}) &  - \nabla \cdot (\xi_{\eps} \mathcal{A}^{E} \nabla u_{R}^{\eps}) + \xi_{\eps} a^{E} u_{R}^{\eps} + \beta \delta_{\eps} u_{R}^{\eps} = \xi_{\eps} f^{E} + \beta \delta_{\eps} g^{E} && \text{ in } \Omega, \\
(\mathrm{RDDH}) &  - \nabla \cdot (\xi_{\eps} \mathcal{A}^{E} \nabla w_{R}^{\eps}) +  \xi_{\eps} a^{E} w_{R}^{\eps} + \beta \delta_{\eps} w_{R}^{\eps} = \xi_{\eps} (f^{E} - a^{E} \tilde{g}^{E}) + \nabla \cdot (\xi_{\eps} \mathcal{A}^{E} \nabla \tilde{g}^{E})  && \text{ in } \Omega, \\
(\mathrm{NDD}) & - \nabla \cdot (\xi_{\eps} \mathcal{A}^{E} \nabla u_{N}^{\eps}) +  \xi_{\eps} a^{E} u_{N}^{\eps} = \xi_{\eps} f^{E} + \delta_{\eps} g^{E} && \text{ in } \Omega, \\
 (\mathrm{NDDH}) &  - \nabla \cdot (\xi_{\eps} \mathcal{A}^{E}  \nabla w_{N}^{\eps}) + \xi_{\eps} a^{E} w_{N}^{\eps} = \xi_{\eps} f^{E} + \xi_{\eps} \nabla \cdot (\mathcal{A}^{E} \nabla h^{E})  - \xi_{\eps} a^{E} h^{E} && \text{ in } \Omega.
\end{alignat*}
Here, terms with the superscript $E$ denote extensions to the larger domain $\Omega$.  We will outline the specific choices for these extensions in Section \ref{sec:dataextension} below.

We note that Theorem \ref{thm:Alt} does not allow us to infer an approximation for PDEs with Dirichlet boundary conditions.  However, motivated by Lemma \ref{lem:DirichletfromRobin}, we obtain an approximation by scaling $\beta$ appropriately in (RDDH).  Hence, a diffuse domain approximation to (DSIH) is $(\mathrm{DDDH})$:
\begin{align*}
- \nabla \cdot (\xi_{\eps} \mathcal{A}^{E} \nabla w_{D}^{\eps}) +  \xi_{\eps} a^{E} w_{D}^{\eps} + \frac{1}{\eps} \delta_{\eps} w_{D}^{\eps} = \xi_{\eps} (f^{E} - a^{E} \tilde{g}^{E}) + \nabla \cdot (\xi_{\eps} \mathcal{A}^{E} \nabla \tilde{g}^{E}) \text{ in } \Omega.
\end{align*}
Here, one may infer that if $w_{D}^{\eps}$ converges to some function $w^{0}$ as $\eps \to 0$, then $\delta_{\eps} w_{D}^{\eps}$ would converge to the trace of $w^{0}$.  So the term $\frac{1}{\eps} \delta_{\eps} w_{D}^{\eps}$ can be seen as a penalisation to enforce $\gamma_{0}(w^{0}) = 0$ in the limit $\eps \to 0$.

\begin{remark}
We could set $\beta = \frac{1}{\eps^{m}}$ for $m > 0$ to obtain a family of diffuse domain approximations to $(\mathrm{DSIH})$.  Then $(\mathrm{DDDH})$ is the case $m = 1$.  We refer the reader to the discussion in Section \ref{sec:AltDirichlet} for more details.
\end{remark}

\begin{remark}
It will turn out that the problem $(\mathrm{NDDH})$ has better properties than $(\mathrm{NDD})$.  In particular, the subsequent analysis for the Neumann problem is performed with $(\mathrm{NDDH})$ instead of $(\mathrm{NDD})$.  We refer the reader to the discussion in Section \ref{sec:Neumann} for more details.
\end{remark}

Finally, we assume a homogeneous Neumann boundary condition  for our diffuse domain approximations:
\begin{align*}
\mathcal{A}^{E} \nabla u^{\eps} \cdot \nu_{\pd \Omega} = \mathcal{A}^{E} \nabla w^{\eps} \cdot \nu_{\pd \Omega} =  \mathcal{B}^{E} \nabla v^{\eps} \cdot \nu_{\pd \Omega} = 0 \text{ on } \pd \Omega.
\end{align*}

\subsection{Assumptions on data extensions}\label{sec:dataextension}
In general, the extension operator is not unique.  For Sobolev functions $f \in W^{l,p}(\Omega^{*})$, where $l \geq 1, 1 \leq p \leq \infty$, one can use an order $l-1$ reflection about the boundary $\Gamma$ to extend $f$ to the exterior of $\Omega^{*}$.  The case $l = 1$ is outlined in the extension theorem \cite[Theorem 1, p. 254]{book:Evans}, while for higher order reflections, we refer to \cite[Theorem 5.19, p. 148]{book:AdamsFournier} or \cite[p. 43--44]{book:MazjaPoborchi}.  

Thus, for (DSIH) we can extend $\tilde{g} \in H^{1}(\Omega^{*})$ to $\tilde{g}^{E} \in H^{1}(\Omega)$ by the extension theorem, and for (NSIH) we can extend $h \in H^{2}(\Omega^{*})$ to $h^{E} \in H^{2}(\Omega)$ by the reflection method.

For the other bulk data, i.e., $\mathcal{A}$, $a$ and $f$, we make the following assumption:
\begin{assump}[Extension of bulk data]\label{assump:bulkdata}
Let $1\leq i, j \leq n$.  For $a_{ij}, a \in L^{\infty}(\Omega^{*})$ and $f \in L^{2}(\Omega^{*})$, we assume that there exist extensions $a_{ij}^{E}, a^{E} \in L^{\infty}(\Omega)$, and $f^{E} \in L^{2}(\Omega)$ such that $\mathcal{A}^{E} = (a_{ij}^{E})_{1\leq i,j \leq n}$ is uniformly elliptic with constant $\theta_{0}$ and $a^{E}(x) \geq \theta_{2}$ for a.e. $x \in \Omega$.

For $(\mathrm{NSIH})$ we additionally assume that the extension $a_{ij}^{E} \in C^{0,1}(\overline{\Omega})$ for $1\leq i, j \leq n$ and the matrix $\mathcal{A}^{E}$ preserves uniform ellipticity.
\end{assump}

The surface data will be extended in a specific way.  Recall that by Assumption \ref{assump:Gamma}, $\Gamma$ is at least of class $C^{2}$.  We define the tubular neighbourhood $\text{Tub}^{r}(\Gamma)$ of $\Gamma$ with width $r > 0$ as
\begin{align*}
\text{Tub}^{r}(\Gamma) := \{ x \in \Omega : \abs{d(x)} < r \}.
\end{align*}
Then, by \cite[Lemma 14.16]{book:GilbargTrudinger}, there exists $\eta > 0$ such that the signed distance function $d$ to $\Gamma$ is of class $C^{2}(\text{Tub}^{\eta}(\Gamma))$.  Moreover, it can be shown that $d$ is globally Lipschitz with constant 1 (see \cite[Section 14.6]{book:GilbargTrudinger}).

For each $y \in \Gamma$, let $T_{y} \Gamma$ and $\nu(y)$ denote its tangent space and outward pointing unit normal, respectively.  A standard result in differential geometry shows that for $\eta$ sufficiently small, there is a $C^{2}$ diffeomorphism between $\text{Tub}^{\eta}(\Gamma)$ and $\Gamma \times (-\eta, \eta)$ given by
\begin{equation}\label{diffeo}
\begin{aligned}
\Theta^{\eta} : \text{Tub}^{\eta}(\Gamma) \to \Gamma \times (-\eta, \eta), \quad \Theta^{\eta}(x) = (p(x), d(x)),
\end{aligned}
\end{equation}
where, for any $x \in \text{Tub}^{\eta}(\Gamma)$, we define the closest point operator (see \cite{article:MerrimanRuuth07} or \cite[Lemma 2.8]{article:DziukElliott13}) $p : \text{Tub}^{\eta}(\Gamma) \to \Gamma$ by
\begin{align}\label{closestpoint}
p(x):= x - d(x) \nu(p(x)).
\end{align}
Then, we also have
\begin{align}\label{nablad}
\nabla d(x) = \nu(p(x)) \text{ for } x \in \text{Tub}^{\eta}(\Gamma).
\end{align}

\begin{defn}[Constant extension in the normal direction]
For any $g \in L^{q}(\Gamma)$, $1 \leq q \leq \infty$, we define its constant extension $g^{e}$ off $\Gamma$ to $\mathrm{Tub}^{\eta}(\Gamma)$ in the normal direction as
\begin{align}\label{defn:constext}
g^{e}(x) = g(p(x)) \text{ for all } x \in \mathrm{Tub}^{\eta}(\Gamma).
\end{align}
\end{defn}
By Corollary \ref{cor:constextH1} below, we have that $g^{e} \in L^{q}(\mathrm{Tub}^{\eta}(\Gamma))$ if $g \in L^{q}(\Gamma)$ for $1 \leq q \leq \infty$.

For the surface data, we make the following assumption:
\begin{assump}[Extension of surface data]\label{assump:surfacedata}
Let $1\leq i, j \leq n$.  For $b_{ij}, b \in L^{\infty}(\Gamma)$ and $g \in L^{2}(\Gamma)$, let $b_{ij}^{e}, b^{e} \in L^{\infty}(\mathrm{Tub}^{\eta}(\Gamma))$ and $g^{e} \in L^{2}(\mathrm{Tub}^{\eta}(\Gamma))$ denote the constant extensions of $b_{ij}, b$ and $g$ off $\Gamma$ to $\mathrm{Tub}^{\eta}(\Gamma)$ in the normal direction, respectively.  We assume that there exist extensions $b_{ij}^{E}, b^{E} \in L^{\infty}(\Omega)$, and $g^{E} \in L^{2}(\Omega)$ of $b_{ij}^{e}, b^{e}$ and $g^{e}$, respectively, such that $\mathcal{B}^{E} = (b_{ij}^{E})_{1\leq i,j \leq n}$ is uniformly elliptic with constant $\theta_{1}$ and $b^{E}(x) \geq \theta_{3}$ for a.e. $x \in \Omega$.
\end{assump}

\begin{remark}\label{rem:extensionambiguity}
To avoid confusion with the different types of extensions for the bulk and surface data introduced above, we will make the following clarification and use the notation: The superscripts $Ea$, $Er$ and $Ec$ will denote arbitrary extension, extension by reflection and constant extension in the normal direction.  More precisely,
\begin{itemize}
\item For $1 \leq i, j \leq n$, the extensions $b_{ij}^{Ec}, b^{Ec} \in L^{\infty}(\Omega)$ that appear in $(\mathrm{CDD})$ and $(\mathrm{SDD})$ are the extensions of $b_{ij} , b \in L^{\infty}(\Gamma)$  constructed from extending constantly in the normal direction off $\Gamma$ to $\mathrm{Tub}^{\eta}(\Gamma)$, and then arbitrary from $\mathrm{Tub}^{\eta}(\Gamma)$ to $\Omega$, such that Assumption \ref{assump:surfacedata} is satisfied.
\item The extensions $\tilde{g}^{Er} \in H^{1}(\Omega)$ and $h^{Er} \in H^{2}(\Omega)$ that appear in $(\mathrm{DDDH})$ and $(\mathrm{NDDH})$, respectively, are extensions by reflection of $\tilde{g} \in H^{1}(\Omega^{*})$ and $h \in H^{2}(\Omega^{*})$.
\item The extension $g^{Ec} \in L^{2}(\Omega)$ that appears in $(\mathrm{CDD})$, $(\mathrm{SDD})$ and $(\mathrm{RDD})$ is the extension of $g \in L^{2}(\Gamma)$ constructed from extending constantly in the normal direction off $\Gamma$ to $\mathrm{Tub}^{\eta}(\Gamma)$, and then arbitrary from $\mathrm{Tub}^{\eta}(\Gamma)$ to $\Omega$.
\item The extensions $a^{Ea} \in L^{\infty}(\Omega)$ and $f^{Ea} \in L^{2}(\Omega)$ that appear in $(\mathrm{CDD})$, $(\mathrm{RDD})$, $(\mathrm{NDDH})$ and $(\mathrm{DDDH})$ are arbitrary extensions of $a \in L^{\infty}(\Omega^{*})$ and $f \in L^{2}(\Omega^{*})$ such that $a^{Ea} \geq \theta_{2}$ a.e. in $\Omega$.
\item For $1 \leq i, j \leq n$, the extension $a^{Ea}_{ij} \in L^{\infty}(\Omega)$ that appears in $(\mathrm{CDD})$, $(\mathrm{RDD})$ and $(\mathrm{DDDH})$ is an arbitrary extension of $a_{ij} \in L^{\infty}(\Omega^{*})$, such that the matrix $\mathcal{A}^{Ea}$ preserves uniform ellipticity.  Meanwhile, the extension  $a^{Er}_{ij} \in C^{0,1}(\overline{\Omega})$ that appears in $(\mathrm{NDDH})$ is the extension by reflection of $a_{ij} \in C^{0,1}(\overline{\Omega^{*}})$, and the matrix $\mathcal{A}^{Er}$ preserves uniform ellipticity.
\end{itemize}
\end{remark}

\subsection{Assumptions on regularisations}
We first introduce the functions $\xi$ and $\delta$, from which the regularisations $\xi_{\eps}$ and $\delta_{\eps}$ are constructed by a rescaling.

\begin{assump}\label{assump:Xi}
We assume that $\xi : \R \to [0,1]$ is a continuous, nonnegative, monotone function such that
\begin{align}\label{property:Xi}
0 \leq \xi(t) \leq \xi(0) = \frac{1}{2} \leq \xi(s) \leq 1, \quad \forall s \leq 0 \leq t, \text{ and } \lim_{\eps \to 0} \xi \left ( \frac{x}{\eps} \right ) = \begin{cases}
1, & \text{ if } x < 0, \\
0, & \text{ if } x > 0, \\
\frac{1}{2} & \text{ if } x = 0.
\end{cases}
\end{align}
\end{assump}

\begin{assump}\label{assump:delta}
We assume that $\delta : \R \to \R_{\geq 0}$ is a $C^{1}$, nonnegative, even function such that
\begin{align}
\int_{\R} \delta(s) \ds = 1, \quad \delta(s_{1}) \geq \delta(s_{2}) \text{ if } \abs{s_{1}} \leq \abs{s_{2}}, \label{property:delta} \\
\int_{\{s \in \R : \delta(s) > 0 \}} \frac{\abs{\delta'(s)}^{2}}{\delta(s)} \ds + \int_{\R} \sqrt{\delta(s)} + \delta(s) (\abs{s} + \abs{s}^{2}) \ds =: C_{\delta, \mathrm{int}} < \infty, \label{property:delta:bound} 
\end{align}
and for any $q \geq 1$,
\begin{align}\label{property:delta:ptwiseconv}
\lim_{\eps \to 0} \frac{1}{\eps^{q}} \delta \left ( \frac{x}{\eps} \right ) = \begin{cases} 
0, & \text{ if } x \neq 0, \\
+\infty, & \text{ if } x = 0.
\end{cases}
\end{align}
Moreover, we assume there exists a constant $C_{\xi} > 0$ such that
\begin{align}\label{property:Xidelta}
C_{\xi} \delta(t) \leq \xi(t) \text{ for all } t \in \R.
\end{align}
\end{assump}

\begin{defn}
Let $d(x)$ denote the signed distance function to $\Gamma$.  For $x \in \Omega$ and for each $\eps \in (0,1]$, we define
\begin{align}\label{defn:Regularisation}
\xi_{\eps}(x) := \xi \left ( \frac{d(x)}{\eps} \right ), \quad \delta_{\eps}(x) := \frac{1}{\eps} \delta \left ( \frac{d(x)}{\eps} \right ),
\end{align}
with
\begin{align}\label{defn:OmegaepsGammaeps}
\Omega_{\eps} := \{ x \in \Omega : \xi_{\eps}(x) > 0 \}, \quad \Gamma_{\eps} := \{ x \in \Omega : \delta_{\eps}(x) > 0 \}.
\end{align}
\end{defn}
By (\ref{property:Xi}), (\ref{property:delta:ptwiseconv}) and (\ref{property:Xidelta}), we observe that
\begin{align*}
\Omega^{*} \cup \Gamma \subset \Omega_{\eps}, \quad \Gamma \subset \Gamma_{\eps} \subset \Omega_{\eps}, \text{ for all } \eps > 0.
\end{align*}

One can check that Assumptions \ref{assump:Xi} and \ref{assump:delta} are satisfied by our candidate regularisations originating from the double-well potential: 
\begin{align*}
\xi^{DW}(x) := \frac{1}{2} \left ( 1 - \tanh \left ( \frac{x}{\sqrt{2}} \right ) \right ), \quad \delta^{DW}(x) := \frac{3}{2 \sqrt{2}} \sech^{4} \left ( \frac{x}{\sqrt{2}} \right ),
\end{align*}
and from the double-obstacle potential:
\begin{equation}
\label{DWDO} 
\begin{aligned}\xi^{DO}(x) & := \chi_{(-\infty, -\frac{\pi}{2})} (x) + \frac{1}{2} (1-\sin(x)) \chi_{[-\frac{\pi}{2}, \frac{\pi}{2}]}(x), \\
 \delta^{DO}(x) & := \frac{2}{\pi} \cos^{2}(x) \chi_{[-\frac{\pi}{2}, \frac{\pi}{2}]}(x).
\end{aligned}
\end{equation}

\begin{remark}\label{rem:DOdegenerate}
For any $\eps > 0$, $\xi_{\eps}^{DW}$ and $\delta_{\eps}^{DW}$ derived from the double-well potential are non-degenerate in $\Omega$, i.e., $\Omega_{\eps} = \Gamma_{\eps} = \Omega$ for all $\eps > 0$.  However, $\xi_{\eps}^{DO}$ and $\delta_{\eps}^{DO}$ originating from the double-obstacle potential are degenerate in $\Omega$.  In particular, for $(\ref{DWDO})$,
\begin{align*}
\Omega_{\eps} = \Omega^{*} \cup \mathrm{Tub}^{\eps \frac{\pi}{2}}(\Gamma), \quad \Gamma_{\eps} = \mathrm{Tub}^{\eps \frac{\pi}{2}}(\Gamma).
\end{align*}
Moreover, if $\eps_{1} < \eps_{2}$, we have $\Omega_{\eps_{1}} \subset \Omega_{\eps_{2}}$ and $\Gamma_{\eps_{1}} \subset \Gamma_{\eps_{2}}$.  However, the framework of weighted Sobolev spaces is flexible enough to allow us to deduce well-posedness of the diffuse domain approximations with both the double-well and double-obstacle regularisations.
\end{remark}

\subsection{Weighted Sobolev spaces}
Due to the presence of $\xi_{\eps}$ and $\delta_{\eps}$ in the diffuse domain approximations, the natural function spaces to look for well-posedness are Sobolev spaces weighted by $\xi_{\eps}$ and $\delta_{\eps}$. In the following, measurability and almost everywhere are with respect to the Lebesgue measure.

\begin{defn}
For fixed $\eps > 0$, we define
\begin{align*}
L^{2}(\Omega_{\eps}, \xi_{\eps}) & := \left \{ f : \Omega_{\eps} \to \R \text{ measurable s.t. } \int_{\Omega_{\eps}} \xi_{\eps} \abs{f}^{2} \dx < \infty \right \}, \\
L^{2}(\Gamma_{\eps}, \delta_{\eps}) & := \left \{ f : \Gamma_{\eps} \to \R \text{ measurable s.t. } \int_{\Gamma_{\eps}} \delta_{\eps} \abs{f}^{2} \dx < \infty \right \}.
\end{align*}
\end{defn}

By Lipschitz continuity of the signed distance function $d$, and the continuity of $\xi(\cdot)$, we see that $\xi_{\eps}(\cdot)$ is continuous, and consequently $\frac{1}{\xi_{\eps}}$ is bounded in all compact sets $B \subset \Omega_{\eps}$.  Thus, $\frac{1}{\xi_{\eps}} \in L^{1}_{loc}(\Omega_{\eps})$ and by H\"{o}lder's inequality we have the continuous embedding $L^{2}(\Omega_{\eps}, \xi_{\eps}) \subset L^{1}_{loc}(\Omega_{\eps})$ (see also \cite[Theorem 1.5]{article:KufnerOpic84}).  Thus, we can define derivatives for $f \in L^{2}(\Omega_{\eps}, \xi_{\eps})$ in a distributional sense.  I.e., for any multiindex $\alpha$, we call a function $g$ the $\alpha^{th}$ distributional derivative of $f$, and write $g = D^{\alpha} f$, if for every $\phi \in C^{\infty}_{c}(\Omega_{\eps})$, 
\begin{align*}
\int_{\Omega_{\eps}} f D^{\alpha} \phi \dx = (-1)^{\abs{\alpha}} \int_{\Omega_{\eps}} g \phi \dx.
\end{align*}
We define the vector space
\begin{align*}
W^{1,2}(\Omega_{\eps}, \xi_{\eps}) := \{ f \in L^{2}(\Omega_{\eps}, \xi_{\eps}) : D^{\alpha} f \in L^{2}(\Omega_{\eps}, \xi_{\eps}) \text{ for } \abs{\alpha} = 1 \}.
\end{align*}

A similar definition for the vector space $W^{1,2}(\Gamma_{\eps}, \delta_{\eps})$ can be made since $\frac{1}{\delta_{\eps}} \in L^{1}_{loc}(\Gamma_{\eps})$ by the continuity of $\delta_{\eps}(\cdot)$.

For the subsequent analysis, we will present the proofs with the double-well regularisation in mind and detail any necessary modifications for the double-obstacle regularisation afterwards.  

To streamline the presentation, it is more convenient to have a fixed domain when working with weighted Sobolev spaces, hence we introduce the following notation:
\begin{defn}
For fixed $\eps > 0$, we define
\begin{align*}
L^{2}(\Omega, \xi_{\eps}) & := \{ f : \Omega \to \R \text{ measurable s.t. } f \mid_{\Omega_{\eps}} \in L^{2}(\Omega_{\eps}, \xi_{\eps}) \}, \\
H^{1}(\Omega, \xi_{\eps}) & := \{ f : \Omega \to \R \text{ measurable s.t. } f \mid_{\Omega_{\eps}} \in W^{1,2}(\Omega_{\eps}, \xi_{\eps}) \},
\end{align*}
with inner product and induced norms:
\begin{alignat*}{3}
\inner{f}{g}_{L^{2}(\Omega, \xi_{\eps})} & := \int_{\Omega} \xi_{\eps} fg \dx = \int_{\Omega_{\eps}} \xi_{\eps} fg \dx, && \quad \norm{f}_{0,\xi_{\eps}}^{2} && := \inner{f}{f}_{L^{2}(\Omega, \xi_{\eps})}, \\
\inner{f}{g}_{H^{1}(\Omega, \xi_{\eps})} & := \int_{\Omega} \xi_{\eps} (fg + \nabla f \cdot \nabla g) \dx, && \quad \norm{f}_{1,\xi_{\eps}}^{2} && := \inner{f}{f}_{H^{1}(\Omega, \xi_{\eps})}.
\end{alignat*}
Here, we use the identification:
\begin{align*}
f = g \Leftrightarrow f(x) = g(x) \text{ a.e. in } \Omega_{\eps}.
\end{align*}
A similar notation and identification are used for $L^{2}(\Omega, \delta_{\eps})$ and $H^{1}(\Omega, \delta_{\eps})$.  Furthermore, we define
\begin{align*}
\mathcal{V}_{\eps} & := \{ f : \Omega \to \R \text{ measurable s.t. } f \mid_{\Omega_{\eps}} \in W^{1,2}(\Omega_{\eps}, \xi_{\eps}) \} \text{ with } \\
\inner{f}{g}_{\mathcal{V}_{\eps}} & := \int_{\Omega} \xi_{\eps} (fg + \nabla f \cdot \nabla g) + \delta_{\eps} fg \dx, 
\end{align*}
and
\begin{align*}
\mathcal{W}_{\eps} & := \{ f : \Omega \to \R \text{ measurable s.t. } f \mid_{\Omega_{\eps}} \in W^{1,2}(\Omega_{\eps}, \xi_{\eps}) \} \text{ with } \\
\inner{f}{g}_{\mathcal{W}_{\eps}} & := \int_{\Omega} \xi_{\eps} (fg + \nabla f \cdot \nabla g) + \frac{1}{\eps}\delta_{\eps} fg \dx.
\end{align*}
\end{defn}

\begin{remark}
\label{rem:differentspacesdifferentinnerproduct}
We observe that $H^{1}(\Omega, \xi_{\eps})$, $\mathcal{V}_{\eps}$ and $\mathcal{W}_{\eps}$ are the same vector spaces but with different inner products and induced norms.  It will turn out that the well-posedness to $(\mathrm{RDD})$, $(\mathrm{DDDH})$ and $(\mathrm{NDDH})$ depend critically on the choice of the inner product.
\end{remark}

\subsection{Results on the diffuse domain (DD) problems}
Similar to the above, we introduce the following bilinear forms $a_{B}^{\eps} : H^{1}(\Omega, \xi_{\eps}) \times H^{1}(\Omega, \xi_{\eps}) \to \R$, $a_{S}^{\eps} : H^{1}(\Omega, \delta_{\eps}) \times H^{1}(\Omega, \delta_{\eps}) \to \R$, $l_{B}^{\eps} : L^{2}(\Omega, \xi_{\eps}) \times L^{2}(\Omega, \xi_{\eps}) \to \R$, and $l_{S}^{\eps} : L^{2}(\Omega, \delta_{\eps}) \times L^{2}(\Omega, \delta_{\eps}) \to \R$:
\begin{alignat}{4}
\label{bulkbilinear:eps} a_{B}^{\eps}(\varphi, \psi) & := \int_{\Omega} \xi_{\eps} (\mathcal{A}^{Ea} \nabla \varphi \cdot \nabla \psi + a^{Ea} \varphi \psi) \dx, && \quad l_{B}^{\eps}(\varphi, \psi ) && := \int_{\Omega} \xi_{\eps} \varphi \psi \dx,  \\
\label{surfacebilinear:eps} a_{S}^{\eps}(\varphi, \psi) &:= \int_{\Omega} \delta_{\eps}(\mathcal{B}^{Ec} \nabla \varphi \cdot \nabla \psi + b^{Ec} \varphi \psi ) \dx, && \quad l_{S}^{\eps}(\varphi, \psi ) &&:= \int_{\Omega} \delta_{\eps} \varphi \psi \dx.
\end{alignat}
For $(\mathrm{NDDH})$, we replace $\mathcal{A}^{Ea}$ in $a_{B}^{\eps}(\cdot, \cdot)$ with $\mathcal{A}^{Er}$.

Our first main result is the well-posedness of the diffuse domain approximations:

\begin{thm}[Well-posedness for the DD problems]
Let $\Omega \subset \R^{n}$ be an open bounded domain with Lipschitz boundary.  Suppose Assumptions \ref{assump:bulkdata}, \ref{assump:surfacedata}, \ref{assump:Xi} and \ref{assump:delta} are satisfied.  In addition, for $(\mathrm{CDD})$ we assume that $\theta_{3} \geq K$.  Then, for each $\eps > 0$, there exist unique weak solutions
\begin{alignat*}{3}
(u^{\eps},v^{\eps})  \in \mathcal{V}_{\eps} \times H^{1}(\Omega, \delta_{\eps}) & \text{ for } (\mathrm{CDD}), && \quad v_{S}^{\eps}  \in H^{1}(\Omega, \delta_{\eps}) && \text{ for } (\mathrm{SDD}) \\
 u_{R}^{\eps}  \in \mathcal{V}_{\eps} & \text{ for } (\mathrm{RDD}), && \quad w_{N}^{\eps} \in H^{1}(\Omega, \xi_{\eps}) && \text{ for } (\mathrm{NDDH}), \\
w_{D}^{\eps} \in \mathcal{W}_{\eps} & \text{ for } (\mathrm{DDDH}), && \quad \, &&
\end{alignat*}
such that for all $\varphi \in H^{1}(\Omega, \xi_{\eps})$, $\psi \in H^{1}(\Omega, \delta_{\eps})$, $\phi \in \mathcal{V}_{\eps}$, $\Phi \in \mathcal{W}_{\eps}$, 
\begin{align}
 a_{B}^{\eps}(u^{\eps}, \phi) + a_{S}^{\eps}(v^{\eps}, \psi) + K l_{S}^{\eps}(v^{\eps} - u^{\eps}, \psi - \phi) & = l_{B}^{\eps}(f^{Ea}, \phi) + \beta l_{S}^{\eps}(g^{Ec}, \psi), \label{weakform:CDD} \\
a_{S}^{\eps}(v_{S}^{\eps}, \psi) & = l_{S}^{\eps}(g^{Ec}, \psi), \label{weakform:SDD} \\
a_{B}^{\eps}(u_{R}^{\eps}, \phi) + \beta l_{S}^{\eps}(u_{R}^{\eps}, \phi) & = l_{B}^{\eps}(f^{Ea}, \phi) + \beta l_{S}^{\eps}(g^{Ec}, \phi), \label{weakform:RDD} \\
a_{B}^{\eps}(w_{D}^{\eps} + \tilde{g}^{Er}, \Phi) + \frac{1}{\eps} l_{S}^{\eps}(w_{D}^{\eps}, \Phi) & = l_{B}^{\eps}(f^{Ea}, \Phi), \label{weakform:DDDH} \\
a_{B}^{\eps}(w_{N}^{\eps}, \varphi) - l_{B}^{\eps}(\nabla \cdot (\mathcal{A}^{Er} \nabla h^{Er}) - a^{Ea} h^{Er}, \varphi) & = l_{B}^{\eps}(f^{Ea} , \varphi). \label{weakform:NDDH}
\end{align}
Moreover, the weak solutions satisfy
\begin{align}
\norm{u^{\eps}}_{1,\xi_{\eps}}^{2} + \norm{u^{\eps}}_{0,\delta_{\eps}}^{2}+ \norm{v^{\eps}}_{1,\delta_{\eps}}^{2} & \leq C(\norm{f^{Ea}}_{L^{2}(\Omega)}^{2} + \norm{g}_{L^{2}(\Gamma)}^{2}), \label{apriori:coupledbulk} \\
\norm{v_{S}^{\eps}}_{1,\delta_{\eps}}^{2} & \leq C\norm{g}_{L^{2}(\Gamma)}^{2}, \label{apriori:surface} \\
\norm{u_{R}^{\eps}}_{1,\xi_{\eps}}^{2} + \norm{u_{R}^{\eps}}_{0,\delta_{\eps}}^{2} & \leq C(\norm{f^{Ea}}_{L^{2}(\Omega)}^{2} + \norm{g}_{L^{2}(\Gamma)}^{2}), \label{apriori:Robin} \\
\norm{w_{D}^{\eps}}_{1,\xi_{\eps}}^{2} + \norm{w_{D}^{\eps}}_{0,\tfrac{1}{\eps}\delta_{\eps}}^{2} & \leq C(\norm{f^{Ea}}_{L^{2}(\Omega)}^{2} + \norm{\tilde{g}^{Er}}_{H^{1}(\Omega)}^{2}) \label{apriori:Dirichlet}, \\
\norm{w_{N}^{\eps}}_{1,\xi_{\eps}}^{2} & \leq C(\norm{f^{Ea}}_{L^{2}(\Omega)}^{2} + \norm{h^{Er}}_{H^{2}(\Omega)}^{2}), \label{apriori:Neumann}
\end{align}
where the constants $C$ are independent of $\eps$.
\end{thm}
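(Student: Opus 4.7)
The plan is to apply the Lax--Milgram theorem to each of the five weak formulations (\ref{weakform:CDD})--(\ref{weakform:NDDH}) on the respective Hilbert spaces $\mathcal{V}_\eps \times H^1(\Omega,\delta_\eps)$, $H^1(\Omega,\delta_\eps)$, $\mathcal{V}_\eps$, $\mathcal{W}_\eps$ and $H^1(\Omega,\xi_\eps)$. As a preliminary step, I would verify that these spaces (equipped with the stated inner products) are Hilbert spaces; this is a routine adaptation of the classical argument for $H^1(\Omega)$, using that $1/\xi_\eps$ and $1/\delta_\eps$ belong to $L^1_{\mathrm{loc}}$ of their respective nondegenerate sets, so that Cauchy sequences in the weighted norms also converge in $L^1_{\mathrm{loc}}(\Omega_\eps)$ and $L^1_{\mathrm{loc}}(\Gamma_\eps)$, allowing distributional derivatives to be identified.

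The core of the argument is coercivity and continuity of each bilinear form. Continuity of $a_B^\eps$, $a_S^\eps$, $l_B^\eps$, $l_S^\eps$ follows immediately from the $L^\infty$ bounds on the (extended) coefficients and the Cauchy--Schwarz inequality in the weighted inner products. Coercivity of $a_B^\eps$ on $H^1(\Omega,\xi_\eps)$ (and on $\mathcal{V}_\eps$, $\mathcal{W}_\eps$ after adding the $\delta_\eps$ or $\tfrac{1}{\eps}\delta_\eps$ mass term to the right-hand side via testing) reduces to the uniform ellipticity $\theta_0$ of $\mathcal{A}^{Ea}$ (or $\mathcal{A}^{Er}$) together with the uniform lower bound $a^{Ea}\geq \theta_2$; analogously for $a_S^\eps$ on $H^1(\Omega,\delta_\eps)$ one uses $\theta_1$ and $\theta_3$. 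For (RDD) and (DDDH), the additional penalty terms $\beta l_S^\eps(\cdot,\cdot)$ and $\tfrac{1}{\eps} l_S^\eps(\cdot,\cdot)$ supply precisely the $\delta_\eps$ or $\tfrac{1}{\eps}\delta_\eps$ contribution required to match the norms on $\mathcal{V}_\eps$ and $\mathcal{W}_\eps$, which is why Remark \ref{rem:differentspacesdifferentinnerproduct} matters.

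The main obstacle, and the reason for the hypothesis $\theta_3 \geq K$, appears in the coupled problem (CDD). Testing (\ref{weakform:CDD}) with $(\phi,\psi)=(u^\eps,v^\eps)$ and expanding the coupling term $K\,l_S^\eps(v^\eps-u^\eps,v^\eps-u^\eps)$ gives the lower bound
\begin{equation*}
a_B^\eps(u^\eps,u^\eps)+a_S^\eps(v^\eps,v^\eps)+K\norm{v^\eps-u^\eps}_{0,\delta_\eps}^2 \geq \theta_0\norm{\nabla u^\eps}_{0,\xi_\eps}^2+\theta_2\norm{u^\eps}_{0,\xi_\eps}^2+\theta_1\norm{\nabla v^\eps}_{0,\delta_\eps}^2+\theta_3\norm{v^\eps}_{0,\delta_\eps}^2+K\norm{v^\eps-u^\eps}_{0,\delta_\eps}^2.
\end{equation*}
To recover control of $\norm{u^\eps}_{0,\delta_\eps}^2$, needed for the $\mathcal{V}_\eps$-norm, I would apply the reverse Young inequality $\norm{v^\eps-u^\eps}_{0,\delta_\eps}^2 \geq \tfrac{1}{2}\norm{u^\eps}_{0,\delta_\eps}^2 - \norm{v^\eps}_{0,\delta_\eps}^2$; the remaining $v^\eps$-coefficient is $\theta_3-K$, which is nonnegative exactly under the stated hypothesis. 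A similar but easier Young argument handles (RDD); for (DDDH) the factor $\tfrac{1}{\eps}$ makes the $\mathcal{W}_\eps$-coercivity automatic.

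The continuity of the right-hand sides requires bounding surface integrals in terms of $L^2(\Gamma)$ norms independently of $\eps$. For the $\xi_\eps$-weighted forcing this follows from $\xi_\eps\leq 1$ and the $L^2(\Omega)$ control of $f^{Ea}$; for the $\delta_\eps$-weighted surface data I would invoke an estimate of the type $\norm{g^{Ec}}_{0,\delta_\eps}\leq C\norm{g}_{L^2(\Gamma)}$ (uniform in $\eps$), which I expect to be established in the technical Section \ref{sec:Technical} from the diffeomorphism (\ref{diffeo}), the normalisation (\ref{property:delta}) and the definition (\ref{defn:constext}). For (NDDH) the forcing contains $\nabla\cdot(\mathcal{A}^{Er}\nabla h^{Er})$, which is bounded in $L^2(\Omega)$ thanks to $h^{Er}\in H^2(\Omega)$ and $\mathcal{A}^{Er}\in C^{0,1}(\overline{\Omega})$. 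Existence and uniqueness in each case then follow from Lax--Milgram; the a priori bounds (\ref{apriori:coupledbulk})--(\ref{apriori:Neumann}) with $\eps$-independent constants are read off by testing each equation with its own solution, combining the coercivity lower bounds with the continuity upper bounds on the right-hand sides, and absorbing via Young's inequality.
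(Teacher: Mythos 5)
Your plan — Lax--Milgram on each of the stated weighted spaces, with uniform continuity coming from the boundedness of the extended coefficients and the $\eps$-uniform bound $\norm{g^{Ec}}_{0,\delta_\eps}\leq C\norm{g}_{L^2(\Gamma)}$ — is precisely the paper's. The one genuine flaw is in your coercivity estimate for $(\mathrm{CDD})$. You expand the coupling term with the sharp Young inequality, i.e.\ $\norm{v^\eps-u^\eps}_{0,\delta_\eps}^2\geq\frac{1}{2}\norm{u^\eps}_{0,\delta_\eps}^2-\norm{v^\eps}_{0,\delta_\eps}^2$ (parameter $\mu=2$), and observe that the surviving coefficient of $\norm{v^\eps}_{0,\delta_\eps}^2$ is $\theta_3-K$, which you then declare \emph{nonnegative} under the hypothesis $\theta_3\geq K$. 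Nonnegativity is not enough for Lax--Milgram: at the admissible borderline $\theta_3=K$ your lower bound degenerates to
\begin{align*}
a_{CDD}((u^\eps,v^\eps),(u^\eps,v^\eps))\geq C\norm{u^\eps}_{1,\xi_\eps}^2+\theta_1\norm{\nabla v^\eps}_{0,\delta_\eps}^2+\tfrac{K}{2}\norm{u^\eps}_{0,\delta_\eps}^2,
\end{align*}
which gives no control whatsoever on $\norm{v^\eps}_{0,\delta_\eps}^2$, a component of the $\mathcal{X}_\eps$-norm you must bound. (The missing control cannot be supplied by the gradient term, absent a weighted Poincar\'e inequality you have not invoked.)

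The fix, and what the paper does, is to take the Young parameter $\mu\in(1,2)$ \emph{strictly}: then
\begin{align*}
K\norm{v^\eps-u^\eps}_{0,\delta_\eps}^2\geq K(1-\mu)\norm{v^\eps}_{0,\delta_\eps}^2+K(1-\mu^{-1})\norm{u^\eps}_{0,\delta_\eps}^2,
\end{align*}
and the $v^\eps$-coefficient becomes $\theta_3+K(1-\mu)=\theta_3-K(\mu-1)\geq K(2-\mu)>0$ whenever $\theta_3\geq K$, so coercivity holds with a constant depending on $\theta_i, K, \mu$ but not on $\eps$. Everything else in your sketch — the role of the penalty terms in matching the $\mathcal{V}_\eps$ and $\mathcal{W}_\eps$ inner products, the use of $\xi_\eps\leq 1$ and the normal-constant extension to make the right-hand sides $\eps$-uniform, and the $H^2$-regularity of $h^{Er}$ together with $\mathcal{A}^{Er}\in C^{0,1}$ for the $(\mathrm{NDDH})$ forcing — agrees with the paper's argument.
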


We point out that, thanks to the fact that $g^{Ec}$ is the constant extension of $g$ in the normal direction, the estimates in (\ref{apriori:coupledbulk}), (\ref{apriori:surface}) and (\ref{apriori:Robin}) are independent of $\eps$.

\subsection{Convergence results}
Recall that the unique weak solutions to (CSI), (SSI), (RSI), (DSIH) and (NSIH) are denoted by $(u,v)$, $v_{S}$, $u_{R}$, $w_{D}$ and $w_{N}$, respectively.  Keeping the notation that the unique weak solutions to (CDD), (SDD), (RDD), (DDDH) and (NDDH) are denoted by $(u^{\eps}, v^{\eps})$, $v_{S}^{\eps}$, $u_{R}^{\eps}$, $w_{D}^{\eps}$, and $w_{N}^{\eps}$, our second main result in the weak convergence of diffuse domain approximations:

\begin{thm}[Weak convergence of diffuse domain approximation]\label{thm:weakconv}
Suppose Assumptions \ref{assump:Gamma}, \ref{assump:General}, \ref{assump:DSI}, \ref{assump:NSI}, \ref{assump:bulkdata}, \ref{assump:surfacedata}, \ref{assump:Xi}, and \ref{assump:delta} are satisfied.  In addition, for $(\mathrm{CDD})$, we assume $\theta_{3} \geq K$.  Then, as $\eps \to 0$,
\begin{align*}
u^{\eps} \mid_{\Omega^{*}}, \; u^{\eps}_{R} \mid_{\Omega^{*}}, \; w^{\eps}_{D} \mid_{\Omega^{*}}, \; w^{\eps}_{N} \mid_{\Omega^{*}} \text{  converge weakly to } u, \; u_{R}, \; w_{D}, \; w_{N} \text{ in } H^{1}(\Omega^{*}),
\end{align*}
respectively.  While, for any $\psi \in H^{1}(\Gamma)$ with extension $\psi^{Ec} \in H^{1}(\Omega)$ (as constructed in Corollary \ref{cor:constextH1} below), as $\eps \to 0$, 
\begin{alignat*}{3}
\int_{\Omega} \delta_{\eps} v^{\eps} \psi^{Ec} \dx & \to \int_{\Gamma} v \psi \dHaus, && \quad \int_{\Omega} \delta_{\eps} \nabla v^{\eps} \cdot \nabla \psi^{Ec} \dx & \to \int_{\Gamma} \surf v \cdot \surf \psi \dHaus, \\
\int_{\Omega} \delta_{\eps} v^{\eps}_{S} \psi^{Ec} \dx & \to \int_{\Gamma} v_{S} \psi \dHaus, && \quad \int_{\Omega} \delta_{\eps} \nabla v_{S}^{\eps} \cdot \nabla \psi^{Ec} \dx & \to \int_{\Gamma} \surf v_{S} \cdot \surf \psi \dHaus.
\end{alignat*}
\end{thm}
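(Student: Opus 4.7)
The plan is to use compactness from the $\eps$-independent a priori bounds in the well-posedness theorem to extract weakly convergent subsequences, pass to the limit in each of the weak formulations (\ref{weakform:CDD})--(\ref{weakform:NDDH}) tested against suitably extended functions, identify the limits with the unique SI solutions via Theorem \ref{thm:wellposedSI}, and then invoke uniqueness to promote subsequential convergence to convergence of the full sequence.

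For the bulk quantities, since $d \leq 0$ on $\overline{\Omega^{*}}$, Assumption \ref{assump:Xi} yields $\xi_{\eps} \geq \tfrac{1}{2}$ pointwise on $\Omega^{*}$. Hence the estimates (\ref{apriori:coupledbulk})--(\ref{apriori:Neumann}) translate into uniform $H^{1}(\Omega^{*})$ bounds on the restrictions $u^{\eps}|_{\Omega^{*}}$, $u_{R}^{\eps}|_{\Omega^{*}}$, $w_{D}^{\eps}|_{\Omega^{*}}$, $w_{N}^{\eps}|_{\Omega^{*}}$, so by reflexivity one extracts subsequences converging weakly in $H^{1}(\Omega^{*})$ to limits $u^{0},u_{R}^{0},w_{D}^{0},w_{N}^{0}$, and by compactness of the trace operator their boundary traces converge strongly in $L^{2}(\Gamma)$.

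For the surface quantities, the bounds $\norm{v^{\eps}}_{1,\delta_{\eps}}^{2}, \norm{v_{S}^{\eps}}_{1,\delta_{\eps}}^{2} \leq C$ combined with the concentration of $\delta_{\eps}$ near $\Gamma$ should identify weak limits $v,v_{S}\in H^{1}(\Gamma)$. The technical tools I would develop in Section \ref{sec:Technical} are: (i) for $\psi\in L^{1}(\Gamma)$ with normal extension $\psi^{Ec}$, a coarea-type change of variables based on the diffeomorphism $\Theta^{\eta}$ of (\ref{diffeo}) yields $\int_{\Omega}\delta_{\eps}\psi^{Ec}\dx \to \int_{\Gamma}\psi \dHaus$; (ii) since $\psi^{Ec}$ is normally constant, $\nabla\psi^{Ec}$ agrees with $\surf\psi\circ p$ near $\Gamma$, so the tangential action of $\mathcal{B}^{Ec}$ (also a normal extension) reproduces $\mathcal{B}\surf\psi$ at the limit; (iii) $\delta_{\eps}$-weighted weakly convergent sequences can be characterised through their action on normally extended test functions, giving functions on $\Gamma$ as limits. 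These facts directly yield the second set of convergences claimed in the theorem, and show $a_{S}^{\eps}(v^{\eps},\psi^{Ec})\to a_{S}(v,\psi)$ and $l_{S}^{\eps}(g^{Ec},\psi^{Ec})\to l_{S}(g,\psi)$.

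With these preparations, passing to the limit in each weak formulation is routine: test with $\phi=\varphi^{Er}$ for $\varphi\in H^{1}(\Omega^{*})$ and $\psi=\eta^{Ec}$ for $\eta\in H^{1}(\Gamma)$; the bulk forms $a_{B}^{\eps},l_{B}^{\eps}$ converge to $a_{B},l_{B}$ via the bounded pointwise convergence $\xi_{\eps}\to\chi_{\Omega^{*}}$; the surface forms converge via (i)--(iii); and the coupling $K l_{S}^{\eps}(v^{\eps}-u^{\eps},\psi-\phi)$ in (\ref{weakform:CDD}) converges to $K l_{S}(v-\gamma_{0}(u),\eta-\gamma_{0}(\varphi))$ by combining the strong $L^{2}(\Gamma)$ trace convergence of $\gamma_{0}(u^{\eps}|_{\Omega^{*}})$ with the $\delta_{\eps}$-weighted convergence of $v^{\eps}$. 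Uniqueness from Theorem \ref{thm:wellposedSI} then identifies the limits with $u$, $u_{R}$, $w_{N}$, $v$, $v_{S}$, and upgrades subsequential to full-sequence convergence. The main obstacle is the Dirichlet problem (DDDH), where one must show that $w_{D}^{0}\in H^{1}_{0}(\Omega^{*})$, i.e.\ that $\gamma_{0}(w_{D}^{0})=0$. This is precisely where the penalisation $\tfrac{1}{\eps}\delta_{\eps}w_{D}^{\eps}$ is decisive: the bound $\norm{w_{D}^{\eps}}_{0,\tfrac{1}{\eps}\delta_{\eps}}^{2}\leq C$ gives $\int_{\Omega}\delta_{\eps}\abs{w_{D}^{\eps}}^{2}\dx\leq C\eps \to 0$, which, together with (i) applied to $w_{D}^{\eps}\psi^{Ec}$ and the strong trace convergence of $w_{D}^{\eps}|_{\Omega^{*}}$, forces $\gamma_{0}(w_{D}^{0})=0$. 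One then passes to the limit in (\ref{weakform:DDDH}) as in the Robin case and uses uniqueness of (DSIH) to conclude.
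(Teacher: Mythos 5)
Your proposal is correct and follows essentially the same route as the paper: uniform $H^{1}(\Omega^{*})$ bounds from $\xi_{\eps}\geq\tfrac12$ on $\Omega^{*}$, compactness and trace identification in $\delta_{\eps}$-weighted spaces (your tools (i)--(iii) are the paper's Lemma \ref{lem:XiDCT}, Corollary \ref{cor:constextH1} with \eqref{ConstExt:relationgradients}--\eqref{ConstExt:samegradients}, Lemma \ref{lem:Vepscompactness}, Lemma \ref{lem:H1deltabddconv}, and Corollary \ref{cor:deltaL2GammaH1}), passage to the limit in each weak formulation tested against $\varphi^{Er}$, $\psi^{Ec}$, and uniqueness of the SI solutions to upgrade subsequential to full convergence. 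Your treatment of the Dirichlet case via the vanishing penalisation term is exactly the mechanism of Corollary \ref{cor:CompactnessWeps}; the only point worth flagging is that the real technical burden is in Lemmas \ref{lem:Vepscompactness} and \ref{lem:H1deltabddconv} (identifying the $\delta_{\eps}$-weighted limits as $\gamma_{0}(\tilde u)$ and as an $H^{1}(\Gamma)$ function with the correct surface gradient, the latter requiring $C^{3}$ regularity of $\Gamma$), which your plan invokes but does not detail.
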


Let
\begin{align}\label{defn:CGamma}
\mathcal{C}_{\Gamma} :=  \left \{ \psi^{Ec} \in H^{1}(\Omega) : \psi^{Ec} \text{ constructed from } \psi \in H^{1}(\Gamma) \text{ as in Corollary } \ref{cor:constextH1} \right \}.
\end{align}
Then, one can formally interpret the above weak convergence of the surface quantities in the following way:   
\begin{align*}
\delta_{\eps} v^{\eps} \rightharpoonup \delta_{\Gamma} v, \quad \delta_{\eps} \nabla v^{\eps} \rightharpoonup \delta_{\Gamma} \surf v \text{ in } \mathcal{C}_{\Gamma}' \text{ as } \eps \to 0.
\end{align*}
The precise definition of the above notion of weak convergence for the surface quantities can be found in the proof of Lemma \ref{lem:H1deltabddconv}.  In fact, in Lemma \ref{lem:H1deltabddconv}, the weak convergence for the lower order term holds for all $H^{1}(\Omega)$ functions:  As $\eps \to 0$,
\begin{align*}
\int_{\Omega} \delta_{\eps} v^{\eps} \varphi \dx \to \int_{\Gamma} v \gamma_{0}(\varphi) \dHaus \text{ for all } \varphi \in H^{1}(\Omega).
\end{align*}

Thanks to the coercivity of the bilinear forms $a_{B}^{\eps}$ and $a_{S}^{\eps}$, we also obtain strong convergence:

\begin{thm}[Strong convergence of diffuse domain approximations]\label{thm:strongconv}
Suppose Assumptions \ref{assump:Gamma}, \ref{assump:General}, \ref{assump:DSI}, \ref{assump:NSI}, \ref{assump:bulkdata}, \ref{assump:surfacedata}, \ref{assump:Xi}, and \ref{assump:delta} are satisfied.  In addition, for $(\mathrm{CDD})$ we assume $\theta_{3} \geq K$.

Let $v^{Ec}, v_{S}^{Ec} \in H^{1}(\Omega)$ denote the constant extensions of $v, v_{S} \in H^{1}(\Gamma)$ in the normal direction, as constructed in Corollary \ref{cor:constextH1}.  Let $u^{Er}, u^{Er}_{R} \in H^{1}(\Omega)$ denote the extensions of $u, u_{R} \in H^{1}(\Omega^{*})$ by reflection.  Then, as $\eps \to 0$,
\begin{align*}
\norm{u^{\eps} \mid_{\Omega^{*}} - u}_{H^{1}(\Omega^{*})} + \norm{u^{\eps} - u^{Er}}_{0,\delta_{\eps}} + \norm{v^{\eps} - v^{Ec}}_{1,\delta_{\eps}} & \to 0, \\
\norm{v_{S}^{\eps} - v_{S}^{Ec}}_{1,\delta_{\eps}} & \to 0, \\
\norm{u^{\eps}_{R} \mid_{\Omega^{*}} - u_{R}}_{H^{1}(\Omega^{*})} + \norm{u^{\eps}_{R} - u_{R}^{Er}}_{0,\delta_{\eps}} & \to 0, \\
\norm{w^{\eps}_{N} \mid_{\Omega^{*}} - w_{N}}_{H^{1}(\Omega^{*})} & \to 0, \\
\norm{w^{\eps}_{D} \mid_{\Omega^{*}} - w_{D}}_{H^{1}(\Omega^{*})} & \to 0.
\end{align*}
Consequently, we also have the norm convergence: As $\eps \to 0$,
\begin{alignat*}{3}
\norm{u^{\eps}}_{0,\delta_{\eps}} &\to \norm{\gamma_{0}(u)}_{L^{2}(\Gamma)}, &&\quad \norm{v^{\eps}}_{1,\delta_{\eps}} &&\to \norm{v}_{H^{1}(\Gamma)},\\
\norm{v^{\eps}_{S}}_{1,\delta_{\eps}} &\to \norm{v_{S}}_{H^{1}(\Gamma)}, && \quad \norm{u^{\eps}_{R}}_{0,\delta_{\eps}} &&\to \norm{\gamma_{0}(u_{R})}_{L^{2}(\Gamma)}.
\end{alignat*}
\end{thm}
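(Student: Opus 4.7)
The plan is to combine the weak convergences of Theorem \ref{thm:weakconv} with a Radon--Riesz type argument: norm convergence plus weak convergence in a Hilbert space implies strong convergence. The norm convergence will come from an energy identity obtained by testing each diffuse domain weak formulation with its own DD solution, passing to the limit on the right-hand side, and matching the result with the sharp interface energy.

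I illustrate the argument on $(\mathrm{CDD})$. Testing \eqref{weakform:CDD} with $(\phi, \psi) = (u^{\eps}, v^{\eps})$ gives
\begin{equation*}
a_B^{\eps}(u^{\eps}, u^{\eps}) + a_S^{\eps}(v^{\eps}, v^{\eps}) + K l_S^{\eps}(v^{\eps} - u^{\eps}, v^{\eps} - u^{\eps}) = l_B^{\eps}(f^{Ea}, u^{\eps}) + \beta l_S^{\eps}(g^{Ec}, v^{\eps}).
\end{equation*}
By Theorem \ref{thm:weakconv}, the surface term on the right converges to $\beta l_S(g, v)$. For the bulk term, Rellich--Kondrachov applied to $u^{\eps}|_{\Omega^{*}} \rightharpoonup u$ in $H^{1}(\Omega^{*})$, together with $\xi_{\eps} \to \chi_{\Omega^{*}}$ boundedly and a.e., yields $l_B^{\eps}(f^{Ea}, u^{\eps}) \to l_B(f, u)$. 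Testing the $(\mathrm{CSI})$ weak formulation with $(u, v)$ shows this limit right-hand side equals $a_B(u, u) + a_S(v, v) + K l_S(v - \gamma_0(u), v - \gamma_0(u))$.

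Each of the three terms on the left is non-negative. Restricting $a_B^{\eps}(u^{\eps}, u^{\eps})$ to $\Omega^{*}$, where $\xi_{\eps} \to 1$ uniformly on compacts, and applying weak lower semicontinuity of the positive-definite quadratic form on $H^{1}(\Omega^{*})$ gives $\liminf_{\eps \to 0} a_B^{\eps}(u^{\eps}, u^{\eps}) \geq a_B(u, u)$. Analogous arguments, now using the $\delta_{\eps}$-weighted surface convergence machinery developed for Theorem \ref{thm:weakconv}, give $\liminf a_S^{\eps}(v^{\eps}, v^{\eps}) \geq a_S(v, v)$ and $\liminf K l_S^{\eps}(v^{\eps} - u^{\eps}, v^{\eps} - u^{\eps}) \geq K l_S(v - \gamma_0(u), v - \gamma_0(u))$. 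Since the sum of the liminfs is bounded above by the limit of the sum yet matches it from below termwise, equality holds in each term. The bulk equality forces both $a_B^{\eps}(u^{\eps}, u^{\eps})|_{\Omega^{*}} \to a_B(u, u)$ and the $\Omega \setminus \Omega^{*}$ contribution to vanish; coercivity of $a_B$ on $\Omega^{*}$ then upgrades weak $H^{1}(\Omega^{*})$ convergence to strong convergence. The surface termwise equalities, combined with the Hilbertian identity $\norm{f - g}^{2} = \norm{f}^{2} - 2(f, g) + \norm{g}^{2}$ in the respective $\delta_{\eps}$-weighted inner products and the weak convergences of Theorem \ref{thm:weakconv}, yield $\norm{v^{\eps} - v^{Ec}}_{1,\delta_{\eps}} \to 0$ and $\norm{u^{\eps} - u^{Er}}_{0,\delta_{\eps}} \to 0$.

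The same scheme handles $(\mathrm{SDD})$, $(\mathrm{RDD})$, $(\mathrm{NDDH})$ and $(\mathrm{DDDH})$: test with the respective DD solution, pass to the limit on the right-hand side using Theorem \ref{thm:weakconv} for surface integrals and Rellich--Kondrachov for bulk integrals, then match with the corresponding sharp interface energy to obtain termwise equalities. For $(\mathrm{DDDH})$ the extra penalization $\tfrac{1}{\eps} l_S^{\eps}(w_D^{\eps}, w_D^{\eps})$ is non-negative with $\eps$-independent upper bound from \eqref{apriori:Dirichlet}, so it fits the scheme and, as a by-product, is forced to vanish in the limit, consistent with the trace of $w_D$ being zero. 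The norm convergences at the bottom of the theorem follow from the above strong convergences combined with $\norm{v^{Ec}}_{1,\delta_{\eps}} \to \norm{v}_{H^{1}(\Gamma)}$ and $\norm{u^{Er}}_{0,\delta_{\eps}} \to \norm{\gamma_0(u)}_{L^{2}(\Gamma)}$, both consequences of the concentration of $\delta_{\eps}$ on $\Gamma$ and the normal-constant structure of the extension $v^{Ec}$. The main technical obstacle is rigorously establishing the weak lower semicontinuity for the $\delta_{\eps}$-weighted surface terms, since the weight itself degenerates with $\eps$; this rests on testing against constant-in-normal functions from $\mathcal{C}_{\Gamma}$ and exploiting the $\delta_{\eps}$-weighted weak convergence statements worked out in the proof of Theorem \ref{thm:weakconv}.
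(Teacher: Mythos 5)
Your approach is a genuinely different route from the paper's. You test the diffuse-domain weak formulation with the DD solution itself, obtain an energy identity, argue termwise weak lower semicontinuity, and then invoke a Radon--Riesz type argument. The paper instead uses a Galerkin-orthogonality/C\'ea-type argument: it tests the \emph{coercive} bilinear form with the difference $(u^{\eps}-u^{Er},v^{\eps}-v^{Ec})$, rewrites
\begin{align*}
a_{CDD}((u^{\eps}-u^{Er},v^{\eps}-v^{Ec}),(u^{\eps}-u^{Er},v^{\eps}-v^{Ec})) & = l_{CDD}((u^{\eps}-u^{Er},v^{\eps}-v^{Ec})) \\
& \quad - a_{CDD}((u^{Er},v^{Ec}),(u^{\eps}-u^{Er},v^{\eps}-v^{Ec})),
\end{align*}
and shows the right-hand side tends to zero directly from the weak-convergence machinery of Theorem~\ref{thm:weakconv}, Lemmata~\ref{lem:constextH1delta}, \ref{lem:convergetodirac}, \ref{lem:Vepscompactness}, \ref{lem:H1deltabddconv} and Corollary~\ref{cor:deltaL2GammaH1}. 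This is more economical because no new lower semicontinuity statement for the degenerate $\delta_{\eps}$-weighted spaces needs to be established.

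There is a genuine gap in the termwise lower semicontinuity you invoke for the surface gradient term. After passing to $(p,z)$ coordinates via Lemma~\ref{ScaledNbd:gradientdecomposition}, one has $\nabla v^{\eps} = \tfrac{1}{\eps}\pd_z V^{\eps}\,\nu + \surfz V^{\eps}$. The a priori estimate \eqref{apriori:coupledbulk} only bounds $\tfrac{1}{\eps}\pd_z V^{\eps}$ in $L^{2}(\Gamma\times\R,\delta)$; it does not force this quantity to converge weakly to zero. Consequently the quadratic form $\int\delta_{\eps}\,\mathcal{B}^{Ec}\nabla v^{\eps}\cdot\nabla v^{\eps}\,\dx$ picks up a normal--normal contribution $\tfrac{1}{\eps^{2}}|\pd_z V^{\eps}|^{2}\,\mathcal{B}\nu\cdot\nu$ and, unless $\mathcal{B}\nu$ is parallel to $\nu$, a \emph{cross} contribution $\tfrac{2}{\eps}\pd_z V^{\eps}\,(\mathcal{B}\nu)\cdot\surfz V^{\eps}$ of indefinite sign. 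Weak lower semicontinuity of the positive-definite form in $L^{2}(\Gamma\times\R,\delta)^{n}$ then only gives $\liminf a_S^{\eps}(v^{\eps},v^{\eps}) \geq \int_{\R}\int_{\Gamma}\delta\,\mathcal{B}(q\nu+\surf\bar v)\cdot(q\nu+\surf\bar v)$ for the (unknown) weak limit $q$ of $\tfrac{1}{\eps}\pd_z V^{\eps}$, and this can lie strictly \emph{below} $a_S(v,v)$ (the minimum of the quadratic form over $q$ is governed by a Schur complement $\mathcal{B}_{tt}-\mathcal{B}_{nt}^{T}\mathcal{B}_{nn}^{-1}\mathcal{B}_{nt}\preceq\mathcal{B}_{tt}$). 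Hence the termwise matching step fails without an additional structural hypothesis on $\mathcal{B}^{Ec}$. The paper's route never meets this difficulty: in the term $a_S^{\eps}(v^{Ec},v^{\eps}-v^{Ec})$ the fixed slot carries $v^{Ec}$, whose $(p,z)$-representation has $\pd_z\Psi_{\eps}=0$ (see \eqref{pdzpsiezero}), so the problematic $\tfrac{1}{\eps}\pd_z V^{\eps}$ factor is always paired with a function with vanishing normal derivative and one need only invoke the weak convergences already proved. A secondary, smaller issue: in the bulk, passing from $a_B^{\eps}(u^{\eps},u^{\eps})\to a_B(u,u)$ to $\|u^{\eps}|_{\Omega^*}-u\|_{H^1(\Omega^*)}\to 0$ needs the intermediate expansion $a_B^{\eps}(u^{\eps}-u^{Er},u^{\eps}-u^{Er}) = a_B^{\eps}(u^{\eps},u^{\eps}) - 2a_B^{\eps}(u^{\eps},u^{Er}) + a_B^{\eps}(u^{Er},u^{Er})$ combined with \eqref{bulkloworder}, \eqref{bulkgradient}, Lemma~\ref{lem:XiDCT} and the bound $\xi_{\eps}\geq\tfrac12$ on $\Omega^*$; your phrasing suggests this but does not make the necessary step explicit.
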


In the next section, we present some technical results that will be essential to prove the main results.

\section{Technical results}\label{sec:Technical}
\subsection{Change of variables in the tubular neigbourhood}\label{sec:TubNbd}
Choose $\eta > 0$ and the diffeomorphism $\Theta^{\eta} : \mathrm{Tub}^{\eta}(\Gamma) \to \Gamma \times (-\eta, \eta)$ defined in (\ref{diffeo}).  For $t \in (-\eta, \eta)$, let $\Gamma_{t}$ denote the level set $\{ x \in \Omega : d(x) = t \}$.  Then, by the co-area formula \cite[Theorem 5, p. 629]{book:Evans} or \cite[Theorem 2, p. 117]{book:EvansGariepy}, and that $\abs{\nabla d} = \abs{\nu} = 1$, we can write
\begin{align}\label{TubGammadecompose}
\int_{\mathrm{Tub}^{\eta}(\Gamma)} f(x) \abs{\nabla d(x)} \dx = \int_{\mathrm{Tub}^{\eta}(\Gamma)} f(x) \dx = \int_{-\eta}^{\eta} \int_{\Gamma_{t}} f \dHaus \dt.
\end{align}

We define the mapping $\rho_{t} : \Gamma \to \Gamma_{t}$ by
\begin{align}\label{maprhot}
\rho_{t}(p) = p + t \nu(p) \text{ for } p \in \Gamma.
\end{align}
This map is well-defined and is injective due to the diffeomorphism $\Theta^{\eta}$.  Then, by a change of variables, we obtain
\begin{align*}
\int_{\Gamma_{t}} f \dHaus = \int_{\Gamma} f(p + t \nu(p)) \abs{\det ((\nabla \rho_{t})^{T}(\nabla \rho_{t}))}^{\tfrac{1}{2}} \dHaus,
\end{align*}
where $\nabla \rho_{t}$ is the Jacobian matrix of $\rho_{t}$.  To identify $\det ((\nabla \rho_{t})^{T} (\nabla \rho_{t}))$ as a function of $t$ we use local coordinates.

Since $\Gamma$ is a compact hypersurface, we can always find a finite open cover of $\Gamma$ consisting of open sets $W_{i} \subset \R^{n}$, $1 \leq i \leq N$ such that $\Gamma \subset \bigcup_{i=1}^{N} W_{i}$.  For each $1 \leq i \leq N$, let $\alpha_{i}(s)$ denote a regular parameterisation of $W_{i} \cap \Gamma$ with parameter domain $\mathcal{S}_{i} \subset \R^{n-1}$, i.e, $\alpha_{i} : \mathcal{S}_{i} \to W_{i} \cap \Gamma$ is a local regular parameterisation of $\Gamma$ (the existence of such local regular parameterisations follows from the regularity of $\Gamma$).  Let
\begin{align*}
J_{i,0}(s) := (\pd_{s_{1}} \alpha_{i}(s), \dots, \pd_{s_{n-1}} \alpha_{i}(s), \nu(\alpha_{i}(s)) \in \R^{n \times n}.
\end{align*}
Since $\alpha_{i}$ is a regular parameterisation, the tangent vectors $\{ \pd_{s_{j}} \alpha_{i} \}_{1 \leq j \leq n-1}$ are linearly independent and hence the determinant $\det J_{i,0} \neq 0$.  Then, for any $f \in L^{1}_{loc}(\Gamma)$,
\begin{align}\label{integralf:transform}
\int_{W_{i} \cap \Gamma} f \dHaus = \int_{\mathcal{S}_{i}} f(\alpha_{i}(s)) \abs{\det J_{i,0}} \ds.
\end{align}
By the injectivity of $\rho_{t}$, $\Gamma_{t}$ is also a compact hypersurface with a finite open cover $\{\rho_{t}(W_{i} \cap \Gamma)\}_{i=1}^{N}$.  In addition, $\rho_{t} \circ \alpha_{i}$ is a local parameterisation of $\Gamma_{t}$.  Let
\begin{align*}
J_{i,\eta}(s,t) & := (\pd_{s_{1}} \alpha_{i}(s) + t \pd_{s_{1}} \nu(\alpha_{i}(s)), \dots, \pd_{s_{n-1}} \alpha_{i}(s) + t \pd_{s_{n-1}} \nu(\alpha_{i}(s)), \nu(\alpha_{i}(s)) \in \R^{n \times n} \\
& = J_{i,0}(s) + t (\pd_{s_{1}} \nu(\alpha_{i}(s)), \dots, \pd_{s_{n-1}} \nu(\alpha_{i}(s)), 0) =: J_{i,0}(s) + t B_{i}(s).
\end{align*}
A short calculation shows that
\begin{align*}
\det J_{i,\eta} &  = \det (J_{i,0} + t B_{i}) = (\det J_{i,0}) (\det (I + t J_{i,0}^{-1} B_{i})) = (\det J_{i,0}) t^{n} \det \left (t^{-1} I + J_{i,0}^{-1} B_{i} \right ) \\
& = (\det J_{i,0})t^{n} \left ( \frac{1}{t^{n}} - \frac{1}{t^{n-1}} \tr{-J_{i,0}^{-1} B_{i}} + \cdots + (-1)^{n} \det (-J_{i,0}^{-1}B_{i}) \right ) \\
& = (\det J_{i,0}) \left (1 + \tr{t J_{i,0}^{-1} B_{i}} + \cdots + (-1)^{2n} \det (t J_{i,0}^{-1} B_{i })  \right ),
\end{align*}
where $\tr{A}$ denotes the trace of a matrix $A$, and we used the well-known fact that the coefficients of the monic characteristic polynomial
\begin{align*}
\det (xI - A) = p_{A}(x) = x^{n} + a_{n-1} x^{n-1} + \cdots + a_{1}x + a_{0},
\end{align*}
are given by
\begin{align*}
a_{n-k} = (-1)^{k} \sum_{1 \leq j_{1} < \cdots < j_{k} \leq n} \lambda_{j_{1}} \lambda_{j_{2}} \dots \lambda_{j_{k}}, \quad k = 1, \dots, n-1, \quad a_{0} = (-1)^{n} \det (A),
\end{align*}
where $\{ \lambda_{j}\}_{j=1}^{n}$ are the eigenvalues of $A$ (see \cite{article:Brooks06}).  We define
\begin{align*}
C_{i,H}(s):= \tr{J_{i,0}^{-1}(s) B_{i}(s)}, \quad C_{i,R}(s,t) := t^{n} p_{-J_{i,0}^{-1}B_{i}}(1/t) - 1 - t C_{i,H}(s),
\end{align*}
so that
\begin{align*}
\det J_{i,\eta}(s,t) = (\det J_{i,0}(s)) (1 + t C_{i,H}(s) + C_{i,R}(s,t)).
\end{align*}

Consequently, we have
\begin{equation}\label{integralf:Gammat:transform}
\begin{aligned}
& \; \int_{\rho_{t}(W_{i} \cap \Gamma)} f \dHaus = \int_{\mathcal{S}_{i}} f(\alpha_{i}(s) + t \nu(\alpha_{i}(s))) \abs{\det J_{i,\eta}(s,t)} \ds \\
= & \;  \int_{\mathcal{S}_{i}} f(\alpha_{i}(s) + t \nu(\alpha_{i}(s))) \abs{\det J_{i,0}(s)} \abs{1 + t C_{i,H}(s) + C_{i,R}(s,t)} \ds.
\end{aligned}
\end{equation}

By Assumption $\ref{assump:Gamma}$, $\Gamma$ is a compact $C^{2}$ hypersurface, and so the eigenvalues of $J_{i,0}(s)$ and $J_{i,0}^{-1}B_{i}(s)$ are bounded  uniformly in $s \in \mathcal{S}_{i}$.  Hence, for each $1 \leq i \leq N$, there exists a constant $c_{i}$ such that 
\begin{align*}
\abs{\det J_{i,\eta}(s,t) - \det J_{i,0}(s)} = \abs{\det J_{i,0}(s)} \abs{t C_{i,H}(s) + C_{R}(s,t)} \leq c_{i} \abs{t}.
\end{align*}
Moreover, by the compactness of $\Gamma$, there are only a finite number of $c_{i}$, and so we can deduce that there exists a constant $\tilde{c}$ (that can be chosen independent of $\eta$) such that, for all $t \in (-\eta, \eta), s \in \mathcal{S}_{i}, 1 \leq i \leq N$, 
\begin{align*}
\max_{1 \leq i \leq N} \abs{\det J_{i,\eta}(s,t) - \det J_{i,0}(s)} \leq \tilde{c} \abs{t}.
\end{align*}

Let $\{ \mu_{i}\}_{i=1}^{N}$ be a partition of unity subordinate to the covering $\{W_{i} \cap \Gamma\}_{i=1}^{N}$ of $\Gamma$.  Consequently, by the diffeomorphism $\Theta^{\eta}$, we observe that $\{ \mu_{i} \circ \rho_{t}^{-1}\}_{i=1}^{N}$ is a partition of unity subordinate to the covering $\{ \rho_{i} (W_{i} \cap \Gamma)\}_{i=1}^{N}$ of $\Gamma_{t}$ for $t \in (-\eta, \eta)$.  We define
\begin{align*}
C_{H}(p) := \sum_{i=1}^{N} \mu_{i}(p) C_{i,H}(\alpha_{i}^{-1}(p)), \quad C_{R}(p,t) := \sum_{i=1}^{N} \mu_{i}(p) C_{i,R}(\alpha_{i}^{-1}(p),t), \text{ for } p \in \Gamma.
\end{align*}

From the above discussion, we observe that $C_{H}(p)$ and $C_{R}(p,t)$ are uniformly bounded in $p \in \Gamma$ and
\begin{align*}
\abs{t C_{H}(p)  + C_{R}(p,t)} \leq \tilde{c} \abs{t} \text{ for all } \abs{t} < \eta.
\end{align*}
Then, from (\ref{integralf:transform}), for any $f \in L^{1}(\Gamma)$ we have
\begin{align*}
\int_{\Gamma} f \dHaus = \sum_{i=1}^{N} \int_{W_{i} \cap \Gamma} \mu_{i} f \dHaus = \sum_{i=1}^{N} \int_{\mathcal{S}_{i}} (\mu_{i} f)(\alpha_{i}(s)) \abs{\det J_{i,0}(s)} \ds,
\end{align*}
and similarly from (\ref{integralf:Gammat:transform}), for any $f \in L^{1}(\mathrm{Tub}^{\eta}(\Gamma))$,
\begin{align*}
& \; \int_{\mathrm{Tub}^{\eta}(\Gamma)} f(x) \dx = \int_{-\eta}^{\eta} \int_{\Gamma_{t}} f \dHaus \dt = \int_{-\eta}^{\eta} \sum_{i=1}^{N} \int_{\rho_{t}(W_{i} \cap \Gamma)} (\mu_{i} \circ \rho_{t}^{-1}) f \dHaus \dt \\
= & \; \int_{-\eta}^{\eta} \sum_{i=1}^{N} \int_{\mathcal{S}_{i}} \mu_{i}(\alpha_{i}(s)) f(\alpha_{i}(s) + t \nu(\alpha_{i}(s))) \abs{\det J_{i,0}(s)} \abs{1 + t C_{i,H}(s) + C_{i,R}(s,t)} \ds \dt \\
= & \; \int_{-\eta}^{\eta} \sum_{i=1}^{N} \int_{W_{i} \cap \Gamma} \mu_{i}(p) f(p + t \nu(p)) \abs{ 1 + t C_{H}(p) + C_{R}(p,t)} \dHaus (p) \dt \\
= & \; \int_{-\eta}^{\eta} \int_{\Gamma} f(p + t \nu(p)) \abs{1 + t C_{H}(p) + C_{R}(p,t)} \dHaus (p) \dt.
\end{align*}

Hence, we can identify 
\begin{align*}
\abs{ \det ((\nabla \rho_{t})^{T} (\nabla \rho_{t}))}^{\frac{1}{2}} (p,t) = \abs{1 + t C_{H}(p) + C_{R}(p,t)}.
\end{align*}
We summarise our findings of this section in the following:
\begin{lemma}
Suppose Assumption \ref{assump:Gamma} is satisfied.  There exists $\tilde{c} > 0$ such that for any $\eta < (\tilde{c})^{-1}$, and any $f \in L^{1}(\mathrm{Tub}^{\eta}(\Gamma))$, we have
\begin{align}\label{TubNbd:intf:change}
\int_{\mathrm{Tub}^{\eta}(\Gamma)} f(x) \dx = \int_{-\eta}^{\eta} \int_{\Gamma} f(p + t \nu(p)) \abs{1 + t C_{H}(p) + C_{R}(p,t)} \dHaus \dt,
\end{align}
where
\begin{align}\label{CHCRbdd}
\abs{t C_{H}(p) + C_{R}(p,t)} \leq \tilde{c} \abs{t} \text{ for all } \abs{t} < \eta.
\end{align}
Consequently,
\begin{equation}
\label{TubNbd:intf:upplowbdd}
\begin{aligned}
\frac{1}{1+ \tilde{c} \eta} \int_{\mathrm{Tub}^{\eta}(\Gamma)} \abs{f(x)} \dx & \leq \int_{-\eta}^{\eta} \int_{\Gamma} \abs{f(p + t \nu(p))} \dHaus \dt \\
& \leq \frac{1}{1-\tilde{c} \eta} \int_{\mathrm{Tub}^{\eta}(\Gamma)} \abs{f(x)} \dx.
\end{aligned}
\end{equation}
\end{lemma}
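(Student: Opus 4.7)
The plan is to assemble the identity \eqref{TubNbd:intf:change}, the uniform remainder bound \eqref{CHCRbdd}, and the two-sided estimate \eqref{TubNbd:intf:upplowbdd} directly from the computations carried out earlier in this subsection. First, I would invoke the co-area formula \eqref{TubGammadecompose}, which exploits $\abs{\nabla d} \equiv 1$ on $\mathrm{Tub}^{\eta}(\Gamma)$, to rewrite $\int_{\mathrm{Tub}^{\eta}(\Gamma)} f(x)\dx$ as the iterated integral $\int_{-\eta}^{\eta}\int_{\Gamma_t} f \dHaus \dt$. For $\eta$ smaller than the neighbourhood width supplied by \cite[Lemma 14.16]{book:GilbargTrudinger}, the map $\rho_t(p) = p + t\nu(p)$ is a $C^2$ diffeomorphism $\Gamma \to \Gamma_t$, so a standard area-formula change of variables on each $\Gamma_t$ introduces the Jacobian factor $\abs{\det((\nabla \rho_t)^T (\nabla \rho_t))}^{1/2}$.

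Next, I would identify this Jacobian in local coordinates. Fix a finite open cover $\{W_i\}_{i=1}^N$ of $\Gamma$ and regular $C^{2}$ parameterisations $\alpha_i : \mathcal{S}_i \to W_i \cap \Gamma$, and write the Jacobian of $\rho_t \circ \alpha_i$ as $J_{i,\eta}(s,t) = J_{i,0}(s) + tB_i(s)$. Expanding $\det(t^{-1}I + J_{i,0}^{-1}B_i)$ via the explicit formula for the coefficients of the monic characteristic polynomial of $-J_{i,0}^{-1}B_i$ as elementary symmetric functions of its eigenvalues yields
\begin{align*}
\det J_{i,\eta}(s,t) = \det J_{i,0}(s)\,\bigl(1 + t\, C_{i,H}(s) + C_{i,R}(s,t)\bigr),
\end{align*}
with $C_{i,H}(s) = \tr{J_{i,0}^{-1}(s)B_i(s)}$ and $C_{i,R}$ collecting the terms of order $\geq 2$ in $t$. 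Patching via a partition of unity $\{\mu_i\}$ subordinate to $\{W_i \cap \Gamma\}$, and using that $\{\mu_i \circ \rho_t^{-1}\}$ is then a partition of unity for $\Gamma_t$, produces globally defined $C_H : \Gamma \to \R$ and $C_R : \Gamma \times (-\eta,\eta) \to \R$, which gives \eqref{TubNbd:intf:change}.

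For \eqref{CHCRbdd}, I would use compactness of $\Gamma$ together with the assumed $C^{2}$-regularity: these ensure that $J_{i,0}^{-1}$, $B_i$, and hence all eigenvalues of $-J_{i,0}^{-1}B_i$, are uniformly bounded over $\mathcal{S}_i$ for each $i$, and that only finitely many charts appear. Consequently a single constant $\tilde c$, independent of $\eta$, bounds $\abs{C_{i,H}}$ and $\abs{C_{i,R}(s,t)}/\abs{t}$ uniformly in $s$, $\abs{t}<\eta$ and $i$, yielding $\abs{tC_H(p) + C_R(p,t)} \leq \tilde c \abs{t}$. Choosing $\eta < \tilde c^{-1}$ makes $1 + tC_H(p) + C_R(p,t)>0$, and inserting the pointwise bound $1-\tilde c\abs{t} \leq \abs{1+tC_H(p)+C_R(p,t)} \leq 1+\tilde c\abs{t}$ into \eqref{TubNbd:intf:change} and using $\abs{t}<\eta$ gives \eqref{TubNbd:intf:upplowbdd}.

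The substantive step, rather than bookkeeping, is verifying that the remainder $C_{i,R}(s,t)/t$ is uniformly bounded in both $s$ and $\abs{t}<\eta$. This is precisely where the $C^{2}$ assumption on $\Gamma$ (and hence Lipschitz continuity of $\nu = \nabla d\vert_\Gamma$) is used: it controls every elementary symmetric polynomial in the eigenvalues of $-J_{i,0}^{-1}B_i$ uniformly on the compact parameter domains, and since every term in $C_{i,R}$ carries at least one extra factor of $t$, the remainder is $O(t)$ uniformly. Everything else reduces to Fubini and the area formula, which are routine.
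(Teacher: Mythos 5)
Your proposal is correct and mirrors the paper's own proof step for step: co-area formula to reduce to integrals over the level sets $\Gamma_t$, change of variables via $\rho_t$, local identification of the Jacobian through $J_{i,\eta} = J_{i,0} + tB_i$ and the characteristic-polynomial expansion, patching via a partition of unity to get $C_H$ and $C_R$, and uniform bounds from compactness and $C^2$ regularity. The observation that $\eta < \tilde c^{-1}$ guarantees $1 - \tilde c \eta > 0$ so the two-sided estimate makes sense is the same implicit step the paper relies on.
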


\subsection{Coordinates in a scaled tubular neighbourhood}\label{sec:ScaledNbd}
In the subsequent convergence analysis, we will use a tubular neighbourhood whose width scales with $\eps^{k}$ for some $0 < k \leq 1$, i.e., we consider $X^{\eps} := \mathrm{Tub}^{\eps^{k} \eta}(\Gamma)$.  For this section, we take $X^{\eps} = \mathrm{Tub}^{\eps \eta}(\Gamma)$, i.e., $k = 1$, to derive some technical results.

Let $W \subset \R^{n}$ be one of the open sets in a finite open cover of $\Gamma$, with associated local regular parameterisation $\alpha : \mathcal{S} \to W \cap \Gamma$ and parameter domain $\mathcal{S} \subset \R^{n-1}$.  We define the metric tensor $\mathcal{G}_{0} = (g_{0,ij})_{1 \leq i,j \leq n-1}$ by
\begin{align}\label{metrictensor:Gamma}
g_{0,ij}(p) := \pd_{s_{i}}\alpha(s) \cdot \pd_{s_{j}} \alpha(s) \text{ for } 1 \leq i,j \leq n-1, \text{ if } p = \alpha(s) \in \Gamma \cap W,
\end{align}
with inverse metric tensor $\mathcal{G}_{0}^{-1} = (g_{0}^{ij})_{1 \leq i,j \leq n-1}$.  Then, for a scalar function $f: \Gamma \to \R$, the surface gradient of $f$ on $\Gamma$ at a point $p = \alpha(s) \in \Gamma \cap W$ for some $s \in \mathcal{S}$ is defined as
\begin{align}\label{surfGradient}
\surf f(\alpha(s)) = \sum_{j=1}^{n-1} \left ( \sum_{i=1}^{n-1} g_{0}^{ij}(\alpha(s)) \pd_{s_{i}} f(\alpha(s)) \right ) \pd_{s_{j}} \alpha(s).
\end{align}
One can show that the definition (\ref{surfGradient}) does not depend on the parameterisation.  There is also an alternate and equivalent definition of the surface gradient in terms of level sets, we refer the reader to \cite[\S 2.2]{article:DziukElliott13} for more details.

Recall the signed distance function $d$ defined in (\ref{defn:signeddist}).  We introduce the rescaled distance variable
\begin{align*}
z = \frac{d}{\eps}.
\end{align*}
For $z \in (-\eta, \eta)$, we define a parallel hypersurface at distance $\eps z$ away from $\Gamma$ as
\begin{align}\label{Gammaepsz}
\Gamma^{\eps z} := \{ p + \eps z \nu(p) : p \in \Gamma \}.
\end{align}
Let $p(x)$ denote the closest point operator of $x \in X^{\eps}$ as defined in (\ref{closestpoint}), such that
\begin{align}\label{scaledNbd:decomposition}
x = p(x) + \eps z \nu(p(x)) \text{ for some } z \in (-\eta, \eta).
\end{align}
Then, by the injectivity of the closest point operator, we have
\begin{align}\label{samenormalGamma}
\nu(y) = \nu(p(y)) \text{ for } y \in \Gamma^{\eps z}.
\end{align}

For any scalar function $f: X^{\eps} \to \R$, we define its representation $F_{\eps}(p,z)$ in the $(p,z)$ coordinate system by
\begin{align}\label{representation:pz}
F_{\eps}(p,z) := f(p + \eps z \nu(p)) \text{ for } p \in \Gamma, z \in (-\eta, \eta).
\end{align}

Locally, for $x \in X^{\eps} \cap W$, we have 
\begin{align}\label{Geps}
x = G_{\eps}(s,z) := \alpha(s) + \eps z \nu(\alpha(s)) \text{ for some } s \in \mathcal{S}, z \in (-\eta, \eta).
\end{align}
Then, we can define the local representation $\tilde{F}_{\eps}(s,z)$ of $f$ in the $(s,z)$ coordinate system by
\begin{align}\label{representation:sz}
\tilde{F}_{\eps}(s,z) := F_{\eps}(\alpha(s), z) = f(\alpha(s) + \eps z \nu(\alpha(s)))  \text{ for } s \in \mathcal{S}, z \in (-\eta, \eta).
\end{align}

Let $(s_{1}, \dots, s_{n-1}) \in \mathcal{S}$ and $s_{n} := z$.  Then, by (\ref{Geps}),
\begin{align}\label{derivativeGeps}
\pd_{s_{i}} G_{\eps}(s,z) = \pd_{s_{i}} \alpha(s) + \eps z \pd_{s_{i}} \nu(\alpha(s)) \text{ for } 1 \leq i \leq n-1, \quad \pd_{s_{n}} G_{\eps} = \eps \nu(\alpha(s)),
\end{align}
and $\{ \pd_{s_{i}} G_{\eps} \}_{i=1}^{n}$ is a basis of $\R^{n}$ locally around $\Gamma^{\eps z}$.  For $1 \leq i, j \leq n-1$, we define the metric tensor $\mathcal{G}_{\eps z} = (g_{\eps,ij})_{1 \leq i,j \leq n}$ in these new coordinates as
\begin{equation}\label{metrictensor:eps}
\begin{aligned}
g_{\eps,ij}(p,z) & = (\pd_{s_{i}} \alpha(s) + \eps z \pd_{s_{i}} \nu(\alpha(s))) \cdot (\pd_{s_{j}} \alpha(s) + \eps z \pd_{s_{j}} \nu(\alpha(s))), \\
g_{\eps,in}(p,z) & = g_{\eps,ni}(p,z) = (\pd_{s_{i}} \alpha(s) + \eps z \pd_{s_{i}} \nu(\alpha(s))) \cdot \eps \nu(\alpha(s)) = 0, \\
 g_{\eps, nn}(p,z) & = (\eps \nu(\alpha(s)) ) \cdot (\eps \nu(\alpha(s))) = \eps^{2}, 
\end{aligned}
\end{equation}
for $p = \alpha(s) \in \Gamma \cap W$, and we have used that $\pd_{s_{i}} \nu \cdot \nu = \tfrac{1}{2} \pd_{s_{i}} \abs{\nu}^{2} = 0$.

Let $\mathcal{G}^{-1}_{\eps z} = (g_{\eps}^{ij})_{1 \leq i,j \leq n}$ denote the inverse metric tensor.  Then, for any scalar function $f: X^{\eps} \to \R$, with the representation $\tilde{F}_{\eps}(s,z)$ in the $(s,z)$ coordinate system, we can express the surface gradient $\surfzloc \tilde{F}_{\eps}(s,z)$ on the parallel hypersurface $\Gamma^{\eps z}$ in local coordinates as
\begin{align}\label{surfGradientepsztildeFeps}
\surfzloc \tilde{F}_{\eps}(s,z) := \sum_{j=1}^{n-1} \left ( \sum_{i=1}^{n-1} g_{\eps}^{ij}(\alpha(s),z) \pd_{s_{i}} \tilde{F}_{\eps}(s,z) \right ) \pd_{s_{j}} G_{\eps}(s,z).
\end{align}
Similarly, we can define the surface gradient $\surfloc \tilde{F}_{\eps}(s,z)$ on the original hypersurface $\Gamma$ as
\begin{align}
\label{surfGradienttildeFeps}
\surfloc \tilde{F}_{\eps}(s,z) = \sum_{j=1}^{n-1} \left ( \sum_{i=1}^{n-1} g_{0}^{ij}(\alpha(s)) \pd_{s_{i}} \tilde{F}_{\eps}(s,z) \right ) \pd_{s_{j}} \alpha(s).
\end{align}

Using (\ref{representation:pz}) and (\ref{representation:sz}) to switch to the $(p,z)$ coordinate system, we can define the surface gradient $\surfz F_{\eps}(p,z)$ on the parallel hypersurface $\Gamma^{\eps z}$ via the formula
\begin{align}\label{surfGradientepszFeps}
\surfz F_{\eps}(p,z) := \surfzloc \tilde{F}_{\eps}(\alpha^{-1}(p),z) \text{ for } p = \alpha(s) \in \Gamma \cap W,
\end{align}
and the surface gradient $\surf F_{\eps}(p,z)$ on the original hypersurface $\Gamma$ as
\begin{align}\label{surfGradientFeps}
\surf F_{\eps}(p,z) := \surfloc \tilde{F}_{\eps}(\alpha^{-1}(p),z) \text{ for } p = \alpha(s) \in \Gamma \cap W.
\end{align}

It is convenient to define the remainder both in local and global representations:
\begin{equation}
\label{defn:nablaepsz}
\begin{aligned}
\tilde{\nabla}_{\eps z} \tilde{F}_{\eps}(s,z) & := \surfzloc \tilde{F}_{\eps}(s,z) - \surfloc \tilde{F}_{\eps}(s,z), \\
\nabla_{\eps z} F_{\eps}(p,z) & := \surfz F_{\eps}(p,z) - \surf F_{\eps}(p,z),
\end{aligned}
\end{equation}
with the relation
\begin{align*}
\nabla_{\eps z} F_{\eps}(p,z) = \tilde{\nabla}_{\eps z} \tilde{F}_{\eps}(\alpha^{-1}(p),z) \text{ for } p = \alpha(s) \in \Gamma \cap W.
\end{align*}

We now state a result that decomposes the Euclidean gradient $\nabla f$ into a surface component and a component in the normal direction:
\begin{lemma}\label{ScaledNbd:gradientdecomposition}
Suppose that Assumption \ref{assump:Gamma} is satisfied.  Let $f : X^{\eps} \to \R$ be a $C^{1}$ function with representation $F_{\eps}$ in the $(p,z)$ coordinate system and representation $\tilde{F}_{\eps}$ in the $(s,z)$ coordinate system, as defined in $(\ref{representation:pz})$ and $(\ref{representation:sz})$, respectively.  Then, for $p \in \Gamma$, $s \in \mathcal{S}$, $z \in (-\eta, \eta)$ such that $p = \alpha(s)$, and $x = \alpha(s) + \eps z \nu(\alpha(s)) = p + \eps z \nu(p) \in X^{\eps}$,
\begin{equation}\label{gradientf:decomposition}
\begin{aligned}
\nabla f(x) & = \frac{1}{\eps} \nu(p) \pd_{z} F_{\eps}(p,z) + \surfz F_{\eps}(p,z) \\
& = \frac{1}{\eps} \nu(\alpha(s)) \pd_{z} \tilde{F}_{\eps}(s,z) + \surfzloc \tilde{F}_{\eps}(s,z),
\end{aligned}
\end{equation}
where $\surfz(\cdot)$ and $\surfzloc (\cdot)$ are defined in $(\ref{surfGradientepszFeps})$ and $(\ref{surfGradientepsztildeFeps})$, respectively.

In addition, the remainders $\nabla_{\eps z} F_{\eps}$ and $\tilde{\nabla}_{\eps z} \tilde{F}_{\eps}$ defined in $(\ref{defn:nablaepsz})$ satisfy
\begin{align}
\nabla_{\eps z} F_{\eps}(p,z) \cdot \nu(p) = 0, & \quad \tilde{\nabla}_{\eps z} \tilde{F}_{\eps}(s,z) \cdot \nu(\alpha(s)) = 0, \label{gradientremainderdotnuzero} \\
\nabla_{\eps z} F_{\eps} (p,z) = \mathcal{O}(\eps), & \quad \tilde{\nabla}_{\eps z} \tilde{F}_{\eps} (s,z) = \mathcal{O}(\eps) \text{ as } \eps \to 0. \label{gradientremainderOrdereps}
\end{align}

\end{lemma}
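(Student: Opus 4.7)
The plan is to work in the local $(s,z)$-chart $G_\eps(s,z)=\alpha(s)+\eps z\,\nu(\alpha(s))$ and exploit the block structure of the metric tensor $\mathcal{G}_{\eps z}$ recorded in \eqref{metrictensor:eps}. Applying the chain rule to $\tilde F_\eps(s,z)=f(G_\eps(s,z))$ yields $\pd_{s_i}\tilde F_\eps=\nabla f\cdot\pd_{s_i}G_\eps$ for $1\leq i\leq n-1$ and $\pd_z\tilde F_\eps=\eps\,\nabla f\cdot\nu(\alpha(s))$, so the normal component of $\nabla f$ is immediately identified as $\nabla f\cdot\nu=\tfrac{1}{\eps}\pd_z\tilde F_\eps$.

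For the tangential piece, the key observation is that $\pd_{s_i}G_\eps\cdot\nu(\alpha(s))=0$ for $1\leq i\leq n-1$: this follows from $\pd_{s_i}\alpha\cdot\nu=0$ (tangency of the parameterisation) together with $\pd_{s_i}\nu\cdot\nu=\tfrac12\pd_{s_i}|\nu|^2=0$. For $\eps$ sufficiently small the top $(n-1)\times(n-1)$ block of $\mathcal{G}_{\eps z}$ remains uniformly positive definite as a perturbation of $\mathcal{G}_0$, so $\{\pd_{s_i}G_\eps\}_{i=1}^{n-1}$ is a basis of the hyperplane $\nu(\alpha(s))^\perp$. Writing $\nabla f-(\nabla f\cdot\nu)\nu=\sum_j c_j\,\pd_{s_j}G_\eps$ and testing against $\pd_{s_i}G_\eps$ gives $\sum_j c_j g_{\eps,ij}=\pd_{s_i}\tilde F_\eps$, so that $c_j=\sum_i g_\eps^{ij}\pd_{s_i}\tilde F_\eps$; by \eqref{surfGradientepsztildeFeps} this is exactly $\surfzloc\tilde F_\eps$, which proves \eqref{gradientf:decomposition} after translating via \eqref{representation:pz} and \eqref{surfGradientepszFeps}. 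The orthogonality assertion \eqref{gradientremainderdotnuzero} is then immediate, since $\surfzloc\tilde F_\eps\in\mathrm{span}\{\pd_{s_j}G_\eps\}$ and $\surfloc\tilde F_\eps\in\mathrm{span}\{\pd_{s_j}\alpha\}$, and both spanning sets are contained in $\nu(\alpha(s))^\perp$.

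The only place where genuine estimation is required is the $\mathcal{O}(\eps)$ bound \eqref{gradientremainderOrdereps}. I would split the difference as
\begin{align*}
\surfzloc\tilde F_\eps-\surfloc\tilde F_\eps
=\sum_{i,j=1}^{n-1}\pd_{s_i}\tilde F_\eps\Bigl[(g_\eps^{ij}-g_0^{ij})\,\pd_{s_j}G_\eps+g_0^{ij}\bigl(\pd_{s_j}G_\eps-\pd_{s_j}\alpha\bigr)\Bigr].
\end{align*}
From \eqref{derivativeGeps} the second factor equals $g_0^{ij}\,\eps z\,\pd_{s_j}\nu(\alpha(s))=\mathcal{O}(\eps)$. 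For the first factor, \eqref{metrictensor:eps} gives the expansion $g_{\eps,ij}=g_{0,ij}+\eps z\,A_{ij}+(\eps z)^2 B_{ij}$ with $A_{ij},B_{ij}$ uniformly bounded on $\Gamma$ thanks to the $C^2$ regularity in Assumption \ref{assump:Gamma}; a Neumann series for $(\mathcal{G}_0+\eps z M_\eps)^{-1}$ then yields $g_\eps^{ij}-g_0^{ij}=\mathcal{O}(\eps)$ uniformly in $s\in\mathcal{S}$ and $|z|<\eta$. The main (minor) obstacle is then the bookkeeping required to make the $\mathcal{O}(\eps)$ constants independent of the coordinate patch; this is handled exactly as in Section \ref{sec:TubNbd} by passing to the finite atlas $\{W_i\}_{i=1}^N$ and the subordinate partition of unity, so that the local bounds glue to a global one on all of $X^\eps$.
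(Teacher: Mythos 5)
Your proof is correct and follows essentially the same path as the paper: both exploit the block-diagonal structure of $\mathcal{G}_{\eps z}$ in the $(s,z)$-chart and a Neumann-series expansion of $\tilde{\mathcal{G}}_{\eps z}^{-1}=\mathcal{G}_0^{-1}+\mathcal{O}(\eps)$ for the remainder bound. The one small stylistic difference is that the paper invokes the standard curvilinear-gradient identity $\nabla f=\sum_{i,j}g_\eps^{ij}\pd_{s_i}\tilde F_\eps\,\pd_{s_j}G_\eps$ and then reads off the normal/tangential split from the block structure, whereas you re-derive this identity from scratch by projecting $\nabla f$ onto $\nu$ and $\nu^\perp$ and inverting the $(n-1)\times(n-1)$ system $\pd_{s_i}\tilde F_\eps=\sum_j g_{\eps,ij}c_j$; this is a more self-contained but mathematically equivalent argument, and your split of $\surfzloc\tilde F_\eps-\surfloc\tilde F_\eps$ into the "inverse-metric" and "basis-vector" $\mathcal{O}(\eps)$ pieces is exactly the content of the paper's display \eqref{gradientepsz:decomposition}.
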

\begin{proof}
For the first assertion, we follow the proof given in \cite[Appendix]{article:AbelsGarckeGrun11}.  The equivalent result in two dimensions can be found in \cite[Appendix B]{article:GarckeStinner06} and in \cite{article:ElliottStinner09}.

Let $f: X^{\eps} \to \R$ be a $C^{1}$ function.  Recall the metric tensor $\mathcal{G}_{\eps z} = (g_{\eps,ij})_{ 1\leq i,j \leq n}$ as defined in (\ref{metrictensor:eps}).  If we denote the submatrix $\tilde{\mathcal{G}}_{\eps z}$ and its inverse $\tilde{\mathcal{G}}_{\eps z}^{-1}$ by:
\begin{align*}
\tilde{\mathcal{G}}_{\eps z} = (g_{\eps,ij})_{1 \leq i,j \leq n-1}, \quad  \tilde{\mathcal{G}}_{\eps z}^{-1} = (g_{\eps}^{ij})_{1 \leq i, j \leq n-1}.
\end{align*}
Then, we observe that
\begin{align*}
\mathcal{G}_{\eps z} = \left ( \begin{array}{cc}
\tilde{\mathcal{G}}_{\eps z} & \bm{0} \\
\bm{0}^{T} & \eps^{2}
\end{array} \right ), \quad \mathcal{G}_{\eps z}^{-1} = \left ( \begin{array}{cc}
\tilde{\mathcal{G}}_{\eps z}^{-1} & \bm{0} \\
\bm{0}^{T} & \eps^{2}
\end{array} \right ),
\end{align*}
where $\bm{0} \in \R^{n-1}$ is the zero column vector of length $n-1$.  Moreover, for $f(x) = \tilde{F}_{\eps}(s(x),z(x))$, we have from (\ref{derivativeGeps}) and  (\ref{surfGradientepsztildeFeps}),
\begin{align*}
\nabla f(x) & = \sum_{j=1}^{n} \left ( \sum_{i=1}^{n} g_{\eps}^{ij} \pd_{s_{i}} \tilde{F}_{\eps} \right ) \pd_{s_{j}} G_{\eps} = \frac{1}{\eps^{2}} \pd_{z} \tilde{F}_{\eps} \pd_{z} G_{\eps} + \surfzloc \tilde{F}_{\eps} = \frac{1}{\eps} \pd_{z} \tilde{F}_{\eps} \nu(\alpha(s)) + \surfzloc \tilde{F}_{\eps}.
\end{align*}

For the assertion regarding the remainder, let
\begin{align*}
\mathcal{C}_{ij}(s) &:= \pd_{s_{i}} \nu(\alpha(s)) \cdot \pd_{s_{j}} \alpha(s) + \pd_{s_{i}} \alpha(s) \cdot \pd_{s_{j}} \nu(\alpha(s)), \\
\mathcal{D}_{ij}(s) &:= \pd_{s_{i}} \nu(\alpha(s)) \cdot \pd_{s_{j}} \nu(\alpha(s)),
\end{align*}
and
\begin{align*}
\mathcal{C} := (\mathcal{C}_{ij})_{1 \leq i,j \leq n-1}, \quad \mathcal{D} := (\mathcal{D}_{ij})_{1 \leq i,j \leq n-1},
\end{align*}
so that by (\ref{metrictensor:eps}) and (\ref{metrictensor:Gamma}),
\begin{align*}
g_{\eps,ij}(\alpha(s),z)  & = g_{0,ij}(\alpha(s)) + \eps z \mathcal{C}_{ij}(s) + (\eps z)^{2} \mathcal{D}_{ij}(s), \\
\tilde{\mathcal{G}}_{\eps z} & = \mathcal{G}_{0} + \eps z \mathcal{C} + (\eps z)^{2} \mathcal{D}.
\end{align*}
A calculation involving the ansatz 
\begin{align}\label{ansatz}
\tilde{\mathcal{G}}_{\eps z}^{-1} = (\mathcal{G}_{0} + \eps z \mathcal{C} + (\eps z)^{2} \mathcal{D} )^{-1} = \mathcal{G}_{0}^{-1} + \mathcal{E},
\end{align}
will yield that
\begin{align*}
\mathcal{E} = - (I + \mathcal{G}_{0}^{-1} (\eps z \mathcal{C} + (\eps z)^{2} \mathcal{D}))^{-1} (\mathcal{G}_{0}^{-1} (\eps z \mathcal{C} + (\eps z)^{2} \mathcal{D}) \mathcal{G}_{0}^{-1}),
\end{align*}
if $\tilde{\mathcal{G}}_{\eps z}$, $\mathcal{G}_{0}$ and $I +  \mathcal{G}_{0}^{-1} (\eps z \mathcal{C} + (\eps z)^{2} \mathcal{D})$ are invertible.  Here, $I$ denotes the identity matrix.

Since $\Gamma$ is a compact $C^{2}$ hypersurface, all entries in the matrices $\mathcal{G}_{0}$, $\mathcal{C}$, and $\mathcal{D}$ are bounded.  For a matrix $H$ and $\eps$ sufficiently small so that the absolute values of the eigenvalues of $\eps H$ are less than 1, we have,
\begin{align*}
(I + \eps H)^{-1} = I - \eps H + \eps^{2} H^{2} - \cdots.
\end{align*}
Hence, we can express
\begin{align*}
\tilde{\mathcal{G}}_{\eps z}^{-1} = \mathcal{G}_{0}^{-1} - \eps z \mathcal{G}_{0}^{-1} \mathcal{C} \mathcal{G}_{0}^{-1} + \mathcal{O}(\eps^{2}) \text{ as } \eps \to 0.
\end{align*}
Consequently, looking at (\ref{surfGradientepsztildeFeps}), and (\ref{surfGradienttildeFeps}), we see that for $s \in \mathcal{S}, z \in (-\eta, \eta)$,
\begin{equation}\label{gradientepsz:decomposition}
\begin{aligned}
& \; \surfzloc \tilde{F}_{\eps}(s,z) = \sum_{i,j=1}^{n-1} g_{\eps}^{ij}(\alpha(s),z) \pd_{s_{i}} \tilde{F}_{\eps}(s,z) \pd_{s_{j}} G_{\eps}(s,z) \\
= & \; \sum_{i,j=1}^{n-1} g_{0}^{ij}(\alpha(s)) \pd_{s_{i}} \tilde{F}_{\eps}(s,z) \pd_{s_{j}} \alpha(s) + \eps z \sum_{i,j=1}^{n-1}  g_{0}^{ij}(\alpha(s)) \pd_{s_{i}} \tilde{F}_{\eps}(s,z) \pd_{s_{j}} \nu(\alpha(s)) \\
 - & \; \eps z \sum_{i,j=1}^{n-1} (\mathcal{G}_{0}^{-1} \mathcal{C} \mathcal{G}_{0}^{-1})_{ij}(s) \pd_{s_{i}} \tilde{F}_{\eps}(s,z) \pd_{s_{j}} \alpha(s)  + \text{ h.o.t.} \\
= & \; \surfloc \tilde{F}_{\eps}(s,z) + \tilde{\nabla}_{\eps z} \tilde{F}_{\eps}(s,z) = \surfloc \tilde{F}_{\eps}(s,z) + \mathcal{O}(\eps) \text{ as } \eps \to 0,
\end{aligned}
\end{equation}
where h.o.t. denotes terms of higher order in $\eps$.  By definition and by (\ref{samenormalGamma}),
\begin{align}\label{surfGradient:remaining:nu}
\tilde{\nabla}_{\eps z} \tilde{F}_{\eps}(s,z) \cdot \nu(\alpha(s)) = \surfzloc \tilde{F}_{\eps}(s,z) \cdot \nu(\alpha(s)) - \surfloc \tilde{F}_{\eps}(s,z) \cdot \nu(\alpha(s)) = 0.
\end{align}

The analogous statements for the representation in the $(p,z)$ coordinate system can be obtain from (\ref{gradientepsz:decomposition}) and (\ref{surfGradient:remaining:nu}) by applying the relations (\ref{surfGradientepszFeps}), (\ref{surfGradientFeps}), and (\ref{defn:nablaepsz}).
\end{proof}

\begin{remark}
In two dimensions, for a local arclength parameterisation $\alpha = \alpha(s)$, $s \in \mathcal{S} \subset \R$, of $\Gamma$, we have that $\mathcal{G}_{0} = 1$, $\pd_{s} \nu = - \kappa \pd_{s}\alpha$, $\mathcal{C} = -2 \kappa$, and $\surfloc \tilde{F}_{\eps}(s,z) = \pd_{s} \tilde{F}_{\eps}(s,z) \pd_{s} \alpha(s)$, where $\kappa$ is the curvature of $\Gamma$.  Hence, we obtain from $(\ref{gradientf:decomposition})$ and $(\ref{gradientepsz:decomposition})$ that
\begin{align*}
\nabla f = \frac{1}{\eps} \pd_{z} \tilde{F}_{\eps} \nu + \pd_{s} \tilde{F}_{\eps} \pd_{s} \alpha + \kappa \eps z \pd_{s} \tilde{F}_{\eps} \pd_{s} \alpha + \mathcal{O}(\eps^{2}),
\end{align*}
which is consistent with \cite[Appendix B]{article:GarckeStinner06}.
\end{remark}  

\begin{remark}\label{rem:ScaledNbd:gradientdecomposition:W11}
For fixed $\eps > 0$ sufficiently small, such that $\pd X^{\eps} \cap \pd \Omega = \emptyset$, we see that $\pd X^{\eps}$ has the same regularity as $\Gamma$.  Using that $C^{1}(\overline{X^{\eps}})$ functions are dense in $H^{1}(X^{\eps})$, we can extend the assertions of Lemma \ref{ScaledNbd:gradientdecomposition} to $H^{1}(X^{\eps})$ functions, where $(\ref{gradientf:decomposition})$ hold for weak derivatives, and $\pd_{z}(\cdot)$ and $\surfz(\cdot)$ are to be understood in the weak sense, and $(\ref{gradientremainderdotnuzero})$ and $(\ref{gradientremainderOrdereps})$ hold  almost everywhere.
\end{remark}

Using (\ref{TubNbd:intf:change}), (\ref{gradientf:decomposition}), and a change of variables $t \mapsto \eps z$, we have for any $0 < k \leq 1$,
\begin{equation}\label{ScaledNbd:coordchange}
\begin{aligned}
& \; \int_{\mathrm{Tub}^{\eps^{k} \eta}(\Gamma)} \abs{f}^{2} + \abs{\nabla f}^{2} \dx \\
= & \; \int_{-\eps^{k} \eta}^{\eps^{k} \eta} \int_{\Gamma} (\abs{f}^{2} + \abs{\nabla f}^{2})(p + t \nu(p)) \abs{1 + t C_{H}(p) + C_{R}(p,t)} \dHaus \dt, \\
= & \; \int_{\frac{-\eta}{\eps^{1-k}}}^{\frac{\eta}{\eps^{1-k}}} \int_{\Gamma} \frac{1}{\eps} \abs{\pd_{z} F_{\eps}}^{2} (p,z) \abs{1 + \eps z C_{H}(p) + C_{R}(p,\eps z)} \dHaus \dz \\
+ & \; \int_{\frac{-\eta}{\eps^{1-k}}}^{\frac{\eta}{\eps^{1-k}}} \int_{\Gamma} \eps (\abs{F_{\eps}}^{2} + \abs{\surfz F_{\eps}}^{2}) (p,z) \abs{1 + \eps z C_{H}(p) + C_{R}(p,\eps z)} \dHaus \dz.
\end{aligned}
\end{equation}
In addition, using that for $\eps \in (0,1]$,
\begin{align*}
1 - \tilde{c} \eps^{k} \eta \geq 1 - \tilde{c} \eta, \quad 1 + \tilde{c} \eps^{k} \eta \leq 1 + \tilde{c} \eta,
\end{align*}
we find that an analogous statement to (\ref{TubNbd:intf:upplowbdd}) holds for $\mathrm{Tub}^{\eps^{k} \eta}(\Gamma)$ for any $0 < k \leq 1$:
\begin{equation}\label{ScaledNbd:intf:upplowbdd}
\begin{aligned}
\frac{1}{1+\tilde{c} \eta} \int_{\mathrm{Tub}^{\eps^{k} \eta}(\Gamma)} \abs{f} \dx & \leq \int_{-\eps^{k} \eta}^{\eps^{k} \eta} \int_{\Gamma} \abs{f(p + t \nu(p))} \dHaus \dt \\
& \leq \frac{1}{1-\tilde{c} \eta} \int_{\mathrm{Tub}^{\eps^{k} \eta}(\Gamma)} \abs{f} \dx.
\end{aligned}
\end{equation}

\subsection{On functions extended constantly along the normal direction}\label{sec:ConstExt}
Let $f \in H^{1}(\Gamma)$, and $f^{e}$ denote its constant extension off $\Gamma$ in the normal direction to $\mathrm{Tub}^{\eta}(\Gamma)$, as defined in (\ref{defn:constext}).  A short calculation shows the following relation between $\nabla f^{e}$ and $\surf f$ (see \cite[proof of Theorem 2.10]{article:DziukElliott13} for more details):
\begin{align}\label{ConstExt:relationgradients}
\nabla f^{e}(x) = (\bm{I} - d(x) \bm{H}(x)) \surf f(p(x)) \text{ for } x \in \mathrm{Tub}^{\eta}(\Gamma),
\end{align}
where, $\bm{H}$ denotes the Hessian of the signed distance function $d$, and $\bm{I}$ denotes the identity tensor.  Consequently 
\begin{align}\label{ConstExt:samegradients}
\nabla f^{e}(y) = \surf f(y) \text{ for } y \in \Gamma.
\end{align}

\begin{cor}\label{cor:constextH1}
Let Assumption \ref{assump:Gamma} be satisfied, and choose $\eta > 0$ with the diffeomorphism $\Theta^{\eta} : \mathrm{Tub}^{\eta}(\Gamma) \to \Gamma \times (-\eta, \eta)$ as defined in $(\ref{diffeo})$.  Let $1 \leq q \leq \infty$, $f \in W^{1,q}(\Gamma)$, and let $f^{e}$ denote its constant extension in the normal direction off $\Gamma$ to $\mathrm{Tub}^{\eta}(\Gamma)$, as defined in $(\ref{defn:constext})$.  Then, $f^{e} \in W^{1,q}(\mathrm{Tub}^{\eta}(\Gamma))$, and there exists a constant $C > 0$, independent of $f$, such that
\begin{align*}
\norm{f^{e}}_{L^{q}(\mathrm{Tub}^{\eta}(\Gamma))} \leq C \norm{f}_{L^{q}(\Gamma)}, \quad \norm{\nabla f^{e}}_{L^{q}(\mathrm{Tub}^{\eta}(\Gamma))} \leq C \norm{ \surf f}_{L^{q}(\Gamma)}.
\end{align*}
Moreover, let $f^{Ec}$ denote the extension of $f^{e}$ from $\mathrm{Tub}^{\eta}(\Gamma)$ to $\Omega$ by the method of reflection.  Then $f^{Ec} \in W^{1,q}(\Omega)$ and there exists a constant $C > 0$, independent of $f$, such that
\begin{align}\label{constextH1:bdd}
\norm{f^{Ec}}_{L^{q}(\Omega))} \leq C \norm{f}_{L^{q}(\Gamma)}, \quad \norm{\nabla f^{Ec}}_{L^{q}(\Omega))} \leq C \norm{ \surf f}_{L^{q}(\Gamma)}. 
\end{align}
\end{cor}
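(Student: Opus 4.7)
The proof splits into two stages: bounding $f^e$ on $\mathrm{Tub}^\eta(\Gamma)$, then extending to all of $\Omega$.  By density of $C^1(\Gamma)$ in $W^{1,q}(\Gamma)$ (for $q < \infty$; the $q = \infty$ case is similar), I would first assume $f \in C^1(\Gamma)$ so that the pointwise gradient identity (\ref{ConstExt:relationgradients}) applies, then pass to the limit for general $f$.

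For the tubular neighbourhood bounds, the key tool is the change of variables formula (\ref{TubNbd:intf:change}).  Since $f^e(p + t\nu(p)) = f(p)$ by (\ref{defn:constext}), integrating $|f^e|^q$ against the Jacobian and using (\ref{CHCRbdd}) gives
\[
\int_{\mathrm{Tub}^\eta(\Gamma)} |f^e|^q \dx = \int_{-\eta}^\eta \int_\Gamma |f(p)|^q \left| 1 + t C_H(p) + C_R(p,t) \right| \dHaus \dt \leq 2\eta(1+\tilde c\eta) \|f\|_{L^q(\Gamma)}^q,
\]
from which the $L^q$-bound on $f^e$ follows.  For the gradient, (\ref{ConstExt:relationgradients}) yields the pointwise estimate $|\nabla f^e(x)| \leq |\bm I - d(x)\bm H(x)|\,|\surf f(p(x))|$.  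The $C^2$-regularity of $\Gamma$ from Assumption \ref{assump:Gamma} ensures $\bm H$ is continuous, hence uniformly bounded, on the closed tubular neighbourhood, so there exists $M > 0$ with $|\bm I - d\bm H| \leq M$; the same change of variables then gives $\|\nabla f^e\|_{L^q(\mathrm{Tub}^\eta)} \leq CM \|\surf f\|_{L^q(\Gamma)}$.

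For the extension from $\mathrm{Tub}^\eta(\Gamma)$ to $\Omega$, I note that since $\overline{\Omega^*} \subset \Omega$ and $\Gamma \cap \partial\Omega = \emptyset$, one can shrink $\eta$ so that $\mathrm{Tub}^\eta(\Gamma) \Subset \Omega$; its boundary is the disjoint union of the two parallel $C^2$ hypersurfaces $\Gamma_{\pm\eta}$ given by the diffeomorphism (\ref{diffeo}).  The reflection-based Sobolev extension theorem cited in Section \ref{sec:dataextension} then produces a bounded linear operator $E : W^{1,q}(\mathrm{Tub}^\eta(\Gamma)) \to W^{1,q}(\Omega)$ that respects $L^q$-norms up to a constant.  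Setting $f^{Ec} := Ef^e$ and combining with the first-stage estimates immediately gives $\|f^{Ec}\|_{L^q(\Omega)} \leq C\|f\|_{L^q(\Gamma)}$.

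The main obstacle is the \emph{decoupled} gradient bound $\|\nabla f^{Ec}\|_{L^q(\Omega)} \leq C\|\surf f\|_{L^q(\Gamma)}$: a naive reflection extension only yields the weaker $\leq C(\|f\|_{L^q(\Gamma)} + \|\surf f\|_{L^q(\Gamma)})$, because the cutoff across $\partial \mathrm{Tub}^\eta(\Gamma)$ differentiates $f^e$ itself.  To remove the $\|f\|_{L^q(\Gamma)}$ term, one should exploit the special structure that $f^e$ is constant along normals: for example, realise the extension as $f^{Ec}(x) = f(\pi(x))$ for a Lipschitz retraction $\pi : \Omega \to \Gamma$ agreeing with the closest point projection on $\mathrm{Tub}^{\eta/2}(\Gamma)$, so that $\nabla f^{Ec}(x)$ is pointwise a bounded multiple of $(\surf f)(\pi(x))$ and a Jacobian change of variables on each level set $\pi^{-1}(p)$ produces the clean bound.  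The existence of such a retraction uses the $C^2$-regularity of $\Gamma$ and the geometry in Assumption \ref{assump:Gamma}.
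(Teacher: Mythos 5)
Your tubular-neighbourhood estimates follow the same route as the paper: the change-of-variables formula \eqref{TubNbd:intf:change}, the Jacobian bound \eqref{CHCRbdd}, and the gradient identity \eqref{ConstExt:relationgradients} together with the uniform bound on the Hessian of $d$ (from the $C^{2}$-regularity of $\Gamma$). The paper works directly with $W^{1,q}$ data, using that \eqref{ConstExt:relationgradients} holds a.e.\ for Sobolev functions, so your preliminary reduction to $C^{1}(\Gamma)$ is harmless but not needed.

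You have, however, put your finger on a genuine imprecision in the paper's own proof of the second (decoupled) estimate in \eqref{constextH1:bdd}. The cited extension theorem (Evans, Thm.\ 1, p.\ 254) yields only the \emph{coupled} bound $\norm{f^{Ec}}_{W^{1,q}(\Omega)} \leq C\norm{f^{e}}_{W^{1,q}(\mathrm{Tub}^{\eta}(\Gamma))}$: the partition-of-unity step generates terms of the form $f^{e}\,\nabla \zeta_{i}$ that do not cancel outside $\mathrm{Tub}^{\eta}(\Gamma)$, so $\norm{\nabla f^{Ec}}_{L^{q}(\Omega)} \lesssim \norm{\nabla f^{e}}_{L^{q}(\mathrm{Tub}^{\eta}(\Gamma))}$ does not follow from that theorem alone. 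Your proposed repair via a Lipschitz retraction $\pi : \Omega \to \Gamma$ cannot work in general — for instance if $\Omega^{*}$ is a ball, any retraction of $\overline{\Omega^{*}}$ onto $\Gamma$ would make $\Gamma$ a retract of a contractible set, which is impossible. A workable fix in this spirit is to subtract the mean $m_{j}$ of $f$ on each connected component $\Gamma_{j}$ of $\Gamma$, apply the coupled extension bound together with a Poincar\'{e} inequality on $\Gamma_{j}$ to $f-m_{j}$, and add back the constants, whose extensions have zero gradient. (In practice the coupled bound $\norm{\nabla f^{Ec}}_{L^{q}(\Omega)} \leq C\norm{f}_{W^{1,q}(\Gamma)}$ already suffices wherever \eqref{constextH1:bdd} is invoked later — the relevant terms appear multiplied by quantities vanishing as $\eps \to 0$ — so this gap is cosmetic rather than fatal, but your instinct to flag it was correct.)
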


\begin{proof}
Since $\Gamma$ is a $C^{2}$ hypersurface, we see that
\begin{align}\label{Hessian}
\norm{\bm{H}}_{C^{0}(\mathrm{Tub}^{\eta}(\Gamma))} \leq \norm{d}_{C^{2}(\mathrm{Tub}^{\eta}(\Gamma))} < \infty.
\end{align}
Let $1 \leq q < \infty$, $f \in W^{1,q}(\Gamma)$ and let $f^{e}$ denote its constant extension  in the normal direction off $\Gamma$ to $\mathrm{Tub}^{\eta}(\Gamma)$, as defined in (\ref{defn:constext}).   Then, by (\ref{ConstExt:relationgradients}), (\ref{TubNbd:intf:change}), (\ref{CHCRbdd}), and (\ref{TubNbd:intf:upplowbdd}), we have
\begin{align*}
\int_{\mathrm{Tub}^{\eta}(\Gamma)} \abs{f^{e}(x)}^{q} \dx & = \int_{\mathrm{Tub}^{\eta}(\Gamma)} \abs{f(p(x))}^{q} \dx \\
& = \int_{-\eta}^{\eta} \int_{\Gamma} \abs{f(p)}^{q} \abs{1 + t C_{H}(p) + C_{R}(p,t)} \dHaus \dt \\
& \leq C(\tilde{c} \eta) \norm{f}_{L^{q}(\Gamma)}^{q}, \\
\int_{\mathrm{Tub}^{\eta}(\Gamma)} \abs{\nabla f^{e}(x)}^{q} \dx & = \int_{\mathrm{Tub}^{\eta}(\Gamma)} \abs{\surf f(p(x)) - d(x) \bm{H}(x) \surf f(p(x))}^{q} \dx \\
& \leq C(1 + \norm{d}_{C^{2}(\mathrm{Tub}^{\eta}(\Gamma))}) \int_{\mathrm{Tub}^{\eta}(\Gamma)} \abs{\surf f(p(x))}^{q} \dx \\
& \leq C(1 + \norm{d}_{C^{2}(\mathrm{Tub}^{\eta}(\Gamma))})  C(\tilde{c} \eta) \norm{\surf f}_{L^{q}(\Gamma)}^{q}.
\end{align*}
Thus, $f^{e} \in W^{1,q}(\mathrm{Tub}^{\eta}(\Gamma))$ with
\begin{align*}
\norm{f^{e}}_{W^{1,q}(\mathrm{Tub}^{\eta}(\Gamma))} \leq C(\tilde{c} \eta , \norm{d}_{C^{2}(\mathrm{Tub}^{\eta}(\Gamma))}) \norm{f}_{W^{1,q}(\Gamma)}.
\end{align*}
For the case $q = \infty$, thanks to (\ref{defn:constext}), we have
\begin{align*}
\norm{f^{e}}_{L^{\infty}(\mathrm{Tub}^{\eta}(\Gamma))} = \norm{f}_{L^{\infty}(\Gamma)},
\end{align*}
and by (\ref{ConstExt:relationgradients}), we have
\begin{align*}
\norm{\nabla f^{e}}_{L^{\infty}(\mathrm{Tub}^{\eta}(\Gamma))} \leq C(\tilde{c} \eta, \norm{d}_{C^{2}(\mathrm{Tub}^{\eta}(\Gamma))}) \norm{\surf f}_{L^{\infty}(\Gamma)}.
\end{align*}

By the extension theorem \cite[Theorem 1, p. 254]{book:Evans}, we can extend $f^{e}$ to $f^{Ec} \in W^{1,q}(\Omega)$ with 
\begin{align*}
\norm{f^{Ec}}_{L^{q}(\Omega)} & \leq C\norm{f^{e}}_{L^{q}(\mathrm{Tub}^{\eta}(\Gamma))} \leq C \norm{f}_{L^{q}(\Gamma)}, \\
\norm{\nabla f^{Ec}}_{L^{q}(\Omega)} & \leq C\norm{\nabla f^{e}}_{L^{q}(\mathrm{Tub}^{\eta}(\Gamma))} \leq C \norm{\surf f}_{L^{q}(\Gamma)},
\end{align*}
where $C$ is independent of $f$.
\end{proof}

The next lemma allows us to test with extensions of $H^{1}(\Gamma)$ functions as constructed in Corollary \ref{cor:constextH1} in the weak formulation of the diffuse domain approximations.

\begin{lemma}\label{lem:constextH1delta}
Suppose Assumption \ref{assump:Gamma} and \ref{assump:delta} are satisfied.  Let $f \in H^{1}(\Gamma)$ and let $f^{Ec}$ denote its extension to $\Omega$ as constructed in Corollary \ref{cor:constextH1}.  Then, for all $\eps > 0$, 
\begin{align*}
f^{Ec} \in H^{1}(\Omega, \delta_{\eps}),
\end{align*}
and there exists a constant $C > 0$, independent of $f$ and $\eps$, such that
\begin{align*}
\norm{f^{Ec}}_{0,\delta_{\eps}} \leq C \norm{f}_{L^{2}(\Gamma)}, \quad \norm{\nabla f^{Ec}}_{0,\delta_{\eps}} \leq C \norm{\surf f}_{L^{2}(\Gamma)}.
\end{align*}
Furthermore, let $b^{Ec}$ and $\mathcal{B}^{Ec}$ denote the extensions of the data $b$ and $\mathcal{B}$ as mentioned in Assumption \ref{assump:surfacedata} and Remark \ref{rem:extensionambiguity}.  Then, as $\eps \to 0$,
\begin{equation}
\label{convergence:constextbB}
\begin{aligned}
\int_{\Omega} \delta_{\eps} b^{Ec} \abs{f^{Ec}}^{2} \dx & \to \int_{\Gamma} b \abs{f}^{2} \dHaus, \\
\int_{\Omega} \delta_{\eps} \mathcal{B}^{Ec} \nabla f^{Ec} \cdot \nabla f^{Ec} \dx & \to \int_{\Gamma} \mathcal{B} \surf f \cdot \surf f \dHaus.
\end{aligned}
\end{equation}
Consequently, as $\eps \to 0$,
\begin{align}\label{convergence:constext}
\int_{\Omega} \delta_{\eps} \abs{f^{Ec}}^{2} \dx \to \int_{\Gamma} \abs{f}^{2} \dHaus, \quad \int_{\Omega} \delta_{\eps} \abs{\nabla f^{Ec}}^{2} \dx \to \int_{\Gamma} \abs{\surf f}^{2} \dHaus.
\end{align}
\end{lemma}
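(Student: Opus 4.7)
The plan is to localise to the tubular neighbourhood $\mathrm{Tub}^\eta(\Gamma)$, on which the constant-in-normal-direction extension is exact, and to apply the change of variables (\ref{TubNbd:intf:change}) followed by the rescaling $t = \eps z$ to convert $\delta_\eps$-weighted integrals on $\Omega$ into product integrals in $z \in \R$ against surface integrals on $\Gamma$. The contribution from $\Omega \setminus \mathrm{Tub}^\eta(\Gamma)$ will be shown to be negligible: on this exterior set $|d(x)| \geq \eta$, and the monotonicity and integrability of $\delta$ in Assumption \ref{assump:delta} imply $r\delta(r) \to 0$ as $r\to\infty$ (since $r\delta(r)\leq 2\int_{r/2}^{r}\delta(s)\,ds \to 0$), hence $\delta_\eps \leq \eps^{-1}\delta(\eta/\eps)=\eta^{-1}(\eta/\eps)\delta(\eta/\eps)\to 0$ uniformly there. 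Combined with the global bounds $\|f^{Ec}\|_{H^1(\Omega)}\leq C\|f\|_{H^1(\Gamma)}$ from Corollary \ref{cor:constextH1} and the boundedness of $b^{Ec}$, $\mathcal{B}^{Ec}$ from Assumption \ref{assump:surfacedata}, the exterior contributions to all four integrals are $o(1)$ as $\eps\to 0$ and are, in particular, uniformly bounded in $\eps$.

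On the interior one exploits that, for $|\eps z|<\eta$, the constant extensions give $f^{Ec}(p+\eps z\nu(p))=f(p)$, $b^{Ec}(p+\eps z\nu(p))=b(p)$ and $\mathcal{B}^{Ec}(p+\eps z\nu(p))=\mathcal{B}(p)$. Applying (\ref{TubNbd:intf:change}) and the rescaling $t = \eps z$,
\begin{align*}
\int_{\mathrm{Tub}^\eta(\Gamma)}\delta_\eps b^{Ec}|f^{Ec}|^2\,dx = \int_{-\eta/\eps}^{\eta/\eps}\delta(z)\int_\Gamma b|f|^2 \bigl|1+\eps z C_H+C_R(\cdot,\eps z)\bigr|\,\dHaus\,dz.
\end{align*}
By (\ref{CHCRbdd}) the Jacobian factor is bounded by $1+\tilde c\eps|z|$, so the integrand is dominated by $\|b\|_\infty\|f\|_{L^2(\Gamma)}^2\,\delta(z)(1+\tilde c|z|)$, which is integrable on $\R$ by (\ref{property:delta:bound}). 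This yields the $L^2(\Omega,\delta_\eps)$ bound with constant independent of $\eps$. For the gradient, (\ref{ConstExt:relationgradients}) gives $\nabla f^{Ec}(p+\eps z\nu(p))=(\bm I - \eps z\bm H(p+\eps z\nu(p)))\surf f(p)$, and $\bm H$ is bounded on $\mathrm{Tub}^\eta(\Gamma)$ by (\ref{Hessian}), so an identical computation produces $\|\nabla f^{Ec}\|_{0,\delta_\eps}^2\leq C\|\surf f\|_{L^2(\Gamma)}^2$ uniformly in $\eps$.

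For the convergences (\ref{convergence:constextbB}), dominated convergence applied to these rescaled $z$-integrals finishes the proof: the Jacobian factor tends to $1$ and $(\bm I-\eps z\bm H)$ to $\bm I$ pointwise, so the $p$-integrals converge pointwise in $z$ to $\int_\Gamma b|f|^2\,\dHaus$ and $\int_\Gamma \mathcal{B}\surf f\cdot\surf f\,\dHaus$, while the dominating functions from the previous paragraph remain valid uniformly in $\eps$. Since $\int_\R\delta\,ds=1$, the $z$-integration then produces exactly the surface integrals on the right-hand side of (\ref{convergence:constextbB}). The statement (\ref{convergence:constext}) is the special case $b\equiv 1$, $\mathcal{B}\equiv\bm I$, whose constant extensions are again $1$ and $\bm I$. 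The main obstacle is the book-keeping between the two regimes: the interior calculation is routine once the rescaling is set up, but for non-compactly supported regularisations such as the double-well case it is essential to verify that the tail decay $r\delta(r)\to 0$ extracted from Assumption \ref{assump:delta} makes the exterior contribution genuinely $o(1)$.
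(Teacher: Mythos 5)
Your proof is correct and follows the same structural route as the paper: split $\Omega$ into $\mathrm{Tub}^\eta(\Gamma)$ and its complement, use the co-area change of variables (\ref{TubNbd:intf:change}) with the rescaling $t = \eps z$ on the interior, exploit the exact constancy of $f^{Ec}$, $b^{Ec}$, $\mathcal{B}^{Ec}$ along normals together with (\ref{ConstExt:relationgradients}) and the Hessian bound (\ref{Hessian}), and bound the Jacobian factor via (\ref{CHCRbdd}). Two small but genuine divergences from the paper's proof are worth noting. First, you establish the uniform decay of $\delta_\eps$ on $\Omega\setminus\mathrm{Tub}^\eta(\Gamma)$ by deriving $r\delta(r)\to 0$ from monotonicity and integrability of $\delta$, whereas the paper simply invokes Assumption \ref{assump:delta}, specifically (\ref{property:delta:ptwiseconv}) with $q=1$; your derivation in effect shows that this pointwise-decay hypothesis is redundant at $q=1$, which is a nice observation but does not change what must be assumed for larger $q$ elsewhere in the paper. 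Second, you pass to the limit in (\ref{convergence:constextbB}) via the dominated convergence theorem (dominating the $z$-integrand by $\delta(z)(1+\tilde c|z|)$ times the relevant $L^\infty$--$L^2$ norms, integrable by (\ref{property:delta:bound})), whereas the paper produces an explicit $\mathcal{O}\bigl(\eps + \int_{\R\setminus(-\eta/\eps,\eta/\eps)}\delta\bigr)$ estimate for the difference. The DCT route is shorter and cleaner; the paper's route is quantitative and therefore reusable for later rate arguments. Both are valid; nothing essential is missing in your argument.
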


\begin{proof}
We note that (\ref{convergence:constext}) follows from (\ref{convergence:constextbB}) if we consider $b(p) \equiv 1$ with $b^{Ec}(x) = 1$, and $b_{ij}(p) = \delta_{ij}$ with $b_{ij}^{Ec}(x) = \delta_{ij}$, for $p \in \Gamma$, $x \in \Omega$, $1\leq i,j \leq n$ and $\delta_{ij}$ is the Kronecker delta.  

Choose $\eta > 0$ and the diffeomorphism $\Theta^{\eta}$ as defined in (\ref{diffeo}).  Thanks to (\ref{property:delta:ptwiseconv}) with $q = 1$, 
\begin{align}\label{deltaeps:Linfty:remainder:convergence}
\norm{\delta_{\eps}}_{L^{\infty}(\Omega \setminus \mathrm{Tub}^{\eta}(\Gamma))} \to 0 \text{ as } \eps \to 0,
\end{align}
and so, for $\eps \in (0,1]$, 
\begin{align}\label{deltaeps:Linfty:remainder:bdd}
\norm{\delta_{\eps}}_{L^{\infty}(\Omega \setminus \mathrm{Tub}^{\eta}(\Gamma))} \leq \sup_{\eps \in (0,1]} \norm{\delta_{\eps}}_{L^{\infty}(\Omega \setminus \mathrm{Tub}^{\eta}(\Gamma))} =: C_{\mathrm{sup}}.
\end{align}

Then, by (\ref{TubNbd:intf:change}) with a change of variables $t \mapsto \eps s$, (\ref{defn:constext}), (\ref{CHCRbdd}), (\ref{property:delta}), and (\ref{constextH1:bdd}) with $q = 2$,
\begin{align*}
& \; \int_{\Omega} \delta_{\eps} \abs{f^{Ec}}^{2} \dx \leq \int_{\mathrm{Tub}^{\eta}(\Gamma)} \delta_{\eps} \abs{f^{e}}^{2} \dx + \norm{\delta_{\eps}}_{L^{\infty}(\Omega \setminus \mathrm{Tub}^{\eta}(\Gamma))} \norm{f^{Ec}}_{L^{2}(\Omega \setminus \mathrm{Tub}^{\eta}(\Gamma))}^{2} \\
\leq & \; \int_{-\eta}^{\eta} \int_{\Gamma} \frac{1}{\eps} \delta \left (\frac{t}{\eps} \right ) \abs{f^{e}(p + t \nu(p))}^{2} \abs{1 + t C_{H}(p) + C_{R}(p,t)} \dHaus \dt + C_{\mathrm{sup}} \norm{f^{Ec}}_{L^{2}(\Omega)}^{2} \\
\leq & \; \int_{\frac{-\eta}{\eps}}^{\frac{\eta}{\eps}} \int_{\Gamma} \delta(s) \abs{f^{e}(p + \eps s \nu(p))}^{2} \abs{1 + \eps s C_{H}(p) + C_{R}(p,\eps s)} \dHaus \ds + C_{\mathrm{sup}} \norm{f^{Ec}}_{L^{2}(\Omega)}^{2} \\
\leq & \; \int_{\frac{-\eta}{\eps}}^{\frac{\eta}{\eps}} \int_{\Gamma} \delta(s) (1 + \tilde{c} \eps \abs{s}) \abs{f(p)}^{2} \dHaus \ds + C_{\mathrm{sup}} \norm{f^{Ec}}_{L^{2}(\Omega)}^{2} \\
\leq & \; (1 + \tilde{c} \eta) \norm{f}_{L^{2}(\Gamma)}^{2} + C_{\mathrm{sup}} \norm{f^{Ec}}_{L^{2}(\Omega)}^{2} \leq C \norm{f}_{L^{2}(\Gamma)}^{2}.
\end{align*}
Hence, $f^{Ec} \in L^{2}(\Omega, \delta_{\eps})$.  

Next, we have by the definition of $b^{e}$, (\ref{defn:constext}), (\ref{TubNbd:intf:change}) with a change of variables $t \mapsto \eps s$, and (\ref{property:delta}):
\begin{align*}
\int_{\mathrm{Tub}^{\eta}(\Gamma)} \delta_{\eps} b^{e} \abs{f^{e}}^{2} \dx & = \int_{\frac{-\eta}{\eps}}^{\frac{\eta}{\eps}} \int_{\Gamma} \delta(s) b(p) \abs{f(p)}^{2} \abs{1 + \eps s C_{H}(p) + C_{R}(p, \eps s)} \dHaus \ds, \\
\int_{\Gamma} b \abs{f}^{2} \dHaus & = \int_{\R} \int_{\Gamma} \delta(s) b(p) \abs{f(p)}^{2} \dHaus \ds.
\end{align*}
Together with (\ref{CHCRbdd}), and (\ref{property:delta:bound}), we deduce that
\begin{align*}
& \; \abs{ \int_{\mathrm{Tub}^{\eta}(\Gamma)} \delta_{\eps} b^{e} \abs{f^{e}}^{2} \dx - \int_{\Gamma} b \abs{f}^{2} \dHaus } \\
\leq & \; \abs{\int_{\R \setminus (\frac{-\eta}{\eps}, \frac{\eta}{\eps})} \delta(s) \ds} \norm{b}_{L^{\infty}(\Gamma)} \norm{f}_{L^{2}(\Gamma)}^{2} \\
+ &\; \abs{\int_{\frac{-\eta}{\eps}}^{\frac{\eta}{\eps}} \int_{\Gamma} \delta(s) b(p) \abs{f(p)}^{2} \abs{\eps s C_{H}(p) + C_{R}(p,\eps s)} \dHaus \ds} \\
\leq & \; \left ( \int_{\R \setminus (\frac{-\eta}{\eps}, \frac{\eta}{\eps})} \delta(s) \ds + \eps \tilde{c} C_{\delta, \mathrm{int}} \right ) \norm{b}_{L^{\infty}(\Gamma)} \norm{f}_{L^{2}(\Gamma)}^{2} \to 0 \text{ as } \eps \to 0.
\end{align*}
Then, the first statement of (\ref{convergence:constextbB}) follows from the above and
\begin{align*}
\int_{\Omega \setminus \mathrm{Tub}^{\eta}(\Gamma)} \delta_{\eps} b^{Ec} \abs{f^{Ec}}^{2} \dx \leq \norm{\delta_{\eps}}_{L^{\infty}(\Omega \setminus \mathrm{Tub}^{\eta}(\Gamma))} \norm{b^{Ec}}_{L^{\infty}(\Omega)}  \norm{f^{Ec}}_{L^{2}(\Omega)}^{2} \to 0, \text{ as } \eps \to 0,
\end{align*}
where we have used (\ref{deltaeps:Linfty:remainder:convergence}).

We point out that the proof for the assertions for the gradients follow similarly by using (\ref{ConstExt:relationgradients}), i.e., 
\begin{align*}
\norm{\nabla f^{Ec}}_{0,\delta_{\eps}}^{2} & \leq C_{\mathrm{sup}} \norm{\nabla f^{Ec}}_{L^{2}(\Omega \setminus \mathrm{Tub}^{\eta}(\Gamma))}^{2} \\
& + C(\norm{d}_{C^{2}(\mathrm{Tub}^{\eta}(\Gamma))})  \int_{\frac{-\eta}{\eps}}^{\frac{\eta}{\eps}} \int_{\Gamma} \delta(s) (1 + \tilde{c} \eps \abs{s}) \abs{\surf f(p)}^{2} \dHaus \ds \\
& \leq C_{\mathrm{sup}} \norm{\nabla f^{Ec}}_{L^{2}(\Omega)}^{2} + C(d, \eta, \tilde{c}) \norm{\surf f}_{L^{2}(\Gamma)}^{2} \leq C \norm{\surf f}_{L^{2}(\Gamma)}^{2},
\end{align*}
which then implies that $f^{Ec} \in H^{1}(\Omega, \delta_{\eps})$.  Similarly, using (\ref{TubNbd:intf:change}), (\ref{ConstExt:relationgradients}) and the definition of $\mathcal{B}^{e}$, and a change of variables $t \mapsto \eps s$,
\begin{align*}
& \; \int_{\mathrm{Tub}^{\eta}(\Gamma)} \delta_{\eps} \mathcal{B}^{e}(x) \nabla f^{Ec}(x) \cdot \nabla f^{Ec}(x) \dx \\
= & \; \int_{-\eta}^{\eta} \int_{\Gamma} \frac{1}{\eps} \delta \left ( \frac{t}{\eps} \right ) \mathcal{B}(p) [(\bm{I} - t \bm{H}) \surf f(p)] \cdot [(\bm{I} - t \bm{H}) \surf f(p)] \\
& \; \times \abs{1 + t C_{H}(p) + C_{R}(p,t)} \dHaus \dt \\
= & \; \int_{\frac{-\eta}{\eps}}^{\frac{\eta}{\eps}} \int_{\Gamma} \delta(s)\mathcal{B}(p) [(\bm{I} - \eps s \bm{H}) \surf f(p)] \cdot [(\bm{I} - \eps s \bm{H}) \surf f(p)] \\
& \; \times  \abs{1 + \eps s C_{H}(p) + C_{R}(p,\eps s)} \dHaus \ds \\
=: & \; \int_{\frac{-\eta}{\eps}}^{\frac{\eta}{\eps}} \int_{\Gamma} \delta(s) \mathcal{B}(p) \surf f(p) \cdot \surf f(p) \abs{1 + \eps s C_{H}(p) + C_{R}(p,\eps s)} \dHaus \ds + \mathcal{R},
\end{align*}
where, by (\ref{Hessian}), (\ref{CHCRbdd}), and (\ref{property:delta:bound}),
\begin{align*}
\abs{\mathcal{R}} \leq \eps C(\norm{d}_{C^{2}(\mathrm{Tub}^{\eta}(\Gamma))}, \tilde{c} \eta, C_{\delta, \mathrm{int}}) \norm{\mathcal{B}}_{L^{\infty}(\Gamma)} \norm{\surf f}_{L^{2}(\Gamma)}^{2} \to 0 \text{ as } \eps \to 0.
\end{align*}
Thus, setting $L := \int_{\frac{-\eta}{\eps}}^{\frac{\eta}{\eps}} \int_{\Gamma} \delta(s) \mathcal{B}(p) \surf f(p) \cdot \surf f(p) \dHaus \ds$ allow us to compute
\begin{align*}
& \; \abs{\int_{\mathrm{Tub}^{\eta}(\Gamma)} \delta_{\eps} \mathcal{B}^{Ec} \nabla f^{Ec} \cdot \nabla f^{Ec} \dx - L + L -  \int_{\Gamma} \mathcal{B} \surf f \cdot \surf f \dHaus  } \\
\leq & \; \abs{\int_{\R \setminus (\frac{-\eta}{\eps}, \frac{\eta}{\eps})} \delta(s) \int_{\Gamma} \mathcal{B}(p) \surf f(p) \cdot \surf f(p) \dHaus \ds } \\
+ & \; \abs{\int_{\frac{-\eta}{\eps}}^{\frac{\eta}{\eps}} \int_{\Gamma} \delta(s) \mathcal{B}(p) \surf f(p) \cdot \surf f(p) \abs{ \eps s C_{H}(p) + C_{R}(p,\eps s)} \dHaus ds} + \abs{\mathcal{R}} \\
\leq & \; \left ( \int_{\R \setminus (\frac{-\eta}{\eps}, \frac{\eta}{\eps})} \delta(s) \ds + \eps \tilde{c} C_{\delta, \mathrm{int}} \right ) \norm{\mathcal{B}}_{L^{\infty}(\Gamma)} \norm{\surf f}_{L^{2}(\Gamma)}^{2} + \abs{\mathcal{R}} \to 0 \text{ as } \eps \to 0.
\end{align*}
Then, the second statement of (\ref{convergence:constextbB}) follows from the above and
\begin{align*}
\int_{\Omega \setminus \mathrm{Tub}^{\eta}(\Gamma)} \delta_{\eps} \mathcal{B}^{Ec} \nabla f^{Ec} \cdot \nabla f^{Ec} \dx \leq \norm{\delta_{\eps}}_{L^{\infty}(\Omega \setminus \mathrm{Tub}^{\eta}(\Gamma))} \norm{\mathcal{B}^{Ec}}_{L^{\infty}(\Omega)} \norm{\nabla f^{Ec}}_{L^{2}(\Omega)}^{2} \to 0,
\end{align*}
as $\eps \to 0$.
\end{proof}

\begin{remark}
For the double-obstacle regularisation, we use the fact that $\delta_{\eps} = 0$ on $\Omega \setminus \mathrm{Tub}^{\eta}(\Gamma)$ for $\eta \geq \eps \frac{\pi}{2}$ to deduce the same results.
\end{remark}

\subsection{On the regularised indicator functions}\label{sec:RegularisedFunct}

Due to the boundedness of $\xi_{\eps}$, (\ref{property:Xi}), and Lebesgue's dominated convergence theorem, we have
\begin{lemma}\label{lem:XiDCT}
Suppose that Assumption \ref{assump:Xi} is satisfied, then for any $g \in L^{1}(\Omega)$,
\begin{align*}
\int_{\Omega^{*}} \xi_{\eps} g \dx \to \int_{\Omega^{*}} g \mid_{\Omega^{*}} \dx, \quad \int_{\Omega^{*}} (1-\xi_{\eps}) g \dx \to 0, \quad \int_{\Omega \setminus \Omega^{*}} \xi_{\eps} g \dx \to 0 \text{ as } \eps \to 0.
\end{align*}
\end{lemma}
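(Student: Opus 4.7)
The three assertions are essentially immediate consequences of Lebesgue's dominated convergence theorem once one pins down the pointwise behaviour of $\xi_{\eps}$. My plan is first to identify the a.e.\ pointwise limit of $\xi_\eps$ on each of the two sub-regions of $\Omega$, then to observe the uniform bound $0 \le \xi_\eps \le 1$ from (\ref{property:Xi}), and finally to apply dominated convergence with dominating function $|g| \in L^1(\Omega)$.

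First, recall from (\ref{defn:signeddist}) that $d(x) < 0$ for $x \in \Omega^{*}$ (the interior of $\Omega^{*}$, away from $\Gamma$), $d(x)=0$ on $\Gamma$, and $d(x) > 0$ for $x \in \Omega \setminus \overline{\Omega^{*}}$. Combined with the pointwise-limit clause of (\ref{property:Xi}), applied to $d(x)/\eps$, this gives
\begin{align*}
\lim_{\eps \to 0} \xi_{\eps}(x) = \begin{cases} 1, & x \in \Omega^{*}, \\ 0, & x \in \Omega \setminus \overline{\Omega^{*}}, \\ \tfrac{1}{2}, & x \in \Gamma. \end{cases}
\end{align*}
Since $\Gamma$ is a compact $C^{2}$ hypersurface (Assumption \ref{assump:Gamma}), it has Lebesgue measure zero, so the exceptional set $\Gamma$ is negligible for purposes of integration against $g \in L^{1}(\Omega)$.

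Next, the uniform bound $0 \le \xi_{\eps}(x) \le 1$ from (\ref{property:Xi}) implies $|\xi_{\eps} g| \le |g|$ and $|(1-\xi_{\eps}) g| \le |g|$ almost everywhere on $\Omega$, so $|g|$ serves as an integrable dominator. I would then apply Lebesgue's dominated convergence theorem three times: on $\Omega^{*}$ to the integrand $\xi_{\eps} g \to g|_{\Omega^{*}}$ a.e., to the integrand $(1-\xi_{\eps}) g \to 0$ a.e., and on $\Omega \setminus \Omega^{*}$ to the integrand $\xi_{\eps} g \to 0$ a.e. (the latter since $\Gamma$ is Lebesgue-null, so the set where the limit is $1/2$ does not matter). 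Each application yields the corresponding one of the three claimed convergences.

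There is no real obstacle here; the whole lemma is just the combination of the pointwise limit in (\ref{property:Xi}), the uniform bound from the same assumption, and the fact that $\Gamma$ has Lebesgue measure zero. The only thing to be slightly careful about is noting that the boundary $\Gamma$, where the limit is $1/2$ rather than $0$ or $1$, contributes nothing to any of the three integrals because $g \in L^{1}(\Omega)$ is integrated with respect to Lebesgue measure.
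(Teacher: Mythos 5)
Your proof is correct and follows exactly the same route the paper indicates: pointwise convergence of $\xi_\eps$ from Assumption \ref{assump:Xi}, the uniform bound $0\le\xi_\eps\le 1$ giving $|g|$ as a dominator, and Lebesgue's dominated convergence theorem, together with the observation that $\Gamma$ is Lebesgue-null. Nothing to add.
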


\begin{lemma}\label{lem:H1impliesL2delta}
Suppose that Assumptions \ref{assump:Gamma} and \ref{assump:delta} are satisfied.  Then, there exists a constant $C > 0$ such that, for all $f \in W^{1,1}(\Omega)$ and all $\eps \in (0,1]$,
\begin{align*}
\int_{\Omega} \delta_{\eps} \abs{f} \dx \leq C \norm{f}_{W^{1,1}(\Omega)}.
\end{align*}
In particular, there exists a constant $C > 0$ such that, for all $f \in H^{1}(\Omega)$ and all $\eps \in (0,1]$,
\begin{align*}
\norm{f}_{0,\delta_{\eps}} \leq C \norm{f}_{H^{1}(\Omega)}.
\end{align*}
\end{lemma}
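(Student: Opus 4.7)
The plan is to establish the $W^{1,1}$ estimate first; the $H^1$ statement then follows by applying it to $\abs{f}^2$, which lies in $W^{1,1}(\Omega)$ (with $\norm{\abs{f}^2}_{W^{1,1}} \leq \norm{f}_{L^2}^2 + 2\norm{f}_{L^2}\norm{\nabla f}_{L^2} \leq C \norm{f}_{H^1}^2$ by Cauchy--Schwarz) and then taking square roots. By a standard density argument (using that $C^1(\overline{\Omega})$ is dense in $W^{1,1}(\Omega)$ since $\Omega$ is Lipschitz), it suffices to prove the bound for $f \in C^1(\overline{\Omega})$.

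I would split $\Omega = \mathrm{Tub}^{\eta}(\Gamma) \cup (\Omega \setminus \mathrm{Tub}^{\eta}(\Gamma))$ for a fixed $\eta > 0$ chosen small enough so that the diffeomorphism $\Theta^{\eta}$ of (\ref{diffeo}) is available and $\tilde{c}\eta < 1$. On the outer piece, (\ref{property:delta:ptwiseconv}) with $q=1$ together with continuity gives, as in the argument leading to (\ref{deltaeps:Linfty:remainder:bdd}), a uniform bound $\norm{\delta_{\eps}}_{L^{\infty}(\Omega \setminus \mathrm{Tub}^{\eta}(\Gamma))} \leq C_{\mathrm{sup}}$ for all $\eps \in (0,1]$, hence
\[
\int_{\Omega \setminus \mathrm{Tub}^{\eta}(\Gamma)} \delta_{\eps} \abs{f} \dx \leq C_{\mathrm{sup}} \norm{f}_{L^{1}(\Omega)} \leq C \norm{f}_{W^{1,1}(\Omega)}.
\]

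On $\mathrm{Tub}^{\eta}(\Gamma)$, I would apply (\ref{TubNbd:intf:change}) followed by the rescaling $t = \eps s$ to obtain
\[
\int_{\mathrm{Tub}^{\eta}(\Gamma)} \delta_{\eps}\abs{f} \dx = \int_{-\eta/\eps}^{\eta/\eps} \int_{\Gamma} \delta(s)\,\abs{f(p+\eps s \nu(p))}\,\abs{1 + \eps s C_H(p) + C_R(p,\eps s)} \dHaus \ds,
\]
where by (\ref{CHCRbdd}) the Jacobian factor is bounded by $1+\tilde{c}\eta$. The core step is a uniform (in $\eps$ and in $s$ with $\abs{\eps s}\leq \eta$) trace-type estimate for $p \mapsto f(p + \eps s \nu(p))$: writing $g(\tau) := f(p + \tau \nu(p))$ for $\tau \in (-\eta,\eta)$ and using the fundamental theorem of calculus,
\[
\abs{f(p + \eps s \nu(p))} \leq \abs{f(p)} + \int_{-\eta}^{\eta} \abs{\nabla f(p + \tau \nu(p)) \cdot \nu(p)} \dd\tau,
\]
so that integrating in $p \in \Gamma$ and invoking the standard trace inequality $\norm{\gamma_0(f)}_{L^1(\Gamma)} \leq C \norm{f}_{W^{1,1}(\Omega)}$ together with (\ref{TubNbd:intf:upplowbdd}) yields
\[
\int_{\Gamma}\abs{f(p + \eps s \nu(p))} \dHaus \leq C\big(\norm{\gamma_0(f)}_{L^1(\Gamma)} + \norm{\nabla f}_{L^1(\mathrm{Tub}^{\eta}(\Gamma))}\big) \leq C \norm{f}_{W^{1,1}(\Omega)}
\]
with $C$ independent of $\eps$ and $s$. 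Plugging back and using $\int_{\R}\delta(s)\ds = 1$ from (\ref{property:delta}) closes the argument.

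The main obstacle is producing the uniform-in-$s$ trace bound on the parallel surfaces $\Gamma^{\eps s}$: a naive application of a trace inequality on $\Gamma^{\eps s}$ would involve a constant depending on that moving surface, whereas the fundamental-theorem-of-calculus step avoids this by controlling the shift from $\Gamma$ to $\Gamma^{\eps s}$ by the full normal integral of $\abs{\nabla f}$ over the tubular neighbourhood, which is independent of $s$. Everything else (the outer-region bound, the density/approximation step, and the reduction from $H^1$ to $W^{1,1}$ via $\abs{f}^2$) is routine.
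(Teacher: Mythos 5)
Your proof is correct and takes essentially the same approach as the paper: split $\Omega$ into the tubular neighbourhood and its complement, bound $\delta_{\eps}$ uniformly on the complement, and on the tubular neighbourhood expand $f$ on the parallel surfaces as the trace plus a normal line integral of $\nabla f$, then integrate against $\delta$. The only cosmetic difference is the technical device used to make the one-dimensional fundamental-theorem-of-calculus step rigorous for $W^{1,1}$ functions: you reduce to $f \in C^1(\overline{\Omega})$ by density (which is fine, since for fixed $\eps$ one has $\delta_{\eps} \in L^\infty(\Omega)$, so passage to the limit is immediate), whereas the paper invokes absolute continuity on lines for $W^{1,1}$ functions directly; these are interchangeable ways of justifying the same identity.
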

\begin{proof}
Choose $\eta > 0$ and the diffeomorphism $\Theta^{\eta}$ as defined in (\ref{diffeo}).  Let $f \in W^{1,1}(\Omega)$.  Then, by (\ref{deltaeps:Linfty:remainder:bdd}), 
\begin{align}\label{deltafsquare:remainder}
\int_{\Omega \setminus \mathrm{Tub}^{\eta}(\Gamma)} \delta_{\eps} \abs{f} \dx \leq C_{\mathrm{sup}} \norm{f}_{L^{1}(\Omega \setminus \mathrm{Tub}^{\eta}(\Gamma))} \leq C_{\mathrm{sup}} \norm{f}_{W^{1,1}(\Omega)}.
\end{align}
Using the diffeomorphism $\Theta^{\eta}$, $f \mid_{\mathrm{Tub}^{\eta}(\Gamma)} \in W^{1,1}(\Gamma \times (-\eta, \eta))$.  By absolute continuity on lines for $W^{1,1}$ functions (see \cite[Theorem 2.1.4]{book:Ziemer} or \cite[Theorem 1, \S 1.1.3]{book:Mazja}) there exists a version of $f$, which we denote by the same symbol, such that for a.e. $p \in \Gamma$, it is absolutely continuous as a function of $t \in (-\eta, \eta)$.  With absolute continuity with respect to $t$, we have for $t \in (-\eta, \eta)$,
\begin{align*}
f(p + t \nu(p)) = f(p) + \int_{0}^{t} \frac{\dd}{\dzeta} f (p + \zeta \nu(p)) \dzeta.
\end{align*}
Then, by (\ref{TubNbd:intf:change}), a change of coordinates $t = \eps s$, (\ref{property:delta}), and (\ref{TubNbd:intf:upplowbdd}), 
\begin{align*}
& \; \int_{\mathrm{Tub}^{\eta}(\Gamma)} \delta_{\eps} \abs{f} \dx = \int_{-\eta}^{\eta} \int_{\Gamma} \frac{1}{\eps} \delta \left ( \frac{t}{\eps} \right ) \abs{f(p + t \nu(p))} \abs{1 + t C_{H}(p) + C_{R}(p, t)} \dHaus \dt \\
= & \; \int_{-\eta}^{\eta} \int_{\Gamma} \frac{1}{\eps} \delta \left ( \frac{t}{\eps} \right ) \left [ \abs{f(p)} + \int_{0}^{t} \abs{\frac{\dd}{\dzeta} f(p,\zeta)}  \dzeta \right ] \abs{1 + t C_{H}(p) + C_{R}(p,t)} \dHaus \dt \\
\leq & \; \left ( \norm{\gamma_{0}(f)}_{L^{1}(\Gamma)}^{2} + \int_{\Gamma} \int_{-\eta}^{\eta} \abs{\nabla f(p + \zeta \nu(p))} \dzeta \dHaus \right ) \int_{\frac{-\eta}{\eps}}^{\frac{\eta}{\eps}} \delta(s) (1 + \tilde{c} \eps \abs{s}) \ds \\
\leq & \; (1 + \eps \tilde{c} C_{\delta, \mathrm{int}} ) \left ( \norm{\gamma_{0}(f)}_{L^{1}(\Gamma)} + C(\tilde{c} \eta) \norm{\nabla f}_{L^{1}(\mathrm{Tub}^{\eta}(\Gamma))} \right ) \\
\leq & \; (1 + \tilde{c} C_{\delta, \mathrm{int}} ) \left ( \norm{\gamma_{0}(f)}_{L^{1}(\Gamma)} + C(\tilde{c} \eta) \norm{f}_{W^{1,1}(\mathrm{Tub}^{\eta}(\Gamma))} \right ).
\end{align*}
Using the trace theorem \cite[Theorem 1, p. 272]{book:Evans} and (\ref{deltafsquare:remainder}), we have
\begin{align*}
\int_{\Omega} \delta_{\eps} \abs{f} \dx \leq ((1 + \tilde{c} C_{\delta, \mathrm{int}}) (C_{\mathrm{tr}} + C(\tilde{c} \eta)) + C_{\mathrm{sup}}) \norm{f}_{W^{1,1}(\Omega)},
\end{align*}
where $C_{\mathrm{tr}}$ is the constant from the trace theorem.  

Note that the second assertion follows due to 
\begin{align*}
\norm{\nabla (\abs{f}^{2})}_{L^{1}(\Omega)} \leq 2 \norm{f}_{L^{2}(\Omega)}\norm{\nabla f}_{L^{2}(\Omega)} \leq 2 \norm{f}_{H^{1}(\Omega)}^{2}.
\end{align*}
\end{proof}
\begin{remark}
For the double-obstacle regularisation, we directly obtain
\begin{align*}
\int_{\mathrm{Tub}^{\eps \frac{\pi}{2}}(\Gamma)} \delta_{\eps} \abs{f} \dx \leq (1 + \tilde{c} C_{\delta, \mathrm{int}})(C_{\mathrm{tr}} + C(\tilde{c} \eta)) \norm{f}_{W^{1,1}(\Omega)}.
\end{align*}
\end{remark}

\begin{lemma}\label{lem:convergetodirac}
Suppose that Assumptions \ref{assump:Gamma} and \ref{assump:delta} are satisfied.  For $f \in W^{1,1}(\Omega)$, it holds that
\begin{align}\label{convergetodirac}
\int_{\Omega} \delta_{\eps} f \dx \to \int_{\Gamma} \gamma_{0}(f) \dHaus \text{ as } \eps \to 0.
\end{align} 
Moreover, for $\eta > 0$ sufficiently small, if $f \in W^{1,q}(\Omega)$, $1 \leq q < \infty$, or if $f = C^{1}(\overline{\Omega})$ and $q = \infty$, then there exists a constant $C > 0$, independent of $f$ and $\eps$, such that
\begin{align}\label{convergetodirac:power}
\abs{\int_{\mathrm{Tub}^{\eta}(\Gamma)} \delta_{\eps} f \dx - \int_{\Gamma} \gamma_{0}(f) \dHaus} \leq C \eps^{1 - \frac{1}{q}} \norm{f}_{W^{1,q}(\Omega)}.
\end{align}
Furthermore, if $f \in W^{1,q}(\Omega)$, $2 \leq q < \infty$ with $\gamma_{0}(f) = 0$, or if $f \in C^{1}(\overline{\Omega})$ with $f \mid_{\Gamma} = 0$ and $q = \infty$, we have
\begin{align}\label{convergetodirac:zerotrace}
\int_{\mathrm{Tub}^{\eta}(\Gamma)} \delta_{\eps} \abs{f}^{2} \dx \leq C \eps^{2 - \frac{2}{q}} \norm{f}_{W^{1,q}(\Omega)}^{2}.
\end{align}
\end{lemma}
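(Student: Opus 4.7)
The plan is to reduce all three assertions to a single change-of-variables in the tubular neighbourhood followed by a rescaling $t = \eps s$. Choose $\eta>0$ as in \eqref{diffeo} and use \eqref{TubNbd:intf:change} to write
\begin{equation*}
\int_{\mathrm{Tub}^{\eta}(\Gamma)} \delta_{\eps} f \dx
= \int_{-\eta/\eps}^{\eta/\eps} \int_{\Gamma} \delta(s)\, f(p + \eps s\,\nu(p))\, \abs{1 + \eps s\, C_{H}(p) + C_{R}(p,\eps s)} \dHaus \ds.
\end{equation*}
The decay estimate \eqref{deltaeps:Linfty:remainder:convergence} together with \eqref{deltaeps:Linfty:remainder:bdd} will take care of the contribution on $\Omega \setminus \mathrm{Tub}^{\eta}(\Gamma)$ and, upon combining with $\int_{\R}\delta(s)\ds = 1$ (see \eqref{property:delta}), turns the problem into comparing $f(p + \eps s\,\nu(p))$ with $f(p)$ under the weight $\delta(s)$.

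For \eqref{convergetodirac}, I would use absolute continuity on lines (as in the proof of Lemma~\ref{lem:H1impliesL2delta}) to write $f(p + \eps s\,\nu(p)) = f(p) + \int_{0}^{\eps s} \pd_{\zeta} f(p + \zeta \nu(p))\, \dzeta$. The term with $f(p)$ gives the sharp-interface contribution as $\eps \to 0$ by dominated convergence (using $\delta \in L^{1}$ and $|1 + \eps s C_H + C_R| \to 1$ pointwise with a uniform bound from \eqref{CHCRbdd}), and the Jacobian correction contributes at most $\tilde{c}\eps\int \delta(s)|s|\ds \cdot (\|\gamma_0(f)\|_{L^1(\Gamma)} + \|\nabla f\|_{L^1})$, which vanishes. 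Alternatively, since Lemma~\ref{lem:H1impliesL2delta} shows that $f \mapsto \int_\Omega \delta_\eps f\dx$ is bounded on $W^{1,1}(\Omega)$ uniformly in $\eps$, a density argument reduces the claim to $f \in C^1(\overline{\Omega})$, for which a direct Taylor estimate suffices.

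For \eqref{convergetodirac:power}, I would bound the error term by Hölder's inequality in $\zeta$:
\begin{equation*}
\abs{\int_{0}^{\eps s} \pd_{\zeta} f\, \dzeta} \leq (\eps \abs{s})^{1-\frac{1}{q}} \left( \int_{-\eta}^{\eta} \abs{\nabla f(p + \zeta \nu(p))}^{q} \dzeta \right)^{1/q},
\end{equation*}
and then Hölder in $p \in \Gamma$ against the constant $1$ (so $|\Gamma|^{1-1/q}$ appears). This gives an error of size $C \eps^{1-\tfrac{1}{q}} \int \delta(s)\abs{s}^{1-1/q}\ds \cdot \|\nabla f\|_{L^{q}(\mathrm{Tub}^{\eta}(\Gamma))}$. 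The Jacobian remainder $|\eps s C_H + C_R|$ is $\mathcal{O}(\eps|s|)$ and hence absorbed into the same rate (it yields an $\eps$ factor, which is smaller than $\eps^{1-1/q}$). The tail $\int_{|s|>\eta/\eps} \delta(s)\ds$ is controlled by $\tfrac{\eps}{\eta}\int \delta(s)|s|\ds$ via \eqref{property:delta:bound}, which is again $\mathcal{O}(\eps) \leq \mathcal{O}(\eps^{1-1/q})$. The case $q=\infty$ is handled directly by $\|f(\cdot + \eps s \nu) - f\|_\infty \leq \eps|s|\|\nabla f\|_\infty$.

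For \eqref{convergetodirac:zerotrace}, the vanishing trace gives $f(p + t\nu(p)) = \int_{0}^{t}\pd_{\zeta} f(p + \zeta\nu(p))\dzeta$, so Hölder in $\zeta$ yields
\begin{equation*}
\abs{f(p+t\nu(p))}^{2} \leq \abs{t}^{2-\tfrac{2}{q}} \left( \int_{-\eta}^{\eta} \abs{\nabla f(p+\zeta\nu(p))}^{q}\dzeta \right)^{2/q}.
\end{equation*}
Integrating against $\delta_{\eps}$, rescaling $t = \eps s$, and applying Hölder in $p \in \Gamma$ with exponents $q/2$ and $q/(q-2)$ (valid since $q \geq 2$) produces the factor $\eps^{2-2/q} \int \delta(s)\abs{s}^{2-2/q}\ds$, together with $\|\nabla f\|_{L^{q}}^{2}$. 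The main technical point is the finiteness of $\int \delta(s)\abs{s}^{2-2/q}\ds$, which follows from \eqref{property:delta:bound} since for $a \in [0,2]$ we have $\abs{s}^{a} \leq 1 + \abs{s}^{2}$. The $q=\infty$ case is again immediate from the pointwise bound. In the double-obstacle case the integrations are automatically restricted to $\mathrm{Tub}^{\eps\pi/2}(\Gamma)$ and no tail estimate is needed. The main obstacle throughout is bookkeeping the contributions from the geometric Jacobian and the tail $\abs{s} > \eta/\eps$ so that they do not degrade the $\eps^{1-1/q}$ or $\eps^{2-2/q}$ rates.
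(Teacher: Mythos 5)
Your proposal is correct and follows essentially the same strategy as the paper's proof: change of variables in the tubular neighbourhood, a rescaling $t = \eps s$, absolute continuity on lines together with H\"older's inequality for the error terms, and a density argument (via the uniform boundedness from Lemma~\ref{lem:H1impliesL2delta}) for the $W^{1,1}$ case. Your tail estimate $\int_{\abs{s}>\eta/\eps}\delta(s)\ds \leq \frac{\eps}{\eta}\int_{\R}\delta(s)\abs{s}\ds$ is a minor (and arguably cleaner) variant of the paper's estimate $(\ref{int:delta:ordereps})$ via $\sqrt{\delta(\eta/\eps)}$, but both deliver the same $\mathcal{O}(\eps)$ control, and the remaining bookkeeping is identical.
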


\begin{proof}
By the trace theorem and Lemma \ref{lem:H1impliesL2delta}, the integrals in (\ref{convergetodirac}) are well-defined.  Choose $0 < \eta < (\tilde{c})^{-1}$ and the diffeomorphism $\Theta^{\eta}$ as defined in (\ref{diffeo}), where $\tilde{c}$ is the constant in (\ref{CHCRbdd}).  

By (\ref{property:delta:ptwiseconv}) with $q = 2$, we see that
\begin{align*}
\lim_{\eps \to 0} \frac{1}{\eps^{2}} \delta \left ( \frac{\eta}{\eps} \right ) = 0.
\end{align*}
Let $C_{\mathrm{sup},\eta} := \sup_{\eps \in (0,1]} \frac{1}{\eps^{2}} \delta \left ( \frac{\eta}{\eps} \right ) < \infty$.  Then, for all $\eps \in (0,1]$, we have
\begin{align*}
\delta \left ( \frac{\eta}{\eps} \right ) \leq C_{\mathrm{sup},\eta} \eps^{2},
\end{align*}
and so, from the monotonicity of $\delta(\abs{s})$, the symmetry of $\delta(s)$ about $s = 0$, and (\ref{property:delta:bound}), 
\begin{equation}\label{int:delta:ordereps}
\begin{aligned}
\int_{\R \setminus (\frac{-\eta}{\eps}, \frac{\eta}{\eps})} \delta(s) \ds & = 2\int_{\frac{\eta}{\eps}}^{\infty} \delta(s) \ds \leq 2\sqrt{\delta \left ( \frac{\eta}{\eps} \right )} \left ( \int_{\frac{\eta}{\eps}}^{\infty} \sqrt{\delta(s)} \ds \right ) \\
& \leq 2\sqrt{C_{\mathrm{sup},\eta}} C_{\delta, \mathrm{int}} \eps \leq C \eps,
\end{aligned}
\end{equation}
for some constant $C > 0$.  

Let $q < \infty$ and $f \in W^{1,q}(\Omega)$.  Then, by the diffeomorphism $\Theta^{\eta}$, absolute continuity on lines, and H\"{o}lder's inequality, we have
\begin{equation}\label{Holder}
\begin{aligned}
& \; \abs{f(p + \eps s \nu(p)) - f(p)} = \abs{\int_{0}^{s} \frac{\dd}{\dzeta} f(p + \eps \zeta \nu(p)) \dzeta} \\
= & \; \eps \abs{\int_{0}^{s} \nabla f(p+ \eps \zeta \nu(p)) \cdot \nu(p) \dzeta} \leq \eps \abs{s}^{\frac{q-1}{q}} \left ( \int_{0}^{s} \abs{\nabla f(p + \eps \zeta \nu(p))}^{q} \dzeta \right )^{\frac{1}{q}}.
\end{aligned}
\end{equation}
Then, by (\ref{Holder}),  (\ref{TubNbd:intf:change}), a change of variables $t \mapsto \eps s$, H\"{o}lder's inequality, and (\ref{CHCRbdd}),
\begin{equation}\label{difference:Holder}
\begin{aligned}
& \; \abs{\int_{\mathrm{Tub}^{\eta}(\Gamma)} \delta_{\eps} f \dx - \int_{\Gamma} \gamma_{0}(f) \dHaus} \\ 
\leq & \; \int_{\R} \int_{\Gamma} \delta(s) \abs{\chi_{\left (\frac{-\eta}{\eps},\frac{\eta}{\eps} \right )}(s) f(p + \eps s \nu(p)) \abs{1 + \eps s C_{H}(p) + C_{R}(p,\eps s)} - f(p)} \dHaus \ds \\
\leq & \; \int_{\R} \int_{\Gamma} \delta(s) \abs{f(p)} \abs{1 - \chi_{\left (\frac{-\eta}{\eps},\frac{\eta}{\eps} \right )}(s) \abs{1 + \eps s C_{H}(p) + C_{R}(p,\eps s)}} \dHaus \ds \\
+ & \; \int_{\R} \int_{\Gamma} \delta(s) \chi_{(\frac{-\eta}{\eps}, \frac{\eta}{\eps})}(s) \abs{f (p + \eps s \nu(p)) - f(p)} \abs{1 + \eps s C_{H}(p) + C_{R}(p, \eps s)} \dHaus \ds \\
\leq & \; \norm{\gamma_{0}(f)}_{L^{1}(\Gamma)} \int_{\R} \delta(s) \left ( \chi_{\R \setminus \left ( \frac{-\eta}{\eps}, \frac{\eta}{\eps} \right )} (s) + \eps \tilde{c} \abs{s} \right ) \ds \\
+ & \; \eps (1+ \tilde{c} \eta) \int_{\R} \int_{\Gamma} \chi_{\left (\frac{-\eta}{\eps},\frac{\eta}{\eps} \right )}(s) \delta(s) \abs{s}^{\frac{q-1}{q}} \left ( \int_{0}^{s} \abs{\nabla f(p + \eps \zeta \nu(p))}^{q} \dzeta \right )^{\frac{1}{q}} \dHaus \ds \\
\leq & \; \norm{\gamma_{0}(f)}_{L^{1}(\Gamma)} \int_{\R} \delta(s) \left ( \chi_{\R \setminus \left ( \frac{-\eta}{\eps}, \frac{\eta}{\eps} \right )} (s) + \eps \tilde{c} \abs{s} \right ) \ds \\
+ & \; \eps (1 + \tilde{c} \eta) \abs{\Gamma}^{\frac{q-1}{q}}  \int_{\R} \delta(s) (1 + \abs{s}) \left ( \int_{\Gamma} \int_{0}^{\frac{\eta}{\eps}} \abs{\nabla f(p + \eps \zeta \nu(p))}^{q} \dzeta \dHaus \right )^{\frac{1}{q}} \ds.
\end{aligned}
\end{equation}
Here, we have also used that $\abs{s}^{1 - \frac{1}{r}} \leq 1 + \abs{s}$ for all $s \in \R$ and $1 \leq r \leq \infty$.  By a change of variables $\eps \zeta \mapsto t$, and (\ref{TubNbd:intf:upplowbdd}), we observe that
\begin{equation}\label{negativepower:eps}
\begin{aligned}
& \; \left ( \int_{\Gamma} \int_{0}^{\frac{\eta}{\eps}} \abs{\nabla f(p + \eps \zeta \nu(p))}^{q} \dzeta \dHaus \right )^{\frac{1}{q}} \\
= & \; \eps^{-\frac{1}{q}} \left ( \int_{\Gamma} \int_{0}^{\eta} \abs{\nabla f(p + t \nu(p))}^{q} \dzeta \dHaus \right)^{\frac{1}{q}} \leq C(\tilde{c} \eta) \eps^{-\frac{1}{q}} \norm{\nabla f}_{L^{q}(\mathrm{Tub}^{\eta}(\Gamma))},
\end{aligned}
\end{equation}
Together with (\ref{int:delta:ordereps}), and (\ref{property:delta:bound}), we see from (\ref{difference:Holder}) that
\begin{align*}
\abs{\int_{\mathrm{Tub}^{\eta}(\Gamma)} \delta_{\eps} f \dx  - \int_{\Gamma} \gamma_{0}(f) \dHaus} & \leq \eps C \norm{\gamma_{0}(f)}_{L^{1}(\Gamma)} + \eps^{1-\frac{1}{q}} C \norm{\nabla f}_{L^{q}(\mathrm{Tub}^{\eta}(\Gamma))} \\
& \leq C \eps^{1-\frac{1}{q}} \norm{f}_{W^{1,q}(\Omega)},
\end{align*}
which is (\ref{convergetodirac:power}) for $q < \infty$. 

For the case $q = \infty$, let $f \in C^{1}(\overline{\Omega})$.  Then, by the fundamental theorem of calculus, we have
\begin{equation}\label{C1:difference:fundamentalthmCalculus}
\begin{aligned}
\abs{f (p + \eps s \nu(p)) - f(p)} & = \eps \abs{\int_{0}^{s} \nabla f(p + \eps \zeta \nu(p)) \cdot \nu(p) \dzeta} \\
& \leq \eps \abs{s} \sup_{\zeta \in [0,s]} \abs{\nabla f(p + \eps \zeta \nu(p))} \leq \eps \abs{s} \norm{\nabla f}_{C^{0}(\mathrm{Tub}^{\eta}(\Gamma))},
\end{aligned}
\end{equation}
and hence, by a similar calculation to (\ref{difference:Holder}) yields,
\begin{align*}
& \; \abs{\int_{\mathrm{Tub}^{\eta}(\Gamma)} \delta_{\eps} f \dx - \int_{\Gamma} \gamma_{0}(f) \dHaus } \\
\leq & \; \eps C \norm{\gamma_{0}(f)}_{L^{1}(\Gamma)} +  \eps (1+\tilde{c} \eta) \abs{\Gamma} \norm{\nabla f}_{C^{0}(\mathrm{Tub}^{\eta}(\Gamma))} \int_{\R} \delta(s) \abs{s} \ds \\
\leq & \; C \eps \norm{f}_{C^{1}(\Omega)}.
\end{align*}

Next, by (\ref{deltaeps:Linfty:remainder:convergence}), we see that
\begin{align*}
\int_{\Omega \setminus \mathrm{Tub}^{\eta}(\Gamma)} \delta_{\eps} \abs{f} \dx \leq \norm{\delta_{\eps}}_{L^{\infty}(\Omega \setminus \mathrm{Tub}^{\eta}(\Gamma))} \norm{f}_{L^{1}(\Omega)} \to 0 \text { as } \eps \to 0.
\end{align*} 
Together with (\ref{convergetodirac:power}) for $q = \infty$, we have that (\ref{convergetodirac}) holds true for all $C^{1}(\overline{\Omega})$ functions.  Let $\zeta > 0$ be arbitrary, then by the density of smooth functions, for any $f \in W^{1,1}(\Omega)$, there exists $g \in C^{1}(\overline{\Omega})$ such that
\begin{align*}
\norm{f - g}_{W^{1,1}(\Omega)} < \zeta.
\end{align*}
Then, by (\ref{convergetodirac}) for $C^{1}(\overline{\Omega})$ functions, there exists $\eps_{0}(\zeta)$ such that for $\eps < \eps_{0}(\zeta)$, 
\begin{align*}
\abs{\int_{\Omega} \delta_{\eps} g \dx - \int_{\Gamma} \gamma_{0}(g) \dHaus} < \zeta.
\end{align*}
By Lemma \ref{lem:H1impliesL2delta}, and the trace theorem, we find that for $\eps < \eps_{0}(\zeta)$,
\begin{equation}\label{C1density}
\begin{aligned}
& \; \abs{\int_{\Omega} \delta_{\eps} f \dx - \int_{\Gamma} \gamma_{0}(f) \dHaus }\\
\leq & \; \abs{\int_{\Omega} \delta_{\eps}(f-g) \dx - \int_{\Gamma} \gamma_{0}(f-g) \dHaus} + \abs{\int_{\Omega} \delta_{\eps} g \dx - \int_{\Gamma} \gamma_{0}(g) \dHaus} \\
\leq & \; C \norm{f-g}_{W^{1,1}(\Omega)} + \norm{f-g}_{L^{1}(\Gamma)} + \zeta \\
\leq & \; (C+ C_{\mathrm{tr}}) \norm{f-g}_{W^{1,1}(\Omega)}  +  \zeta \leq (C + C_{\mathrm{tr}} + 1) \zeta.
\end{aligned}
\end{equation}
By the arbitrariness of $\zeta$, we conclude that (\ref{convergetodirac}) holds for any $f \in W^{1,1}(\Omega)$.

For assertion (\ref{convergetodirac:zerotrace}), we note that for $f \in C^{1}(\overline{\Omega})$ with $f \mid_{\Gamma} = 0$, a similar calculation to (\ref{Holder}) yields for $2 \leq q < \infty$,
\begin{align*}
\abs{f(p + \eps s \nu(p))}^{2} & \leq \eps^{2} \abs{\int_{0}^{s} \abs{\nabla f(p+\eps \zeta \nu(p))} \dzeta}^{2} \\
& \leq \eps^{2} \abs{s}^{2 - \frac{2}{q}} \left ( \int_{0}^{s} \abs{\nabla f(p + \eps \zeta \nu(p))}^{q} \dzeta \right)^{\frac{2}{q}} .
\end{align*}
Then, a similar calculation to (\ref{difference:Holder}), and using (\ref{negativepower:eps}) yield
\begin{align*}
& \; \abs{\int_{\mathrm{Tub}^{\eta}(\Gamma)} \delta_{\eps} \abs{f}^{2} \dx } \\
\leq & \; \eps^{2} \int_{\frac{-\eta}{\eps}}^{\frac{\eta}{\eps}} \int_{\Gamma} \abs{1 + \eps s C_{H}(p) + C_{R}(p,\eps s)} \delta(s) \abs{s}^{2- \frac{2}{q}} \left ( \int_{0}^{s} \abs{\nabla f(p + \eps \zeta \nu(p))}^{q} \dzeta \right)^{\frac{2}{q}} \dHaus \ds \\
\leq & \; \eps^{2} (1 + \tilde{c} \eta) \abs{\Gamma}^{1-\frac{2}{q}} \left ( \int_{\Gamma} \int_{0}^{\frac{\eta}{\eps}} \abs{\nabla f(p + \eps \zeta \nu(p))}^{q} \dzeta \dHaus \right )^{\frac{2}{q}} \left ( \int_{\R} \delta(s) (1 + \abs{s} + \abs{s}^{2}) \ds \right ) \\
\leq & \; C \eps^{2 - \frac{2}{q}} \norm{f}_{W^{1,q}(\Omega)}^{2}.
\end{align*}
Using the density of $C^{1}(\overline{\Omega})$ in $W^{1,q}(\Omega)$, and arguing similarly as in (\ref{C1density}), we see that assertion (\ref{convergetodirac:zerotrace}) holds true for any $f \in W^{1,q}(\Omega)$, $2 \leq q < \infty$ with $\gamma_{0}(f) = 0$.

While, for the case $q = \infty$, let $f \in C^{1}(\overline{\Omega})$ with $f \mid_{\Gamma} = 0$.  We have 
\begin{align*}
\abs{f(p+\eps s \nu(p)) - f(p)}^{2} = \abs{f(p+\eps s \nu(p))}^{2} \leq \eps^{2} \abs{s}^{2} \norm{\nabla f}_{C^{0}(\mathrm{Tub}^{\eta}(\Gamma))}^{2},
\end{align*}
and so
\begin{align}\label{convergetodirac:zerotrace:C1}
\abs{\int_{\mathrm{Tub}^{\eta}(\Gamma)} \delta_{\eps} \abs{f}^{2} \dx} \leq \eps^{2} \norm{f}_{C^{1}(\mathrm{Tub}^{\eta}(\Gamma))}^{2} (1 + \tilde{c} \eta) \int_{\R} \delta(s) \abs{s}^{2} \ds \leq C \eps^{2} \norm{f}_{C^{1}(\Omega)}^{2}.
\end{align}

\end{proof} 

\begin{remark}
The analogous assertions for the double-obstacle regularisation with $\eta = \eps \frac{\pi}{2}$ follows along a similar argument.  For this case, we point out that $(\ref{int:delta:ordereps})$ is not needed.
\end{remark}

%

We define
\begin{align*}
H^{1}_{\Gamma,0}(\Omega) := \{ f \in H^{1}(\Omega) : \gamma_{0}(f) = 0 \}.
\end{align*}
This is a closed subspace of $H^{1}(\Omega)$ under $\norm{\cdot}_{H^{1}(\Omega)}$ and hence is a Hilbert space itself.

\begin{lemma}\label{lem:H1zeroimpliesL2deltaeps-1}
Suppose that Assumptions \ref{assump:Gamma} and \ref{assump:delta} are satisfied.  There exists a constant $C > 0$ such that for all $f \in H^{1}_{\Gamma,0}(\Omega)$ and all $\eps \in (0,1]$,
\begin{align*}
\norm{f}_{0,\frac{1}{\eps} \delta_{\eps}} \leq C \norm{f}_{H^{1}(\Omega)}.
\end{align*}
Moreover,
\begin{align}\label{H1zeroL2deltaeps-1conv}
\int_{\Omega} \frac{1}{\eps} \delta_{\eps} \abs{f}^{2} \dx \to 0 \text{ as } \eps \to 0.
\end{align}
\end{lemma}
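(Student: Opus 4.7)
The plan is to split the integration domain $\Omega$ into the tubular neighbourhood $\mathrm{Tub}^{\eta}(\Gamma)$ and its complement, and estimate the two contributions separately, leveraging Lemma \ref{lem:convergetodirac}.

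For the contribution from $\mathrm{Tub}^{\eta}(\Gamma)$, I would directly invoke estimate (\ref{convergetodirac:zerotrace}) with $q=2$, which is applicable precisely because $\gamma_{0}(f)=0$. This yields
\begin{align*}
\int_{\mathrm{Tub}^{\eta}(\Gamma)} \delta_{\eps} \abs{f}^{2} \dx \leq C \eps \norm{f}_{H^{1}(\Omega)}^{2},
\end{align*}
so dividing by $\eps$ gives a bound on $\int_{\mathrm{Tub}^{\eta}(\Gamma)} \frac{1}{\eps}\delta_{\eps} \abs{f}^{2} \dx$ that is uniform in $\eps \in (0,1]$. In fact this integral tends to zero like $\mathcal{O}(\eps)$, which takes care of half of the limit assertion (\ref{H1zeroL2deltaeps-1conv}).

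For the complement $\Omega \setminus \mathrm{Tub}^{\eta}(\Gamma)$, I would use the monotonicity of $\delta$ in $\abs{s}$ together with property (\ref{property:delta:ptwiseconv}) applied with exponent $q=2$. Since $\abs{d(x)}\geq \eta$ on this set, one has $\tfrac{1}{\eps}\delta_{\eps}(x) = \tfrac{1}{\eps^{2}}\delta(d(x)/\eps) \leq \tfrac{1}{\eps^{2}}\delta(\eta/\eps)$, and by (\ref{property:delta:ptwiseconv}) with $q=2$ the quantity $\tfrac{1}{\eps^{2}}\delta(\eta/\eps)$ tends to zero as $\eps \to 0$; in particular its supremum over $\eps \in (0,1]$ is a finite constant $\tilde{C}_{\sup,\eta}$ (precisely as was used in the proof of Lemma \ref{lem:convergetodirac}). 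Thus
\begin{align*}
\int_{\Omega \setminus \mathrm{Tub}^{\eta}(\Gamma)} \tfrac{1}{\eps}\delta_{\eps} \abs{f}^{2} \dx \leq \tilde{C}_{\sup,\eta} \norm{f}_{L^{2}(\Omega)}^{2},
\end{align*}
which is uniform in $\eps$ and additionally tends to zero as $\eps \to 0$. Combining the two regions gives both the uniform bound and the convergence (\ref{H1zeroL2deltaeps-1conv}).

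There is no real obstacle here beyond correctly invoking the two ingredients already established: the zero-trace improvement in Lemma \ref{lem:convergetodirac} and the pointwise decay of $\delta_{\eps}$ away from $\Gamma$ encoded in (\ref{property:delta:ptwiseconv}). For the double-obstacle regularisation the complement contribution is trivially zero once $\eta \geq \eps \pi /2$, so the same splitting argument goes through without modification.
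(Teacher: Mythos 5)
Your argument for the uniform bound $\norm{f}_{0,\frac{1}{\eps}\delta_{\eps}} \leq C\norm{f}_{H^{1}(\Omega)}$ is correct and follows the paper's approach exactly: split $\Omega$ into $\mathrm{Tub}^{\eta}(\Gamma)$ and its complement, apply \eqref{convergetodirac:zerotrace} with $q=2$ on the tubular piece, and use the supremum bound on $\tfrac{1}{\eps}\delta_{\eps}$ away from $\Gamma$ on the complement. The complement part of the convergence claim is also fine.

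However, there is a genuine gap in your treatment of the tubular contribution to the convergence \eqref{H1zeroL2deltaeps-1conv}. Estimate \eqref{convergetodirac:zerotrace} with $q=2$ yields $\int_{\mathrm{Tub}^{\eta}(\Gamma)} \delta_{\eps}\abs{f}^{2}\dx \leq C\eps\norm{f}_{H^{1}(\Omega)}^{2}$, so after dividing by $\eps$ the weighted integral $\int_{\mathrm{Tub}^{\eta}(\Gamma)} \tfrac{1}{\eps}\delta_{\eps}\abs{f}^{2}\dx$ is bounded by $C\norm{f}_{H^{1}(\Omega)}^{2}$ — a quantity of order $\mathcal{O}(1)$ in $\eps$, not $\mathcal{O}(\eps)$ as you claim. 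The exponent $2-\tfrac{2}{q}$ equals $1$ precisely at the borderline $q=2$, so the factor of $\eps^{-1}$ exactly cancels the decay; no vanishing is produced for general $f \in H^{1}_{\Gamma,0}(\Omega)$ this way. The paper circumvents this by first proving the decay for $g \in C^{1}(\overline{\Omega})$ with $g\mid_{\Gamma}=0$ via \eqref{convergetodirac:zerotrace:C1}, where the undivided integral is $\mathcal{O}(\eps^{2})$ and so the divided one is $\mathcal{O}(\eps)$, and then transfers the result to all of $H^{1}_{\Gamma,0}(\Omega)$ by a density argument using the uniform bound you have already established. Your proof is missing this density step, and the convergence statement \eqref{H1zeroL2deltaeps-1conv} is not justified as written.
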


\begin{proof}
Choose $0 < \eta < (\tilde{c})^{-1}$ an the diffeomorphism $\Theta^{\eta}$ as defined in (\ref{diffeo}).  By (\ref{property:delta:ptwiseconv}) with $q = 2$, we have
\begin{align}\label{remainder:delta:eps-1}
\norm{\tfrac{1}{\eps} \delta_{\eps}}_{L^{\infty}(\Omega \setminus \mathrm{Tub}^{\eta}(\Gamma))} \to 0 \text{ as } \eps \to 0.
\end{align}
Hence, for $\eps \in (0,1]$,
\begin{align*}
\norm{\tfrac{1}{\eps} \delta_{\eps}}_{L^{\infty}(\Omega \setminus \mathrm{Tub}^{\eta}(\Gamma))} \leq \sup_{\eps \in (0,1]} \norm{\tfrac{1}{\eps} \delta_{\eps}}_{L^{\infty}(\Omega \setminus \mathrm{Tub}^{\eta}(\Gamma))}  =: C_{\mathrm{sup},-1}.
\end{align*}
Let $f \in H^{1}_{\Gamma,0}(\Omega)$.  By (\ref{convergetodirac:zerotrace}) with $q = 2$, we have 
\begin{align*}
\int_{\Omega} \frac{1}{\eps} \delta_{\eps} \abs{f}^{2} \dx & \leq \int_{\mathrm{Tub}^{\eta}(\Gamma)} \frac{1}{\eps} \delta_{\eps} \abs{f}^{2} \dx + \norm{\tfrac{1}{\eps} \delta_{\eps}}_{L^{\infty}(\Omega \setminus \mathrm{Tub}^{\eta}(\Gamma))} \norm{f}_{L^{2}(\Omega \setminus \mathrm{Tub}^{\eta}(\Gamma))}^{2} \\
& \leq C \norm{f}_{H^{1}(\Omega)}^{2} + C_{\mathrm{sup},-1} \norm{f}_{L^{2}(\Omega)}^{2} \leq C \norm{f}_{H^{1}(\Omega)}^{2},
\end{align*}
where $C$ is independent of $f$ and $\eps$.  Next, take $g \in C^{1}(\overline{\Omega})$ with $g \mid_{\Gamma} = 0$.  Then, from (\ref{convergetodirac:zerotrace:C1}) and (\ref{remainder:delta:eps-1}) we have
\begin{align*}
\int_{\Omega} \frac{1}{\eps} \delta_{\eps} \abs{g}^{2} \dx \leq C \eps \norm{g}_{C^{1}(\Omega)}^{2} + \norm{\tfrac{1}{\eps} \delta_{\eps}}_{L^{\infty}(\Omega \setminus \mathrm{Tub}^{\eta}(\Gamma))} \norm{g}_{L^{2}(\Omega \setminus \mathrm{Tub}^{\eta}(\Gamma))}^{2}  \to 0 \text{ as } \eps \to 0.
\end{align*}
Using a density argument similar to (\ref{C1density}), we see that assertion (\ref{H1zeroL2deltaeps-1conv}) holds true for any $f \in H^{1}_{\Gamma,0}(\Omega)$.
\end{proof}

As a consequence of (\ref{property:Xidelta}), we have
\begin{align}\label{XiDelta}
\frac{1}{\eps} \delta \left ( \frac{d(x)}{\eps} \right ) \leq \frac{1}{\eps} \frac{1}{C_{\xi}} \xi \left ( \frac{d(x)}{\eps} \right ),
\end{align}
and thus, for all $\eps \in (0,1]$, and any $f \in L^{2}(\Omega, \xi_{\eps})$, we have
\begin{align}\label{XiDeltaRelation}
\int_{\Omega} \delta_{\eps} \abs{f}^{2} \dx \leq \frac{1}{\eps} \frac{1}{C_{\xi}} \int_{\Omega} \xi_{\eps} \abs{f}^{2} \dx.
\end{align}

We now introduce the following weighted Sobolev space:
\begin{defn}
\begin{align*}
L^{2}(\Gamma \times \R, \delta) := \left \{ f : \Gamma \times \R \to \R \text{ measurable s.t. } \int_{\R} \int_{\Gamma} \delta(z) \abs{f(p,z)}^{2} \dHaus \dz < \infty \right \},
\end{align*}
with the inner product and induced norm:
\begin{align*}
\inner{f}{g}_{L^{2}(\Gamma \times \R, \delta)} := \int_{\R} \int_{\Gamma} \delta(z) f(p,z) g(p,z) \dHaus \dz, \quad \norm{f}_{L^{2}(\Gamma \times \R, \delta)}^{2} := \inner{f}{f}_{L^{2}(\Gamma \times \R, \delta)}^{2},
\end{align*}
along with the identification,
\begin{align*}
f = g \Leftrightarrow f(p,z) = g(p,z) \text{ for a.e. } p \in \Gamma \text{ and a.e. } z \in \{ t \in \R : \delta(t) > 0 \}.
\end{align*}
\end{defn}
In the next two lemma, we give compactness results for $\mathcal{V}_{\eps}$ and $H^{1}(\Omega, \delta_{\eps})$.

\begin{lemma}\label{lem:Vepscompactness}
Suppose that Assumptions \ref{assump:Gamma}, \ref{assump:Xi} and \ref{assump:delta} are satisfied.  Let $\{ u^{\eps} \}_{\eps \in (0,1]} \subset \mathcal{V}_{\eps}$ and $\{ w^{\eps} \}_{\eps \in (0,1]} \subset H^{1}(\Omega, \xi_{\eps})$ denote two bounded sequences, i.e., there exist constants $C > 0$, independent of $\eps$, such that
\begin{align*}
\norm{u^{\eps}}_{1,\xi_{\eps}}^{2} + \norm{u^{\eps}}_{0,\delta_{\eps}}^{2} \leq C, \quad \norm{w^{\eps}}_{1,\xi_{\eps}}^{2} \leq C.
\end{align*}
Then, there exist $\tilde{u}, \tilde{w} \in H^{1}(\Omega^{*})$ such that $u^{\eps} \mid_{\Omega^{*}}$ converges weakly to $\tilde{u}$ in $H^{1}(\Omega^{*})$,  $w^{\eps} \mid_{\Omega^{*}}$ converges weakly to $\tilde{w}$ in $H^{1}(\Omega^{*})$, along subsequences, and for any $\varphi \in H^{1}(\Omega)$, 
\begin{align}\label{convergencetotrace}
\int_{\Omega} \delta_{\eps} u^{\eps} \varphi \dx \to \int_{\Gamma} \gamma_{0}(\tilde{u}) \gamma_{0}(\varphi) \dHaus \text{ as } \eps \to 0.
\end{align}
\end{lemma}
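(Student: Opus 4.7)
The plan is to split the proof into weak $H^{1}(\Omega^{*})$ convergence of the restrictions and the delta-concentration identity (\ref{convergencetotrace}) for $u^{\eps}$. For the first, note that $d\le 0$ on $\overline{\Omega^{*}}$, so assumption (\ref{property:Xi}) gives $\xi_{\eps}\ge \tfrac{1}{2}$ on $\Omega^{*}$ uniformly in $\eps$, whence $\|u^{\eps}|_{\Omega^{*}}\|_{H^{1}(\Omega^{*})}^{2}\le 2\|u^{\eps}\|_{1,\xi_{\eps}}^{2}\le C$ and similarly for $w^{\eps}$. Reflexivity yields the desired weak limits $\tilde{u},\tilde{w}\in H^{1}(\Omega^{*})$, and the compact embedding $H^{1/2}(\Gamma)\hookrightarrow L^{2}(\Gamma)$ additionally produces the strong trace convergence $\gamma_{0}(u^{\eps}|_{\Omega^{*}})\to \gamma_{0}(\tilde{u})$ in $L^{2}(\Gamma)$ along the same subsequence, which will drive the identification below.

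For (\ref{convergencetotrace}) I fix a small $\eta_{0}>0$ so that the diffeomorphism (\ref{diffeo}) applies on $\mathrm{Tub}^{\eta_{0}}(\Gamma)\subset\Omega$ and split the integral. The contribution from the complement $\Omega\setminus\mathrm{Tub}^{\eta_{0}}(\Gamma)$ is bounded by $\|u^{\eps}\|_{0,\delta_{\eps}}\,(\int_{\Omega\setminus\mathrm{Tub}^{\eta_{0}}}\delta_{\eps}\varphi^{2}\,\dx)^{1/2}$, and the second factor is $o(1)$ because $\|\delta_{\eps}\|_{L^{\infty}(\Omega\setminus\mathrm{Tub}^{\eta_{0}})}\to 0$ as in (\ref{deltaeps:Linfty:remainder:convergence}). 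On the tubular part the change of variables (\ref{ScaledNbd:coordchange}) recasts the integral as
\begin{equation*}
\int_{-\eta_{0}/\eps}^{\eta_{0}/\eps}\int_{\Gamma}\delta(s)\,\tilde{u}^{\eps}(p,s)\,\tilde{\varphi}^{\eps}(p,s)\,\lvert J(p,\eps s)\rvert\,\dHaus(p)\,ds,
\end{equation*}
with $\tilde{u}^{\eps}(p,s):=u^{\eps}(p+\eps s\nu(p))$ and $\tilde{\varphi}^{\eps}$ defined analogously, both extended by zero beyond the range. The bound $\|u^{\eps}\|_{0,\delta_{\eps}}\le C$ translates, after the coordinate change, into $\tilde{u}^{\eps}$ being bounded in the weighted space $L^{2}(\Gamma\times\R,\delta)$, hence $\tilde{u}^{\eps}\rightharpoonup \tilde{u}^{0}$ weakly in that space along a further subsequence.

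The main obstacle is to identify $\tilde{u}^{0}(p,s)=\gamma_{0}(\tilde{u})(p)$, since $u^{\eps}$ has no uniform $L^{2}$-bound outside $\Omega^{*}$. The decisive ingredient is the inequality $\delta_{\eps}\le \xi_{\eps}/(C_{\xi}\eps)$ from (\ref{property:Xidelta}), which combined with $\|u^{\eps}\|_{1,\xi_{\eps}}\le C$ gives $\int_{\Omega}\delta_{\eps}|\nabla u^{\eps}|^{2}\,\dx\le C/\eps$. Lemma~\ref{ScaledNbd:gradientdecomposition} supplies $\partial_{s}\tilde{u}^{\eps}(p,s)=\eps\,\nabla u^{\eps}(p+\eps s\nu(p))\cdot\nu(p)$, and the coordinate change turns the gradient bound into $\|\partial_{s}\tilde{u}^{\eps}\|_{L^{2}(\Gamma\times\R,\delta)}^{2}\le C\eps$, so $\partial_{s}\tilde{u}^{\eps}\to 0$ strongly in this weighted space. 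Weak--strong duality testing against $\psi(p)\phi(s)$ with $\phi\in C_{c}^{\infty}(\{\delta>0\})$ then forces $\partial_{s}\tilde{u}^{0}=0$ distributionally on $\{\delta>0\}$, so $\tilde{u}^{0}$ is independent of $s$ on the (connected) set where $\delta>0$. To pin down its value I would introduce the $H^{1}(\Omega)$-extension by reflection $U^{\eps}$ of $u^{\eps}|_{\Omega^{*}}$ and exploit the absolute continuity on lines argument already used in the proof of Lemma~\ref{lem:convergetodirac} to obtain $\tilde{U}^{\eps}\to \gamma_{0}(\tilde{u})$ \emph{strongly} in $L^{2}(\Gamma\times\R,\delta)$; since $U^{\eps}=u^{\eps}$ on $\Omega^{*}$ we have $\tilde{U}^{\eps}=\tilde{u}^{\eps}$ on $\{s<0\}$, and comparing the strong and weak limits there yields $\tilde{u}^{0}(p)=\gamma_{0}(\tilde{u})(p)$.

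Finally, the same absolute continuity argument applied to $\varphi\in H^{1}(\Omega)$, together with the elementary bound $\sup_{s\in\R}s^{2}\delta(s)<\infty$ (a consequence of the monotonicity in (\ref{property:delta}) and the integrability (\ref{property:delta:bound})) used to absorb the Jacobian factor $\lvert J\rvert=1+O(\eps\lvert s\rvert)$, gives $\tilde{\varphi}^{\eps}\lvert J\rvert\to \gamma_{0}(\varphi)$ strongly in $L^{2}(\Gamma\times\R,\delta)$. A weak--strong pairing, combined with $\int_{\R}\delta(s)\,ds=1$ from (\ref{property:delta}), then delivers
\begin{equation*}
\int_{\mathrm{Tub}^{\eta_{0}}(\Gamma)}\delta_{\eps}u^{\eps}\varphi\,\dx \longrightarrow \int_{\Gamma}\gamma_{0}(\tilde{u})\gamma_{0}(\varphi)\,\dHaus,
\end{equation*}
which, together with the negligible complementary contribution, is (\ref{convergencetotrace}); uniqueness of the weak limit removes the subsequence restriction.
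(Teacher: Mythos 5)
Your proof is correct, and it follows the same broad architecture as the paper (split off the complement of a tubular neighbourhood, pass to the $(p,s)$ rescaled variables, derive that $\partial_s$ of the rescaled family tends to zero so the weak limit is $s$-independent, then pair weak against strong). However, you handle the two most delicate steps in a genuinely different way, and each choice is worth noting.

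\emph{The identification $\tilde u^{0}(p)=\gamma_{0}(\tilde u)(p)$.} The paper restricts to the shrinking tube $\mathrm{Tub}^{\eps\eta}(\Gamma)$, so that $z$ runs over a fixed interval $(-\eta,\eta)$, and then estimates the product $U^{\eps}\Phi_{\eps}$ along normal lines using the uniform lower bound $\delta(\zeta)\ge\delta(\eta)>0$ on that interval; this is the purpose of the estimate $(\ref{deltadenominator})$. You instead introduce the $H^{1}(\Omega)$ reflection extension $U^{\eps}$ of $u^{\eps}|_{\Omega^{*}}$, whose $H^{1}(\Omega)$-norm is uniformly bounded, use absolute continuity on lines to show $\tilde U^{\eps}\to\gamma_{0}(\tilde u)$ \emph{strongly} in $L^{2}(\Gamma\times\R,\delta)$, and then observe that $\tilde U^{\eps}=\tilde u^{\eps}$ on $\{s<0\}$ forces the weak limit $\tilde u^{0}$ to agree with $\gamma_{0}(\tilde u)$ on $\{s<0\}\cap\{\delta>0\}$, hence everywhere by $s$-independence. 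This is cleaner: it avoids the $1/\delta(\eta)$ bound entirely and requires no separate restriction of the domain, at the mild cost of invoking the extension operator's uniform boundedness and the compact trace theorem (which the paper uses anyway). When filling in the detail you should note explicitly that the reflection extension is a fixed bounded linear operator, so $U^{\eps}\rightharpoonup E(\tilde u)$ in $H^{1}(\Omega)$, and that $\{\delta>0\}\cap\{s<0\}$ has positive measure because $\delta$ is continuous and $\delta(0)=\max\delta>0$.

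\emph{Fixed versus scaled tubular neighbourhood.} The paper works with $\mathrm{Tub}^{\eps^{k}\eta}(\Gamma)$, $0<k<1$, precisely so that the Jacobian correction $|1+\eps z C_{H}+C_{R}|$ tends to $1$ \emph{uniformly} over the rescaled range (see $(\ref{CHCRsup})$--$(\ref{1CHCRstrongconverge})$). You use a fixed tube $\mathrm{Tub}^{\eta_{0}}(\Gamma)$, so $\eps|s|$ can be as large as $\eta_{0}$, and instead absorb the $O(\eps|s|)$ Jacobian error using the $\delta$-weighting. This is legitimate, but the bound that makes it work is $\int_{\R}\delta(s)|s|^{2}\,\ds\le C_{\delta,\mathrm{int}}<\infty$ from $(\ref{property:delta:bound})$: with $\bigl||J|-1\bigr|\le\tilde c\,\eps|s|$, the error term contributes
$\tilde c^{2}\eps^{2}\int_{\R}\delta(s)|s|^{2}\ds\,\norm{\gamma_{0}(\varphi)}_{L^{2}(\Gamma)}^{2}\to 0$.
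You cite $\sup_{s}s^{2}\delta(s)<\infty$ instead; while that bound does follow from monotonicity and integrability, it is not the quantity that closes the estimate (dropping $\delta(s)$ from the integrand would leave an unweighted $s$-integral over $(-\eta_{0}/\eps,\eta_{0}/\eps)$ that diverges). Replace that citation with the moment bound in $(\ref{property:delta:bound})$ and the argument is watertight.

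In summary: the result is proved, the identification step is an alternative and somewhat more direct argument than the paper's, and the only fix needed is the one indicated about which $\delta$-moment controls the Jacobian error.
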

\begin{proof}
By Assumption \ref{assump:Xi}, $\xi_{\eps} \geq \frac{1}{2}$ in $\overline{\Omega^{*}}$, and so, 
\begin{align}\label{H1Xibddconv}
\norm{u^{\eps} \mid_{\Omega^{*}}}_{H^{1}(\Omega^{*})} \leq 2 \int_{\Omega^{*}} \xi_{\eps} \abs{u^{\eps}}^{2} \dx \leq 2 \int_{\Omega} \xi_{\eps} \abs{u^{\eps}}^{2} \dx \leq 2C \text{ for all } \eps \in (0,1].
\end{align}
Hence, $\{ u^{\eps} \mid_{\Omega^{*}} \}_{\eps \in (0,1]} \subset H^{1}(\Omega^{*})$ is a bounded sequence.  Then, by the reflexive weak compactness theorem \cite[Theorem 3, p. 639]{book:Evans}, there exists a function $\tilde{u} \in H^{1}(\Omega^{*})$ such that
\begin{align*}
u^{\eps} \mid_{\Omega^{*}} \rightharpoonup \tilde{u} \text{ in } H^{1}(\Omega^{*}) \text{ as } \eps \to 0,
\end{align*}
along a subsequence.  The proof for the weak convergence of $w^{\eps} \mid_{\Omega^{*}}$ to $\tilde{w}$ in $H^{1}(\Omega^{*})$ follows along the same lines.

Choose $0 < \eta < (\tilde{c})^{-1}$ and the diffeomorphism $\Theta^{\eta}$ as defined in (\ref{diffeo}), where $\tilde{c}$ is the constant in (\ref{CHCRbdd}).  We consider the scaled tubular neighbourhood $X^{\eps} = \mathrm{Tub}^{\eps^{k} \eta}(\Gamma)$ for $0 < k < 1$.  Furthermore, choosing $q = \frac{1}{1-k} > 1$ in (\ref{property:delta:ptwiseconv}), and using (\ref{property:delta}) and a rescaling, we obtain
\begin{align*}
\delta_{\eps}(x) \leq \frac{1}{\eps} \delta \left ( \frac{\eta}{\eps^{1-k}} \right ) = \frac{1}{\tilde{\eps}^{\frac{1}{1-k}}} \delta \left ( \frac{\eta}{\tilde{\eps}} \right ) \to 0 \text{ as } \eps \to 0, 
\end{align*}
for all $x \in \Omega \setminus X^{\eps}$.  Here, we have used that if $x \in \Omega \setminus X^{\eps}$, then $\abs{d(x)} \geq \eps^{k} \eta$.  Thus, we deduce that
\begin{align}\label{delta:remainder:Xeps}
\norm{\delta_{\eps}}_{L^{\infty}(\Omega \setminus X^{\eps})} \to 0 \text{ as } \eps \to 0.
\end{align}
Next, let $\varphi \in H^{1}(\Omega)$.  Then, by the Cauchy--Schwarz inequality and the uniform boundedness of $\{ u^{\eps} \}_{\eps \in (0,1]}$ in $\mathcal{V}_{\eps}$, we have
\begin{equation}\label{int:uepsvarphi:remainder}
\begin{aligned}
\abs{\int_{\Omega \setminus X^{\eps}} \delta_{\eps} u^{\eps} \varphi \dx } & \leq \norm{u^{\eps}}_{L^{2}(\Omega \setminus X^{\eps}, \delta_{\eps})} \norm{\varphi}_{L^{2}(\Omega \setminus X^{\eps}, \delta_{\eps})} \\
& \leq C \norm{\delta_{\eps}}_{L^{\infty}(\Omega \setminus X^{\eps})}^{\frac{1}{2}} \norm{\varphi}_{L^{2}(\Omega)} \to 0 \text{ as } \eps \to 0.
\end{aligned}
\end{equation}
Hence, it suffices to look at the integral over $X^{\eps}$.  Let $U^{\eps}(p,z)$ and $\Phi_{\eps}(p,z)$ denote the representation of $u^{\eps}$ and $\varphi$ in the $(p,z)$ coordinate system, respectively.  For convenience, let us use the notation
\begin{align}
\eta_{\eps,k} := \frac{\eta}{\eps^{1-k}}.
\end{align}
Then, as $u^{\eps}$ is bounded uniformly in $H^{1}(\Omega, \xi_{\eps})$, by (\ref{XiDelta}), (\ref{ScaledNbd:coordchange}), and (\ref{CHCRbdd}), we have
\begin{equation}\label{pdzUeps:bdd}
\begin{aligned}
C & \geq \norm{u^{\eps}}_{1,\xi_{\eps}}^{2} \geq \int_{X^{\eps}} \xi_{\eps} \abs{\nabla u^{\eps}}^{2} \dx \geq \int_{X^{\eps}} \eps C_{\xi} \delta_{\eps} \abs{\nabla u^{\eps}}^{2} \dx \\
& \geq \int_{-\eta_{\eps,k}}^{\eta_{\eps,k}} \int_{\Gamma} \eps C_{\xi} \delta(z) \frac{1}{\eps^{2}} \abs{\pd_{z} U^{\eps}}^{2} \abs{1 + \eps z C_{H}(p) + C_{R}(p,\eps z)} \dHaus \dz \\
& \geq C_{\xi} (1-\tilde{c} \eta) \int_{\R} \int_{\Gamma} \chi_{(-\eta_{\eps,k}, \eta_{\eps,k})}(z) \delta(z) \frac{1}{\eps} \abs{\pd_{z}U^{\eps}}^{2}(p,z) \dHaus \dz.
\end{aligned}
\end{equation}
Multiplying by $\eps$ on both sides of the inequality allows us to deduce that
\begin{align}\label{pdzUeps:convergezero}
\chi_{(-\eta_{\eps,k}, \eta_{\eps,k})} (z) \pd_{z} U^{\eps}(p,z) \to 0 \text{ in } L^{2}(\Gamma \times \R, \delta) \text{ as } \eps \to 0.
\end{align}
By (\ref{property:delta}), $\delta(0) \geq \eps \delta_{\eps}(x)$ for all $x \in \Omega$.  As $\varphi \in H^{1}(\Omega)$, using (\ref{ScaledNbd:coordchange}), and (\ref{property:delta}) we have
\begin{equation}\label{pdzPhieps:bdd}
\begin{aligned}
\norm{\varphi}_{H^{1}(\Omega)}^{2} & \geq \int_{X^{\eps}} \abs{\nabla \varphi}^{2} \dx \geq \frac{1}{\delta(0)} \int_{X^{\eps}} \eps \delta_{\eps} \abs{\nabla \varphi}^{2} \dx \\
& \geq \frac{1-\tilde{c} \eta}{\delta(0)} \int_{\R} \int_{\Gamma} \chi_{(-\eta_{\eps,k}, \eta_{\eps,k})}(z) \delta(z) \frac{1}{\eps} \abs{\pd_{z}\Phi_{\eps}}^{2}(p,z)  \dHaus \dz,
\end{aligned}
\end{equation}
and thus,
\begin{align}\label{pdzPhieps:convergezero}
\chi_{(-\eta_{\eps,k}, \eta_{\eps,k})}(z) \pd_{z}\Phi_{\eps}(p,z) \to 0 \text{ in } L^{2}(\Gamma \times \R, \delta) \text{ as } \eps \to 0.
\end{align}
Meanwhile, by Lemma \ref{lem:H1impliesL2delta}, we see that
\begin{equation}\label{Phieps:bdd}
\begin{aligned}
\norm{\varphi}_{H^{1}(\Omega)}^{2} & \geq C \norm{\varphi}_{0,\delta_{\eps}}^{2} \geq C \int_{X^{\eps}} \delta_{\eps} \abs{\varphi}^{2} \dx \\
& \geq C(1-\tilde{c} \eta) \int_{\R} \int_{\Gamma} \chi_{(-\eta_{\eps,k}, \eta_{\eps,k})}(z) \delta(z) \abs{\Phi_{\eps}}^{2}(p,z) \dHaus \dz.
\end{aligned}
\end{equation}
Similarly, the uniform boundedness of $u^{\eps}$ in $L^{2}(\Omega, \delta_{\eps})$ gives
\begin{equation}\label{Ueps:bdd}
\begin{aligned}
C& \geq \norm{u^{\eps}}_{0,\delta_{\eps}}^{2} \geq \int_{X^{\eps}} \delta_{\eps} \abs{u^{\eps}}^{2} \dx \\
& \geq (1-\tilde{c} \eta) \int_{\R} \int_{\Gamma} \chi_{(-\eta_{\eps,k}, \eta_{\eps,k})}(z) \delta(z) \abs{U^{\eps}}^{2} (p,z) \dHaus \dz.
\end{aligned}
\end{equation}
Hence, by the reflexive weak compactness theorem, there exists a function $\overline{u} \in L^{2}(\Gamma \times \R, \delta)$ such that
\begin{align}\label{Ueps:converge}
\chi_{(-\eta_{\eps,k}, \eta_{\eps,k})}(z) U^{\eps}(p,z) \rightharpoonup \overline{u}(p,z) \text{ in } L^{2}(\Gamma \times \R, \delta) \text{ as } \eps \to 0,
\end{align}
along a subsequence.  By (\ref{pdzUeps:convergezero}) we can deduce that
\begin{align}\label{pdzoverlineu:zero}
\pd_{z} \overline{u} = 0 \text{ on } J := \{ z \in \R : \delta(z) > 0 \},
\end{align}
and so $\overline{u} = \overline{u}(p)$ in $J$.  Indeed, for any $\Psi(p,z)$ that is smooth and compactly supported in $J$, we have from (\ref{pdzUeps:convergezero}),
\begin{align}\label{pdzUepsPsi:convergezero}
\int_{-\eta_{\eps,k}}^{\eta_{\eps,k}} \int_{\Gamma} \delta(z) (\pd_{z}U^{\eps} \Psi)(p,z) \dHaus \dz \to 0 \text{ as } \eps \to 0.
\end{align}
Then, by (\ref{property:delta:bound}), and the smoothness of $\Psi$, we have
\begin{equation*}
\begin{aligned}
& \; \int_{(-\eta_{\eps,k}, \eta_{\eps,k}) \cap J} \int_{\Gamma} \frac{\abs{\delta'(z)}^{2}}{\delta(z)} \abs{\Psi}^{2} (p,z) \dHaus dz\\
 \leq & \; C(\tilde{c} \eta, \abs{\Gamma}) \norm{\Psi}_{L^{\infty}_{loc}(\Gamma \times J)} \int_{(-\eta_{\eps,k}, \eta_{\eps,k}) \cap J} \frac{\abs{\delta'(z)}^{2}}{\delta(z)} \dx < \infty.
\end{aligned}
\end{equation*}
This implies that
\begin{align*}
\frac{\delta'(z)}{\delta(z)} \Psi(p,z) \in L^{2}(\Gamma \times J, \delta),
\end{align*}
and so by (\ref{Ueps:converge}), as $\eps \to 0$,
\begin{align*}
& \; \int_{-\eta_{\eps,k}}^{\eta_{\eps,k}} \int_{\Gamma} \chi_{J}(z) \delta(z) \left ( U^{\eps}(p,z) \frac{\delta'(z)}{\delta(z)} \Psi(p,z) \right ) \dHaus \dz \\
\to & \; \int_{J} \int_{\Gamma} \delta'(z) (\overline{u} \Psi)(p,z) \dHaus dz.
\end{align*}
Thus, we see that, as $\eps \to 0$,
\begin{align*}
& \; \int_{-\eta_{\eps,k}}^{\eta_{\eps,k}} \int_{\Gamma} \delta(z) (\pd_{z}U^{\eps} \Psi)(p,z) \dHaus dz \\
= & \; - \int_{-\eta_{\eps,k}}^{\eta_{\eps,k}} \int_{\Gamma} \chi_{J}(z) \delta'(z) (U^{\eps} \Psi)(p,z) + \chi_{J}(z) \delta(z) (\pd_{z}\Psi U^{\eps}) (p,z) \dHaus \dz \\
\to & \; - \int_{J} \int_{\Gamma} \delta'(z) (\overline{u} \Psi)(p,z) + \delta(z) (\overline{u} \pd_{z}\Psi)(p,z) \dHaus dz \\
= & \; \int_{J} \int_{\Gamma} \delta(z) (\pd_{z} \overline{u} \Psi)(p,z) \dHaus \dz.
\end{align*}
But by (\ref{pdzUepsPsi:convergezero}), the left hand side also converge to zero as $\eps \to 0$.  Hence, for arbitrary $\Psi$ that is smooth and compactly supported in $J$, we have
\begin{align}\label{pdzoverlineu:zero:inJ}
\int_{J} \int_{\Gamma} \delta(z) (\pd_{z}\bar{u} \Psi)(p,z) \dHaus \dz = 0,
\end{align}
which implies that $\pd_{z} \overline{u} = 0$ a.e. on $J$.

To finish the proof, we will show that for all $\varphi \in H^{1}(\Omega)$,
\begin{align}\label{productconvergence}
\int_{X^{\eps}} \delta_{\eps} u^{\eps} \varphi \dx \to \int_{\Gamma} \overline{u} \gamma_{0}(\varphi) \dHaus \text{ as } \eps \to 0,
\end{align}
and then the identification
\begin{align}\label{identification}
\overline{u} = \gamma_{0}(\tilde{u}) \text{ a.e. on } \Gamma.
\end{align}

First, we note that by (\ref{CHCRbdd}) and the definition of $\Phi_{\eps}(p,z)$ (see (\ref{representation:pz})), we have for a.e. $(p,z) \in \Gamma \times \R$, 
\begin{align}\label{aeconvergencePhi}
\chi_{(-\eta_{\eps,k}, \eta_{\eps,k})}(z) \Phi_{\eps}(p,z) \abs{1 + \eps z C_{H}(p) + C_{R}(p,\eps z)}^{\frac{1}{2}} \to \gamma_{0}(\varphi)(p) \text{ as } \eps \to 0.
\end{align}
By Lemma \ref{lem:H1impliesL2delta}, we have the uniform boundedness of the norm:
\begin{equation}\label{normbddPhi}
\begin{aligned}
& \; \int_{-\eta_{\eps,k}}^{\eta_{\eps,k}} \int_{\Gamma} \delta(z) \abs{\Phi_{\eps}}^{2} \abs{1 + \eps z C_{H}(p) + C_{R}(p,\eps z)} \dHaus \dz \\
= & \; \int_{X^{\eps}} \delta_{\eps} \abs{\varphi}^{2} \dx \leq C \norm{\varphi}_{H^{1}(\Omega)}^{2}.
\end{aligned}
\end{equation}
By (\ref{convergetodirac}), we have
\begin{align*}
\int_{\Omega} \delta_{\eps} \abs{\varphi^{2}} \dx \to \int_{\Gamma} \abs{\gamma_{0}(\varphi)}^{2} \dHaus \text{ as } \eps \to 0.
\end{align*}
Furthermore, from (\ref{delta:remainder:Xeps}) we see that
\begin{align*}
\int_{\Omega \setminus X^{\eps}} \delta_{\eps} \abs{\varphi}^{2} \dx \leq \norm{\delta_{\eps}}_{L^{\infty}(\Omega \setminus X^{\eps})} \norm{\varphi}_{L^{2}(\Omega)}^{2} \to 0 \text{ as } \eps \to 0.
\end{align*}
Hence, we deduce that
\begin{align}\label{convergetotrace:Xeps}
\int_{X^{\eps}} \delta_{\eps} \abs{\varphi}^{2} \dx \to \int_{\Gamma} \abs{\gamma_{0}(\varphi)}^{2} \dHaus \text{ as } \eps \to 0.
\end{align}
In particular, we obtain the norm convergence:
\begin{equation}\label{normconvergencePhi}
\begin{aligned}
& \; \int_{-\eta_{\eps,k}}^{\eta_{\eps,k}} \int_{\Gamma} \delta(z) \abs{\Phi_{\eps}}^{2}(p,z) \abs{1 + \eps z C_{H}(p) + C_{R}(p,\eps z)} \dHaus \dz \\
= & \; \int_{X^{\eps}} \delta_{\eps} \abs{\varphi}^{2} \dx \to \int_{\Gamma} \abs{\gamma_{0}(\varphi)}^{2} \dHaus = \int_{\R} \int_{\Gamma} \delta(z) \abs{\gamma_{0}(\varphi)}^{2}(p) \dHaus \dz,
\end{aligned}
\end{equation}
as $\eps \to 0$.  

Almost everywhere convergence (\ref{aeconvergencePhi}) and uniform boundedness of the norm (\ref{normbddPhi}) imply weak convergence in $L^{2}(\Gamma \times \R, \delta)$ \cite[Proposition 4.7.12, p. 282]{book:Bogachev}.  Together with the norm convergence (\ref{normconvergencePhi}) yields strong convergence in $L^{2}(\Gamma \times \R, \delta)$ \cite[Corollary 4.7.16 p. 285]{book:Bogachev}.  I.e., as $\eps \to 0$, 
\begin{equation}\label{strongconvergencePhi}
\begin{aligned}
& \; \chi_{(-\eta_{\eps,k}, \eta_{\eps,k})}(z) \Phi_{\eps}(p,z) \abs{1 + \eps z C_{H}(p) + C_{R}(p,\eps z)}^{\frac{1}{2}} \\
 \to & \; \gamma_{0}(\varphi)(p) \text{ in } L^{2}(\Gamma \times \R, \delta).
\end{aligned}
\end{equation}

Recall that $\eta_{\eps,k} = \frac{\eta}{\eps^{1-k}}$.  Then, by (\ref{CHCRbdd}), 
\begin{align}\label{CHCRsup}
\sup_{(p,z) \in \Gamma \times (-\eta_{\eps,k}, \eta_{\eps,k})} \abs{\eps z C_{H}(p) + C_{R}(p,\eps z)} \leq \sup_{(p,z) \in \Gamma \times (-\eta_{\eps,k}, \eta_{\eps,k})} \tilde{c} \eps \abs{z} \leq \tilde{c} \eps^{k} \eta,
\end{align}
and hence, using the identity
\begin{align*}
\sqrt{a} - \sqrt{b} = \frac{a-b}{\sqrt{a} + \sqrt{b}},
\end{align*}
we obtain, as $\eps \to 0$,
\begin{equation}\label{1CHCRstrongconverge}
\begin{aligned}
& \; \esssup_{(p,z) \in \Gamma \times (-\eta_{\eps,k}, \eta_{\eps,k})} \abs{ \abs{1 + \eps z C_{H}(p) + C_{R}(p,\eps z)}^{\frac{1}{2}} - 1} \\
\leq & \;  \frac{\tilde{c} \eps^{k} \eta}{\sqrt{1 + \tilde{c} \eps^{k} \eta} + 1}  \leq C \eps^{k} \eta \to 0.
\end{aligned}
\end{equation}
By the weak-strong product convergence, we have
\begin{equation}\label{productweakstrong}
\begin{aligned}
& \; \int_{X^{\eps}} \delta_{\eps} u^{\eps} \varphi \dx \\
 = & \; \int_{\R} \int_{\Gamma} \delta(z) \chi_{(-\eta_{\eps,k}, \eta_{\eps,k})}(z) (U^{\eps} \Phi_{\eps})(p,z) \abs{1 + \eps z C_{H}(p) + C_{R}(p,\eps z)} \dHaus \dz \\
\to & \; \int_{\R} \delta(z) \int_{\Gamma} \overline{u}(p) \gamma_{0}(\varphi)(p) \dHaus \dz = \int_{\Gamma} \overline{u} \gamma_{0}(\varphi) \dHaus \text{ as } \eps \to 0.
\end{aligned}
\end{equation}

For the identification (\ref{identification}), we use that $\eps < 1 \Rightarrow \eta_{\eps,k} > \eta$ to see that the weak convergence of $U^{\eps}$ to $\overline{u}$ also holds in the restricted space $L^{2}(\Gamma \times (-\eta, \eta), \delta)$, which is equivalent to restricting to $\mathrm{Tub}^{\eps \eta}(\Gamma)$.  Indeed, for $\Psi \in L^{2}(\Gamma \times \R, \delta)$, we have
\begin{equation}\label{weakconv:Ueps:restricted}
\begin{aligned}
& \; \abs{\int_{-\eta}^{\eta} \int_{\Gamma} \delta(z) (U^{\eps} - \overline{u}) \Psi \dHaus \dz} \\
= & \; \abs{\int_{\R} \int_{\Gamma} \delta(z) (U^{\eps} - \overline{u}) (\chi_{(-\eta, \eta)}(z) \Psi) \dHaus \dz} \to 0,
\end{aligned}
\end{equation}
as $\eps \to 0$.  Let 
\begin{align*}
C_{\eta} := \int_{-\eta}^{\eta} \delta(z) \dz.
\end{align*}
Then, by (\ref{CHCRbdd}), we find that
\begin{align*}
& \; \int_{-\eta}^{\eta} \int_{\Gamma} \delta(z) \abs{\gamma_{0}(\varphi)(p)}^{2} \abs{\abs{1 + \eps z C_{H}(p) + C_{R}(p,\eps z)} - 1} \dHaus \dz \\
\leq & \; \tilde{c} \eta \eps \norm{\gamma_{0}(\varphi)}_{L^{2}(\Gamma)}^{2} \int_{-\eta}^{\eta} \delta(z) \abs{z} \dz \leq C(\tilde{c}, \eta, C_{\delta, \mathrm{int}}) \norm{\gamma_{0}(\varphi)}_{L^{2}(\Gamma)}^{2} \eps \to 0 \text{ as } \eps \to 0,
\end{align*}
and thus we deduce the following strong convergence:
\begin{equation}\label{restricted:strongconvergencePhi}
\begin{aligned}
& \; \chi_{(-\eta, \eta)}(z) \gamma_{0}(\varphi)(p) \abs{1 + \eps z C_{H}(p) + C_{R}(p,\eps z)}^{\frac{1}{2}} \\
\to & \; \gamma_{0}(\varphi)(p) \chi_{(-\eta, \eta)}(z) \text{ in } L^{2}(\Gamma \times \R, \delta), \text{ as } \eps \to 0.
\end{aligned}
\end{equation}

By the Cauchy--Schwarz inequality, (\ref{CHCRbdd}), (\ref{property:delta:bound}) and Lemma \ref{lem:H1impliesL2delta}, 
\begin{align*}
& \; \int_{-\eta}^{\eta} \int_{\Gamma} \delta(z) \abs{\gamma_{0}(\tilde{u})(p) \gamma_{0}(\varphi)(p)} \abs{1 + \eps z C{H}(p) + C_{R}(p,\eps z)} \dHaus \dz \\
\leq & \;  \int_{-\eta}^{\eta} \int_{\Gamma} \delta(z) \abs{\gamma_{0}(\tilde{u})(p)}\abs{\gamma_{0}(\varphi)(p)} (1 + \tilde{c} \eps \abs{z}) \dHaus \dz  \\
\leq & \; (1 + \tilde{c} \eps C_{\delta, \mathrm{int}}) \norm{\gamma_{0}(\tilde{u})}_{L^{2}(\Gamma)}\norm{\gamma_{0}(\varphi)}_{L^{2}(\Gamma)} < \infty.
\end{align*}

Recalling that, by definition (see (\ref{representation:pz})), $\Phi_{\eps}(p,0) = \gamma_{0}(\varphi)$, we can now compute (writing $C_{H} = C_{H}(p)$ and $C_{R} = C_{R}(p,\eps z)$ in the following):
\begin{equation}\label{identification:difference}
\begin{aligned}
& \; \abs{\int_{-\eta}^{\eta} \int_{\Gamma} \delta(z) ((U^{\eps}\Phi_{\eps})(p,z) - (\gamma_{0}(\tilde{u})\gamma_{0}(\varphi))(p)) \abs{1 + \eps z C_{H} + C_{R}} \dHaus \dz} \\
\leq & \; \abs{\int_{-\eta}^{\eta} \int_{\Gamma} \delta(z) (U^{\eps}(p,0) - \gamma_{0}(\tilde{u})(p))\gamma_{0}(\varphi)(p) \abs{1 + \eps z C_{H} + C_{R}} \dHaus \dz} \\
+ & \; \abs{\int_{-\eta}^{\eta} \int_{\Gamma} \delta(z) \left ( \int_{0}^{z} \frac{\dd}{\dzeta} (U^{\eps} \Phi_{\eps})(p,\zeta) \dzeta \right ) \abs{1 + \eps z C_{H} + C_{R}} \dHaus \dz}  \\
\leq & \; (1 + \tilde{c} C_{\delta, \mathrm{int}}) \int_{\Gamma} \abs{\gamma_{0}(u^{\eps}) - \gamma_{0}(\tilde{u})} \abs{\gamma_{0}(\varphi)} \dHaus \\
+ & \; (1 + \tilde{c} \eta) \int_{-\eta}^{\eta} \delta(z) \int_{\Gamma} \int_{-\eta}^{\eta} \abs{\frac{\dd}{\dzeta} (U^{\eps} \Phi_{\eps})(p,\zeta)} \dzeta \dHaus \dz.
\end{aligned}
\end{equation}

The first term on the right hand side converges to zero by the strong convergence of $\gamma_{0}(u^{\eps})$ to $\gamma_{0}(\tilde{u})$ in $L^{2}(\Gamma)$ (see Theorem \ref{thm:compacttrace}).  For the second term, using the monotonicity of $\delta(\abs{s})$, and the definition of $C_{\eta}$,
\begin{equation}\label{deltadenominator}
\begin{aligned}
& \; \int_{-\eta}^{\eta} \delta(z) \int_{\Gamma} \int_{-\eta}^{\eta} \frac{\delta(\zeta)}{\delta(\zeta)}\abs{\frac{\dd}{\dzeta} (U^{\eps} \Phi_{\eps})(p,\zeta)} \dzeta \dHaus \dz \\
\leq & \; \frac{C_{\eta}}{\delta(\eta)} \int_{\Gamma} \int_{-\eta}^{\eta} \delta(\zeta) (\abs{\pd_{\zeta} U^{\eps}(p,\zeta) \Phi_{\eps}(p,\zeta)} + \abs{U^{\eps}(p,\zeta) \pd_{\zeta}\Phi_{\eps}(p,\zeta)}) \dzeta \dHaus \\
\leq & \; C\norm{\pd_{z}U^{\eps}}_{L^{2}(\Gamma \times (-\eta, \eta), \delta)} \norm{\Phi_{\eps}}_{L^{2}(\Gamma \times (-\eta, \eta), \delta)} \\
+ & \; C\norm{U^{\eps}}_{L^{2}(\Gamma \times (-\eta, \eta), \delta)} \norm{\pd_{z}\Phi_{\eps}}_{L^{2}(\Gamma \times (-\eta, \eta), \delta)} \to 0,
\end{aligned}
\end{equation}
as $\eps \to 0$, where we used (\ref{pdzUeps:convergezero}), (\ref{pdzPhieps:convergezero}), (\ref{Phieps:bdd}), and (\ref{Ueps:bdd}) to show that 
\begin{align*}
& \; \norm{\pd_{z}U^{\eps}}_{L^{2}(\Gamma \times (-\eta, \eta), \delta)} \norm{\Phi_{\eps}}_{L^{2}(\Gamma \times (-\eta, \eta), \delta)} \\
 + & \; \norm{U^{\eps}}_{L^{2}(\Gamma \times (-\eta, \eta), \delta)} \norm{\pd_{z}\Phi_{\eps}}_{L^{2}(\Gamma \times (-\eta, \eta), \delta)} \\
\leq & \; C \norm{\varphi}_{H^{1}(\Omega)} \norm{\chi_{(-\eta_{\eps,k}, \eta_{\eps,k})}(z) \pd_{z}U^{\eps}}_{L^{2}(\Gamma \times \R, \delta)}\\
 + & \; C\norm{u^{\eps}}_{0, \delta_{\eps}} \norm{\chi_{(-\eta_{\eps,k}, \eta_{\eps,k})}(z) \pd_{z} \Phi_{\eps}}_{L^{2}(\Gamma \times \R, \delta)} \to 0,
\end{align*}
as $\eps \to 0$.  As a consequence of (\ref{identification:difference}) and (\ref{deltadenominator}), we have
\begin{equation}\label{differencelimitzero}
\begin{aligned}
& \; \int_{-\eta}^{\eta} \int_{\Gamma} \delta(z)((U^{\eps}\Phi_{\eps})(p,z) - (\gamma_{0}(\tilde{u})\gamma_{0}(\varphi))(p)) \abs{1 + \eps z C_{H} + C_{R}} \dHaus \dz \\
\to & \;0 \text{ as } \eps \to 0.
\end{aligned}
\end{equation}
But, by (\ref{CHCRbdd}) and (\ref{restricted:strongconvergencePhi}),
\begin{align*}
& \; \int_{-\eta}^{\eta} \int_{\Gamma} \delta(z) \gamma_{0}(\tilde{u})(p) \gamma_{0}(\varphi)(p) \abs{1 + \eps z C_{H} + C_{R}} \dHaus \dz \\
\to & \; \int_{-\eta}^{\eta} \int_{\Gamma} \delta(z) \gamma_{0}(\tilde{u})(p) \gamma_{0}(\varphi)(p) \dHaus \dz = C_{\eta} \int_{\Gamma} \gamma_{0}(\tilde{u}) \gamma_{0}(\varphi) \dHaus \text{ as } \eps \to 0.
\end{align*}
Together with the weak convergence of $U^{\eps}$ to $\overline{u}$ in $L^{2}(\Gamma \times (-\eta, \eta), \delta)$ (see (\ref{weakconv:Ueps:restricted})), we have
\begin{align*}
& \; \int_{-\eta}^{\eta} \int_{\Gamma} \delta(z) ((U^{\eps} \Phi_{\eps})(p,z) - (\gamma_{0}(\tilde{u}) \gamma_{0}(\varphi))(p)) \abs{1 + \eps z C_{H} + C_{R}} \dHaus \dz \\
\to & \; \int_{-\eta}^{\eta} \int_{\Gamma} \delta(z) (\overline{u}(p) - \gamma_{0}(\tilde{u})(p)) \gamma_{0}(\varphi)(p) \dHaus \dz = C_{\eta} \int_{\Gamma} (\overline{u} - \gamma_{0}(\tilde{u})) \gamma_{0}(\varphi) \dHaus,
\end{align*}
as $\eps \to 0$.  Hence, by (\ref{differencelimitzero}), we have
\begin{align*}
0 = C_{\eta} \int_{\Gamma}  (\overline{u} - \gamma_{0}(\tilde{u})) \gamma_{0}(\varphi) \dHaus \text{ for all } \varphi \in H^{1}(\Omega),
\end{align*}
which implies that $\overline{u} = \gamma_{0}(\tilde{u})$ a.e. on $\Gamma$.
\end{proof}

\begin{remark}
For the identification $(\ref{identification})$, we restricted to the tubular neighbourhood $\mathrm{Tub}^{\eps \eta}(\Gamma)$, so that in the $(p,z)$ coordinate system, we have $z \in (-\eta, \eta)$.  This is needed in $(\ref{deltadenominator})$ where we estimate the fraction $\frac{1}{\delta(\zeta)}$ from above.   Otherwise, we cannot deduce that the right hand side of $(\ref{identification:difference})$ converges to zero as $\eps \to 0$.
\end{remark}

\begin{remark}
For the double-obstacle regularisation, we choose $X^{\eps} = \mathrm{Tub}^{\eps \frac{\pi}{2}}(\Gamma)$ and we will have $U^{\eps}$ converging weakly to $\overline{u}(p)$ in $L^{2}(\Gamma \times (-\frac{\pi}{2},\frac{\pi}{2}), \delta)$.  The estimate $(\ref{CHCRsup})$ becomes
\begin{align*}
\sup_{p \in \Gamma, z \in (-\frac{\pi}{2}, \frac{\pi}{2})} \abs{ \eps z C_{H}(p) + C_{R}(p, \eps z)} \leq \tilde{c} \eps \frac{\pi}{2}.
\end{align*}
A similar argument with the above elements will show $(\ref{productconvergence})$, and we restrict to $\mathrm{Tub}^{\eps \frac{\pi}{4}}(\Gamma)$ in order to show $(\ref{identification})$.
\end{remark}

\begin{cor}\label{cor:CompactnessWeps}
Suppose that Assumptions \ref{assump:Gamma}, \ref{assump:Xi} and \ref{assump:delta} are satisfied.  Let $\{ w^{\eps} \}_{\eps \in (0,1]} \subset \mathcal{W}_{\eps}$ be a bounded sequence, i.e., there exist a constant $C > 0$, independent of $\eps$, such that
\begin{align*}
\norm{w^{\eps}}_{1,\xi_{\eps}}^{2} + \norm{w^{\eps}}_{0,\frac{1}{\eps} \delta_{\eps}}^{2} = \norm{w^{\eps}}_{1,\xi_{\eps}}^{2} + \frac{1}{\eps} \norm{w^{\eps}}_{0,\delta_{\eps}}^{2} \leq C.
\end{align*}
Then, there exists $\tilde{w} \in H^{1}_{0}(\Omega^{*})$ such that $w^{\eps} \mid_{\Omega^{*}}$ converges weakly to $\tilde{w}$ in $H^{1}(\Omega^{*})$.
\end{cor}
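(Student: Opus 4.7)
The key observation is that by Remark \ref{rem:differentspacesdifferentinnerproduct}, the spaces $\mathcal{W}_{\eps}$, $\mathcal{V}_{\eps}$, and $H^{1}(\Omega, \xi_{\eps})$ coincide as vector spaces. The $\mathcal{W}_{\eps}$-bound immediately yields $\norm{w^{\eps}}_{1,\xi_{\eps}}^{2} \leq C$ and $\norm{w^{\eps}}_{0,\delta_{\eps}}^{2} \leq C\eps \leq C$ for $\eps \in (0,1]$, so $\{w^{\eps}\}$ is a bounded sequence in $\mathcal{V}_{\eps}$ in the sense required by Lemma \ref{lem:Vepscompactness}. Applying that lemma gives, along a subsequence, a function $\tilde{w} \in H^{1}(\Omega^{*})$ with $w^{\eps} \mid_{\Omega^{*}} \rightharpoonup \tilde{w}$ in $H^{1}(\Omega^{*})$, and moreover
\begin{align*}
\int_{\Omega} \delta_{\eps} w^{\eps} \varphi \dx \to \int_{\Gamma} \gamma_{0}(\tilde{w}) \gamma_{0}(\varphi) \dHaus \quad \text{for every } \varphi \in H^{1}(\Omega).
\end{align*}

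The remaining task, and the heart of the corollary, is to upgrade $\tilde{w} \in H^{1}(\Omega^{*})$ to $\tilde{w} \in H^{1}_{0}(\Omega^{*})$ by showing $\gamma_{0}(\tilde{w}) = 0$ on $\Gamma$. This is where the stronger weight $\tfrac{1}{\eps}\delta_{\eps}$ enters: the bound $\norm{w^{\eps}}_{0,\delta_{\eps}} \leq \sqrt{C\eps}$ forces the above limit to vanish. Indeed, for any $\varphi \in H^{1}(\Omega)$, Cauchy--Schwarz together with Lemma \ref{lem:H1impliesL2delta} (which bounds $\norm{\varphi}_{0,\delta_{\eps}}$ by $\norm{\varphi}_{H^{1}(\Omega)}$ uniformly in $\eps$) yields
\begin{align*}
\abs{\int_{\Omega} \delta_{\eps} w^{\eps} \varphi \dx} \leq \norm{w^{\eps}}_{0,\delta_{\eps}} \norm{\varphi}_{0,\delta_{\eps}} \leq \sqrt{C\eps}\cdot C\norm{\varphi}_{H^{1}(\Omega)} \to 0.
\end{align*}

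Comparing the two limits, $\int_{\Gamma} \gamma_{0}(\tilde{w}) \gamma_{0}(\varphi) \dHaus = 0$ for every $\varphi \in H^{1}(\Omega)$. Since the trace operator $\gamma_{0} : H^{1}(\Omega) \to H^{\frac{1}{2}}(\Gamma)$ is surjective (Theorem \ref{thm:trace}) and $H^{\frac{1}{2}}(\Gamma)$ is dense in $L^{2}(\Gamma)$, the test functions $\gamma_{0}(\varphi)$ range over a dense subset of $L^{2}(\Gamma)$, and we conclude $\gamma_{0}(\tilde{w}) = 0$, i.e., $\tilde{w} \in H^{1}_{0}(\Omega^{*})$. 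No step here presents a real obstacle; the whole point is simply to extract from the $\tfrac{1}{\eps}\delta_{\eps}$-penalisation the vanishing of the boundary trace in the limit, which is exactly the motivation indicated after the definition of $(\mathrm{DDDH})$.
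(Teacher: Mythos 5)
Your proof is correct and follows exactly the paper's argument: reduce to Lemma \ref{lem:Vepscompactness} via the trivial bound $\norm{w^{\eps}}_{0,\delta_{\eps}}^{2} \leq C\eps$, then use the sharper $\sqrt{\eps}$ decay together with Lemma \ref{lem:H1impliesL2delta} to show the limit integral vanishes. Your Cauchy--Schwarz splitting is a cosmetic rearrangement of the paper's $\tfrac{\sqrt{\eps}}{\sqrt{\eps}}$ factoring, and your closing surjectivity/density remark merely spells out the step the paper leaves implicit when concluding $\gamma_{0}(\tilde{w})=0$ a.e.\ on $\Gamma$.
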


\begin{proof}
For $\eps \in (0,1]$, we see that $\norm{w^{\eps}}_{0,\delta_{\eps}}^{2} \leq C \eps \leq C$, and thus, by Lemma \ref{lem:Vepscompactness}, there exists a function $\tilde{w} \in H^{1}(\Omega^{*})$ such that, for all $\varphi \in H^{1}(\Omega)$,
\begin{align}
\notag w^{\eps} \mid_{\Omega^{*}} & \rightharpoonup \tilde{w} \text{ in } H^{1}(\Omega^{*}) \text{ as } \eps \to 0, \\
\label{Weps:integrallimit} \int_{\Omega} \delta_{\eps} w^{\eps} \varphi \dx & \to \int_{\Gamma} \gamma_{0}(\tilde{w}) \gamma_{0}(\varphi) \dHaus \text{ as } \eps \to 0,
\end{align}
along a subsequence.  

On the other hand, we have by Lemma \ref{lem:H1impliesL2delta}, Cauchy--Schwarz inequality and the uniform boundedness of $\{ w^{\eps}\}_{\eps \in (0,1]}$ in $\mathcal{W}_{\eps}$:
\begin{align*}
\abs{\int_{\Omega} \delta_{\eps} w^{\eps} \varphi \dx} & = \abs{\int_{\Omega} \frac{\sqrt{\eps}}{\sqrt{\eps}} \delta_{\eps} w^{\eps} \varphi \dx}  \leq \left ( \int_{\Omega} \frac{1}{\eps} \delta_{\eps} \abs{w^{\eps}} \dx \right )^{\frac{1}{2}} \left ( \int_{\Omega} \eps \delta_{\eps} \abs{\varphi}^{2} \dx \right )^{\frac{1}{2}} \\
& \leq \norm{w^{\eps}}_{0,\frac{1}{\eps} \delta_{\eps}} \sqrt{\eps} \norm{\varphi}_{0,\delta_{\eps}} \leq C \sqrt{\eps} \norm{\varphi}_{H^{1}(\Omega)} \to 0 \text{ as } \eps \to 0.
\end{align*}

Upon comparing with (\ref{Weps:integrallimit}), we see that
\begin{align*}
\int_{\Gamma} \gamma_{0}(\tilde{w}) \gamma_{0}(\varphi) \dHaus = 0 \text{ for all } \varphi \in H^{1}(\Omega),
\end{align*}
and so, $\gamma_{0}(\tilde{w}) = 0$ a.e. on $\Gamma$.
\end{proof}

\begin{lemma} \label{lem:H1deltabddconv}
Suppose that $\Gamma$ is a $C^{3}$ compact hypersurface and that Assumptions \ref{assump:Gamma} and \ref{assump:delta} are satisfied.  Let $\{ v^{\eps}\}_{\eps \in (0,1]} \subset H^{1}(\Omega, \delta_{\eps})$ be a bounded sequence, i.e., there exists a constant $C > 0$, independent of $\eps$, such that, $\norm{v^{\eps}}_{1,\delta_{\eps}}^{2} \leq C$.  For $1 \leq i,j \leq n$, let $g^{Ec} \in L^{2}(\Omega)$, $b^{Ec} \in L^{\infty}(\Omega)$ and $b_{ij}^{Ec} \in L^{\infty}(\Omega)$ denote the extensions of the data $g \in L^{2}(\Gamma)$, $b \in L^{\infty}(\Gamma)$ and $b_{ij} \in L^{\infty}(\Gamma)$ as mentioned in Assumption \ref{assump:surfacedata} and Remark \ref{rem:extensionambiguity}.

Then, there exists $\bar{v} \in H^{1}(\Gamma)$ such that, 

\begin{align}\label{H1deltabdd:vepsg}
\int_{\Omega} \delta_{\eps} v^{\eps} g^{Ec} \dx \to \int_{\Gamma} \overline{v} g \dHaus,
\end{align}
and for any $\varphi \in H^{1}(\Omega)$ and $\psi \in H^{1}(\Gamma)$ with $\psi^{Ec} \in H^{1}(\Omega)$ as constructed in Corollary \ref{cor:constextH1},
\begin{equation}\label{H1deltabdd:convstatement}
\begin{aligned}
\int_{\Omega} \delta_{\eps} b^{Ec} v^{\eps} \varphi \dx & \to \int_{\Gamma} b \overline{v} \gamma_{0}(\varphi) \dHaus, \\
\int_{\Omega} \delta_{\eps} \mathcal{B}^{Ec} \nabla v^{\eps} \cdot \nabla \psi^{Ec} \dx & \to \int_{\Gamma} \mathcal{B} \surf \overline{v} \cdot \surf \psi \dHaus,
\end{aligned}
\end{equation}
as $\eps \to 0$.
\end{lemma}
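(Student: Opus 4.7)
The plan is to adapt the strategy of Lemma \ref{lem:Vepscompactness} to sequences bounded only in $H^1(\Omega,\delta_\eps)$. Because no $\xi_\eps$-weighted control is available, we cannot extract an ambient limit $\tilde v\in H^1(\Omega^*)$ and take its trace onto $\Gamma$; instead we shall construct $\overline v$ directly as a weak limit of the rescaled representation on $\Gamma\times\R$, and recover its $H^1(\Gamma)$ regularity intrinsically from the decomposition $\nabla v^\eps=\tfrac{1}{\eps}\pd_z V^\eps\,\nu+\surfz V^\eps$ furnished by Lemma \ref{ScaledNbd:gradientdecomposition}.

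Fix $0<\eta<\tilde c^{-1}$, $0<k<1$, $X^\eps=\mathrm{Tub}^{\eps^k\eta}(\Gamma)$, $\eta_{\eps,k}=\eta/\eps^{1-k}$, and set $V^\eps(p,z):=v^\eps(p+\eps z\nu(p))$, $\chi:=\chi_{(-\eta_{\eps,k},\eta_{\eps,k})}$. As in (\ref{delta:remainder:Xeps}), $\|\delta_\eps\|_{L^\infty(\Omega\setminus X^\eps)}\to 0$. Transferring the uniform bound $\|v^\eps\|_{1,\delta_\eps}^2\le C$ into $(p,z)$-coordinates via (\ref{ScaledNbd:coordchange}) yields uniform control of $\chi V^\eps$, $\chi\surfz V^\eps$, and $\tfrac{1}{\eps}\chi\pd_z V^\eps$ in $L^2(\Gamma\times\R,\delta)$. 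By reflexive weak compactness, along a subsequence,
\begin{align*}
\chi V^\eps\rightharpoonup\overline v,\qquad \chi\surfz V^\eps\rightharpoonup\overline w,\qquad \chi\pd_z V^\eps\to 0\quad\text{in }L^2(\Gamma\times\R,\delta).
\end{align*}
Exactly as in the argument leading to (\ref{pdzoverlineu:zero}), property (\ref{property:delta:bound}) together with the vanishing of $\chi\pd_z V^\eps$ forces $\pd_z\overline v=0$ on $J=\{\delta>0\}$, so $\overline v=\overline v(p)$. Since $\surfz V^\eps=\surf V^\eps+O(\eps)$ by (\ref{gradientremainderOrdereps}), we also obtain $\chi\surf V^\eps\rightharpoonup\overline w$. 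Testing against a smooth tangential field $\Psi\in C^\infty_c(\Gamma\times J;\R^n)$ and integrating by parts on the closed surface $\Gamma$ for each fixed $z$ gives
\begin{align*}
\int_\R\int_\Gamma\delta(z)\,\overline w\cdot\Psi\,\dHaus\dz=-\int_\R\int_\Gamma\delta(z)\,\overline v(p)\,(\surf\!\cdot\Psi)\,\dHaus\dz,
\end{align*}
from which $\overline w(p,z)=\surf\overline v(p)$ for a.e.\ $z\in J$ and hence $\overline v\in H^1(\Gamma)$.

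The three convergence statements then follow by splitting each integral over $X^\eps$ and $\Omega\setminus X^\eps$. The exterior piece is controlled by Cauchy--Schwarz, the uniform bound on $\|v^\eps\|_{0,\delta_\eps}$, the $L^\infty$-bounds on $b^{Ec}$ and $\mathcal{B}^{Ec}$, and $\|\delta_\eps\|_{L^\infty(\Omega\setminus X^\eps)}\to 0$. Inside $X^\eps$ we exploit that $g^{Ec}$, $b^{Ec}$, $\mathcal{B}^{Ec}$ are constant extensions in the normal direction on $\mathrm{Tub}^\eta(\Gamma)$ by Assumption \ref{assump:surfacedata} and Remark \ref{rem:extensionambiguity}, so their $(p,z)$-representations collapse to $g(p)$, $b(p)$, $\mathcal{B}(p)$; similarly, (\ref{ConstExt:relationgradients}) gives $\nabla\psi^{Ec}(p+\eps z\nu(p))=(\bm{I}-\eps z\bm{H}(p))\surf\psi(p)$. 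The normal component $\tfrac{1}{\eps}\pd_z V^\eps\,\nu\cdot\nabla\psi^{Ec}$ vanishes identically because $|\nabla d|\equiv 1$ near $\Gamma$ forces $\bm{H}\nu=0$ and $\surf\psi\perp\nu$. Each of the three integrals then reduces to a pairing of $\chi V^\eps$ or $\chi\surfz V^\eps$ (weakly convergent) against a factor that converges strongly in $L^2(\Gamma\times\R,\delta)$, the required strong convergences being obtained by dominated convergence exactly as in (\ref{strongconvergencePhi}) and (\ref{1CHCRstrongconverge}); weak-strong product convergence then yields (\ref{H1deltabdd:vepsg}) and (\ref{H1deltabdd:convstatement}).

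The principal obstacle is the intrinsic identification $\overline w=\surf\overline v$: with no ambient $H^1(\Omega^*)$ limit to trace, the $H^1(\Gamma)$ regularity of $\overline v$ must be extracted entirely from the integration-by-parts identity above. This forces the test functions $\Psi$ to be compactly supported in the interior of $J$ (where $\delta$ is bounded below so that boundary contributions in $z$ are legitimately zero), and requires careful bookkeeping of the curvature corrections arising from integrating by parts on $\Gamma$ in place of $\Gamma^{\eps z}$ — both manageable via the machinery developed in Section \ref{sec:ScaledNbd}.
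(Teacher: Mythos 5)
Your proposal follows the paper's overall scaffold (scaled tubular neighbourhood $X^\eps$, weak compactness in $L^2(\Gamma\times\R,\delta)$, showing $\pd_z\overline v=0$ via $(\ref{property:delta:bound})$, then weak--strong product convergence for $(\ref{H1deltabdd:vepsg})$--$(\ref{H1deltabdd:convstatement})$), but the identification of the limiting gradient is handled by a genuinely different route. The paper identifies $\bm Q=\surf\overline v$ by integrating by parts in local $(s,z)$ coordinates directly against $\surfz V^\eps$, so that the $\eps$-dependent coefficients $g_\eps^{jr}$ are differentiated in $(\ref{surfgradientIBP})$; this is exactly what forces $\Gamma\in C^3$ (so that $g_\eps^{jr}\in C^1$). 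You instead first pass from $\surfz V^\eps$ to $\surf V^\eps$ at the level of the weak limit and then integrate by parts on the fixed closed surface $\Gamma$, so only the $\eps$-independent metric $g_0$ is ever differentiated. You also dispatch the normal cross term with the identity $\bm H\nu=0$ (from $|\nabla d|\equiv1$ in $\mathrm{Tub}^\eta(\Gamma)$), whereas the paper uses the equivalent $\pd_z\Psi_\eps=0$ from $(\ref{pdzpsiezero})$; both are correct.

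The one step that is not adequately justified as written is the claim ``since $\surfz V^\eps=\surf V^\eps+O(\eps)$ by $(\ref{gradientremainderOrdereps})$, we also obtain $\chi\surf V^\eps\rightharpoonup\overline w$.'' Estimate $(\ref{gradientremainderOrdereps})$ is a pointwise asymptotic for a \emph{fixed} $C^1$ function $f$, and the implied constant involves the tangential derivatives $\pd_{s_i}\tilde F_\eps$ of that function; applied to the sequence $V^\eps$ (a different function for each $\eps$) it does not by itself produce an $L^2(\Gamma\times\R,\delta)$ bound on the remainder. To close the gap you need the exact pointwise identity $\pd_{s_k}\tilde V^\eps=\surfzloc\tilde V^\eps\cdot\pd_{s_k}G_\eps$ (take the inner product of $(\ref{surfGradientepsztildeFeps})$ with $\pd_{s_k}G_\eps$ and use the block structure of $(\ref{metrictensor:eps})$), which yields the pointwise bound $|\tilde\nabla_{\eps z}\tilde V^\eps|\le C\,\eps|z|\,|\surfzloc\tilde V^\eps|$; since $\eps|z|\le\eps^k\eta$ on the support of $\chi$ and $\|\chi\surfz V^\eps\|_{L^2(\Gamma\times\R,\delta)}$ is uniformly bounded by $(\ref{vepsbddH1delta})$, this gives $\|\chi(\surfz V^\eps-\surf V^\eps)\|_{L^2(\Gamma\times\R,\delta)}\le C\eps^k\to0$, so the two sequences do share a weak limit. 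It is worth noting that this inner-product identity appears to supply exactly the control over the components $(\pd_{s_j}\tilde V^\eps)_{1\le j\le n-1}$ that Section \ref{sec:Discussion} of the paper suggests might be unavailable, so it would be worth stating explicitly if you pursue this route. Finally, your remark that $\Psi$ must be compactly supported in the interior of $J$ ``so that boundary contributions in $z$ are legitimately zero'' is misleading: the integration by parts is in the surface variable only, so there are no $z$-boundary terms; compact support is needed merely so that $\Psi$ and $\surf\cdot\Psi$ are admissible test functions in $L^2(\Gamma\times\R,\delta)$.
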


\begin{proof}
Choose $0 < \eta < (\tilde{c})^{-1}$ and the diffeomorphism $\Theta^{\eta}$ as defined in (\ref{diffeo}).  Let  $X^{\eps} = \mathrm{Tub}^{\eps^{k} \eta}(\Gamma)$ for $0 < k < 1$.  By (\ref{delta:remainder:Xeps}), and the uniform boundedness of $\{ v^{\eps} \}_{ \eps \in (0,1]}$ in $H^{1}(\Omega, \delta_{\eps})$, 
\begin{align*}
\abs{\int_{\Omega \setminus X^{\eps}} \delta_{\eps} v^{\eps} g^{Ec} \dx} \leq \norm{v^{\eps}}_{0,\delta_{\eps}} \norm{\delta_{\eps}}_{L^{\infty}(\Omega \setminus X^{\eps})}^{\frac{1}{2}} \norm{g^{Ec}}_{L^{2}(\Omega)}  \to 0 \text{ as } \eps \to 0,
\end{align*}
and
\begin{align*}
\abs{\int_{\Omega \setminus X^{\eps}} \delta_{\eps} b^{Ec} v^{\eps} \varphi \dx} \leq \norm{b^{Ec}}_{L^{\infty}(\Omega)} \norm{v^{\eps}}_{0,\delta_{\eps}} \norm{\delta_{\eps}}_{L^{\infty}(\Omega \setminus X^{\eps})}^{\frac{1}{2}} \norm{\varphi}_{L^{2}(\Omega)} \to 0 \text{ as } \eps \to 0,
\end{align*}
for any $\varphi \in H^{1}(\Omega)$.  Similarly, by (\ref{delta:remainder:Xeps}), and the uniform boundedness of $\{ v^{\eps} \}_{ \eps \in (0,1]}$ in $H^{1}(\Omega, \delta_{\eps})$, for any $\psi^{Ec} \in H^{1}(\Omega)$ constructed from $\psi \in H^{1}(\Gamma)$ as in Corollary \ref{cor:constextH1}, 
\begin{align*}
& \; \abs{\int_{\Omega \setminus X^{\eps}} \delta_{\eps} \mathcal{B}^{Ec} \nabla v^{\eps} \cdot \nabla \psi^{Ec} \dx} \\
\leq & \; \norm{\mathcal{B}^{Ec}}_{L^{\infty}(\Omega)} \norm{\nabla v^{\eps}}_{0,\delta_{\eps}} \norm{\delta_{\eps}}_{L^{\infty}(\Omega \setminus X^{\eps})}^{\frac{1}{2}} \norm{ \nabla \psi^{Ec}}_{L^{2}(\Omega)} \to 0 \text{ as } \eps \to 0.
\end{align*} 
Hence, it is sufficient to restrict our attention to $X^{\eps}$.  Then, invoking the $(p,z)$ coordinate system, and using (\ref{ScaledNbd:coordchange}), and (\ref{ScaledNbd:intf:upplowbdd}),
\begin{equation}\label{vepsbddH1delta}
\begin{aligned}
C & \geq \norm{v^{\eps}}_{1,\delta_{\eps}}^{2} \geq \int_{X^{\eps}} \delta_{\eps} (\abs{v^{\eps}}^{2} + \abs{\nabla v^{\eps}}^{2}) \dx \\
& \geq (1-\tilde{c} \eta) \int_{-\eta_{\eps,k}}^{\eta_{\eps,k}} \int_{\Gamma} \delta(z) \left ( \abs{V^{\eps}}^{2} + \frac{1}{\eps^{2}} \abs{\pd_{z} V^{\eps}}^{2} + \abs{\surfz V^{\eps}}^{2} \right )(p,z) \dHaus \dz.
\end{aligned}
\end{equation}
Hence, there exists a function $\overline{v} \in L^{2}(\Gamma \times \R, \delta)$ and a vector-valued function $\bm{Q} \in (L^{2}(\Gamma \times \R, \delta))^{n}$ such that, as $\eps \to 0$,
\begin{alignat*}{3}
\chi_{(-\eta_{\eps,k}, \eta_{\eps,k})}(z) \pd_{z} V^{\eps}(p,z) & \to 0 && \text{ in } L^{2}(\Gamma \times \R, \delta), \\
\chi_{(-\eta_{\eps,k}, \eta_{\eps, k})}(z) V^{\eps}(p,z) & \rightharpoonup \overline{v}(p,z) && \text{ in } L^{2}(\Gamma \times \R, \delta), \\
\chi_{(-\eta_{\eps,k}, \eta_{\eps,k})}(z) \surfz V^{\eps}(p,z) & \rightharpoonup \bm{Q}(p,z) && \text{ in } (L^{2}(\Gamma \times \R, \delta))^{n}.
\end{alignat*}
Moreover, we can deduce, via a similar argument to the derivation of (\ref{pdzoverlineu:zero:inJ}), that $\pd_{z} \overline{v} = 0$ and $\overline{v} = \overline{v}(p)$ a.e. on $J$.

We claim that $\bm{Q} = \surf \overline{v}(p)$.  As $\Gamma$ is a $C^{3}$ compact hypersurface, and so for a finite open cover $\{ W_{i} \cap \Gamma \}_{i=1}^{N}, W_{i} \subset \R^{n}$ of $\Gamma$, we have local regular parameterisations $\alpha_{i} : \mathcal{S}_{i} \to W_{i} \cap \Gamma$.  Let $\{ \mu_{i} \}_{i=1}^{N}$ denote a partition of unity subordinate to $\{ W_{i} \}_{i=1}^{N}$.  Take $Y \in C^{\infty}_{c}(\Gamma \times \R)$, and for any $1 \leq r \leq n$, let $\tilde{Q}_{r}(s,z)$, $\tilde{V}^{\eps}(s,z)$, $\tilde{Y}(s,z)$, $\surfzloc \tilde{V}^{\eps}(s,z)$ denote the representation of $Q_{r}$, $V^{\eps}$, $Y$, $\surfz V^{\eps}$ in the $(s,z)$ coordinate system.  Then, 
\begin{align*}
& \; \int_{\R} \int_{\Gamma} \delta(z) (Q_{r} Y)(p,z) \dHaus \dz \\
= & \; \sum_{i=1}^{N} \int_{\R} \int_{\mathcal{S}_{i}} \mu_{i}(\alpha_{i}(s)) \delta(z) (\tilde{Q}_{r} \tilde{Y})(s,z) \abs{\det J_{i,0}(s)} \ds \dz.
\end{align*}

Moreover, since $\Gamma$ is $C^{3}$, the normal $\nu$ and the components $g_{0}^{ij}$ of the inverse of the metric tensor $\mathcal{G}_{0}$ on $\Gamma$ both belong to the class $C^{2}$.  From (\ref{metrictensor:eps}), we infer that the components $g_{\eps}^{ij}$ of the inverse of the metric tensor $\mathcal{G}_{\eps z}$ are $C^{1}$ functions.  So, for any $1 \leq r \leq n$, we have by (\ref{Geps}) and integration by parts,
\begin{equation}\label{surfgradientIBP}
\begin{aligned}
& \; \int_{\mathcal{S}_{i}} \mu_{i}(\alpha_{i}(s)) \delta(z) (\surfzloc \tilde{V}^{\eps})_{r}(s,z) \tilde{Y}(s,z) \abs{\det J_{i,0}(s)} \ds \\
= & \; \int_{\mathcal{S}_{i}} \mu_{i}(\alpha_{i}(s)) \delta(z) \sum_{j=1}^{n-1} g_{\eps}^{jr}(s,z) \pd_{s_{j}} \tilde{V}^{\eps}(s,z) \pd_{s_{r}} G_{\eps}(s,z) \tilde{Y}(s,z) \abs{\det J_{i,0}(s)} \ds \\
= & \; - \int_{\mathcal{S}_{i}} \delta(z) \sum_{j=1}^{n-1} \pd_{s_{j}} \left ( g_{\eps}^{jr}(s,z) \mu_{i}(\alpha_{i}(s)) \tilde{Y}(s,z) \abs{\det J_{i,0}(s)} \pd_{s_{r}} G_{\eps}(s,z) \right ) \tilde{V}^{\eps}(s,z) \ds
\end{aligned}
\end{equation}
Note that, by the regularity of $\Gamma$ and $Y$, 
\begin{align*}
\pd_{s_{j}} \left ( g_{\eps}^{jr} \mu_{i} \tilde{Y} \abs{\det J_{i,0}} \pd_{s_{r}} G_{\eps}  \right ) \in L^{2}(\mathcal{S}_{i} \times \R, \delta).
\end{align*}
Hence, integrating with respect to $z$, summing from $i = 1$ to $N$ and passing to the limit as $\eps \to 0$ in (\ref{surfgradientIBP}), we obtain
\begin{align*}
& \; \sum_{i=1}^{N} \int_{-\eta_{\eps,k}}^{\eta_{\eps,k}} \int_{\mathcal{S}_{i}} \mu_{i}(\alpha_{i}(s)) \delta(z) ((\surfzloc \tilde{V}^{\eps})_{r} \tilde{Y})(s,z) \abs{\det J_{i,0}(s)} \ds \dz \\
\to & \; - \sum_{i=1}^{N} \int_{\R} \int_{\mathcal{S}_{i}} \delta(z) \sum_{j=1}^{n-1} \pd_{s_{j}} \left ( g_{0}^{jr}(s) \mu_{i}(\alpha_{i}(s)) \tilde{Y}(s,z) \abs{\det J_{i,0}(s)} \pd_{s_{r}} \alpha_{i} \right ) \overline{v}(s) \ds \dz \\
= & \; \sum_{i=1}^{N} \int_{\R} \int_{\mathcal{S}_{i}} \mu_{i}(\alpha_{i}(s)) \delta(z) \sum_{j=1}^{n-1} g_{0}^{jr}(s) \pd_{s_{j}} \overline{v}(s) \pd_{s_{r}} \alpha_{i}(s) \tilde{Y}(s,z) \abs{\det J_{i,0}(s)} \ds \dz \\
= & \; \int_{\R} \int_{\Gamma} \delta(z) (\surf \overline{v})_{r}(p) Y(p,z) \dHaus \dz.
\end{align*}
Meanwhile, by the weak convergence of $\chi_{(-\eta_{\eps,k}, \eta_{\eps,k})}(z) (\surfzloc \tilde{V}^{\eps})_{r}$ to $\tilde{Q}_{r}$, we have
\begin{align*}
& \; \sum_{i=1}^{N} \int_{\eta_{\eps,k}}^{\eta_{\eps,k}} \int_{\mathcal{S}_{i}} \mu_{i}(\alpha_{i}(s)) ((\surfzloc \tilde{V}^{\eps})_{r} \tilde{Y})(s,z) \abs{\det J_{i,0}(s)} \ds \dz \\
\to & \; \sum_{i=1}^{N} \int_{\R} \int_{\mathcal{S}_{i}} \mu_{i}(\alpha_{i}(s)) \delta(z) (\tilde{Q}_{r} \tilde{Y})(s,z) \abs{\det J_{i,0}(s)} \ds \dz \text{ as } \eps \to 0.
\end{align*}
Transforming to the $(p,z)$ coordinate system and equating the limits leads to
\begin{align*}
\int_{\R} \int_{\Gamma} \delta(z) (Q_{r} Y)(p,z) \dHaus \dz = \int_{\R} \int_{\Gamma}(\surf \overline{v})_{r}(p) Y(p,z) \dHaus \dz,
\end{align*}
and since $Y$ is arbitrary, we have that $Q_{r} = (\surf \overline{v})_{r}$.

Let $g^{Ec}$, $b^{Ec}$ and $\mathcal{B}^{Ec}$ denote the extensions of the data $g$, $b$ and $\mathcal{B}$ as mentioned in Assumption \ref{assump:surfacedata} and Remark \ref{rem:extensionambiguity}.  We note that by the definition of the extensions:
\begin{align*}
g^{Ec}(x) = g^{e}(x) = g(p(x)), \quad b^{Ec}(x) = b^{e}(x) = b(p(x)), \quad \mathcal{B}^{Ec}(x) = \mathcal{B}^{e}(x) = \mathcal{B}(p(x)),
\end{align*}
for any $x \in X^{\eps}$.  To show (\ref{H1deltabdd:vepsg}) it suffices to show the strong convergence in $L^{2}(\Gamma \times \R, \delta)$:
\begin{align}\label{gstrongconv}
\chi_{(-\eta_{\eps,k}, \eta_{\eps,k})}(z) g(p) \abs{1 + \eps z C_{H}(p) + C_{R}(p, \eps z)}^{\frac{1}{2}} \to g(p) \text{ as } \eps \to 0.
\end{align}
We use the method in the proof of Lemma \ref{lem:Vepscompactness}.  Note that (\ref{gstrongconv}) holds pointwise for a.e. $p \in \Gamma$ and $z \in \R$.  Moreover, by Corollary \ref{cor:constextH1}, we have uniform boundedness of the norm:
\begin{align*}
\int_{X^{\eps}} \delta_{\eps} \abs{g^{e}}^{2} \dx \leq C \norm{g}_{L^{2}(\Gamma)}.
\end{align*}
Furthermore, we may appeal to $(\ref{convergence:constext})_{1}$ to show norm convergence, since the proof also works for the extension of a function in $L^{2}(\Gamma)$.  Then, arguing as in the proof of Lemma \ref{lem:Vepscompactness} yields the required strong convergence.  Using the weak convergence $\chi_{(-\eta_{\eps,k}, \eta_{\eps,k})}(z) V^{\eps}$ to $\overline{v}$ in $L^{2}(\Gamma \times \R, \delta)$, we obtain by the weak-strong product convergence:
\begin{align*}
& \; \int_{X^{\eps}} \delta_{\eps} v^{\eps} g^{e} \dx \\
 = & \; \int_{\R} \int_{\Gamma} \delta(z) \chi_{(-\eta_{\eps,k}, \eta_{\eps,k})}(z) V^{\eps}(p,z) g(p) \abs{1 + \eps z C_{H}(p) + C_{R}(p, \eps z)} \dHaus \dz \\
\to & \; \int_{\R} \int_{\Gamma} \delta(z) \overline{v}(p) g(p) \dHaus \dz  = \int_{\Gamma}  \overline{v} g \dHaus \text{ as } \eps \to 0.
\end{align*}

We note that the proof of $(\ref{H1deltabdd:convstatement})_{1}$ is similar to that of (\ref{productconvergence}).  More precisely, we replace $u^{\eps}$ with $v^{\eps}$, and $\overline{u}$ with $\overline{v}$.  Using that $b^{Ec}(x) = b(p(x))$ for all $x \in X^{\eps}$, and (\ref{1CHCRstrongconverge}), we can deduce the strong convergence 
\begin{align*}
& \; b^{Ec}(p + \eps z \nu(p)) \abs{1 + \eps z C_{H}(p) + C_{R}(p, \eps z)}^{\frac{1}{2}} \\
= & \; b(p)\abs{1 + \eps z C_{H}(p) + C_{R}(p, \eps z)}^{\frac{1}{2}}  \to b(p) \text{ in } L^{\infty}(\Gamma \times \R, \delta).
\end{align*}
Together with (\ref{strongconvergencePhi}) and the weak convergence of $\chi_{(-\eta_{\eps,k}, \eta_{\eps,k})}(z) V^{\eps}$ to $\overline{v}$ in $L^{2}(\Gamma \times \R, \delta)$, we obtain by the weak-strong product convergence:
\begin{align*}
& \; \int_{X^{\eps}} \delta_{\eps} b^{Ec} v^{\eps} \varphi \dx \\
= & \; \int_{\R} \int_{\Gamma} \delta(z) \chi_{(-\eta_{\eps,k}, \eta_{\eps,k})}(z) (V^{\eps} \Phi_{\eps})(p,z) b(p) \abs{1 + \eps z C_{H}(p) + C_{R}(p, \eps z)} \dHaus \dz \\
\to & \; \int_{\R} \int_{\Gamma} \delta(z) \overline{v}(p) \gamma_{0}(\varphi)(p) b(p) \dHaus \dz  = \int_{\Gamma} b \overline{v} \gamma_{0}(\varphi) \dHaus \text{ as } \eps \to 0.
\end{align*}

Next, recall that for a given $\psi \in H^{1}(\Gamma)$ with extension $\psi^{Ec} \in H^{1}(\Omega)$ as constructed in Corollary \ref{cor:constextH1}, within the scaled tubular neighbourhood $X^{\eps}$, $\psi^{Ec}$ is simply the constant extension $\psi^{e}$ in the normal direction off $\Gamma$, as defined in (\ref{defn:constext}).  The same is true for the extension of the data $\mathcal{B}$, i.e., $\mathcal{B}^{Ec}(x) = \mathcal{B}(p(x))$ for all $x \in X^{\eps}$.  By Corollary \ref{cor:constextH1}, we have that $\psi^{e} \in H^{1}(X^{\eps})$ and by (\ref{ConstExt:relationgradients}) we have $\nabla \psi^{e}(x) \cdot \nu(p(x)) = 0$.  Moreover, let $\Psi_{\eps}$ denote the representation of $\psi$ in the $(p,z)$ coordinate system.  Then, by (\ref{gradientf:decomposition}) and (\ref{gradientremainderdotnuzero}), we see that, for $x = p + \eps z \nu(p) \in X^{\eps}$,
\begin{align}\label{pdzpsiezero}
0 = \nabla \psi^{e}(x) \cdot \nu(p(x)) = \frac{1}{\eps} \pd_{z} \Psi_{\eps}(p,z) + \surfz \Psi_{\eps}(p,z) \cdot \nu(p) = \frac{1}{\eps} \pd_{z} \Psi_{\eps}(p,z),
\end{align}
i.e. $\pd_{z} \Psi_{\eps} = 0$.  Together with (\ref{gradientremainderdotnuzero}), we compute that 
\begin{equation}\label{transformBnablavepsnablapsi}
\begin{aligned}
& \; \int_{X^{\eps}} \delta_{\eps} \mathcal{B}^{e} \nabla v^{\eps} \cdot \nabla \psi^{e} \dx \\
= & \; \int_{-\eta_{\eps,k}}^{\eta_{\eps,k}} \int_{\Gamma} \delta(z) \surfz V^{\eps} \cdot \mathcal{B}(p)^{T} \surfz \Psi_{\eps} \abs{1 + \eps z C_{H}(p) + C_{R}(p, \eps z)} \dHaus \dz,
\end{aligned}
\end{equation}
where $\mathcal{B}^{T}$ denotes the transpose of $\mathcal{B}$.

With the weak convergence $\chi_{(-\eta_{\eps,k}, \eta_{\eps,k})}(z) \surfz V^{\eps}(p,z)$ to $\surf \overline{v}(p)$ in $L^{2}(\Gamma \times \R, \delta)$, in order to show 
\begin{align*}
\int_{X^{\eps}} \delta_{\eps} \mathcal{B}^{e} \nabla v^{\eps} \cdot \nabla \psi^{e} \dx \to \int_{\Gamma} \mathcal{B} \surf \overline{v} \cdot \surf \psi \dHaus \text{ as } \eps \to 0,
\end{align*}
it is sufficient to show the following strong convergence result: As $\eps \to 0$,
\begin{equation}\label{BsurfpsiStrong}
\begin{aligned}
& \; \chi_{(-\eta_{\eps,k}, \eta_{\eps,k})}(z) \mathcal{B}^{T}(p) \surfz \Psi_{\eps}(p,z) \abs{1 + \eps z C_{H}(p) + C_{R}(p,\eps z)}^{\frac{1}{2}} \\ \to & \; \mathcal{B}^{T}(p) \surf \psi(p) \text{ in } L^{2}(\Gamma \times \R, \delta).
\end{aligned}
\end{equation}

By (\ref{representation:pz}), (\ref{CHCRbdd}), and (\ref{gradientremainderOrdereps}), we have for almost every $p \in \Gamma$, $z \in \R$, as $\eps \to 0$,
\begin{equation}\label{Bsurfpsipointwise}
\begin{aligned}
& \; \chi_{(-\eta_{\eps,k}, \eta_{\eps,k})}(z) \mathcal{B}^{T}(p) \surfz \Psi_{\eps}(p,z) \abs{1 + \eps z C_{H}(p) + C_{R}(p, \eps z)}^{\frac{1}{2}}  \\
\to & \; \mathcal{B}^{T}(p) \surf \psi(p).
\end{aligned}
\end{equation}

By (\ref{pdzpsiezero}), (\ref{gradientf:decomposition}) and a slight modification to the proof of Corollary \ref{cor:constextH1}, we have uniform boundedness of the norm:
\begin{equation}\label{Bsurfpsiuniformnormbdd}
\begin{aligned}
& \; \int_{\R} \int_{\Gamma} \delta(z) \chi_{(-\eta_{\eps,k}, \eta_{\eps,k})}(z) \abs{\mathcal{B}^{T}\surfz \Psi_{\eps}}^{2} \abs{1 + \eps z C_{H}(p) + C_{R}(p, \eps z)} \dHaus \dz \\
= & \; \int_{X^{\eps}} \delta_{\eps} \abs{(\mathcal{B}^{T})^{e} \nabla \psi^{e}}^{2} \dx \leq C \norm{\mathcal{B}}_{L^{\infty}(\Gamma)} \norm{\surf \psi}_{L^{2}(\Gamma)}^{2}.
\end{aligned}
\end{equation}
Lastly, by a slight modification of the proof of $(\ref{convergence:constextbB})_{2}$ and using that
\begin{align*}
\int_{\Omega \setminus X^{\eps}} \delta_{\eps} \abs{(\mathcal{B}^{Ec})^{T} \nabla \psi^{Ec}}^{2} \dx \leq \norm{\delta_{\eps}}_{L^{\infty}(\Omega \setminus X^{\eps})} \norm{\mathcal{B}^{Ec}}_{L^{\infty}(\Omega)} \norm{\nabla \psi^{Ec}}_{L^{2}(\Omega)}^{2} \to 0 \text{ as } \eps \to 0,
\end{align*}
we can deduce 
\begin{align*}
\int_{X^{\eps}} \delta_{\eps} \abs{\mathcal{B}^{T} \nabla \psi^{e}}^{2} \dx \to \int_{\Gamma} \abs{\mathcal{B}^{T} \surf \psi}^{2} \dHaus \text{ as } \eps \to 0,
\end{align*}
and hence we obtain the norm convergence: As $\eps \to 0$,
\begin{equation}\label{Bsurfpsinormconvergence}
\begin{aligned}
& \; \int_{-\eta_{\eps,k}}^{\eta_{\eps,k}} \int_{\Gamma} \delta(z)  \abs{\mathcal{B}^{T}(p) \surfz \Psi_{\eps}(p,z)}^{2} \abs{1 + \eps z C_{H}(p) + C_{R}(p, \eps z)} \dHaus \dz  \\
= & \; \int_{X^{\eps}} \delta_{\eps} \abs{\mathcal{B}^{T} \nabla \psi^{e}}^{2} \dx \to \int_{\Gamma} \abs{\mathcal{B}^{T} \surf \psi}^{2} \dHaus \\
= & \; \int_{\R} \int_{\Gamma} \delta(z)   \abs{\mathcal{B}^{T}(p) (\surf \psi)(p)}^{2} \dHaus \dz.
\end{aligned}
\end{equation}
Then, arguing as in the proof of (\ref{strongconvergencePhi}), we use (\ref{Bsurfpsipointwise}), (\ref{Bsurfpsiuniformnormbdd}) and (\ref{Bsurfpsinormconvergence}) to deduce the required strong convergence (\ref{BsurfpsiStrong}).
\end{proof}

\begin{cor}\label{cor:deltaL2GammaH1}
Suppose that Assumptions \ref{assump:Gamma} and \ref{assump:delta} are satisfied.   Let $g^{Ec} \in L^{2}(\Omega)$ denote the extensions of the data $g \in L^{2}(\Gamma)$ as mentioned in Assumption \ref{assump:surfacedata} and Remark \ref{rem:extensionambiguity}.  Let $\varphi \in H^{1}(\Omega)$.  Then, as $\eps \to 0$,
\begin{align}\label{deltaL2GammaH1}
\int_{\Omega} \delta_{\eps} g^{Ec} \varphi \dx \to \int_{\Gamma} g \gamma_{0}(\varphi) \dHaus.
\end{align}
\end{cor}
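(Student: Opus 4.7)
The plan is to reduce to the case treated in Lemma \ref{lem:convergetodirac} by a density argument, using the uniform control of the $L^2(\Omega,\delta_\eps)$-norm of the constant extension.

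First I would handle the smooth case $g \in H^{1}(\Gamma)$. By Corollary \ref{cor:constextH1}, $g^{Ec}\in H^{1}(\Omega)$, and since $\varphi\in H^{1}(\Omega)$ the product $g^{Ec}\varphi$ belongs to $W^{1,1}(\Omega)$ with $\gamma_{0}(g^{Ec}\varphi)=g\,\gamma_{0}(\varphi)$ (the trace of $g^{Ec}$ on $\Gamma$ is $g$ by definition of the constant normal extension). Applying Lemma \ref{lem:convergetodirac} to $f := g^{Ec}\varphi$ yields
\begin{equation*}
\int_{\Omega}\delta_{\eps}\, g^{Ec}\varphi\dx \;\longrightarrow\; \int_{\Gamma}g\,\gamma_{0}(\varphi)\dHaus \quad\text{as }\eps\to 0.
\end{equation*}

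For general $g\in L^{2}(\Gamma)$, I would use the density of $H^{1}(\Gamma)$ in $L^{2}(\Gamma)$ to pick a sequence $g_{k}\in H^{1}(\Gamma)$ with $g_{k}\to g$ in $L^{2}(\Gamma)$. The key uniform estimate is
\begin{equation*}
\|g^{Ec}-g_{k}^{Ec}\|_{0,\delta_{\eps}}\;\leq\;C\,\|g-g_{k}\|_{L^{2}(\Gamma)},
\end{equation*}
independently of $\eps\in(0,1]$. Inspection of the first estimate in Lemma \ref{lem:constextH1delta} shows its proof only uses the $L^{2}(\Gamma)$-norm of the trace data (through $|f(p)|^2$), so it extends verbatim to $L^{2}(\Gamma)$-functions. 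Combined with Lemma \ref{lem:H1impliesL2delta} applied to $\varphi$, a Cauchy--Schwarz estimate gives
\begin{equation*}
\Bigl|\int_{\Omega}\delta_{\eps}(g^{Ec}-g_{k}^{Ec})\varphi\dx\Bigr| \leq \|g^{Ec}-g_{k}^{Ec}\|_{0,\delta_{\eps}}\|\varphi\|_{0,\delta_{\eps}} \leq C\,\|g-g_{k}\|_{L^{2}(\Gamma)}\|\varphi\|_{H^{1}(\Omega)},
\end{equation*}
uniformly in $\eps$. Using the trace theorem, we have analogously
\begin{equation*}
\Bigl|\int_{\Gamma}(g-g_{k})\gamma_{0}(\varphi)\dHaus\Bigr|\leq \|g-g_{k}\|_{L^{2}(\Gamma)}\|\gamma_{0}(\varphi)\|_{L^{2}(\Gamma)}.
\end{equation*}

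Finally I would conclude with a standard three-$\varepsilon$ argument: given $\zeta>0$, fix $k$ large enough that both terms above are smaller than $\zeta/3$, and then choose $\eps_{0}(k,\zeta)$ small enough so that $|\int_{\Omega}\delta_{\eps}g_{k}^{Ec}\varphi\dx-\int_{\Gamma}g_{k}\gamma_{0}(\varphi)\dHaus|<\zeta/3$ by the first step. Splitting
\begin{equation*}
\int_{\Omega}\delta_{\eps}g^{Ec}\varphi\dx-\int_{\Gamma}g\gamma_{0}(\varphi)\dHaus
\end{equation*}
through the intermediate terms with $g_k$ yields the claim. The main obstacle is the verification that the uniform bound $\|g^{Ec}\|_{0,\delta_{\eps}}\leq C\|g\|_{L^2(\Gamma)}$ holds without requiring $H^{1}(\Gamma)$-regularity on $g$; once that is granted the density argument is routine.
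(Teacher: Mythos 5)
Your proposal is correct but follows a genuinely different route from the paper. The paper handles general $g\in L^2(\Gamma)$ directly: it restricts to the scaled tube $X^\eps=\mathrm{Tub}^{\eps^k\eta}(\Gamma)$ (the tail is killed by $\norm{\delta_\eps}_{L^\infty(\Omega\setminus X^\eps)}\to 0$), passes to $(p,z)$ coordinates where $g^{Ec}$ is exactly $g(p)$ independent of $z$, and then invokes the strong convergence $\chi_{(-\eta_{\eps,k},\eta_{\eps,k})}\Phi_\eps\abs{1+\eps z C_H+C_R}^{1/2}\to\gamma_0(\varphi)$ in $L^2(\Gamma\times\R,\delta)$ that was already established en route to Lemma \ref{lem:Vepscompactness}. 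You instead reduce to Lemma \ref{lem:convergetodirac} applied to $f=g^{Ec}\varphi\in W^{1,1}(\Omega)$ for $g\in H^1(\Gamma)$, and then bootstrap to $g\in L^2(\Gamma)$ by density. Your approach avoids re-invoking the $(p,z)$ machinery and is closer to the spirit of Lemma \ref{lem:convergetodirac}'s own $C^1$-density proof; the paper's approach is self-contained within the scaled-tube framework it has already set up and yields the result in one pass without a density step.

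There is, however, a small gap you should close. By Remark \ref{rem:extensionambiguity}, $g^{Ec}$ agrees with the constant normal extension $g^e$ only on $\mathrm{Tub}^\eta(\Gamma)$ and is \emph{arbitrary} in $L^2$ outside. This causes two problems in your plan as written: in Step 1, even for $g\in H^1(\Gamma)$ the product $g^{Ec}\varphi$ need not lie in $W^{1,1}(\Omega)$, since $g^{Ec}$ is only $L^2$ away from the tube; and in Step 2, the bound $\norm{g^{Ec}-g_k^{Ec}}_{L^2(\Omega)}\leq C\norm{g-g_k}_{L^2(\Gamma)}$ (which you need to control the tail term $C_{\mathrm{sup}}\norm{g^{Ec}-g_k^{Ec}}_{L^2(\Omega)}^2$ appearing in the estimate of Lemma \ref{lem:constextH1delta}) is not guaranteed for an arbitrary extension. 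Both issues are repaired by first splitting
\begin{align*}
\int_\Omega\delta_\eps g^{Ec}\varphi\dx=\int_{\mathrm{Tub}^\eta(\Gamma)}\delta_\eps g^e\varphi\dx+\int_{\Omega\setminus\mathrm{Tub}^\eta(\Gamma)}\delta_\eps g^{Ec}\varphi\dx,
\end{align*}
noting that the second term tends to zero by $(\ref{deltaeps:Linfty:remainder:convergence})$ uniformly in the choice of extension, and then running your Step 1 and Step 2 on the tube, where the relevant extension is exactly $g^e$ and is linear and bounded from $L^2(\Gamma)$ to $L^2(\mathrm{Tub}^\eta(\Gamma))$. With that modification the density argument closes cleanly and the proof is sound.
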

\begin{proof}
Choose $0 < \eta < (\tilde{c})^{-1}$ and the diffeomorphism $\Theta^{\eta}$ as defined in (\ref{diffeo}).  Let  $X^{\eps} = \mathrm{Tub}^{\eps^{k} \eta}(\Gamma)$ for $0 < k < 1$.  By (\ref{delta:remainder:Xeps}),
\begin{align*}
\abs{\int_{\Omega \setminus X^{\eps}} \delta_{\eps} \varphi g^{Ec} \dx} \leq  \norm{\delta_{\eps}}_{L^{\infty}(\Omega \setminus X^{\eps})} \norm{g^{Ec}}_{L^{2}(\Omega)} \norm{\varphi}_{L^{2}(\Omega)}  \to 0 \text{ as } \eps \to 0.
\end{align*}
Hence, it is sufficient to restrict our attention to $X^{\eps}$.  Invoking the $(p,z)$ coordinate system, we note that by definition $g^{Ec}(x) = g^{e}(x) = g(p(x))$ for all $x \in X^{\eps}$.  Moreover,  (\ref{aeconvergencePhi}), (\ref{normbddPhi}), (\ref{normconvergencePhi}), and (\ref{strongconvergencePhi}) are still valid.  Hence, by (\ref{strongconvergencePhi}) and (\ref{1CHCRstrongconverge}), we have
\begin{align*}
& \; \int_{X^{\eps}} \delta_{\eps} g \varphi \dx = \int_{-\eta_{\eps,k}}^{\eta_{\eps,k}} \int_{\Gamma} \delta(z) g(p) \Phi_{\eps}(p,z) \abs{1 + \eps z C_{H}(p) + C_{R}(p, \eps z)} \dHaus \dz \\
\to & \; \int_{\R} \int_{\Gamma} \delta(z) g(p) \gamma_{0}(\varphi)(p) \dHaus \dz = \int_{\Gamma} g \gamma_{0}(\varphi) \dHaus \text{ as } \eps \to 0.
\end{align*}
\end{proof}

\section{Proof of the main results}\label{sec:Mainproof}

\subsection{Proof of Lemma \ref{lem:DirichletfromRobin}}\label{sec:proof:DirichletfromRobin}
The weak formulation of (RSIH) is: Find $w \in H^{1}(\Omega^{*})$ such that for all $\varphi \in H^{1}(\Omega^{*})$,
\begin{equation}\label{weakform:RSIH}
\begin{aligned}
& \; \int_{\Omega^{*}} \mathcal{A} \nabla w \cdot \nabla \varphi + a w \varphi \dx + \int_{\Gamma} \beta \gamma_{0}(w) \gamma_{0}(\varphi) \dHaus \\
 = & \; \int_{\Omega^{*}} f \varphi - \mathcal{A} \nabla \tilde{g} \cdot \nabla \varphi - a \tilde{g} \varphi \dx.
\end{aligned}
\end{equation}
By the Lax--Milgram theorem, for each $\beta > 0$, there exists a unique weak solution $w^{\beta} \in H^{1}(\Omega^{*})$ to (RSIH).  Moreover, $w^{\beta}$ satisfies the following estimate, with the constant $C_{\mathcal{A}, a}$ independent of $\beta$:
\begin{align*}
\min(\theta_{0}, \theta_{2}) \norm{w^{\beta}}_{H^{1}(\Omega^{*})}^{2} + 2 \beta \norm{\gamma_{0}(w^{\beta})}_{L^{2}(\Gamma)}^{2} \leq \frac{\norm{f}_{L^{2}(\Omega^{*})}^{2} + C_{\mathcal{A},a} \norm{\tilde{g}}_{H^{1}(\Omega^{*})}^{2}}{\min(\theta_{0}, \theta_{2})}.
\end{align*}
From this, we observe that $\{w^{\beta}\}_{\beta}$ is bounded uniformly in $H^{1}(\Omega^{*})$.  Hence there exists a subsequence $(\beta_{i})_{i \in \N} \to \infty$ such that
\begin{align*}
w^{\beta_{i}} \rightharpoonup w^{0} \in H^{1}(\Omega^{*}) \text{ as } \beta_{i} \to \infty.
\end{align*}
Moreover, from the a priori estimates on $\{w^{\beta}\}_{\beta}$, we see that
\begin{align*}
\norm{\gamma_{0}(w^{\beta})}_{L^{2}(\Gamma)} \leq \frac{C(\theta_{0}, \theta_{2}, f, \mathcal{A}, a, \tilde{g})}{\sqrt{\beta}} \to 0 \text{ as } \beta \to \infty.
\end{align*}
Compactness of the trace operator (Theorem \ref{thm:compacttrace}) implies that $\gamma_{0}(w^{\beta})$ converges strongly to $\gamma_{0}(w^{0})$, and subsequently the norms also converge.  Thus,
\begin{align*}
0 = \lim_{\beta_{i} \to \infty} \norm{\gamma_{0}(w^{\beta_{i}})}_{L^{2}(\Gamma)} = \norm{\gamma_{0}(w^{0})}_{L^{2}(\Gamma)},
\end{align*}
and so $\gamma_{0}(w^{0}) = 0$ a.e. on $\Gamma$.  Since $\Gamma$ is $C^{1}$, by the characterisation of zero trace Sobolev functions \cite[Theorem 2, p. 259]{book:Evans} we have $w^{0} \in H^{1}_{0}(\Omega^{*})$.  Testing with $\varphi \in H^{1}_{0}(\Omega^{*})$ in (\ref{weakform:RSIH}) and passing to the limit $\beta \to \infty$, we see that $w^{0}$ satisfies
\begin{align*}
\int_{\Omega^{*}} \mathcal{A} \nabla w^{0} \cdot \nabla \varphi + a w^{0} \varphi \dx = \int_{\Omega^{*}} f \varphi - \mathcal{A} \nabla \tilde{g} \cdot \nabla \varphi - a \tilde{g} \varphi \dx,
\end{align*}
for all $\varphi \in H^{1}_{0}(\Omega^{*})$.  Hence $w^{\beta}$ converges (along a subsequence) to the solution of (DSIH) weakly in $H^{1}(\Omega^{*})$ as $\beta \to \infty$.  Since (DSIH) is well-posed by the Lax--Milgram theorem, the aforementioned convergence applies to the whole sequence.

\subsection{Well-posedness for (CSI)}\label{sec:proof:CSI}
We consider the product Hilbert space and associated inner product
\begin{align*}
 \mathcal{X} & := H^{1}(\Omega^{(1)}) \times H^{1}(\Gamma), \\
\inner{(u_{1},v_{1})}{(u_{2},v_{2})}_{\mathcal{X}} & := \int_{\Omega^{(1)}} u_{1} u_{2} + \nabla u_{1} \cdot \nabla u_{2} \dx + \int_{\Gamma} v_{1} v_{2} + \surf v_{1} \cdot \surf v_{2} \dHaus. 
\end{align*}
Let
\begin{align*}
a_{CSI}((u,v),(\varphi, \psi)) & := a_{B}(u,\varphi) + a_{S}(v, \psi) + K l_{S}(v - \gamma_{0}(u), \psi - \gamma_{0}(\varphi)), \\
l_{CSI}((\varphi, \psi)) & := l_{B}(f,\varphi) + \beta l_{S}(g,\psi),
\end{align*}
where $a_{B}(\cdot, \cdot)$, $a_{S}(\cdot, \cdot)$, $l_{B}(\cdot, \cdot)$ and $l_{S}(\cdot, \cdot)$ are as defined in (\ref{bulkbilinear}) and (\ref{surfacebilinear}).  The weak formulation for (CSI) is: Find $(u,v) \in \mathcal{X}$ such that for all $(\varphi, \psi) \in \mathcal{X}$,
\begin{align*}
a_{CSI}((u,v), (\varphi, \psi)) = l_{CSI}((\varphi, \psi)).
\end{align*}

By the root mean square inequality, i.e.,
\begin{align}\label{rootmeansq}
a+b \leq \sqrt{2} \sqrt{a^{2} + b^{2}},
\end{align}
one can show that
\begin{equation}\label{l:CSI:bound}
\begin{aligned}
 \abs{l_{CSI}((\varphi, \psi))} & \leq \norm{f}_{L^{2}(\Omega^{*})} \norm{\varphi}_{L^{2}(\Omega^{*})} + \norm{g}_{L^{2}(\Gamma)} \norm{\psi}_{L^{2}(\Gamma)} \\
& \leq (\norm{f}_{L^{2}(\Omega^{*})} + \norm{g}_{L^{2}(\Gamma)})(\norm{\varphi}_{L^{2}(\Omega^{*})} + \norm{\psi}_{L^{2}(\Gamma)}) \\
& \leq \sqrt{2} (\norm{f}_{L^{2}(\Omega^{*})} + \norm{g}_{L^{2}(\Gamma)}) \norm{(\varphi, \psi)}_{\mathcal{X}},
\end{aligned}
\end{equation}
and
\begin{align*}
& \; \int_{\Gamma} \abs{(v - \gamma_{0}(u))(\psi - \gamma_{0}(\varphi))} \dHaus \\
 \leq & \; \norm{v - \gamma_{0}(u)}_{L^{2}(\Gamma)} \norm{\psi - \gamma_{0}(\varphi)}_{L^{2}(\Gamma)} \\
\leq & \; (\norm{v}_{L^{2}(\Gamma)} + C_{\mathrm{tr}} \norm{u}_{H^{1}(\Omega^{*})})(\norm{\psi}_{L^{2}(\Gamma)} + C_{\mathrm{tr}} \norm{\varphi}_{H^{1}(\Omega^{*})}) \\
\leq & \; 2 (1 + C_{\mathrm{tr}}^{2}) \norm{(u,v)}_{\mathcal{X}} \norm{(\varphi, \psi)}_{\mathcal{X}},
\end{align*}
and so
\begin{align*}
\abs{a_{CSI}((u,v), (\varphi, \psi))} \leq (C_{\mathcal{A}, \mathcal{B}, a, b} + 2K(1 + C_{\mathrm{tr}}^{2})) \norm{(u,v)}_{\mathcal{X}} \norm{(\varphi, \psi)}_{\mathcal{X}}.
\end{align*}
Moreover, 
\begin{align*}
a_{CSI}((u,v),(u,v)) & \geq \min(\theta_{0}, \theta_{2}) \norm{u}_{H^{1}(\Omega^{*})}^{2} + \min(\theta_{1}, \theta_{3}) \norm{v}_{H^{1}(\Gamma)}^{2} + K \norm{v - \gamma_{0}(u)}_{L^{2}(\Gamma)}^{2} \\
& \geq (\min_{i} \theta_{i}) \norm{(u,v)}_{\mathcal{X}}^{2}.
\end{align*}
By the Lax--Milgram theorem, there exists a unique weak solution $(u,v) \in \mathcal{X}$ to (CSI) such that
\begin{align*}
\norm{(u,v)}_{\mathcal{X}} \leq C(\theta_{i}) (\norm{f}_{L^{2}(\Omega^{*})} + \norm{g}_{L^{2}(\Gamma)}).
\end{align*}

\subsection{Well-posedness for (CDD)}\label{sec:proof:CDD}
We consider the product Hilbert space and associated inner product
\begin{align*}
 \mathcal{X}_{\eps} &:= \mathcal{V}_{\eps} \times H^{1}(\Omega, \delta_{\eps}), \\
\inner{(u_{1},v_{1})}{(u_{2},v_{2})}_{\mathcal{X}_{\eps}} & := \int_{\Omega} (\xi_{\eps} + \delta_{\eps})u_{1} u_{2} + \xi_{\eps} \nabla u_{1} \cdot \nabla u_{2} + \delta_{\eps} v_{1} v_{2} + \delta_{\eps} \nabla v_{1} \cdot \nabla v_{2} \dx.
\end{align*}
Let
\begin{align}
a_{CDD}((u,v),(\varphi, \psi)) & := a_{B}^{\eps}(u,\varphi) + a_{S}^{\eps}(v, \psi) + K l_{S}^{\eps}(v - u, \psi - \varphi), \label{defn:aCDD} \\
l_{CDD}((\varphi, \psi)) & := l_{B}^{\eps}(f^{Ea},\varphi) + \beta l_{S}^{\eps}(g^{Ec},\psi), \label{defn:lCDD}
\end{align}
where $a_{B}^{\eps}(\cdot, \cdot)$, $a_{S}^{\eps}(\cdot, \cdot)$, $l_{B}^{\eps}(\cdot, \cdot)$ and $l_{S}^{\eps}(\cdot, \cdot)$ are as defined in (\ref{bulkbilinear:eps}) and (\ref{surfacebilinear:eps}).  The weak formulation for (CDD) is: Find $(u^{\eps}, v^{\eps}) \in \mathcal{X}_{\eps}$ such that for all $(\varphi, \psi) \in \mathcal{X}_{\eps}$,
\begin{align*}
a_{CDD}((u^{\eps},v^{\eps}),(\varphi, \psi)) = l_{CDD}((\varphi, \psi)).
\end{align*}
A similar calculation to (\ref{l:CSI:bound}) involving the root mean square inequality (\ref{rootmeansq}) applied to $l_{CDD}$ yields
\begin{align*}
\abs{l_{CDD}((\varphi, \psi))} \leq \sqrt{2} \left ( \norm{f^{Ea}}_{0,\xi_{\eps}} + \norm{g^{Ec}}_{0,\delta_{\eps}} \right ) \norm{(\varphi, \psi)}_{\mathcal{X}_{\eps}}.
\end{align*}
Similarly, we have
\begin{align*}
\abs{a_{CDD}((u,v),(\varphi, \psi))} \leq (C(\mathcal{A}^{Ea}, a^{Ea}, \mathcal{B}^{Ec}, b^{Ec}) + K) \norm{(u^{\eps},v^{\eps})}_{\mathcal{X}_{\eps}} \norm{(\varphi, \psi)}_{\mathcal{X}_{\eps}}.
\end{align*}
By Young's inequality with constant $\mu \in (1,2)$, we have
\begin{align*}
\int_{\Omega} \delta_{\eps} \abs{v^{\eps} - u^{\eps}}^{2} \dx & \geq \int_{\Omega} \delta_{\eps}(\abs{v^{\eps}}^{2} - 2 \abs{v^{\eps}}\abs{u^{\eps}} + \abs{u^{\eps}}^{2}) \dx \\
& \geq (1-\mu) \norm{v^{\eps}}_{0,\delta_{\eps}}^{2} + (1-\mu^{-1})\norm{u^{\eps}}_{0,\delta_{\eps}}^{2}.
\end{align*}
Then, by the assumption specifically for (CDD), we have $\theta_{3} \geq K$, and so
\begin{equation}\label{CDDcoercivity}
\begin{aligned}
a_{CDD}((u^{\eps},v^{\eps}),(u^{\eps},v^{\eps})) & \geq C(\theta_{0},\theta_{2}) \norm{u^{\eps}}_{1,\xi_{\eps}}^{2} + \theta_{1} \norm{\nabla v^{\eps}}_{0,\delta_{\eps}}^{2} \\
& + (\theta_{3} + K (1- \mu)) \norm{v^{\eps}}_{0,\delta_{\eps}}^{2}  + K(1-\mu^{-1}) \norm{u^{\eps}}_{0,\delta_{\eps}}^{2} \\
& \geq C(\theta_{i}, K, \mu) \norm{(u^{\eps},v^{\eps})}_{\mathcal{X}_{\eps}}^{2}.
\end{aligned}
\end{equation}

Hence, by the Lax--Milgram theorem, for every $\eps > 0$ there exists a unique pair of functions $(u^{\eps},v^{\eps}) \in \mathcal{X}_{\eps}$ that is a weak solution to (CDD) and satisfies
\begin{align*}
\norm{(u^{\eps},v^{\eps})}_{\mathcal{X}_{\eps}} \leq C \left ( \norm{f^{Ea}}_{0,\xi_{\eps}} + \norm{g^{Ec}}_{0,\delta_{\eps}} \right ),
\end{align*}
where the constant $C$ is independent of $\eps$.  Next, by the assumption that $\xi_{\eps} \leq 1$ and Lemma \ref{lem:constextH1delta}, there exists a constant $C$, independent of $\eps$ such that
\begin{align*}
\norm{(u^{\eps},v^{\eps})}_{\mathcal{X}_{\eps}} \leq C \left ( \norm{f^{Ea}}_{0,\xi_{\eps}} + \norm{g^{Ec}}_{0,\delta_{\eps}} \right ) \leq C \left ( \norm{f^{Ea}}_{L^{2}(\Omega)} + \norm{g}_{L^{2}(\Gamma)} \right ),
\end{align*}
which is (\ref{apriori:coupledbulk}).

By a similar argument involving the Lax--Milgram theorem, one can show the well-posedness of (SDD) in $H^{1}(\Omega, \delta_{\eps})$, (RDD) in $\mathcal{V}_{\eps}$, (DDDH) in $\mathcal{W}_{\eps}$ and (NDDH) in $H^{1}(\Omega, \xi_{\eps})$.  The independence of $\eps$ for the constants in (\ref{apriori:surface}), (\ref{apriori:Robin}), (\ref{apriori:Dirichlet}) and (\ref{apriori:Neumann}) follow again from Lemma \ref{lem:constextH1delta} and the property that $\xi_{\eps} \leq 1$.

\subsection{Weak convergence}
Let $\varphi \in H^{1}(\Omega^{*})$ and $\psi \in H^{1}(\Gamma)$ be arbitrary.  Let $\varphi^{Er} \in H^{1}(\Omega)$ denote the extension of $\varphi$ to $\Omega$ by the extension theorem \cite[Theorem 1, p. 254]{book:Evans}, and let $\psi^{Ec} \in H^{1}(\Omega)$ denote the extension of $\psi$ to $\Omega$ as outlined in Corollary \ref{cor:constextH1}.

By $\xi_{\eps} \leq 1$, Lemma \ref{lem:H1impliesL2delta}, and Lemma \ref{lem:constextH1delta}, we see that $\varphi^{Er} \in \mathcal{V}_{\eps}$ and $\psi^{Ec} \in H^{1}(\Omega, \delta_{\eps})$ for all $\eps \in (0,1]$.  Moreover, there exists a constant $C > 0$, independent of $\eps$, such that, 
\begin{align*}
\norm{\varphi^{Er}}_{\mathcal{V}_{\eps}} \leq C \norm{\varphi}_{H^{1}(\Omega^{*})}, \quad \norm{\psi^{Ec}}_{1,\delta_{\eps}} \leq C \norm{\psi}_{H^{1}(\Gamma)}.
\end{align*}
Thus, we may test with $\varphi^{Er}$ and $\psi^{Ec}$ in the weak formulation for (CDD).

For $\eps \in (0,1]$, let $(u^{\eps}, v^{\eps}) \in \mathcal{X}_{\eps}$ denote the unique weak solution to (CDD).  Then, they satisfy
\begin{equation}\label{CDDweakconv}
\begin{aligned}
& \; \int_{\Omega} \xi_{\eps} \mathcal{A}^{Ea} \nabla u^{\eps} \cdot \nabla \varphi^{Er} + \xi_{\eps} a^{Ea} u^{\eps} \varphi^{Er} + \delta_{\eps} \mathcal{B}^{Ec} \nabla v^{\eps} \cdot \nabla \psi^{Ec} + \delta_{\eps} b^{Ec} v^{\eps} \psi^{Ec} \dx \\
+ & \; \int_{\Omega} K \delta_{\eps} (v^{\eps} - u^{\eps})(\psi^{Ec} - \varphi^{Er}) \dx - \int_{\Omega} \xi_{\eps} f^{Ea} \varphi^{Er} + \delta_{\eps} \beta g^{Ec} \psi^{Ec} \dx  = 0.
\end{aligned}
\end{equation}
We analyse the bulk and surface terms separately.  From (\ref{apriori:coupledbulk}), we have
\begin{align}\label{apriori:coupledbulk:bulkterms}
\norm{u^{\eps}}_{1,\xi_{\eps}}^{2} + \norm{u^{\eps}}_{0,\delta_{\eps}}^{2} \leq \norm{(u^{\eps}, v^{\eps})}_{\mathcal{X}_{\eps}}^{2} \leq C(\norm{f^{Ea}}_{L^{2}(\Omega)}^{2} + \norm{g}_{L^{2}(\Gamma)}^{2}) =: C_{f,g},
\end{align}
where $C_{f,g}$ is independent of $\eps$.  Then, by Lemma \ref{lem:Vepscompactness} and Rellich--Kondrachov compactness theorem, there exists $\tilde{u} \in H^{1}(\Omega^{*})$ such that, along a subsequence
\begin{align}
u^{\eps} \mid_{\Omega^{*}} & \rightharpoonup \tilde{u}  \text{ in } H^{1}(\Omega^{*}) \text{ as } \eps \to 0, \label{uepstildeuH1} \\
u^{\eps} \mid_{\Omega^{*}} & \rightarrow \tilde{u}  \text{ in } L^{2}(\Omega^{*}) \text{ as } \eps \to 0, \label{uepstildeuL2} \\
\int_{\Omega} \delta_{\eps} u^{\eps} (\psi^{Ec} - \varphi^{Er}) \dx & \to \int_{\Gamma} \gamma_{0}(\tilde{u}) (\psi - \gamma_{0}(\varphi) ) \dHaus \text{ as } \eps \to 0 \label{uepstildeutrace}.
\end{align}
By H\"{o}lder's inequality, $f^{Ea} \varphi^{Er} \in L^{1}(\Omega)$, and so by  Lemma \ref{lem:XiDCT}, we see that
\begin{align}\label{bulkRHS}
\int_{\Omega} \xi_{\eps} f^{Ea} \varphi^{Er} \dx = \int_{\Omega^{*}} \xi_{\eps} f \varphi \dx + \int_{\Omega \setminus \Omega^{*}} \xi_{\eps} f^{Ea} \varphi^{Er} \dx \to \int_{\Omega^{*} } f \varphi \dx \text{ as } \eps \to 0.
\end{align}
By Cauchy--Schwarz inequality, Lemma \ref{lem:XiDCT}, (\ref{apriori:coupledbulk:bulkterms}), and the fact that $(1- \xi_{\eps}) \leq \frac{1}{2} \leq \xi_{\eps}$ in $\Omega^{*}$, we have
\begin{equation}\label{bulklowerorder:intermediatecal}
\begin{aligned}
& \; \int_{\Omega^{*}} (1-\xi_{\eps}) \abs{u^{\eps}} \abs{\varphi} \dx  \leq \left ( \int_{\Omega^{*}} \tfrac{1}{2} \abs{u^{\eps}}^{2} \dx \right )^{\frac{1}{2}} \left ( \int_{\Omega^{*}} (1-\xi_{\eps}) \abs{\varphi}^{2} \dx \right )^{\frac{1}{2}} \\
\leq & \; \norm{u^{\eps}}_{0,\xi_{\eps}} \left ( \int_{\Omega^{*}} (1-\xi_{\eps}) \abs{\varphi}^{2} \dx \right )^{\frac{1}{2}}  \leq C_{f,g} \left ( \int_{\Omega^{*}} (1-\xi_{\eps}) \abs{\varphi}^{2} \dx \right )^{\frac{1}{2}} \to 0 \text{ as } \eps \to 0.
\end{aligned}
\end{equation}
Thus, together with (\ref{uepstildeuL2}), we have
\begin{align*}
& \; \abs{\int_{\Omega} \xi_{\eps} a^{Ea} u^{\eps} \varphi^{Er} \dx - \int_{\Omega^{*}} a \tilde{u} \varphi \dx}\\
 \leq & \;  \int_{\Omega^{*}} \abs{a} \abs{u^{\eps} - \tilde{u}} \abs{\varphi} \dx + \int_{\Omega^{*}} \abs{a} (1 - \xi_{\eps}) \abs{u^{\eps}} \abs{\varphi} \dx + \int_{\Omega \setminus \Omega^{*}} \xi_{\eps} \abs{a^{Ea}} \abs{u^{\eps}} \abs{\varphi^{Er}} \dx \\
 \leq & \; C_{a} \left ( \norm{u^{\eps} - \tilde{u}}_{L^{2}(\Omega^{*})} \norm{\varphi}_{L^{2}(\Omega^{*})} + \int_{\Omega^{*}} (1 - \xi_{\eps}) \abs{u^{\eps}} \abs{\varphi} \dx + C_{f,g} \norm{\varphi^{Er}}_{L^{2}(\Omega \setminus \Omega^{*}, \xi_{\eps})} \right )  \to 0,
\end{align*}
as $\eps \to 0$.  This implies that 
\begin{align}\label{bulkloworder}
\int_{\Omega} \xi_{\eps} a^{Ea} u^{\eps} \varphi^{Er} \dx \to \int_{\Omega^{*}} a \tilde{u} \varphi \dx.
\end{align}
A similar calculation to (\ref{bulklowerorder:intermediatecal}) gives
\begin{align*}
\int_{\Omega^{*}} (1-\xi_{\eps}) \abs{\nabla u^{\eps}} \abs{\nabla \varphi} \dx \leq C_{f,g} \left ( \int_{\Omega^{*}} (1-\xi_{\eps}) \abs{\nabla \varphi}^{2} \dx \right )^{\frac{1}{2}} \to 0 \text{ as } \eps \to 0.
\end{align*}
Meanwhile, by Cauchy--Schwarz inequality, (\ref{apriori:coupledbulk:bulkterms}), and Lemma \ref{lem:XiDCT}, 
\begin{align*}
\abs{\int_{\Omega \setminus \Omega^{*}} \xi_{\eps} \nabla u^{\eps} \cdot (\mathcal{A}^{Ea})^{T} \nabla \varphi^{Er} \dx} & \leq \norm{\nabla u^{\eps}}_{L^{2}(\Omega \setminus \Omega^{*}, \xi_{\eps})} C(\mathcal{A}^{Ea}) \norm{\nabla \varphi^{Er}}_{L^{2}(\Omega \setminus \Omega^{*}, \xi_{\eps})} \\
& \leq C_{f,g} C(\mathcal{A}^{Ea}) \norm{\nabla \varphi^{Er}}_{L^{2}(\Omega \setminus \Omega^{*}, \xi_{\eps})} \to 0 \text{ as } \eps \to 0.
\end{align*}
Then, together with the weak convergence of $\nabla u^{\eps} \mid_{\Omega^{*}}$ to $\nabla \tilde{u}$ in $L^{2}(\Omega^{*})$, we obtain
\begin{equation}\label{bulkgradient}
\begin{aligned}
& \abs{\int_{\Omega} \xi_{\eps} \mathcal{A}^{Ea} \nabla u^{\eps} \cdot \nabla \varphi^{Er} \dx - \int_{\Omega^{*}} \mathcal{A} \nabla \tilde{u} \cdot \nabla \varphi \dx } \\
\leq & \; \abs{\int_{\Omega^{*}} \nabla (u^{\eps} - \tilde{u}) \cdot \mathcal{A}^{T} \nabla \varphi \dx} + \norm{\mathcal{A}}_{L^{\infty}(\Omega^{*})} \int_{\Omega^{*}} (1-\xi_{\eps}) \abs{\nabla u^{\eps}} \abs{\nabla \varphi} \dx \\
+ & \; \abs{\int_{\Omega \setminus \Omega^{*}} \xi_{\eps} \nabla u^{\eps} \cdot (\mathcal{A}^{Ea})^{T} \nabla \varphi^{Er} \dx} \to 0 \text{ as } \eps \to 0.
\end{aligned}
\end{equation}

Next, by (\ref{convergence:constext}), we have
\begin{equation}\label{surfaceRHS}
\begin{aligned}
\int_{\Omega} \delta_{\eps} g^{Ec} \psi^{Ec} \dx & = \frac{1}{2} \int_{\Omega} \delta_{\eps} \left ( \abs{g^{Ec} + \psi^{Ec}}^{2} - \abs{g^{Ec}}^{2} - \abs{\psi^{Ec}}^{2} \right ) \dx \\
& \to \frac{1}{2} \int_{\Gamma} (\abs{g + \psi}^{2} - \abs{g}^{2} - \abs{\psi}^{2} ) \dHaus = \int_{\Gamma} g \psi \dHaus \text{ as } \eps \to 0.
\end{aligned}
\end{equation}
By (\ref{apriori:coupledbulk}), we have
\begin{align*}
\norm{v^{\eps}}_{1,\delta_{\eps}}^{2} \leq \norm{(u^{\eps},v^{\eps})}_{\mathcal{X}_{\eps}}^{2} \leq C_{f,g},
\end{align*}
and so the conditions of Lemma \ref{lem:H1deltabddconv} are satisfied.  Thus, there exists a function $\tilde{v} \in H^{1}(\Gamma)$ such that
\begin{equation}\label{surfacelowerorderandgradient}
\begin{aligned}
\int_{\Omega} \delta_{\eps} b^{Ec} v^{\eps} \psi^{Ec} \dx & \to \int_{\Gamma} b\tilde{v} \psi \dHaus, \\
\int_{\Omega} \delta_{\eps} \mathcal{B}^{Ec} \nabla v^{\eps} \cdot \nabla \psi^{Ec} \dx & \to \int_{\Gamma} \mathcal{B} \surf \tilde{v} \cdot \surf \psi \dHaus,
\end{aligned}
\end{equation}
as $\eps \to 0$.  Moreover, using the fact that $\psi^{Ec} - \varphi^{Er} \in H^{1}(\Omega)$, we obtain from $(\ref{H1deltabdd:convstatement})_{1}$ with $b^{Ec} = b = 1$, 
\begin{align}\label{surfacecrossterm}
\int_{\Omega} \delta_{\eps} v^{\eps}(\psi^{Ec} - \varphi^{Er}) \dx \to \int_{\Gamma} \tilde{v} (\psi - \gamma_{0}(\varphi) ) \dHaus \text{ as } \eps \to 0.
\end{align}

In summary, from (\ref{uepstildeutrace}), (\ref{bulkRHS}), (\ref{bulkloworder}), (\ref{bulkgradient}), (\ref{surfaceRHS}), (\ref{surfacelowerorderandgradient}), and (\ref{surfacecrossterm}), we see that passing to the limit $\eps \to 0$ in (\ref{CDDweakconv}) leads to
\begin{align*}
& \; \int_{\Omega^{*}} \mathcal{A} \nabla \tilde{u} \cdot \nabla \varphi + a \tilde{u} \varphi \dx + \int_{\Gamma} \mathcal{B} \surf \tilde{v} \cdot \surf \psi + b \tilde{v} \psi \dHaus \\
+ & \; \int_{\Gamma} K (\tilde{v} - \gamma_{0}(\tilde{u}))(\psi - \gamma_{0}(\varphi)) \dHaus - \int_{\Omega^{*}} f \varphi \dx - \int_{\Gamma} \beta g \psi \dHaus = 0.
\end{align*}
In particular $(\tilde{u}, \tilde{v})$ is a weak solution of (CSI).  But by the well-posedness of (CSI), we must have that $\tilde{u} = u$ and $\tilde{v} = v$, and that the whole sequence converge as $\eps \to 0$.

We remark on how to prove Theorem \ref{thm:weakconv} for the other problems:
\begin{itemize}
\item For (SDD), it follows directly from (\ref{surfaceRHS}) and (\ref{surfacelowerorderandgradient}).
\item For (RDD), it follows from (\ref{bulkRHS}), (\ref{bulkloworder}), (\ref{bulkgradient}), (\ref{uepstildeutrace}) with $\psi^{Ec} - \varphi^{Er}$ replaced with $\varphi^{Er}$, and using Corollary \ref{cor:deltaL2GammaH1} gives
\begin{align*}
\int_{\Omega} \delta_{\eps} g^{Ec} \varphi^{Er} \dx \to \int_{\Gamma} g \gamma_{0}(\varphi) \dHaus \text{ as } \eps \to 0.
\end{align*}
\item For (NDDH), it follows from (\ref{bulkloworder}), (\ref{bulkgradient}) and (\ref{bulkRHS}) with $f^{Ea}$ replaced with $f^{Ea} + \nabla \cdot (\mathcal{A}^{Er} \nabla h^{Er}) - a^{Ea} h^{Er}$.
\item For (DDDH), we consider an arbitrary $\phi \in H^{1}_{0}(\Omega^{*})$ and extend it to $\phi^{Er} \in H^{1}_{\Gamma,0}(\Omega)$ by the reflection method.  Then, by Lemma \ref{lem:H1zeroimpliesL2deltaeps-1}, $\phi^{Er} \in \mathcal{W}_{\eps}$ is a valid test function.  From (\ref{apriori:Dirichlet}) and Corollary \ref{cor:CompactnessWeps}, we obtain, along a subsequence,
\begin{align*}
w^{\eps}_{D} \mid_{\Omega^{*}} & \rightharpoonup \tilde{w}_{D} \in H^{1}(\Omega^{*}) \text{ as } \eps \to 0,
\end{align*}
for some $\tilde{w}_{D} \in H^{1}_{0}(\Omega^{*})$.  Moreover, by Cauchy--Schwarz inequality, (\ref{apriori:Dirichlet}), and (\ref{H1zeroL2deltaeps-1conv}) applied to $\phi^{Er}$, we see that
\begin{align*}
\abs{ \int_{\Omega} \frac{1}{\eps} \delta_{\eps} w^{\eps}_{D} \phi^{Er} \dx } & \leq \norm{w^{\eps}_{D}}_{0,\tfrac{1}{\eps} \delta_{\eps}} \norm{\phi^{Er}}_{0,\frac{1}{\eps} \delta_{\eps}} \\
 & \leq C(\norm{f^{Ea}}_{L^{2}(\Omega)}, \norm{\tilde{g}^{Er}}_{H^{1}(\Omega)}) \norm{\phi^{Er}}_{0,\frac{1}{\eps} \delta_{\eps}} \to 0 \text{ as } \eps \to 0.
\end{align*}
Then, together with (\ref{bulkRHS}), (\ref{bulkloworder}), (\ref{bulkgradient}), and the above, we see that $\tilde{w}_{D}$ satisfies the weak formulation for $(\mathrm{DSIH})$.
\end{itemize}

\subsection{Strong convergence}
We can choose $\varphi = u \in H^{1}(\Omega)$ and $\psi = v \in H^{1}(\Gamma)$, where $(u,v) \in H^{1}(\Omega^{*}) \times H^{1}(\Gamma)$ denote the unique weak solution to (CSI).  Then, the extension $u^{Er} \in H^{1}(\Omega)$ by the extension theorem \cite[Theorem 1, p. 254]{book:Evans}, and $\psi^{Ec} \in H^{1}(\Omega)$ as in Corollary \ref{cor:constextH1} are admissible test functions in the weak formulation of (CDD).  Due to the coercivity of the bilinear form (\ref{defn:aCDD}) (see (\ref{CDDcoercivity})) and that $\xi_{\eps} \geq \frac{1}{2}$ in $\Omega^{*}$, we obtain
\begin{equation}\label{strongconv:computation}
\begin{aligned}
& \; l_{CDD}((u^{\eps} - u^{Er}, v^{\eps} - v^{Ec})) - a_{CDD}((u^{Er}, v^{Ec}), (u^{\eps} - u^{Er}, v^{\eps} - v^{Ec})) \\
= & \; a_{CDD} ((u^{\eps} - u^{Er}, v^{\eps} - v^{Ec}), (u^{\eps} - u^{Er}, v^{\eps} - v^{Ec})) \\
\geq & \; C(\theta_{i},K) \left ( \norm{u^{\eps} - u^{Er}}_{1, \xi_{\eps}}^{2} + \norm{u^{\eps} - u^{Er}}_{0,\delta_{\eps}}^{2} + \norm{v^{\eps} - v^{Ec}}_{1,\delta_{\eps}}^{2} \right ) \\
\geq & \; C(\theta_{i},K) \left ( \norm{u^{\eps} \mid_{\Omega^{*}} - u}_{H^{1}(\Omega^{*})}^{2} + \norm{u^{\eps} - u^{Er}}_{0,\delta_{\eps}}^{2} + \norm{v^{\eps} - v^{Ec}}_{1,\delta_{\eps}}^{2} \right ).
\end{aligned}
\end{equation}
We claim that the left hand side converges to zero as $\eps \to 0$.  Indeed, by (\ref{bulkRHS}) with $\varphi^{Er} = u^{Er}$, (\ref{bulkloworder}) with $a^{Ea} = a = 1$, and $\varphi^{Er}$ replaced with $f^{Ea}$, we have
\begin{align}\label{strongconv:f}
\int_{\Omega} \xi_{\eps} f^{Ea} (u^{\eps} - u^{Er} ) \dx \to \int_{\Omega^{*}} f (u - u) \dx = 0 \text{ as } \eps \to 0.
\end{align}
Similarly, by (\ref{surfaceRHS}) with $\psi^{Ec} = v^{Ec}$ and (\ref{H1deltabdd:vepsg}), we have
\begin{align}\label{strongconv:g}
\int_{\Omega} \delta_{\eps} g^{Ec} (v^{\eps} - v^{Ec}) \dx \to \int_{\Gamma} g(v - v) \dHaus = 0 \text{ as } \eps \to 0.
\end{align}
Thus, 
\begin{align*}
l_{CDD}((u^{\eps} - u^{Er}, v^{\eps} - v^{Er})) = \int_{\Omega} \xi_{\eps} f^{Ea}(u^{\eps} - u^{Er}) + \delta_{\eps} \beta g^{Ec}(v^{\eps} - v^{Ec}) \dx \to 0 \text{ as } \eps \to 0.
\end{align*}
Meanwhile, using (\ref{bulkRHS}) with $f^{Ea} \varphi^{Er}$ replaced with $a^{Ea} \abs{u^{Er}}^{2} $ and with $\mathcal{A}^{Ea} \nabla u^{Er} \cdot \nabla u^{Er}$, (\ref{bulkloworder}) with $\varphi^{Er} = u^{Er}$ and (\ref{bulkgradient}) with $\varphi^{Er} = u^{Er}$, we have
\begin{align}\label{strongconv:bulk}
\int_{\Omega} \xi_{\eps} (\mathcal{A}^{Ea} \nabla u^{Er} \cdot \nabla (u^{\eps} - u^{Er}) + a^{Ea} u^{Er} (u^{\eps} - u^{Er})) \dx \to 0 \text{ as } \eps \to 0.
\end{align}
Next, by (\ref{convergence:constextbB}) with $f^{Ec} = v^{Ec}$, (\ref{H1deltabdd:convstatement}) with $\varphi = \psi^{Ec} = v^{Ec}$, we have
\begin{align}\label{strongconvsurface}
\int_{\Omega} \delta_{\eps} (\mathcal{B}^{Ec} \nabla v^{Ec} \cdot \nabla (v^{\eps} - v^{Ec}) + b^{Ec} v^{Ec} (v^{\eps} - v^{Ec})) \dx \to 0 \text{ as } \eps \to 0.
\end{align}
Finally, by (\ref{convergetodirac}) applied to $f =  u^{Er}(2 v^{Ec} - u^{Er}) \in W^{1,1}(\Omega)$, (\ref{convergencetotrace}) with $\varphi = v^{Ec} - u^{Er} \in H^{1}(\Omega)$, $(\ref{convergence:constextbB})_{1}$ with $b^{Ec} = 1$ and $f^{Ec} = v^{Ec}$, (\ref{H1deltabdd:vepsg}) with $g^{Ec} = v^{Ec}$, and $(\ref{H1deltabdd:convstatement})_{1}$ with $b^{Ec} = 1$ and $\varphi = u^{Er}$, we obtain
\begin{align*}
& \; \int_{\Omega} \delta_{\eps} (v^{Ec} - u^{Er})(v^{\eps} - u^{\eps} - v^{Ec} + u^{Er}) \dx \\
= & \; \int_{\Omega}  \delta_{\eps} \left (u^{Er} (2v^{Ec} - u^{Er}) - u^{\eps}(v^{Ec} - u^{Er}) - \abs{v^{Ec}}^{2} + v^{\eps} v^{Ec} - v^{\eps} u^{Er} \right ) \\
\to & \; \int_{\Gamma} \gamma_{0}(u)(2v - \gamma_{0}(u)) - \gamma_{0}(u)(v - \gamma_{0}(u)) - \abs{v}^{2} + \abs{v}^{2} - v \gamma_{0}(u) \dHaus = 0 \text{ as } \eps \to 0.
\end{align*}

Thus, 
\begin{align*}
& \; a_{CDD}((u^{Er}, v^{Ec}), (u^{\eps} - u^{Er}, v^{\eps} - v^{Ec})) \\
= & \; \int_{\Omega} \xi_{\eps}  (\mathcal{A}^{Ea} \nabla u^{Er} \cdot \nabla (u^{\eps} - u^{Er}) + a^{Ea} u^{Er} (u^{\eps} - u^{Er})) \dx \\
+ & \; \int_{\Omega} \delta_{\eps} (\mathcal{B}^{Ec} \nabla v^{Ec} \cdot \nabla (v^{\eps} - v^{Ec}) + b^{Ec} v^{Ec} (v^{\eps} - v^{Ec})) \dx \\
+ & \; K \int_{\Omega} \delta_{\eps} (v^{Ec} - u^{Er})(v^{\eps} - u^{\eps} - v^{Ec} + u^{Er}) \dx \to 0 \text{ as } \eps \to 0.
\end{align*}
From (\ref{strongconv:computation}), this implies that
\begin{align}\label{bulkandsurfacenormconvergence}
 \norm{u^{\eps} \mid_{\Omega^{*}} - u}_{H^{1}(\Omega^{*})}^{2} + \norm{u^{\eps} - u^{Er}}_{0,\delta_{\eps}}^{2} + \norm{v^{\eps} - v^{Ec}}_{1,\delta_{\eps}}^{2}  \to 0 \text{ as } \eps \to 0,
\end{align}
which is the first assertion of Theorem \ref{thm:strongconv} for (CDD).

Furthermore, by the triangle inequality, (\ref{bulkandsurfacenormconvergence}), and (\ref{convergence:constext}) with $f^{Ec} = v^{Ec}$, we obtain
\begin{align*}
\abs{ \norm{v^{\eps}}_{1,\delta_{\eps}} - \norm{v}_{H^{1}(\Gamma)} } & \leq \abs{ \norm{v^{\eps}}_{1,\delta_{\eps}} - \norm{v^{Ec}}_{1,\delta_{\eps}}} + \abs{\norm{v^{Ec}}_{1,\delta_{\eps}} - \norm{v}_{H^{1}(\Gamma)}} \\
& \leq \norm{v^{\eps} - v^{Ec}}_{1,\delta_{\eps}} + \abs{\norm{v^{Ec}}_{1,\delta_{\eps}} - \norm{v}_{H^{1}(\Gamma)}}  \to 0 \text{ as } \eps \to 0.
\end{align*}
Hence, we obtain the normal convergence
\begin{align*}
\norm{v^{\eps}}_{1,\delta_{\eps}} \to \norm{v}_{H^{1}(\Gamma)} \text{ as } \eps \to 0.
\end{align*}
Similarly, by (\ref{convergetodirac}) with $f = \abs{u^{Er}}^{2}$, we have $\norm{u^{Er}}_{0,\delta_{\eps}} \to \norm{\gamma_{0}(u)}_{L^{2}(\Gamma)}$ as $\eps \to 0$.  Together with the triangle inequality and (\ref{bulkandsurfacenormconvergence}), we have
\begin{align*}
\norm{u^{\eps}}_{0,\delta_{\eps}} \to \norm{\gamma_{0}(u)}_{L^{2}(\Gamma)} \text{ as } \eps \to 0.
\end{align*}

We remark on how to prove Theorem \ref{thm:strongconv} for the other problems:  Recall that $v_{S}^{\eps} \in H^{1}(\Omega, \delta_{\eps})$, $u_{R}^{\eps} \in \mathcal{V}_{\eps}$, $w_{N}^{\eps} \in H^{1}(\Omega, \xi_{\eps})$ and $w_{D}^{\eps} \in \mathcal{W}_{\eps}$ denote the unique weak solutions to (SDD), (RDD), (NDDH) and (DDDH), respectively, and $v_{S} \in H^{1}(\Gamma)$, $u_{R}, w_{N} \in H^{1}(\Omega^{*})$ and $w_{D} \in H^{1}_{0}(\Omega^{*})$ denote the corresponding unique weak solutions to (SSI), (RSI), (NSIH) and (DSIH).  We consider the extensions $v_{S}^{Ec}$ as constructed in Corollary \ref{cor:constextH1}, $u_{R}^{Er}, w_{N}^{Er}$ by the method of reflection and $w_{D}^{E0} \in H^{1}_{\Gamma,0}(\Omega)$ obtained from extending $w_{D}$ by zero outside $\Omega^{*}$.  Then, by Lemma \ref{lem:constextH1delta}, Lemma \ref{lem:H1impliesL2delta}, Lemma \ref{lem:H1zeroimpliesL2deltaeps-1}, and that $\xi_{\eps} \leq 1$ in $\Omega$, we have that $v_{S}^{Ec} \in H^{1}(\Omega, \delta_{\eps})$, $u_{R}^{Er}, w_{N}^{Er} \in \mathcal{V}_{\eps}$ and $w_{D}^{E0} \in \mathcal{W}_{\eps}$ are valid test functions for the weak formulation of (SDD), (RDD), (NDDH) and (DDDH).
\begin{itemize}
\item For (SDD), it follows from the above analysis by setting $v^{\eps} = v_{S}^{\eps}$, $v^{Ec} = v_{S}^{Ec}$, $\beta = 1$, $K = 0$, $\mathcal{A}^{Ea} = \bm{0}^{n \times n}$, $a^{Ea} = 0$ and $f^{Ea} = 0$.
\item For (RDD), it follows from setting $u^{\eps} = u_{R}^{\eps}$, $u^{Er} = u_{R}^{Er}$, $\mathcal{B}^{Ec} = \bm{0}^{n \times n}$, $b^{Ec} = 0$, $v^{\eps} = v = 0$, $K = \beta$ and using (\ref{strongconv:f}) and (\ref{deltaL2GammaH1}) to see that
\begin{align*}
l_{RDD}(u_{R}^{\eps} - u_{R}^{Er}) := \int_{\Omega} \xi_{\eps} f^{Ea}(u_{R}^{\eps} - u_{R}^{Er}) + \beta \delta_{\eps} g^{Ec}(u_{R}^{\eps} - u_{R}^{Er}) \dx \to 0 \text{ as } \eps \to 0.
\end{align*}
\item For (NDDH), it follows from setting $u^{\eps} = w_{N}^{\eps}$, $u^{Er} = w_{N}^{Er}$, $\mathcal{B}^{Ec} = \bm{0}^{n \times n}$, $b^{Ec} = 0$, $K = 0$, $g^{Ec} = 0$ and replacing $f^{Ea}$ with $f^{Ea} + \nabla \cdot (\mathcal{A}^{Er} \nabla h^{Er}) - a^{Ea} h^{Er}$.
\item For (DDDH), it follows from setting $u^{\eps} = w_{D}^{\eps}$, $u^{Er} = w_{D}^{E0}$, $\mathcal{B}^{Ec} = \bm{0}^{n \times n}$, $b^{Ec} = 0$, $K = \frac{1}{\eps}$, $g^{Ec} = 0$, and we use (\ref{strongconv:f}) and (\ref{strongconv:bulk}) to deduce
\begin{align*}
& \; l_{DDDH}(w_{D}^{\eps} - w_{D}^{Er}) \\
:= & \; \int_{\Omega} \xi_{\eps} (f^{Ea} (w_{D}^{\eps} - w_{D}^{Er}) - \mathcal{A}^{Ea} \nabla \tilde{g}^{Er} \cdot \nabla (w_{D}^{\eps} - w_{D}^{Er}))\dx\\
 - & \; \int_{\Omega} \xi_{\eps} a^{Ea} \tilde{g}^{Er} (w_{D}^{\eps} - w_{D}^{Er}) \dx \to 0 \text{ as } \eps \to 0,
\end{align*}
and use Cauchy--Schwarz inequality, (\ref{apriori:Dirichlet}), and (\ref{H1zeroL2deltaeps-1conv}) to deduce
\begin{align*}
& \; \abs{ \int_{\Omega} \frac{1}{\eps} \delta_{\eps} w_{D}^{E0} (w_{D}^{\eps} - w_{D}^{E0}) \dx} \leq \norm{w_{D}^{E0}}_{0,\frac{1}{\eps} \delta_{\eps}} \left ( \norm{w_{D}^{\eps}}_{0,\frac{1}{\eps} \delta_{\eps}} + \norm{w_{D}^{E0}}_{0,\frac{1}{\eps} \delta_{\eps}} \right ) \\
\leq & \; C\norm{w_{D}^{E0}}_{0,\frac{1}{\eps} \delta_{\eps}} \left ( \norm{f^{Ea}}_{L^{2}(\Omega)} + \norm{\tilde{g}^{Er}}_{H^{1}(\Omega)} + \norm{w_{D}^{E0}}_{H^{1}(\Omega)} \right ) \to 0 \text{ as } \eps \to 0.
\end{align*}
\end{itemize}

\section{Discussion}\label{sec:Discussion}
\subsection{Alternate approximations for the Dirichlet problem}\label{sec:AltDirichlet}
In the derivation of $(\mathrm{DDDH})$, we substituted $\beta= \frac{1}{\eps}$.  However, an equally valid diffuse domain approximation to $(\mathrm{DSIH})$ is $(\mathrm{DDDH})_{m}$:
\begin{equation*}
\begin{alignedat}{2}
- \nabla \cdot (\xi_{\eps} \mathcal{A}^{E} \nabla w^{\eps}) +  \xi_{\eps} a^{E} w^{\eps} + \frac{1}{\eps^{m}} \delta_{\eps} w^{\eps} & = \xi_{\eps} f^{E} + \nabla \cdot (\xi_{\eps} \mathcal{A}^{E} \nabla \tilde{g}^{E}) - \xi_{\eps} a^{E} \tilde{g}^{E} && \text{ in } \Omega, \\
\mathcal{A}^{E} \nabla w^{\eps} \cdot \nu &= 0 && \text{ on } \pd \Omega,
\end{alignedat}
\end{equation*}
where $m > 0$.  We see that $(\mathrm{DDDH})$ is the case $m = 1$.  Let us introduce the weighted Sobolev space
\begin{align*}
\mathcal{W}_{\eps}^{m} & := \{ f : \Omega \to \R \text{ measurable s.t. } f \mid_{\Omega_{\eps}} \in W^{1,2}(\Omega_{\eps}, \xi_{\eps}) \} \text{ with } \\
\inner{f}{g}_{\mathcal{W}_{\eps}^{m}} & := \int_{\Omega} \xi_{\eps} (fg + \nabla f \cdot \nabla g) + \frac{1}{\eps^{m}} \delta_{\eps} fg \dx.
\end{align*}
One can use the Lax--Milgram theorem to show that $(\mathrm{DDDH})_{m}$ is well-posed in $\mathcal{W}_{\eps}^{m}$ with unique weak solution $w_{D,m}^{\eps}$.

For $g \in C^{1}(\overline{\Omega})$ such that $g \mid_{\Gamma} = 0$, we see that for any $m \in (0,2)$, by (\ref{property:delta:ptwiseconv}) with $q = 1+m$, and (\ref{convergetodirac:zerotrace}) with $q = \infty$,
\begin{align*}
\int_{\Omega} \frac{1}{\eps^{m}} \delta_{\eps} \abs{g}^{2} \dx \to 0 \text{ as } \eps \to 0.
\end{align*}
However, due to (\ref{convergetodirac:zerotrace}) with $q = 2$, for any $f \in H^{1}_{\Gamma,0}(\Omega)$, we can only show that there exists a constant $C > 0$, independent of $f$ and $\eps \in (0,1]$, such that
\begin{align*}
\int_{\Omega} \frac{1}{\eps^{m}} \delta_{\eps} \abs{f}^{2} \dx \leq C \norm{f}_{H^{1}(\Omega)}^{2} \text{ for } m \in (0,1].
\end{align*}

Thus, for any $m \in (0,1]$, using the techniques presented here, we can show
\begin{align*}
\norm{w_{D,m}^{\eps} \mid_{\Omega^{*}} - w_{D}}_{H^{1}(\Omega^{*})} \to 0 \text{ as } \eps \to 0,
\end{align*}
where $w_{D}$ is the unique weak solution to (DSIH).  We point out that it is not known if the formal asymptotic methods used in \cite{article:AbelsGarckeGrun11, article:GarckeLamStinner14, article:LiLowengrubRatzVoigt09} can be applied to non-integer $m$.  However, this result may be beneficial for numerical computation, where high values of $m$ and small values of $\eps$ usually result in stiff computations, and thus are computationally expensive.

\subsection{Issue of uniform estimates for (NDD)}\label{sec:Neumann}
Let us define
\begin{align*}
a_{NDD}(u,\phi) := a_{B}^{\eps}(u,\phi), \quad l_{NDD}(\phi) := l_{B}^{\eps}(f^{Ea},\phi) + l_{S}^{\eps}(g^{Ec},\phi).
\end{align*}
Suppose we seek weak solutions in $H^{1}(\Omega,\xi_{\eps})$, then $a_{NDD}(\cdot,\cdot)$ is bounded and coercive uniformly in $\eps$.  But, by (\ref{XiDeltaRelation}), the boundedness of $l_{NDD}$ over $H^{1}(\Omega, \xi_{\eps})$ scales with $\frac{1}{\eps}$.  Thus, we cannot obtain uniform estimate similar to (\ref{apriori:Neumann}).  If we seek solutions in $\mathcal{V}_{\eps}$, then $l_{NDD}$ is bounded uniformly in $\eps$.  But $a_{NDD}$ is not coercive in $\mathcal{V}_{\eps}$ uniformly in $\eps$.  Hence, to obtain uniform estimates in $\eps$, we have to remove the term involving $\delta_{\eps}$, which amounts to transforming (NSI) to (NSIH) through Theorem \ref{thm:conormal}.  This, then motivates Assumption \ref{assump:NSI} and the corresponding diffuse domain approximation (NDDH) now has a weak solution in $H^{1}(\Omega, \xi_{\eps})$ with uniform estimates in $\eps$.

We point out that if a trace-type inequality holds:  
\begin{align*}
\int_{\Omega} \delta_{\eps} \abs{u}^{2} \dx \leq C \int_{\Omega} \xi_{\eps}(\abs{u}^{2} + \abs{\nabla u}^{2}) \dx,
\end{align*}
for some $C > 0$ independent of $\eps$, then the Hilbert spaces $\mathcal{V}_{\eps}$ and $H^{1}(\Omega, \xi_{\eps})$ are equivalent uniformly in $\eps$.  Moreover, (NDD) will be well-posed over $H^{1}(\Omega, \xi_{\eps}) = \mathcal{V}_{\eps}$, and Assumption \ref{assump:NSI} is not required.  However, stronger assumptions than Assumptions \ref{assump:Xi} and \ref{assump:delta} are required, see for instance \cite[Theorem 4.2]{preprint:BurgerElvetunSchlottbom14} or Section \ref{sec:Burger}.

\subsection{Regularity of $\Gamma$ for (CSI) and (SSI)}
In the proof of Lemma \ref{lem:H1deltabddconv}, we used that the components $g_{\eps}^{ij}$, $1 \leq i, j \leq n$, of the inverse of the metric tensor $\mathcal{G}_{\eps}^{-1}$ are $C^{1}$ functions.  Then, from the definition, we require that $\Gamma$ is of class $C^{3}$.  Since we apply Lemma \ref{lem:H1deltabddconv} only to (CDD) and (SDD), for the other problems, a $C^{2}$ regularity for $\Gamma$ is sufficient. 

We point out that using (\ref{ansatz}) and (\ref{Geps}), we can consider the following splitting in (\ref{surfgradientIBP}):
\begin{equation}\label{splitting}
\begin{aligned}
(\surfzloc \tilde{V}^{\eps})_{r} & = \sum_{j=1}^{n-1} g_{\eps}^{jr} \pd_{s_{j}} \tilde{V}^{\eps} \pd_{s_{r}} G_{\eps}\\
& = \sum_{j=1}^{n-1} g_{0}^{jr} \pd_{s_{j}} \tilde{V}^{\eps} \pd_{s_{r}} \alpha +  \eps z g_{0}^{jr} \pd_{s_{j}} \tilde{V}^{\eps} \pd_{s_{r}} \alpha + \mathcal{E}_{jr} \pd_{s_{j}} \tilde{V}^{\eps} \pd_{s_{r}} G_{\eps}.
\end{aligned}
\end{equation}

If we have some control over the components $(\pd_{s_{j}} \tilde{V}^{\eps})_{1 \leq j \leq n-1}$, then it is sufficient to apply integration by parts only on the first term.  This in turn implies that we can potentially drop the required regularity of $\Gamma$ from $C^{3}$ to $C^{2}$.  However, the hypothesis that $\norm{v^{\eps}}_{1,\delta_{\eps}}$ is bounded uniformly in $\eps$ seems to be not sufficient to give any control over the components $(\pd_{s_{j}} \tilde{V}^{\eps})_{1 \leq j \leq n-1}$.

\subsection{Comparison with the results in \cite{article:ElliottStinner09}}
In a time-independent setting, by choosing $q = 1$, $\rho_{\eps}(s,z) = \overline{\rho}(z) = \delta(z)$, the function spaces $B$ and $X$ employed in \cite{article:ElliottStinner09} are equivalent to $L^{2}(\mathrm{Tub}^{\eps}(\Gamma), \delta_{\eps})$ and $H^{1}(\mathrm{Tub}^{\eps}(\Gamma), \delta_{\eps})$, respectively.  Moreover, by comparing with the notation and results in Section \ref{sec:ScaledNbd}, the results of \cite{article:ElliottStinner09} in our notation are
\begin{align*}
c_{\eps}(s,z) \rightarrow c(s) \in B \Longleftrightarrow \norm{c_{\eps} - c}_{L^{2}(\mathrm{Tub}^{\eps}(\Gamma),\delta_{\eps})} \to 0,
\end{align*}
and 
\begin{align*}
c_{\eps}(s,z) \rightharpoonup c(s) \in X \Longleftrightarrow \int_{\mathrm{Tub}^{\eps}(\Gamma)} \delta_{\eps}(c_{\eps} - c)\psi \dx \to 0, \, \int_{\mathrm{Tub}^{\eps}(\Gamma)} \delta_{\eps} \nabla (c_{\eps} - c) \cdot \nabla \psi \dx \to 0,
\end{align*}
for $\psi \in H^{1}(\mathrm{Tub}^{\eps}(\Gamma), \delta_{\eps})$.  Thus, in the limit $\eps \to 0$, $c_{\eps}$ converges weakly to a function $c$ defined only on the surface $\Gamma$.

We point out that, in the proof of Lemma \ref{lem:H1deltabddconv}, it is crucial that (\ref{pdzpsiezero}) holds, i.e., the test function $\psi$ is extended constantly in the normal direction.  Otherwise, for an arbitrary test function $\lambda \in H^{1}(\Omega, \delta_{\eps})$ with representation $\Lambda_{\eps}$ in the $(p,z)$ coordinate system, a similar calculation to (\ref{vepsbddH1delta}) yields that
\begin{align*}
\chi_{(-\eta_{\eps,k}, \eta_{\eps,k})}(z) \frac{1}{\eps} \pd_{z} \Lambda_{\eps} \text{ is bounded in } L^{2}(\Gamma \times \R, \delta) \text{ for all } \eps \in (0,1].
\end{align*}
When computing (\ref{transformBnablavepsnablapsi}), we have an additional term of the form
\begin{align}\label{extraintegralH1deltatest}
\int_{\eta_{\eps,k}}^{\eta_{\eps,k}} \int_{\Gamma} \delta(z) \frac{1}{\eps^{2}} \pd_{z}V^{\eps} \pd_{z} \Lambda_{\eps} \abs{1 + \eps z C_{H}(p) + C_{R}(p, \eps z)} \dHaus \dz,
\end{align}
and by (\ref{vepsbddH1delta}) and Cauchy--Schwarz inequality, we obtain only the uniform boundedness of (\ref{extraintegralH1deltatest}) in $\eps$, and we are unable to show that (\ref{extraintegralH1deltatest}) converges to zero as $\eps \to 0$. 

Thus, we are unable to enrich the space of test functions of the weak convergence in Theorem \ref{thm:weakconv} and Lemma \ref{lem:H1deltabddconv} from $C_{\Gamma}$ (\ref{defn:CGamma}) to $H^{1}(\Omega, \delta_{\eps})$.  However, in \cite[Proof of Theorem 4.1]{article:ElliottStinner09}, to show that the limit $c$ satisfies the correct weak formulation of an advection diffusion surface PDE, the authors considered a test function $\chi \in X$ with $\pd_{z} \chi = 0$, which is similar to what we do in Lemma \ref{lem:H1deltabddconv}.

\subsection{Comparison with the results in \cite{preprint:BurgerElvetunSchlottbom14}}\label{sec:Burger}
The framework developed in \cite{preprint:BurgerElvetunSchlottbom14} bears the most resemblance to the framework presented here.  In particular, weighted Sobolev spaces are employed for the well-posedness of the diffuse domain approximation, and the authors also studied tubular neighbourhoods and derived results similar to those in Section \ref{sec:TubNbd}.  In our notation, the object of study in \cite{preprint:BurgerElvetunSchlottbom14} is the Robin boundary value problem
\begin{equation}\label{Burger:Robin}
\begin{aligned}
- \nabla \cdot (A \nabla u ) + cu = f & \text{ in } \Omega^{*}, \\
A \nabla u \cdot \nu + bu = g & \text{ on } \Gamma.
\end{aligned}
\end{equation}
With the choice $\delta_{\eps} = \abs{\nabla \xi_{\eps}}$, the diffuse domain approximation in weak formulation is given as
\begin{align}\label{Burger:RDD}
\int_{\Omega_{\eps}} \xi_{\eps} (A \nabla u^{\eps} \cdot \nabla v + c u^{\eps} v) + \abs{\nabla \xi_{\eps}} b u^{\eps} v \dx = \int_{\Omega_{\eps}} \xi_{\eps} f v + \abs{\nabla \xi_{\eps}} g v \dx
\end{align}
for all $v \in H^{1}(\Omega_{\eps}, \xi_{\eps})$, where $\Omega_{\eps}$ is defined in (\ref{defn:OmegaepsGammaeps}).  Moreover, the authors can show a trace-type inequality:  There exists a $C > 0$ such that for all $v \in W^{1,p}(\Omega_{\eps},\xi_{\eps})$, $1 \leq p < \infty$, and $\eps$ sufficiently small,
\begin{align}\label{Burger:trace}
\int_{\Omega_{\eps}} \abs{v}^{p} \abs{\nabla \xi_{\eps}} \dx \leq C \int_{\Omega_{\eps}} \xi_{\eps} (\abs{v}^{p} + \abs{\nabla v}^{p}) \dx.
\end{align}
Additionally, it is assumed that $\xi_{\eps}$ satisfies the following behaviour:  There exists $\zeta_{1}, \zeta_{2} > 0$ and $\alpha > 0$ such that for all $x \in \Omega_{\eps}$, 
\begin{align}\label{Burger:assumpXi}
\zeta_{1} \left ( \frac{\dist(x, \pd \Omega_{\eps})}{\eps} \right )^{\alpha} \leq \xi_{\eps}(x) \leq \zeta_{2} \left ( \frac{\dist(x, \pd \Omega_{\eps})}{\eps} \right )^{\alpha}.
\end{align}
Then, the authors can show continuous and compact embeddings from $W^{1,p}(\Omega_{\eps}, \xi_{\eps})$ into $L^{q}(\Omega_{\eps}, \xi_{\eps})$, as well as a Poincar\'{e} type inequality: There exists a $C_{P} > 0$ such that for all $v \in W^{1,p}(\Omega_{\eps},\xi_{\eps})$, $1 \leq p < \infty$ and $\eps$ sufficiently small,
\begin{align}\label{Burger:Poincare}
\norm{v}_{L^{p}(\Omega_{\eps},\xi_{\eps})}^{p} \leq C_{P} \left ( \norm{\nabla v}_{L^{p}(\Omega_{\eps}, \xi_{\eps})}^{p} + \int_{\Omega_{\eps}} \abs{v}^{p} \abs{\nabla \xi_{\eps}} \dx \right ).
\end{align}
Using these results, together with the fact that $\Omega_{\eps} = \Omega^{*} \cup \Gamma_{\eps}$ and $\abs{\Gamma_{\eps}} \leq C \eps$, the authors deduced that
\begin{align*}
\abs{\int_{\Omega_{\eps}} \xi_{\eps} h \dx - \int_{\Omega^{*}} h \dx} \leq \begin{cases}
C \norm{h}_{L^{p}(\Gamma_{\eps}, \xi_{\eps})} \eps^{1 - \frac{1}{p}}, & \text{ for }  h \in L^{p}(\Omega_{\eps}, \xi_{\eps}), \; 1 < p \leq \infty, \\
C \norm{h}_{W^{1,p}(\Gamma_{\eps}, \xi_{\eps})} \eps^{2 - \frac{1}{p}}, & \text{ for } h \in W^{1,p}(\Omega_{\eps}, \xi_{\eps}), \; 1 \leq p \leq \infty, \\
\to 0 \text{ as } \eps \to 0, & \text{ for } h \in L^{1}(\Omega).
\end{cases}
\end{align*}
We note that the last case is similar to Lemma \ref{lem:XiDCT}, and both proofs use the dominated convergence theorem and the pointwise limit of $\xi_{\eps}$.

By the application of (\ref{Burger:trace}), (\ref{Burger:Poincare}) and the Lax--Milgram theorem, the diffuse domain approximation is well-posed with a unique weak solution $u^{\eps} \in H^{1}(\Omega_{\eps}, \xi_{\eps})$.  The chief results of \cite{preprint:BurgerElvetunSchlottbom14} is the following error estimate:
\begin{align}\label{Burger:ErrorEst}
\norm{u - u^{\eps}}_{W^{1,2}(\Omega_{\eps}, \xi_{\eps})} \leq C \eps^{\frac{1}{2} - \frac{1}{p}},
\end{align}
where $u \in W^{1,p}(\Omega^{*})$, $2 < p \leq 2^{*}_{\alpha}$ is the weak solution to (\ref{Burger:Robin}), $u^{\eps} \in H^{1}(\Omega_{\eps}, \xi_{\eps})$ is the unique weak solution to the diffuse domain approximation, $C$ is independent of $\eps$, $\alpha$ is the exponent in (\ref{Burger:assumpXi}), and
\begin{align*}
p^{*}_{\alpha} = \frac{p(n+\alpha)}{n + \alpha - p} \text{ if } p < n + \alpha, \quad p^{*}_{\alpha} = \infty \text{ if } p \geq n + \alpha.
\end{align*}
We point out the differences between our work and the work of \cite{preprint:BurgerElvetunSchlottbom14}:
\begin{itemize}
\item  \cite{preprint:BurgerElvetunSchlottbom14} utilises a double-obstacle regularisation and the fact that $\abs{\Gamma_{\eps}} \leq C \eps$ for some constant $C$ to deduce (\ref{Burger:ErrorEst}).  In our work, we cover both the double-well and double obstacle regularisations.
\item We do not have a rate of convergence for the error $\norm{u^{\eps}_{R} - u_{R}^{Er}}_{1,\xi_{\eps}}$, but \cite{preprint:BurgerElvetunSchlottbom14} requires $W^{1,p}$ regularity of the original problem to deduce (\ref{Burger:ErrorEst}).  This follows in the same spirit as (\ref{convergetodirac:power}), where for more regular functions, we are able to deduce a rate of convergence.
\item For the choice $\delta_{\eps} = \abs{\nabla \xi_{\eps}}$, the trace-type inequality (\ref{Burger:trace}) allows \cite{preprint:BurgerElvetunSchlottbom14} to handle non-homogeneous Neumann boundary problems.  Our setting is more general, where $\delta_{\eps}$ and $\xi_{\eps}$ can be unrelated, as long as  (\ref{property:Xidelta}) is satisfied.  But as discussed in Section \ref{sec:Neumann}, we require stronger assumptions on the data to compensate for the generality.
\item In \cite{preprint:BurgerElvetunSchlottbom14}, the non-homogeneous Dirichlet problem is handled with a penalization method in the same spirit as Lemma \ref{lem:DirichletfromRobin}.  The corresponding diffuse domain approximation is (\ref{Burger:RDD}) but with $\frac{1}{\eps^{m}} \abs{\nabla \xi_{\eps}}$ replacing $\abs{\nabla \xi_{\eps}}$ with $m > 0$.  The error estimate for the Dirichlet problem becomes
\begin{align*}
\norm{u_{D}^{\eps}  \mid_{\Omega^{*}} - u_{D}}_{H^{1}(\Omega^{*})} \leq C (\eps^{m} + \eps^{\frac{1}{2} - \frac{1}{p} - m}),
\end{align*}
where $u \in W^{1,p}(\Omega^{*})$, $2 \leq p \leq 2^{*}_{\alpha}$, is the unique solution to (DSI), and $u_{D}^{\eps} \in H^{1}(\Omega_{\eps}, \xi_{\eps})$ is the unique solution to the diffuse domain approximation.  An optimal choice is $m = \frac{1}{4} - \frac{1}{2p}$, leading to a rate of convergence of order $\mathcal{O}(\eps^{\frac{1}{4} - \frac{1}{2p}})$.  In contrast, we homogenise the problem and use Lemma \ref{lem:DirichletfromRobin} to derive a family of approximation $(\mathrm{DDDH})_{m}$, $m > 0$.  Although we do not have any rate of convergence, we showed strong convergence for any $m \in (0,1]$ (see Section \ref{sec:AltDirichlet}).
\item Lastly, in this work, we are also able to consider partial differential equations purely on the boundary $\Gamma$ or coupled with the bulk equation, whereas \cite{preprint:BurgerElvetunSchlottbom14} only focus on a bulk elliptic equation with Robin, Neumann and Dirichlet boundary conditions.
\end{itemize}


\appendix \section{Derivation of the distributional forms}
A key result we use to derive an equivalent distributional form is that of Alt \cite[\S 2.7 and Theorem 2.8]{article:Alt09} (see also \cite[Theorem 6.1]{article:GarckeLamStinner14}).  We shall give a time independent version:
\begin{thm}\label{thm:Alt}
Given an open set $\Omega \subset \R^{n}$ containing an open subset $\Omega^{*}$ such that $\Gamma = \pd \Omega^{*} \subset \Omega$.  For $x \in \Gamma$, let $\nu(x) \in (T_{x} \Gamma)^{\perp}$ be the external unit normal of $\Gamma$.  Denote by $\chi_{\Omega^{*}}$ and $\delta_{\Gamma}$ the following distributions:
\begin{align*}
\int_{\Omega} f \dd \chi_{\Omega^{*}} = \int_{\Omega^{*}} f \dx, \quad \int_{\Omega} f \dd \delta_{\Gamma} = \int_{\Gamma} f \dHaus.
\end{align*}
Then, for smooth functions $q_{j}^{b}, f^{b} : \overline{\Omega^{*}} \to \R$, $q_{j}^{s}, f^{s} : \Gamma \to \R$, $1 \leq j \leq n$, the distributional law,
\begin{align*}
\nabla \cdot (\chi_{\Omega^{*}} \bm{q}^{b} + \delta_{\Gamma} \bm{q}^{s}) = \chi_{\Omega^{*}} f^{b} + \delta_{\Gamma} f^{s} \text{ in } \mathcal{D}'(\Omega),
\end{align*}
is equivalent to the following:
\begin{alignat*}{2}
\nabla \cdot \bm{q}^{b} &= f^{b} && \text{ in } \Omega^{*}, \\
\bm{q}^{s}(p) \in T_{p} \Gamma, \quad \surf \cdot \bm{q}^{s} & = f^{s} + \bm{q}^{b} \cdot \nu && \text{ on } \Gamma, 
\end{alignat*}
where $\surf (\cdot)$ denotes the tangential gradient on $\Gamma$.
\end{thm}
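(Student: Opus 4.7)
}

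The plan is to unpack the distributional identity by testing against an arbitrary $\phi \in C_c^\infty(\Omega)$, apply the standard divergence theorem on $\Omega^*$ and the surface divergence theorem on the closed hypersurface $\Gamma$, and then separate bulk, tangential, and normal contributions by a careful choice of test functions. By definition of the distributional divergence, the left-hand side of the statement is
\begin{align*}
-\int_{\Omega^*} \bm{q}^b \cdot \nabla \phi \dx - \int_\Gamma \bm{q}^s \cdot \nabla \phi \dHaus
= \int_{\Omega^*} f^b \phi \dx + \int_\Gamma f^s \phi \dHaus
\end{align*}
for every $\phi \in C_c^\infty(\Omega)$. The divergence theorem on $\Omega^*$ (valid since $\Gamma \in C^2$ by Assumption \ref{assump:Gamma}) rewrites the first term as $\int_{\Omega^*} (\nabla \cdot \bm{q}^b)\phi \dx - \int_\Gamma (\bm{q}^b \cdot \nu)\phi \dHaus$, leading to the master identity
\begin{align*}
\int_{\Omega^*} (\nabla \cdot \bm{q}^b - f^b)\phi \dx
- \int_\Gamma \bm{q}^s \cdot \nabla\phi \dHaus
- \int_\Gamma (\bm{q}^b \cdot \nu + f^s)\phi \dHaus = 0.
\end{align*}

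First I would localise: restricting $\phi \in C_c^\infty(\Omega^*)$ kills the surface terms and yields $\nabla \cdot \bm{q}^b = f^b$ pointwise in $\Omega^*$. Returning to the general identity and decomposing $\nabla\phi = \surf \phi + (\nabla\phi \cdot \nu)\nu$ on $\Gamma$, I would extract the normal component
\begin{align*}
\int_\Gamma (\bm{q}^s \cdot \nu)(\nabla \phi \cdot \nu) \dHaus + \int_\Gamma \bm{q}^s \cdot \surf \phi \dHaus + \int_\Gamma (\bm{q}^b \cdot \nu + f^s)\phi \dHaus = 0.
\end{align*}
The key observation is that, using a tubular neighbourhood and the diffeomorphism $\Theta^\eta$ from \eqref{diffeo}, one can construct $\phi \in C_c^\infty(\Omega)$ with $\phi|_\Gamma = 0$ but with $\nabla\phi \cdot \nu$ equal to any prescribed smooth function on $\Gamma$ (e.g.\ $\phi(p + t\nu(p)) = t\,\psi(p)\chi(t)$ with $\chi$ a smooth cut-off). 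Testing with such $\phi$ collapses the identity to $\int_\Gamma (\bm{q}^s \cdot \nu)\psi \dHaus = 0$ for arbitrary $\psi$, forcing $\bm{q}^s(p) \in T_p\Gamma$ pointwise.

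With tangentiality in hand, the remaining identity reads
\begin{align*}
\int_\Gamma \bm{q}^s \cdot \surf \phi \dHaus + \int_\Gamma (\bm{q}^b \cdot \nu + f^s)\phi \dHaus = 0
\end{align*}
for every $\phi \in C_c^\infty(\Omega)$. Since $\Gamma = \partial \Omega^*$ is a compact $C^2$ hypersurface without boundary, the surface divergence theorem gives $\int_\Gamma \bm{q}^s \cdot \surf \phi \dHaus = -\int_\Gamma (\surf \cdot \bm{q}^s)\phi \dHaus$ (no boundary contribution arises). Because traces of $C_c^\infty(\Omega)$ functions are dense in, say, $C^1(\Gamma)$, the fundamental lemma of the calculus of variations on $\Gamma$ yields $\surf \cdot \bm{q}^s = f^s + \bm{q}^b \cdot \nu$ on $\Gamma$. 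The converse direction is immediate: starting from the three pointwise identities, one reverses the above integrations by parts (bulk divergence theorem plus surface divergence theorem, with tangentiality used to discard the normal component of $\nabla\phi$) to recover the distributional statement.

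The only delicate step is the extraction of tangentiality, where one needs test functions whose trace on $\Gamma$ vanishes while the normal derivative is free; this is handled by the tubular-neighbourhood construction above, using the $C^2$ regularity of $\Gamma$ guaranteed by Assumption \ref{assump:Gamma}. The surface integration by parts without boundary contribution is standard (see \cite[\S 2]{article:DziukElliott13}) once tangentiality is known.
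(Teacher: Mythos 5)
The paper does not actually prove Theorem~\ref{thm:Alt}; it cites the result from Alt (\S 2.7 and Theorem~2.8), with a time-independent specialisation, and also points to the version in Garcke, Lam, Stinner. Your derivation is correct and is the standard way to establish this equivalence: test against $\phi \in C_c^\infty(\Omega)$, localise in $\Omega^*$ to get the bulk PDE, then on $\Gamma$ split $\nabla\phi = \surf\phi + (\nabla\phi\cdot\nu)\nu$, extract tangentiality of $\bm{q}^s$ by testing with functions that vanish on $\Gamma$ but have a prescribed normal derivative (the tubular-neighbourhood construction $\phi(p+t\nu(p)) = t\,\psi(p)\chi(t)$ does exactly this), and finally use the closed-surface divergence theorem for tangential fields. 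Both directions of the equivalence are addressed. The one thing worth noting is that, although the theorem statement keeps its regularity hypotheses on $\Gamma$ implicit, your argument (correctly) imports the $C^2$ regularity of $\Gamma$ from Assumption~\ref{assump:Gamma}, which is what licenses the tubular-neighbourhood diffeomorphism~\eqref{diffeo}, the bulk divergence theorem on $\Omega^*$, and the surface divergence theorem on $\Gamma$.
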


To derive the equivalent distributional form for (CSI), we set
\begin{align*}
\bm{q}^{b} = - \mathcal{A} \nabla u, \quad \bm{q}^{s} = \bm{0}, \quad f^{b} = f - a u, \quad f^{s} = K(v - u).
\end{align*}
Then, by Theorem \ref{thm:Alt}, 
\begin{align*}
-\nabla \cdot (\chi_{\Omega^{*}} \mathcal{A} \nabla u) + \chi_{\Omega^{*}} a u = \chi_{\Omega^{*}}f + \delta_{\Gamma} K(v-u) \text{ in } \mathcal{D}'(\Omega)
\end{align*}
is the equivalent distributional form for
\begin{align*}
-\nabla \cdot (\mathcal{A} \nabla u) + au = f & \text{ in } \Omega^{*}, \\
\mathcal{A} \nabla u \cdot \nu = K(v-u) & \text{ on } \Gamma.
\end{align*}
Now, setting
\begin{align*}
\bm{q}^{b} = \bm{0}, \quad \bm{q}^{s} = - \mathcal{B} \nabla v^{e}, \quad f^{b} = 0, \quad f^{s} = \beta g - bv - K(v-u),
\end{align*}
where $v^{e}$ is the extension of $v$ constantly in the normal direction as outlined in Section \ref{sec:ConstExt} (see also (\ref{ConstExt:samegradients})).  Then,
\begin{align*}
-\nabla \cdot (\delta_{\Gamma} \mathcal{B} \nabla v^{e}) + \delta_{\Gamma} bv = \delta_{\Gamma} \beta g - \delta_{\Gamma} K(v-u) \text{ in } \mathcal{D}'(\Omega)
\end{align*}
is the equivalent distribution form for
\begin{align*}
-\surf \cdot (\mathcal{B} \surf v) + bv + K(v-u) = \beta g \text{ on } \Gamma.
\end{align*}
Hence, the equivalent distributional form for (CSI) is
\begin{equation}\label{CSIreform}
\begin{aligned}
-\nabla \cdot (\chi_{\Omega^{*}} \mathcal{A} \nabla u) + \chi_{\Omega^{*}} a u = \chi_{\Omega^{*}}f + \delta_{\Gamma} K(v-u) & \text{ in } \mathcal{D}'(\Omega), \\
-\nabla \cdot (\delta_{\Gamma} \mathcal{B} \nabla v^{e}) + \delta_{\Gamma} bv = \delta_{\Gamma} \beta g - \delta_{\Gamma} K(v-u) & \text{ in } \mathcal{D}'(\Omega).
\end{aligned}
\end{equation}

\section{Properties of the trace operator}
We use the following result from \cite[Theorem 1.5.1.2]{book:Grisvard} regarding the continuous right inverse of the trace operator for Sobolev functions (see also \cite[Theorem, p. 211]{book:Triebel} and \cite[Theorem 3.37]{book:McLean}):
\begin{thm}\label{thm:trace}
Let $k \geq 0$ be an integer and $\Omega^{*}$ is a bounded open subset of $\R^{n}$ with $C^{k,1}$ boundary $\Gamma$.  Assume that $s - \frac{1}{p} > 0$ is not an integer with $s \leq k+1$.  Then the trace operator $\gamma_{0} : W^{s,p}(\Omega^{*}) \to W^{s - \frac{1}{2},p}(\Gamma)$ has a continuous right inverse.
\end{thm}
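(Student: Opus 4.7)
Since this is the classical Gagliardo/Grisvard trace theorem, the natural approach is to reduce to the half-space case by localization and then construct an explicit extension operator there. I would first choose a finite atlas $\{(U_j, \Phi_j)\}$ of boundary charts in which $\Gamma \cap U_j$ is the graph of a $C^{k,1}$ function, together with a subordinate partition of unity $\{\chi_j\}$ on $\Gamma$. Given $g \in W^{s-1/p,p}(\Gamma)$, writing $g = \sum_j \chi_j g$ and transporting each piece through $\Phi_j$ yields boundary data $g_j \in W^{s-1/p,p}(\R^{n-1})$ with compact support, and the preservation of the Sobolev-Slobodeckij norms under a $C^{k,1}$ diffeomorphism is valid precisely in the range $s \leq k+1$ (where only up to $k$ Lipschitz derivatives of the transition maps are needed).

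The core of the argument is then to produce a continuous linear operator $E: W^{s-1/p,p}(\R^{n-1}) \to W^{s,p}(\R^n_+)$ satisfying $\gamma_0(Eg) = g$. A clean choice is a Poisson-type averaging extension
\begin{equation*}
(Eg)(x', x_n) := \int_{\R^{n-1}} g(x' - x_n y') \, \phi(y') \, \mathrm{d} y',
\end{equation*}
where $\phi \in C^\infty_c(\R^{n-1})$ with $\int \phi = 1$, possibly modified by a smooth cut-off in $x_n$ to give compact support. The trace identity $\gamma_0(Eg) = g$ is immediate from $x_n = 0$. Continuity of $E$ reduces, via tangential Fourier transform in $x'$, to pointwise multiplier estimates of symbols like $\hat\phi(x_n\xi')$ and then to Hardy-type inequalities that convert $W^{s,p}(\R^n_+)$-norms of $Eg$ into the fractional Slobodeckij seminorm
\begin{equation*}
[g]_{s-1/p,p}^p = \int_{\R^{n-1}} \int_{\R^{n-1}} \frac{|g(x)-g(y)|^p}{|x-y|^{n-1 + (s-1/p)p}} \, \mathrm{d} x \, \mathrm{d} y,
\end{equation*}
precisely when $s - 1/p$ is not an integer.

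For $s \leq 1$ the averaging construction above suffices directly. For the range $1 < s \leq k+1$, the main obstacle is handling the higher-order normal derivatives consistently: one must iteratively extend not just $g$ but also compatible normal derivative traces. The standard remedy is to decompose $g$ into its normal derivative traces of orders $0, 1, \dots, \lfloor s-1/p \rfloor$, extend each using the scalar construction applied to a shifted fractional order, and multiply by a power $x_n^j/j!$ so that the $j$-th normal derivative at $x_n=0$ reproduces the prescribed trace. Because $s-1/p$ is not an integer by hypothesis, the fractional orders stay away from the critical thresholds and no compatibility conditions between successive traces are required, which is exactly where this assumption is used.

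Finally I would assemble: the local extensions $E g_j$ on $\R^n_+$ are pushed back through $\Phi_j^{-1}$ to functions supported near $\Gamma \cap U_j$, and their sum, extended by zero into $\Omega^*$ away from the charts, yields an element of $W^{s,p}(\Omega^*)$ whose trace equals $g$. Linearity is evident from the construction, and continuity follows because each constituent step (localization, change of variables, half-space extension, summation) is a bounded linear operator between the appropriate Sobolev-Slobodeckij spaces. I expect the half-space higher-order construction to be the single most technical step, and the non-integrality of $s-1/p$ is essential to keep the Slobodeckij seminorms equivalent to the required norms throughout.
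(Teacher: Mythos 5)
The paper does not prove this theorem; it simply cites it from Grisvard \cite[Theorem 1.5.1.2]{book:Grisvard} (with alternative references to Triebel and McLean). Your proposal supplies an actual proof sketch, and the strategy you outline — localize with a $C^{k,1}$ atlas and a boundary partition of unity, transport to the half-space where the transition maps cost exactly $k$ Lipschitz derivatives (hence the restriction $s \leq k+1$), construct a Poisson-type averaging extension whose continuity is verified via tangential Fourier multipliers and Hardy inequalities, and assemble by summation — is precisely the argument underlying Grisvard's theorem. So this is the ``same'' approach in the sense of matching the cited source.

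Two remarks are worth flagging. First, you correctly and consistently use the trace space $W^{s-\frac{1}{p},p}(\Gamma)$; the paper's statement writes $W^{s-\frac{1}{2},p}(\Gamma)$, which is a typo (harmless for the paper's application since it is only invoked with $p=2$, in which case $\frac{1}{p}=\frac{1}{2}$). Second, your description of the higher-order step as ``decompose $g$ into its normal derivative traces'' is loosely worded: $g$ is a single function defined on $\Gamma$, and there are no normal derivative traces of $g$ to decompose. What Grisvard actually proves is that the full jet map $u \mapsto (\gamma_0 u, \gamma_1 u, \dots, \gamma_{\ell-1}u)$ with $\ell = \lceil s-\frac{1}{p} \rceil$ has a continuous right inverse; the right inverse of $\gamma_0$ alone is then obtained by lifting the tuple $(g,0,\dots,0)$, and this is where the polynomial factors $x_n^j/j!$ you mention enter — to make the $j$-th normal derivative of the lifted function vanish at $x_n=0$ for $j\geq 1$ while leaving the zeroth trace equal to $g$. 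Rephrased this way the argument closes; as written, the phrase ``decompose $g$'' suggests a non-existent operation on boundary data. The hypothesis that $s-\frac{1}{p}$ is not an integer is used exactly as you say: it keeps the fractional orders $s-j-\frac{1}{p}$ away from integers so that the Slobodeckij seminorms characterize the relevant spaces and no compatibility conditions among the $\gamma_j$ are required.
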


The derivative $\mathcal{A} \nabla u \cdot \nu$ on $\Gamma$ is defined by
\begin{align*}
\mathcal{A} \nabla u \cdot \nu := \sum_{j=1}^{n} \nu_{j} \gamma_{0}(\mathcal{A} \nabla u)_{j} \text{ on } \Gamma,
\end{align*}
where $\nu$ is the outward unit normal to $\Gamma$.  This object is well-defined for $u \in H^{2}(\Omega^{*})$ and $\mathcal{A} \in (C^{0}(\overline{\Omega^{*}}))^{n \times n}$.  The analogous result to Theorem \ref{thm:trace} for the derivative $\mathcal{A} \nabla u \cdot \nu$, is a consequence of the solvability and regularity of solutions to non-homogeneous Neumann problems (see \cite[Theorem 4.18 (ii)]{book:McLean} and \cite[Theorem 4]{article:Savare98}).  We also refer the reader to \cite[Theorem 1.6.1.3]{book:Grisvard} for a similar result with stronger assumptions on $\mathcal{A}$.

\begin{thm}\label{thm:conormal}
Let $k \geq 0$ be an integer and $\Omega^{*} \subset \R^{n}$ is a bounded open set, with boundary $\Gamma \in C^{k+1,1}$ and outward unit normal $\nu$.  Assume that the matrix $\mathcal{A}(x)$ is uniformly elliptic and each coefficient belongs to $C^{k,1}(\overline{\Omega^{*}})$.  Then, if $\tilde{f} \in H^{k}(\Omega^{*})$, $\tilde{g} \in H^{k+\frac{1}{2}}(\Gamma)$, the non-homogeneous Neumann problem (with $\lambda > 0$)
\begin{alignat*}{2}
-\nabla \cdot (\mathcal{A} \nabla u) + \lambda u & = \tilde{f} && \text{ in } \Omega^{*}, \\
\mathcal{A} \nabla u \cdot \nu & = \tilde{g} && \text{ on } \Gamma,
\end{alignat*}
admits a unique solution $u \in H^{k+2}(\Omega^{*})$ with
\begin{align*}
\norm{u}_{H^{k+2}(\Omega^{*})} \leq C(\norm{u}_{H^{1}(\Omega^{*})} + \norm{\tilde{f}}_{H^{k}(\Omega^{*})} + \norm{\tilde{g}}_{H^{k+\frac{1}{2}}(\Gamma)}).
\end{align*}
\end{thm}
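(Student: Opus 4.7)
The plan is to proceed in three stages: obtain a weak solution via Lax--Milgram, upgrade to $H^{2}$-regularity by Nirenberg's difference-quotient method, and then bootstrap to $H^{k+2}$ by induction on $k$. For the first stage, I would apply Lax--Milgram to the bilinear form $a(u,v) := \int_{\Omega^{*}}(\mathcal{A}\nabla u\cdot\nabla v + \lambda uv)\dx$ and the functional $L(v) := \int_{\Omega^{*}} \tilde{f}v \dx + \int_{\Gamma}\tilde{g}\,\gamma_{0}(v)\dHaus$ on $H^{1}(\Omega^{*})$: uniform ellipticity of $\mathcal{A}$ combined with $\lambda > 0$ gives coercivity, and the trace theorem bounds $L$, yielding a unique $u\in H^{1}(\Omega^{*})$ with $\norm{u}_{H^{1}(\Omega^{*})}\le C(\norm{\tilde{f}}_{L^{2}(\Omega^{*})} + \norm{\tilde{g}}_{H^{1/2}(\Gamma)})$.

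For $k=0$, I would establish $H^{2}$-regularity through a finite atlas of $C^{1,1}$ charts on $\overline{\Omega^{*}}$. Interior balls are handled by testing the weak equation with $-D_{h}^{-i}(\zeta^{2}D_{h}^{i}u)$ for a cut-off $\zeta$ and passing to uniform bounds in $h$ on the discrete gradients. For a boundary chart $\Phi\colon U\cap\overline{\Omega^{*}}\to\overline{B_{1}^{+}}$ flattening $\Gamma$ to $\{y_{n}=0\}$, the transformed equation keeps uniform ellipticity with Lipschitz coefficients, and the Neumann datum becomes a co-normal derivative prescribed on the flat face. Tangential difference quotients $D_{h}^{i}$ for $i<n$ stay inside $B_{1}^{+}$ and lead to uniform bounds on $\tau_{h}\nabla u$, hence $\partial_{y_{i}}u\in H^{1}$ for $i<n$; solving the equation pointwise for $a_{nn}\partial_{y_{n}}^{2} u$ (using $a_{nn}\ge\theta_{0}>0$) recovers the remaining normal derivative in $L^{2}$. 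Transporting back through $\Phi^{-1}$ and assembling over the atlas yields the $H^{2}$-estimate, with $\norm{u}_{H^{1}(\Omega^{*})}$ on the right absorbing commutator errors with $\zeta$, $\Phi$ and $\mathcal{A}$.

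For $k\ge 1$, I would argue by induction. Assuming the result at level $k-1$, differentiating the weak equation tangentially produces a problem of the same form for $w=\partial_{\mathrm{tan}}^{\alpha} u$ with $|\alpha|\le k$ and data in $H^{k-1}(\Omega^{*})\times H^{k-1/2}(\Gamma)$, the extra terms arising from commutators of tangential differentiations with the coefficients (requiring $\mathcal{A}\in C^{k,1}$) and with the parameterisation of $\Gamma$ (requiring $C^{k+1,1}$). The inductive hypothesis then puts all $k$ tangential derivatives of $u$ into $H^{2}$, and pure normal derivatives up to order $k+2$ are obtained by differentiating the equation $k$ times and solving algebraically for the highest normal derivative, again using uniform ellipticity of $a_{nn}$.

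The main obstacle is controlling the boundary term produced by the difference-quotient test function: after integration by parts, $-D_{h}^{-i}(\zeta^{2}D_{h}^{i}u)$ generates a trace contribution that must be matched against the non-homogeneous Neumann datum $\tilde{g}$. This matching is transparent only after flattening the boundary, and its persistence through $k$ tangential differentiations is exactly what forces $\Gamma\in C^{k+1,1}$ (so Jacobians and metric factors stay Lipschitz) and $\mathcal{A}\in C^{k,1}$ (so coefficient commutators stay bounded at each step of the induction); tracking these commutators with the right norms is where the bulk of the technical work lies.
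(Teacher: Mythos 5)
The paper does not prove Theorem \ref{thm:conormal}; it is stated in the appendix as a known regularity result and deferred to \cite[Theorem 4.18 (ii)]{book:McLean}, \cite[Theorem 4]{article:Savare98}, and (under stronger hypotheses on $\mathcal{A}$) \cite[Theorem 1.6.1.3]{book:Grisvard}. Your blind proposal --- Lax--Milgram for the weak solution, boundary flattening with Nirenberg difference quotients for the base case $k=0$, and tangential bootstrap for $k\geq 1$ --- is correct in outline and is the classical self-contained route. It is genuinely different from the cited sources: McLean reaches such estimates through boundary potentials and parametrices for strongly elliptic systems, and Savar\`{e} derives them within a real-interpolation framework for abstract variational problems, whereas your argument stays entirely inside elementary $L^2$-Sobolev theory. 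The trade-off is scope versus transparency: the cited machinery proves whole scales of results at once, while your direct argument makes visible exactly where each hypothesis ($\Gamma\in C^{k+1,1}$, $\mathcal{A}\in C^{k,1}$, $\tilde g\in H^{k+1/2}(\Gamma)$) enters.

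One point that you rightly flag as ``the main obstacle'' should be made precise if the sketch is expanded. After the discrete summation by parts, the boundary contribution from the test function $-D_h^{-i}(\zeta^2 D_h^i u)$ takes the form $\int_{\Gamma}(D_h^i\tilde g)\,\gamma_0(\zeta^2 D_h^i u)\dHaus$, and the correct uniform-in-$h$ bound does not come from an $L^2(\Gamma)$ estimate on $D_h^i\tilde g$ (which would force $\tilde g\in H^1(\Gamma)$); one instead pairs in the duality $H^{-1/2}(\Gamma)\times H^{1/2}(\Gamma)$, using that $\tilde g\in H^{1/2}(\Gamma)$ makes the tangential difference quotients $D_h^i\tilde g$ uniformly bounded in $H^{-1/2}(\Gamma)$ while the trace theorem controls $\gamma_0(\zeta^2 D_h^i u)$ in $H^{1/2}(\Gamma)$. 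Similarly, the step where you ``solve the equation pointwise for $a_{nn}\partial_{y_n}^2 u$'' requires first expanding the divergence form; it is precisely $\mathcal{A}\in C^{0,1}$ (and $C^{k,1}$ at level $k$) that keeps the resulting coefficient derivatives in $L^{\infty}$ so the lower-order terms land in the right Sobolev class. With these two mechanisms made explicit, your outline yields a complete proof.
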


We will use the following compactness property of the trace operator $\gamma_{0}(\cdot)$ from \cite[Theorem A 6.13, p. 257]{book:Alt} (see also \cite[Theorem 3.8.5, p. 167]{book:Demengel} or \cite[Theorem 6.2, p. 103]{book:Necas}):
\begin{thm}\label{thm:compacttrace}
Let $\Omega \subset \R^{n}$ be an open bounded domain with Lipschitz boundary, and $1 \leq p < \infty$.  For $k \in \N$, let $u_{k},u \in W^{1,p}(\Omega)$.  Then, as $k \to \infty$,
\begin{align*}
u_{k} \rightharpoonup u \text{ in } W^{1,p}(\Omega) \Rightarrow \gamma_{0}(u_{k}) \rightarrow \gamma_{0}(u) \text{ in } L^{p}(\pd \Omega).
\end{align*}
\end{thm}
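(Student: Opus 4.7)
The plan is to combine the Rellich--Kondrachov theorem (which, on bounded Lipschitz domains, gives compact embedding of $W^{1,p}(\Omega)$ into $L^{p}(\Omega)$) with a trace-type interpolation inequality that bounds $\norm{\gamma_{0}(v)}_{L^{p}(\pd \Omega)}$ in terms of both $\norm{v}_{L^{p}(\Omega)}$ and $\norm{\nabla v}_{L^{p}(\Omega)}$, crucially with a power of the bulk $L^{p}$ norm strictly less than $p$, so that strong convergence in the bulk combined with a uniform Sobolev bound forces strong convergence on the boundary.

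First I would note that $u_{k} \rightharpoonup u$ in $W^{1,p}(\Omega)$ and the Banach--Steinhaus theorem yield a uniform bound $M := \sup_{k} \norm{u_{k}}_{W^{1,p}(\Omega)} + \norm{u}_{W^{1,p}(\Omega)} < \infty$, and by Rellich--Kondrachov one has strong convergence $u_{k} \to u$ in $L^{p}(\Omega)$. The key ingredient is then the estimate
\begin{equation*}
\norm{\gamma_{0}(v)}_{L^{p}(\pd \Omega)}^{p} \leq C \bigl( \norm{v}_{L^{p}(\Omega)}^{p-1} \norm{\nabla v}_{L^{p}(\Omega)} + \norm{v}_{L^{p}(\Omega)}^{p} \bigr) \qquad \forall\, v \in W^{1,p}(\Omega),
\end{equation*}
which I would derive by constructing a Lipschitz vector field $\Psi : \overline{\Omega} \to \R^{n}$ with $\Psi \cdot \nu \geq c_{0} > 0$ for $\mathcal{H}^{n-1}$-a.e.\ point of $\pd \Omega$, using a finite cover of $\pd \Omega$ by Lipschitz boundary charts and a subordinate partition of unity; on each chart $\pd \Omega$ is the graph of a Lipschitz function, and any direction strictly transversal to the tangent hyperplane has positive pairing with the a.e.-defined outward normal. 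Applying the Gauss--Green theorem on bounded Lipschitz domains to $|v|^{p} \Psi$ (first for $v \in C^{\infty}(\overline{\Omega})$ and then extending by density) yields
\begin{equation*}
c_{0} \int_{\pd \Omega} |v|^{p} \dHaus \leq \int_{\Omega} p |v|^{p-1} |\nabla v| |\Psi| + |v|^{p} |\diver \Psi| \dx,
\end{equation*}
and Hölder's inequality on the bulk integral delivers the displayed estimate.

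Applying this inequality to $v = u_{k} - u$ and using $\norm{\nabla(u_{k} - u)}_{L^{p}(\Omega)} \leq 2M$ gives
\begin{equation*}
\norm{\gamma_{0}(u_{k}) - \gamma_{0}(u)}_{L^{p}(\pd \Omega)}^{p} \leq 2CM \norm{u_{k} - u}_{L^{p}(\Omega)}^{p-1} + C \norm{u_{k} - u}_{L^{p}(\Omega)}^{p} \longrightarrow 0
\end{equation*}
as $k \to \infty$, which is the desired conclusion.

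The main technical obstacle is constructing the transversal vector field $\Psi$ on a merely Lipschitz domain: the outward unit normal is only defined $\mathcal{H}^{n-1}$-a.e.\ on $\pd \Omega$ and need not be continuous, so the pointwise lower bound $\Psi \cdot \nu \geq c_{0}$ must be established via the chart structure and then shown to survive gluing by the partition of unity. Once $\Psi$ is in hand the integration-by-parts step is standard, and the interpolation structure of the right-hand side makes the compactness argument essentially automatic.
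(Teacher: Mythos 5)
The paper itself does not prove this statement; it is quoted verbatim from the cited references (Alt, Demengel--Demengel, Ne\v{c}as). Your self-contained argument---Ne\v{c}as's transversal Lipschitz vector field $\Psi$, Gauss--Green applied to $\abs{v}^{p}\Psi$, H\"older, then Rellich--Kondrachov together with the uniform Banach--Steinhaus bound---is sound and does establish the conclusion for $1 < p < \infty$: the construction of $\Psi$ from Lipschitz charts and a partition of unity is standard, the identity holds for $|v|^p\Psi \in W^{1,1}(\Omega)^n$ (first on $C^\infty(\overline{\Omega})$ then by density), and the interpolation structure you isolate then closes the argument.

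The statement, however, covers $1 \leq p < \infty$, and your scheme fails at $p = 1$. At $p = 1$ the exponent $p-1$ vanishes: H\"older applied to $\int_{\Omega}\abs{v}^{p-1}\abs{\nabla v}\dx$ simply returns $\norm{\nabla v}_{L^{1}(\Omega)}$, and your interpolation estimate degenerates to the ordinary trace inequality $\norm{\gamma_{0}(v)}_{L^{1}(\pd\Omega)} \leq C\norm{v}_{W^{1,1}(\Omega)}$. Applied to $v = u_{k} - u$ the gradient contribution is bounded by $2CM$ but does not tend to zero; the ``power of the bulk $L^{p}$-norm strictly less than $p$'' that you rightly flag as the crux is absent. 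This is not a removable blemish of the write-up: $\gamma_{0} : W^{1,1}(\Omega) \to L^{1}(\pd\Omega)$ is not a compact operator, so no argument that uses only the boundedness of $\{u_{k}\}$ in $W^{1,1}$ can succeed at $p=1$. The repair must exploit the weak $L^{1}$-convergence $\nabla u_{k} \rightharpoonup \nabla u$, which by the Dunford--Pettis theorem gives uniform integrability of $\{\nabla u_{k}\}$; averaging the fundamental theorem of calculus over a boundary collar of thickness $H$ then bounds $\norm{\gamma_{0}(u_{k}-u)}_{L^{1}(\pd\Omega)}$ by $C H^{-1}\norm{u_{k}-u}_{L^{1}(\Omega)}$ plus the $L^{1}$-norm of $\nabla(u_{k}-u)$ over the collar, the latter being small uniformly in $k$ for $H$ small by uniform integrability, the former vanishing as $k\to\infty$ by Rellich--Kondrachov. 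For the paper's own applications, which invoke the theorem only at $p=2$ (Lemma \ref{lem:DirichletfromRobin} and Lemma \ref{lem:Vepscompactness}), your proof is complete as written.
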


\bibliography{DiffuseDomain}
\bibliographystyle{plain}
\end{document}